\DeclareMathAlphabet{\mathpzc}{OT1}{pzc}{m}{it}
\newcommand{\dbar}{\overline{\partial}}
\newcommand{\ddbar}{\sqrt{-1}\partial\dbar}
\newtheorem{theorem}{Theorem}[section]
\newtheorem{proposition}{Proposition}[section]
\newtheorem{lemma}{Lemma}[section]
\newtheorem{conjecture}{Conjecture}[section]
\newtheorem{definition}{Definition}[section]
\newtheorem{corollary}{Corollary}[section]
\newcommand{\QQ}{\mathbb{Q}}
\newcommand{\cM}{\mathcal{M}}
\newcommand{\cS}{\mathcal{S}}
\newcommand{\cX}{\mathcal{X}}
\newcommand{\cZ}{\mathcal{Z}}
\def\KSBA{\mathrm{KSBA}}
\def\WP{\mathrm{WP}}
\def\hilb{\mathrm{Hilb}}
\newcommand{\ti}{\widetilde}
\numberwithin{equation}{section}
\begin{document}

\address{$^*$ Department of Mathematics, Rutgers University, Piscataway, NJ 08854}

\address{$^{**}$ Department of Mathematics and Computer Science, Rutgers University, Newark, NJ 07102}
\address{$^\dagger$Department of Mathematics and Computer Science,  Rutgers University, Newark, NJ 07102}

\thanks{Research supported in
part by National Science Foundation grants DMS-1711439.}

\centerline{ {\bf \normalsize RIEMANNIAN GEOMETRY OF K\"AHLER-EINSTEIN CURRENTS III  } 
 \footnote{ Research supported in part by National Science Foundation grant  DMS-1711439, DMS-1609335 and  Simons Foundation Mathematics and Physical Sciences-Collaboration Grants, Award Number: 631318. }}

\medskip

\centerline{ \footnotesize COMPACTNESS OF K\"AHLER-EINSTEIN MANIFOLDS OF NEGATIVE SCALAR CURVATURE   }

\bigskip
\bigskip

\centerline{ \small  JIAN SONG$^*$, JACOB STURM$^{**}$, XIAOWEI WANG$^\dagger$}

\bigskip
\bigskip

{\noindent \small A{\scriptsize BSTRACT}. \footnotesize $~$~~Let $\mathcal{K}(n, V)$ be the set of $n$-dimensional compact K\"ahler-Einstein manifolds $(X, g)$ satisfying $\textnormal{Ric}(g)= - g$ with volume bounded above by $V$. We prove that after passing to a subsequence, any sequence $\{ (X_j, g_j)\}_{j=1}^\infty$ in $\mathcal{K}(n, V)$ converges,  in the pointed Gromov-Hausdorff topology, to a finite union of complete K\"ahler-Einstein metric spaces without loss of volume. The convergence is smooth off a closed singular set  of Hausdorff dimension no greater than $2n-4$, and the limiting metric space is biholomorphic to an  $n$-dimensional  semi-log canonical model with its non log terminal locus of complex dimension no greater than $n-1$ removed. We also show that the Weil-Petersson metric extends uniquely to a K\"ahler current with bounded local potentials on the KSBA compactification of the moduli space of canonically polarized manifolds. In particular, the coarse KSBA moduli space has finite volume with respect to the Weil-Petersson metric. Our results are a high dimensional generalization of the well known compactness results for hyperbolic metrics on compact Riemann surfaces of fixed genus greater than one.  }

\bigskip

{\footnotesize  \tableofcontents}


\section{Introduction}

The construction  of moduli spaces of Einstein manifolds and their compactifications is a fundamental problem in differential geometry.
For manifolds of real dimension two, the moduli space $\cM_g$ of Einstein metrics (or equivalently the space of complex structures) on a fixed surface $M$ of genus $g\geq 2$,  admits a natural compactification $\overline\cM_g$ known as the Deligne-Mumford compactification \cite{DM}. It is a projective algebraic variety whose
boundary consists of isomorphism classes of stable curves, i.e. curves with finite automorphism group and at worst nodal singularities. Such stable curves admit unique constant curvature metrics with collapsing complete ends near the nodal points. Moreover, a sequence of points $x_j \in
\cM_g$ converges to a point $x_\infty\in \overline\cM_g$ if and only if the corresponding sequence of  Riemann surfaces $X_i$, equipped with their canonical hyperbolic metrics, converges in the pointed Gromov-Hausdorff topology to the nodal surface $X_\infty$.  A main goal in the subject is  extending the theory of  moduli spaces of Riemann surfaces and their natural compactifications to higher dimensional Einstein manifolds $(M, g)$. Here $M$ is a smooth compact manifold and $g$ is a Riemannian metric satisfying  $$\textnormal{Ric}(g) = \lambda g, ~ \lambda= 1, 0, -1.$$

In real dimension $4$, there are many deep results on structures for spaces of Einstein manifolds.  In fact, it is proved in \cite{N, An1, T1, An2} that any sequence of Einstein 4-manifolds $(M_i, g_i)$ with uniform volume lower bound, diameter upper bound and $L^2$-curvature bound will converge to a compact Einstein orbifold, after passing to a subsequence. In the K\"ahler case,  the compactness result holds when $\lambda=0$ 
  if one imposes  uniform  upper and lower volume bounds, and when $\lambda=1$ without any assumption. The $\epsilon$-regularity theorem for 4-manifolds plays a key role in the proof of compactness for such Einstein manifolds.

In higher dimensions, very little is known about the compactness of the moduli space of general Einstein manifolds. But in the K\"ahler setting, recent years have witnessed a surge of activity including several important breakthroughs. The starting point for these developments are the seminal existence results for K\"ahler-Einstein metrics on compact K\"ahler manifolds of negative or vanishing Chern class, established by Yau in his solution to the Calabi conjecture (also see \cite{A}) and the uniqueness results due to Calabi.  When the first Chern class of $M$ is positive, uniqueness of K\"ahler-Einstein (modulo automorphisms) was established by Bando-Mabuchi, but  obstructions to the existence of  
such metrics have been known for many years. The fundamental conjecture of Yau-Tian-Donaldson, which predicts that the existence of a K\"ahler-Einstein metric on $M$ is equivalent to the algebraic condition of its $K$-stability, was recently solved \cite{CDS1, CDS2, CDS3, T4} and gives a complete answer to the existence problem on Fano manifolds. The partial $C^0$-estimate, proposed by Tian and proved for surfaces in \cite{T1},  is a fundamental tool for studying analytic and geometric degenerations of K\"ahler-Einstein metrics in the non-collapsing case, and  was established for higher dimensions by Donaldson-Sun \cite{DS1}.  As a consequence, any sequence of $n$-dimensional K\"ahler-Einstein manifolds $(M_i, g_i)$ with $\textnormal{Ric}(g_i) =  g_i$ must converge in Gromov-Hausdorff topology, after passing to a subsequence, to a singular  K\"ahler-Einstein metric space homeomorphic to a $\mathbb{Q}$-Fano variety (\cite{DS1}).   In this case, the global uniform non-collapsing for  all $(M_i, g_i)$ immediately follows from the volume comparison theorem and the fact that the first Chern is an integral class. The partial $C^0$-estimate can also be viewed as effective base point free theorem in the Riemannian setting and can be applied to understand the algebraic structures of the limiting K\"ahler-Einstein metric spaces by developing a connection between the analytic and geometric K\"ahler-Einstein metrics and the algebraic Bergman metrics.

However, when $\lambda=-1$, as one sees already in the case of Riemann surfaces of higher genus,   Einstein metrics   collapse at the complete ends and the limiting spaces are no  longer compact in general. Such complete ends correspond to the nodes arising from algebraic degenerations of smooth curves with genus greater than one. The major result in real dimension $4$ for compactness of K\"ahler-Einstien manifolds of negative scalar curvature is due to Cheeger-Tian \cite{CT},  which was established much later than the corresponding compactness result in the positive scalar curvature setting \cite{An1, N, T1}. A key tool in the collapsing setting is the chopping technique  of \cite{CG} and a refined $\epsilon$-regularity theorem for $4$-folds to establish a non-collapsing result for Einstein $4$-manifolds with uniformly bounded $L^2$-curvature. 

In the complex case, when $(M_i, J_i, g_i)$ is a sequence of K\"ahler-Einstein surfaces  on a smooth manifold $M$ satisfying $\textnormal{Ric}(g_i) = - g_i$ with a uniform volume upper bound,  it is proved in \cite{CT} that $(M_i, J_i, g_i)$ converges in the pointed Gromov-Hausdorff topology, after passing to a subsequence, to a finite disjoint union of complete orbifold K\"ahler-Einstein  surfaces without loss of total volume.  It is proved in \cite{M, MZ, Ye1} that any complete K\"ahler manifold of finite volume and negative Ricci curvature must be quasi-projective if the sectional curvatures are bounded. Its orbifold analogue immediately implies that each component in the limiting metric space of K\"ahler-Einstein surfaces of negative scalar curvature must be quasi-projective and in fact, admits a unique projective compactifcation which is a log canonical pair \cite{Ye2}.

The main purpose of this paper is to establish the analogue, for K\"ahler manifolds of higher dimension,  of the classical compactness results for constant curvature metrics on high genus curves as well as the result of Cheeger-Tian \cite{CT} on K\"ahler-Einstein surfaces of negative scalar curvature. In particular, we  show the analytic compactification agrees precisely with  $\overline{\cM}_{\rm KSBA}$, the algebraic compactification of the moduli space of K\"ahler manifolds with negative first Chern class \cite{KSB,Al}.

Special degenerations of  K\"ahler-Einstein manifolds of negative first Chern class were first studied in \cite{T1,LL, Ru1} when the special fibres have only simple normal crossing singularities inside a smooth total space, and smooth pointed Cheeger-Gromov convergence was shown to hold away from such simple normal crossing singularities. The first general compactness result is established by the first author in \cite{S4} for higher dimensional K\"ahler-Einstein manifolds of negative scalar curvature. More precisely,  it is proved in \cite{S4} that  if $f: \mathcal{X} \rightarrow B^*$  is a projective family of K\"ahler manifolds of negative first Chern class over a punctured disc $B^*=\{ t\in \mathbb{C}~|~0<|t|<1 \}$,  the fibres $\mathcal{X}_t=f^{-1}(t)$ equipped with the unique K\"ahler-Einstien metrics $g_t$ must converge in pointed Gromov-Hausdorff topology to a quasiprojective K\"ahler-Einstein variety as $t\rightarrow 0$ and the projective compatification of such a limiting variety coincides with the unique semi-log canonical model as the central fibre for the algebraic log canonical closure of $f: \mathcal{X}\rightarrow B^*$. In particular, any boundary point of the algebraic compatification of the moduli space of smooth canonical models corresponds to a unique Riemannian complete K\"ahler-Einstein metric space.    In this paper, we aim to generalize the result of \cite{S4} to algebraic degeneration of K\"ahler-Einstein manifolds of negative scalar curvature over higher dimensional base, which will allow us to to solve the general compactness problem. In addition, we establish new $C^0$ estimates for the Weil-Petersson potential on the compactified KSBA moduli space.

We start by introducing the main objects of interests, the space of K\"ahler-Einstein spaces with negative first Chern class with fixed dimension and upper volume bound.
 \begin{definition}\label{kespace}
Let  $\mathcal{K}(n, V)$ be the space of K\"ahler-Einstein manifolds  defined by
$$\mathcal{K}(n, V)= \{ (X, J, g)~|~(X, J, g)~ \textnormal{is ~ K\"ahler}, ~\dim_{\mathbb{C}} X =n, ~ \textnormal{Ric}(g) = - g, ~ \textnormal{Vol}(X, g) \leq V\}.$$
\end{definition}

We now state our first main result which concerns the compactness of the space $\mathcal{K}(n, V)$.
\begin{theorem} \label{main1}  For any sequence $\{(X_j, J_j, g_j)\}_{j=1}^\infty \subset \mathcal{K}(n, V)$, after passing to a subsequence, there exist $m\in \mathbb{Z}^+$ and a sequence of $P_j = (p_{1, j}, p_{2,j}, ..., p_{m, j}) \in \amalg_{k=1}^m   X_{{j}}$, such that $(X_j, J_j, g_j, P_j)$ converge in the pointed Gromov-Hausdorff topology to a finite disjoint union of metric spaces
\begin{equation}
(\mathcal{Y}, d_{\mathcal{Y}})  = \amalg_{k=1}^m (\mathcal{Y}_k, d_k)
\end{equation}
satisfying the following.

\begin{enumerate}

\item For each $k$, $\left\{ (X_j, J_j, g_j, p_{k, j}) \right\}_{j=1}^\infty$ converges smoothly to an $n$-dimensional K\"ahler-Einstein manifold $(\mathcal{Y}_k\setminus \mathcal{S}_k, \mathcal{J}_k, \mathpzc{g}_k)$ away from the singular set $\mathcal{S}_k$ of $(\mathcal{Y}_k, d_k)$.  In particular, $\mathcal{S}_k$ is closed and has Hausdorff dimension no greater than $2n-4$.

\medskip

\item The complex structure $\mathcal{J}_k$ on $\mathcal{Y}_k \setminus \mathcal{S}_k $ uniquely extends to $\mathcal{Y}_k$ and $(\mathcal{Y}_k, \mathcal{J}_k)$ is an $n$-dimensional  normal quasi-projective variety with at worst log terminal singularities.

\medskip

\item  $(\mathcal{Y}_k, d_k)$ is the metric completion of $(\mathcal{Y}_k\setminus \mathcal{S}_k, \mathpzc{g}_k)$ and $\mathpzc{g}_k$ uniquely extends to a K\"ahler current on $\mathcal{Y}_k$ with locally bounded K\"ahler potentials.

\medskip

\item $\sum_{k=1}^m \textnormal{Vol}(\mathcal{Y}_k, d_k)= \lim_{j\rightarrow \infty} \textnormal{Vol}(X_j, g_j)$.

\medskip

\item There exists a unique projective compactification $\overline{\mathcal{Y}}$ of $\mathcal{Y}$ such that $\overline{\mathcal{Y}}$ is a semi-log canonical model and $\overline{\mathcal{Y}}\setminus \mathcal{Y}$ is the non log terminal locus of $\overline{\mathcal{Y}}$.

\end{enumerate}

\end{theorem}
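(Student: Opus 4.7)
The strategy is a thick–thin decomposition of each $(X_j, g_j)$ adapted to both the Riemannian and algebraic structures, followed by extraction of smooth pointed limits on the thick parts and then an algebraic identification of the limits. Fix a small $\varepsilon>0$ and let $X_j^{\textnormal{thick}}(\varepsilon)$ be the set of points $p\in X_j$ with $\textnormal{Vol}(B(p,1))\geq \varepsilon$. Since $\textnormal{Ric}(g_j)=-g_j\geq -(2n-1)g_j$, the Bishop–Gromov inequality gives a uniform upper bound on $\textnormal{Vol}(B(p,1))$; combined with the volume bound $\textnormal{Vol}(X_j,g_j)\leq V$, the thick part can be covered by a uniformly bounded number $m$ of unit balls, independent of $j$. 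I would choose base points $p_{1,j},\dots,p_{m,j}$ as centers of a maximal $1$-separated net inside $X_j^{\textnormal{thick}}(\varepsilon)$, passing to a subsequence so that the number $m$ stabilizes.

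Next, I would apply Cheeger–Colding theory to each pointed sequence $(X_j,g_j,p_{k,j})$ to extract a pointed Gromov–Hausdorff limit $(\mathcal{Y}_k,d_k,p_{k,\infty})$. The Cheeger–Colding–Tian theory in the Kähler setting, together with the $\varepsilon$-regularity and codimension-four estimates (refined in the K\"ahler case by Cheeger–Donaldson–Sun), would give that each $\mathcal{Y}_k$ is smooth away from a closed singular set $\mathcal{S}_k$ of Hausdorff codimension $\geq 4$, and that the convergence is smooth on $\mathcal{Y}_k\setminus\mathcal{S}_k$, with the limit carrying a K\"ahler–Einstein metric $\mathpzc{g}_k$ satisfying $\textnormal{Ric}(\mathpzc{g}_k)=-\mathpzc{g}_k$. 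By standard Gromov compactness and the choice of net, after subsequence the pointed limit splits as the disjoint union $(\mathcal{Y},d_\mathcal{Y})=\amalg_{k=1}^m(\mathcal{Y}_k,d_k)$.

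The algebraic structure would be obtained via the pluri-canonical embedding. Since $K_{X_j}$ is ample and bounded below (up to scaling) by the Kähler–Einstein metric, a partial $C^0$-type estimate in the negative setting, combined with H\"ormander $L^2$-estimates on the thick part and the diameter control coming from Cheeger–Colding, would produce a uniform $m_0$ such that $|m_0 K_{X_j}|$ gives uniformly controlled embeddings into a fixed projective space. Passing to the limit, the embedded images converge to a normal projective variety whose intersection with $\mathcal{Y}_k$ is biholomorphic (via the uniqueness of complex structure extension across sets of Hausdorff codimension $\geq 4$, cf.\ the Siu–Remmert and Shiffman extension theorems used in the Donaldson–Sun framework) to the smooth K\"ahler–Einstein manifold $\mathcal{Y}_k\setminus \mathcal{S}_k$. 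The closure $\overline{\mathcal{Y}}_k$ provides the projective compactification, and arguments from pluripotential theory (in the spirit of Eyssidieux–Guedj–Zeriahi, Berman–Guenancia, and the earlier papers in this series) would identify $\overline{\mathcal{Y}}_k$ with the semi-log-canonical model whose non-log-terminal locus equals $\overline{\mathcal{Y}}_k\setminus \mathcal{Y}_k$, and show that $\mathpzc{g}_k$ extends as a K\"ahler current with locally bounded potentials. The log terminality of $\mathcal{Y}_k$ itself is then a consequence of the $L^\infty$ bound for the K\"ahler–Einstein potential.

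Finally, to obtain item (4), namely no loss of volume, one must show that the complement $X_j\setminus \bigcup_k B(p_{k,j},R)$ has volume tending to $0$ as $R\to\infty$ and $j\to\infty$. I would use a chopping/cutoff argument \`a la Cheeger–Gromov combined with the Bochner identity for the K\"ahler–Einstein condition (which controls the $L^2$-norm of the Ricci, hence curvature integrals) to show that the thin ends contribute negligibly — exactly the mechanism by which collapsing cusps in the Riemann surface case carry vanishing area. \textbf{The main obstacle} I expect is this no-loss-of-volume assertion together with the algebraic identification of the boundary $\overline{\mathcal{Y}}\setminus\mathcal{Y}$ as precisely the non-log-terminal locus of the semi-log-canonical model: it requires matching the analytic collapsing behavior near the thin part with the algebraic structure of semi-log-canonical singularities, and ruling out any hidden volume escaping into infinitely many small components — a step that in dimension two is handled by the \`a priori bound on $L^2$-curvature from Gauss–Bonnet but in higher dimension must be replaced by the pluri-canonical/Weil–Petersson framework developed in this paper.
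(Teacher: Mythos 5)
Your proposal runs the argument in the opposite direction from the paper, and in doing so it assumes the two hardest facts rather than proving them. First, the thick--thin decomposition only produces base points if the $\varepsilon$-thick part is nonempty for some uniform $\varepsilon=\varepsilon(n,V)>0$, i.e.\ if there exists $p_j\in X_j$ with $\textnormal{Vol}(B_{g_j}(p_j,1))>c(n,V)$. This uniform non-collapsing is not automatic (unlike the Fano case, where it follows from volume comparison and integrality of $c_1$); it is one of the main results of the paper, and it is obtained there by purely algebro-analytic means: one first replaces the abstract sequence by a stable (KSBA) family $\pi:\mathcal{X}\to\overline{V}$ using Matsusaka/Koll\'ar--Matsusaka boundedness and the existence of log canonical closures of Hacon--Xu, then runs semi-stable reduction over a higher-dimensional base, inversion of adjunction, and local $C^0$-estimates with barrier functions (Proposition \ref{c0pro}) to produce the non-collapsed base point (Corollary \ref{noncol}). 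Without this input, Cheeger--Colding theory cannot be started, and the ``partial $C^0$-estimate in the negative setting'' you invoke is likewise unavailable, since it requires the non-collapsing hypothesis.

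Second, the mechanisms you propose for the no-loss-of-volume statement and for the algebraic identification --- Cheeger--Gromov chopping plus the Bochner identity to control curvature integrals, followed by algebraization of the geometric limit --- are exactly the four-(real)-dimensional Cheeger--Tian tools, and they do not generalize: in complex dimension $n\geq 3$ there is no a priori $L^2$ (or any $L^p$) curvature bound playing the role of Gauss--Bonnet, and the paper explicitly notes that identifying the metric limit as an algebraic variety without first knowing the algebraic degeneration is open in the collapsing regime. In the paper, item (4) comes for free from the algebraic setup: $\textnormal{Vol}(X_j,g_j)=[K_{X_j}]^n$ is constant along the flat family, and the limiting K\"ahler--Einstein current on the semi-log canonical central fibre has full Monge--Amp\`ere mass $[K_{\mathcal{X}_0}]^n$ by Theorem \ref{song}, while item (5) is the definition of the central fibre of the stable limit rather than something to be reconstructed from the metric limit. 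You correctly identify these points as the main obstacles, but the proposal does not supply an argument that overcomes them, so as written it has a genuine gap at precisely the steps the paper is designed to address.
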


The  number $m$ of components in the limiting space is bounded above by a constant $M=M(n, V)$ and $\lim_{V\rightarrow \infty} M=  \infty$.  Moreover,  it follows from  \cite[Theorem 1.9]{HMX2013} that the volume of each component $(\mathcal{Y}_k, d_k)$ in the limiting metric space is bounded below by a constant $\nu=\nu(n)>0$ that only depends on the dimension $n$. 
A smooth canonical model is a complex projective manifold whose canonical bundle is ample, or equivalently, with negative first Chern class. There is always a unique K\"ahler-Einstein metric in the canonical class of a smooth canonical model. Semi-log canonical models (see Definition \ref{semilog}) are  high-dimensional analogues of high genus nodal curves and are  generalizations of canonical models with hypersurface singularities. Theorem \ref{main1} is a also generalization of the holomorphic compactness result  in \cite{S4}.

Let us say a few words about the proof. As in \cite{S4}, we first need to control the growth rate towards $-\infty$ of the K\"ahler-Einstein potential near the log canonical locus. To overcome the difficulties posed by a higher dimensional base, we use semi-stable reduction together with an analytic form of Inverse of Adjunction to obtain the necessary control. Then, following the road map in \cite{S4}, we prove a uniform non-collapsing condition for $(X_j, g_j)\in \mathcal{K}(n, V)$, more precisely, we will show that there exist $c=c(n, V)>0$ and $p_j \in X_j$   such that for all $j\geq 1$
\begin{equation}\label{uninoncol}
{\rm Vol}_{g_j} (B_{g_j}(p_j, 1) )>c .
\end{equation}
The Cheeger-Colding theory cannot be applied without  condition (\ref{uninoncol}). Such a non-collapsing condition is achieved by uniform analytic estimates using the algebraic semi-stable reduction.  Instead of considering a general sequence of K\"ahler-Einstein manifolds with fixed dimension and volume upper bound, we can consider an algebraic family of canonical models with fixed Hilbert polynomial, and study the Riemannian geometric behavior of K\"ahler-Einstein metrics on algebraic degeneration of such manifolds towards semi-log canonical models.   Therefore it suffices to prove the compactness for K\"ahler-Einstein manifolds as a holomorphic family over a projective variety $\mathcal{S}$. One of the main contributions in this paper is to derive a local analytic estimate for $\dim \mathcal{S} \geq 2$, improving the results in \cite{S4} for $\dim \mathcal{S} =1$.  Roughly speaking, we aim to turn the bigness of canonical class into non-collapsing of the Riemannian metrics.  With the noncollapsing condition (\ref{uninoncol}), the partial $C^0$-estimates will naturally hold by the fundamental work in \cite{T1, DS1}. One of the difficult issues here for degeneration of canonical models is that, the diameter will in general tend to infinity and there must be collapsing at the complete ends in the limiting metric space. We will have to estimate the distance near the singular locus of special fibres of the degeneration. It turns out by applying the analytic and geometric techniques developed in \cite{S4} that the general principle for geometric K\"ahler currents applies, i.e., boundedness of the local potential is equivalent to boundedness of distance to a fixed regular base point. Such a principle is achieved by building a local Schwarz lemma with suitable auxiliary K\"ahler-Einstein currents and the $L^\infty$-estimates for degenerate complex Monge-Amp\`ere equations from the capacity theory \cite{Kol1, EGZ}. Our proof also relies on the the recent progress in birational geometry which establishes the KSBA compatification of the moduli space of canonical models.  Theorem \ref{main1}  gives the expected relation between differential geometric moduli spaces and algebraic moduli spaces of smooth canonical models. 

There are still many open questions (c.f. \cite{HMX, HX, K1, K2}). The standard partial $C^0$-estimates fail near the log canonical and semi-log canonical locus of the semi-log canonical models because of collapsing, and the Riemannian geometric limit will push such singularities to infinity which limits our understanding of the metric and the geometric structure. Naturally, one hopes to use geometric $L^2$-theory to compactify and glue the complete metric spaces in the limit along the semi-log canonical locus to recover the algebraic semi-log canonical models.  Also it seems to be more natural to compactify the moduli space by using the compactness of K\"ahler-Einstein metrics without running the relative minimal model program (MMP). It is possible that  most results in Theorem \ref{main1} can be proved without using MMP, although it is not clear how to identify the limiting metric space as an algebraic variety since the partial $C^0$-estimates as well as geometric convergence theory are not quite available in the collapsing case.

Yau has proposed studying the metric completion of the Weil-Petersson metric on the moduli spaces as an approach for compactification. We  apply Theorem \ref{main1} and its proof to obtain some analytic understanding the relative canonical sheaf as well as the Weil-Petersson  currents. To explain our next result we first make the following definition.

\begin{definition} \label{stabfib} A  flat projective morphism $\pi: \mathcal{X} \rightarrow \mathcal{S}$ between normal varieties $\mathcal{X}$ and $\mathcal{S}$ is said to be a stable family of  canonical models, i.e., K\"ahler manifolds of negative first Chern class,  if the following hold.

\begin{enumerate}

\item The relative canonical sheaf $K_{\mathcal{X}/\mathcal{S}}$ is $\mathbb{Q}$-Cartier and  is $\pi$-ample.

\medskip

\item The general fibres are smooth canonical models and the special fibres are semi-log canonical models.

\end{enumerate}
We let $\mathcal{S}^\circ$ be  the set of smooth points of $\mathcal{S}$ over which $\pi$ is smooth and $\mathcal{X}^\circ = \pi^{-1}(\mathcal{S}^\circ).$

\end{definition}

In this paper, we always assume the general fibres of a stable family are smooth, i.e., the family $\pi|_{\cX^\circ}: \cX^\circ \rightarrow \cS^\circ$ in Definition \ref{stabfib} is a holomorphic family of canonically polarized projective manifolds. For each $t\in \cS^\circ$, there exists a unique K\"ahler-Einstein form $\omega_t\in c_1(\cX_t)$ on the fibre $\cX_t = \pi^{-1}(t)$ and one can define the hermitian metric on the relative canonical bundle $K_{\cX^\circ/\cS^\circ}$ by $h_t = (\omega_t^n)^{-1}.$ It is proved by Schumacher \cite{Sch} and Tsuji \cite{Ts} using different methods that
$$\textnormal{Ric}(h) = - \ddbar \log h$$
is nonnegative on $\cX^\circ$ and  $\textnormal{Ric}(h)$ is strictly positive if the family is nowhere infinitesimally trivial.
It is further shown in \cite{Sch} that $h$ can be uniquely extended to a non-negatively curved singular hermitian metric on $K_{\cX/\cS}$ with analytic singularities, which implies that the relative canonical bundle $K_{\cX/\cS}$ is pseudo-effective.  Using the proof of Theorem \ref{main1}, we can give a sharp description of  the analytic singularities of $h$. In fact, $h$ has vanishing Lelong number everywhere on $\cX$ and it tends $-\infty$ exactly at the semi-log and log canonical locus of the special fibres as semi-log canonical canonical models of $\pi: \cX \rightarrow \cS$. In particular, this gives an analytic proof of Fujino's theorem 
\cite{Fujino}
which says that the relative canonical bundle $K_{\cX/\cS}$ is nef.

\begin{theorem} \label{main2} Let $\pi: \mathcal{X} \rightarrow \mathcal{S}$ be a stable family of $n$-dimensional   canonical models over a projective normal variety $\mathcal{S}$.  Let $\omega_t$ be the unique K\"ahler-Einstein metric on $\mathcal{X}_t$ for $t\in \mathcal{S}^\circ$ and $h$ be the hermitian metric on the relative canonical sheaf $K_{\mathcal{X}^\circ/\mathcal{S}^\circ}$ defined by
$$h_t =( \omega_t^n)^{-1}.$$
The curvature $\theta= \ddbar \log h$ of $(K_{\mathcal{X}^\circ/\mathcal{S}^\circ}, h)$  extends uniquely to a closed nonnegative $(1,1)$-current on $\mathcal{X}$ with vanishing Lelong number everywhere. Furthermore, 

\smallskip

\begin{enumerate}
\item $\theta|_{\mathcal{X}_t}=\omega_t$ for $t\in \mathcal{S}^\circ$.

\medskip

\item
$\theta|_{\mathcal{X}_t}$ is the unique canonical K\"ahler-Einstein current on the semi-log canonical model $\mathcal{X}_t$ for any $t\in \mathcal{S}\setminus \mathcal{S}^\circ$.

\medskip

\item  $K_{\mathcal{X}/\mathcal{S}}$ is nef on $\mathcal{X}$.

\end{enumerate}
Moreover, if all the fibres of $\pi: \mathcal{X} \rightarrow \cS$ have at worst log terminal singularities, then $\theta$    has bounded local potentials on $\mathcal{X}$.

\end{theorem}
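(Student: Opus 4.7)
The plan is to work locally on $\mathcal{X}$ around an arbitrary point $x_0 \in \mathcal{X}\setminus \mathcal{X}^\circ$ and construct a quasi-plurisubharmonic local potential for $\theta$ with sharp singularity type. Since $K_{\mathcal{X}/\mathcal{S}}$ is $\mathbb{Q}$-Cartier and $\pi$-ample, after passing to an index-one cover we may trivialize $K_{\mathcal{X}/\mathcal{S}}$ over a Stein neighborhood $U$ of $x_0$ and fix a smooth background form $\omega_0 \in c_1(K_{\mathcal{X}/\mathcal{S}})$, so that $\theta = \omega_0 + \ddbar u$ on $U\cap \mathcal{X}^\circ$ for a local potential $u$. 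The theorem is then equivalent to showing that $u$ extends to an $\omega_0$-plurisubharmonic function on $U$ with vanishing Lelong number at every point, and that this extension is locally bounded when all fibers of $\pi$ are log terminal. Schumacher's theorem already produces such an extension as a closed non-negative $(1,1)$-current with analytic singularities, so the task is to sharpen the description of these singularities.

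Once the extension is in place, $u$ is $\omega_0$-plurisubharmonic and therefore automatically bounded above on compact sets; only the lower bound is at issue. On a smooth fiber $\mathcal{X}_t$ with $t\in \mathcal{S}^\circ$, the restriction $u_t := u|_{\mathcal{X}_t}$ solves the normalized K\"ahler-Einstein Monge-Amp\`ere equation $(\omega_{0,t}+\ddbar u_t)^n = e^{u_t}\Omega_t$ on a canonically polarized manifold. The $L^\infty$-estimates of Ko\l odziej and Eyssidieux-Guedj-Zeriahi, together with the uniform capacity and volume control coming from the proof of Theorem \ref{main1}, yield a bound $\|u_t\|_{L^\infty(\mathcal{X}_t)}\leq C$ independent of $t\in \mathcal{S}^\circ$.

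The heart of the matter, and the \textbf{main obstacle}, is the base-direction estimate ruling out any positive Lelong number of $u$ at $x_0$. The plan is to transport the local analytic estimate introduced in the proof of Theorem \ref{main1} from the one-parameter setting to the present higher-dimensional base. After pulling back to a semi-stable reduction $\tilde{\pi}:\tilde{\mathcal{X}}\to \tilde{\mathcal{S}}$, the analytic form of inversion of adjunction converts the log-canonical hypothesis on the central fiber into an integrability/subharmonicity statement for the relative potential; combined with a local Schwarz lemma against an auxiliary K\"ahler-Einstein current transverse to the fibers, this upgrades the fiberwise $L^\infty$-bound to a base-direction lower bound. In particular $u$ cannot acquire any logarithmic blow-up at $x_0$, so $\nu(u,x_0)=0$ at every point of $\mathcal{X}$.

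With these estimates established, the remaining conclusions follow by quick extraction arguments. Item (1) is by construction on $\mathcal{S}^\circ$. For (2), weak-$*$ compactness of closed positive currents of bounded mass combined with the pointed Gromov-Hausdorff convergence of Theorem \ref{main1} identifies $\theta|_{\mathcal{X}_{t_0}}$ as the weak limit of $\omega_t$ as $t\to t_0$; by the uniqueness of the canonical K\"ahler-Einstein current on the semi-log canonical model $\mathcal{X}_{t_0}$ guaranteed by Theorem \ref{main1}(3), this limit must coincide with that current. For (3), a line bundle carrying a singular hermitian metric of semi-positive curvature with vanishing Lelong numbers everywhere is nef by Demailly's regularization, applied to a sufficiently divisible Cartier multiple of $K_{\mathcal{X}/\mathcal{S}}$. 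Finally, if every fiber is log terminal, part (3) of Theorem \ref{main1} yields locally bounded fiberwise potentials; combined with the uniform base-direction estimate of the previous paragraph, this upgrades the vanishing Lelong number conclusion to the statement that $\theta$ has bounded local potentials on all of $\mathcal{X}$.
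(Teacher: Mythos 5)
Your overall architecture (extend the fibrewise K\"ahler--Einstein potential to a quasi-psh function on $\mathcal{X}$, control its singularity type, regularize to get nefness, use uniqueness of the canonical current for item (2)) matches the paper's, but there is a concretely false step at the center. You claim that the Ko\l odziej/EGZ estimates yield $\|u_t\|_{L^\infty(\mathcal{X}_t)}\leq C$ uniformly in $t\in\mathcal{S}^\circ$. This cannot be right in general: if a limiting fibre has non--log-terminal singularities, the adapted measure $\Omega_t$ degenerates out of $L^p$ ($p>1$) uniformly and the fibrewise potentials $\varphi_t$ tend to $-\infty$ near the $\textnormal{LCS}$ locus (already visible for a family of high-genus curves degenerating to a nodal curve). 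Indeed, if your uniform two-sided bound held, $\theta$ would always have bounded local potentials and the theorem's final clause (bounded potentials \emph{only} under the hypothesis that all fibres are log terminal) would be vacuous. What is actually available, and what the paper proves (Lemma \ref{54} and Proposition \ref{c0pro}), is a uniform \emph{upper} bound (Lemma \ref{5upb}, by the maximum principle against the relative volume form) together with a lower bound of the form $\varphi_t \geq \delta \log(\cdot) - C_\delta$ for \emph{every} $\delta>0$, with the log pole supported on the exceptional divisors of the semi-stable model; it is this "milder than any log pole" estimate, not an $L^\infty$ bound, that forces $\nu(F,x)=0$ everywhere.

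The paragraph you label the heart of the matter is also not yet a proof: you invoke semi-stable reduction, an analytic inversion of adjunction, and a "local Schwarz lemma against an auxiliary K\"ahler--Einstein current transverse to the fibers," but the mechanism that actually produces the lower bound is different and more specific. One needs (i) the discrepancy computation of Lemma \ref{adjun4} on the toroidal model, (ii) the volume comparison of Lemma \ref{volcom4} between the conical reference metrics $\omega_\varepsilon$ of Lemma \ref{conemet} and the adapted measure, and (iii) the fibrewise maximum principle applied to $\varphi_{t,\varepsilon}=\varphi_t-(\text{barrier})$ as in Lemma \ref{54}; the Schwarz lemma (Lemma \ref{sch1}) enters only later, for the metric lower bound and higher-order estimates, not for the $C^0$ lower bound. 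Once Proposition \ref{c0pro} is in hand, the remaining steps you outline are essentially the paper's: $F=\log(\omega_t^n/\Omega_t)$ is $\chi$-psh on $\mathcal{X}^\circ$ by Schumacher's theorem, bounded above, hence extends across $\mathcal{X}\setminus\mathcal{X}^\circ$ (Lemma \ref{loclel}); nefness follows from the B\l ocki--Ko\l odziej regularization of a quasi-psh function with zero Lelong numbers (Lemma \ref{kol}, Corollary \ref{nef}), which is the same device as your Demailly regularization; and the bounded-potential conclusion in the log terminal case is exactly the situation where the $C^0$-estimate of Proposition \ref{c0pro} holds without the barrier.
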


The Weil-Petersson metric on the moduli space of K\"ahler-Einstein manifolds with negative first Chern class is the $L^2$-metric of the harmonic $(0,1)$-form with coefficients in the tangent bundle and it is a natural generalization of the Weil-Petersson metric for the Teichm\"uller space. It is shown in \cite{Ko} that the Weil-Petersson metric is  K\"ahler over the base of families of smooth canonical models. The following theorem shows that the Weil-Petersson metric can be uniquely extended globally to the base of any stable family. 
\begin{theorem} \label{main3} Let $\pi: \mathcal{X} \rightarrow \mathcal{S}$ be a stable family of $n$-dimensional   canonical models over an $m$-dimensional projective normal variety $\mathcal{S}$. 
 Then the Weil-Petersson metric $\omega_{WP}$ on $\mathcal{S}^\circ$  extends uniquely to a non-negative closed $(1,1)$-current on $\mathcal{S}$ with bounded local potentials, i.e., for any point $p \in \mathcal{S}$, there exists an open neighborhood $U$ of $p$ in $\mathcal{S}$ and a bounded plurisubharmonic function $\psi$ in $U$ such that
$$
 \omega_{WP}=\ddbar \psi.
$$
 In particular, the volume of $(\mathcal{S}, \omega_{WP})$ is finite and 
 $$\int_\mathcal{S} (\omega_{WP})^m=\int_{\mathcal{S}^\circ} (\omega_{WP})^m \in \mathbb{Q}^+.$$

\end{theorem}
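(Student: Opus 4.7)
The plan is to use Schumacher's fibre-integral representation of the Weil-Petersson form. On $\mathcal{S}^\circ$, one has
$$\omega_{WP} = c_n\, \pi_*(\theta^{n+1})$$
as a closed positive $(1,1)$-form, where $\theta$ is the curvature current of the fibrewise K\"ahler-Einstein metric on $K_{\mathcal{X}^\circ/\mathcal{S}^\circ}$ appearing in Theorem \ref{main2}, and $c_n$ is a dimensional constant. By Theorem \ref{main2}, $\theta$ extends uniquely to a closed non-negative $(1,1)$-current on all of $\mathcal{X}$ with vanishing Lelong numbers everywhere, so its wedge power $\theta^{n+1}$ is well-defined via Bedford-Taylor--Demailly theory, and I would \emph{define} the extended Weil-Petersson current on $\mathcal{S}$ as $c_n\,\pi_*(\theta^{n+1})$. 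Properness of $\pi$ then guarantees this is a closed non-negative $(1,1)$-current on $\mathcal{S}$ that restricts to $\omega_{WP}$ on $\mathcal{S}^\circ$.

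The main task is to show this extension has bounded local potentials. Near any $p_0\in\mathcal{S}$, I would perform a local semistable reduction of $\pi$ at $p_0$, as in the proof of Theorem \ref{main1}, to reduce to the case of smooth total space. Writing $\theta = \Omega_0 + \ddbar\varphi$ locally for a smooth reference form $\Omega_0 \in c_1(K_{\mathcal{X}/\mathcal{S}})$ and a $\theta$-psh function $\varphi$, a local potential for $c_n\,\pi_*(\theta^{n+1})$ can be expressed as a fibre integral of the form
$$\psi_U(s) = a_n \int_{\mathcal{X}_s} \varphi\,(\Omega_0 + \ddbar\varphi)^n + (\text{smooth in } s).$$
Its boundedness on a neighbourhood $U$ of $p_0$ then amounts to a locally uniform two-sided bound on the fibre integral of $\varphi$ against the relative Monge-Amp\`ere measure.

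The hardest step is this uniform fibre-integral bound, since $\varphi$ may tend to $-\infty$ along the non-log-terminal locus of special fibres. The key inputs are: (i) the uniform control on the growth rate of the K\"ahler-Einstein potential near the log canonical locus established in the course of Theorem \ref{main1}; (ii) the fibrewise $L^\infty$-estimates of Kolodziej and Eyssidieux-Guedj-Zeriahi, applied uniformly in the base parameter using the stable family structure; (iii) the vanishing Lelong number property from Theorem \ref{main2}, which yields exponential integrability of $\varphi$ fibrewise and hence $L^1$-control of $\varphi$ against the relative Monge-Amp\`ere measure. The fact that $\pi$ is a stable family, in particular with fixed Hilbert polynomial and bounded fibre volumes, is essential to preclude mass escape.

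Uniqueness of the extension is standard: two closed positive $(1,1)$-currents with bounded potentials that agree on the Zariski-open subset $\mathcal{S}^\circ$ must coincide, since their difference is $\ddbar$ of a bounded pluriharmonic function on $\mathcal{S}$, which vanishes. For the volume statement, bounded local potentials imply that $\omega_{WP}$ puts no mass on the proper analytic subset $\mathcal{S}\setminus\mathcal{S}^\circ$, yielding the equality $\int_{\mathcal{S}}\omega_{WP}^m = \int_{\mathcal{S}^\circ}\omega_{WP}^m <\infty$ by compactness of $\mathcal{S}$. Rationality follows from the cohomological identity
$$[\omega_{WP}] = c_n\,\pi_*\bigl(c_1(K_{\mathcal{X}/\mathcal{S}})^{n+1}\bigr) \in H^{1,1}(\mathcal{S},\mathbb{Q}),$$
which uses that $K_{\mathcal{X}/\mathcal{S}}$ is $\mathbb{Q}$-Cartier, while positivity of the top self-intersection comes from its $\pi$-ampleness.
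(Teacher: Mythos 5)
Your overall architecture matches the paper's: the fibre-integral representation of $\omega_{WP}$, the decomposition of a local potential into $\int_{\mathcal{X}_t}\chi^{n+1}$ plus $\ddbar$ of the energy-type fibre integral of $\varphi$ (Lemma \ref{wpref}), the reduction via semistable reduction and toric blow-ups, and the uniqueness and volume/rationality arguments at the end are all essentially what the paper does. The problem is the step you yourself identify as the hardest one, and there your proposed justification has a genuine gap. You need a two-sided bound, uniform in $t$, on $\int_{\mathcal{X}_t}\varphi_t\,(\chi_t+\ddbar\varphi_t)^{j}\wedge\chi_t^{n-j}$ for every $j=0,\dots,n$. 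Your input (iii) --- ``vanishing Lelong number yields exponential integrability of $\varphi$ fibrewise and hence $L^1$-control of $\varphi$ against the relative Monge--Amp\`ere measure'' --- does not deliver this: Skoda-type exponential integrability controls $\int e^{-\lambda\varphi_t}\,dV$ against a \emph{fixed smooth} volume form, not against the mixed Monge--Amp\`ere measures $(\chi_t+\ddbar\varphi_t)^{j}\wedge\chi_t^{n-j}$, whose mass concentrates exactly where $\varphi_t\to-\infty$; and even the $j=0$ case, $\int_{\mathcal{X}_t}(-\log|\sigma|^2)\,\chi_t^n\le C$ uniformly in $t$, is a nontrivial toric computation (Lemma \ref{divisorinte}), not a consequence of (i) or (ii). Likewise the fibrewise Kolodziej/EGZ $L^\infty$-estimates cannot be ``applied uniformly in the base parameter'' here, because the fibres degenerate to a non-klt central fibre and the potentials genuinely blow down to $-\infty$ along $\textnormal{LCS}(\mathcal{X}_0)$; there is no uniform $L^\infty$-bound to be had.

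The paper closes this gap by a different mechanism which your proposal does not contain. It observes that $E(t)=E_t(\varphi_t)$ equals the Ding-type functional $G_t(\varphi_t)$ (since $\int_{\mathcal{X}_t}e^{\varphi_t}\Omega_t=V$), and that the K\"ahler--Einstein potential \emph{maximizes} $G_t$ (Ding--Tian, \cite{BBGZ}). Hence a uniform lower bound on $E(t)$ follows from exhibiting a single explicit competitor $\psi_t=-(-\tfrac1m\log\sum_j|\sigma_j|^2_h)^\alpha$, cutting out the exceptional locus, with $G_t(\psi_t)\ge -C$ uniformly (Theorem \ref{main91}). Even for this competitor, passing from the $L^1$-bound of $(-\psi_t)^{1/\alpha}$ against $\chi_t^n$ to a bound of $\int(-\psi_t)(\chi_t+\ddbar\psi_t)^n$ requires the capacity estimates of Guedj--Zeriahi (Lemmas \ref{cap2} and \ref{cap1}), with the exponent $\alpha=\tfrac1{n+2}$ chosen precisely to make the capacity decay fast enough. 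The upper bound on $E(t)$ then comes cheaply from $\sup\varphi_t\le C$. Without this variational detour (or some substitute for it), your direct attack on the fibre integral of $\varphi_t$ against its own Monge--Amp\`ere measures does not go through. A secondary caveat: defining the extension as $c_n\pi_*(\theta^{n+1})$ requires making sense of $\theta^{n+1}$ for a current with unbounded potentials; the paper avoids this by working directly with the potential $\psi_B+E(t)$ and extending it as a bounded psh function.
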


In fact, the extended Weil-Petersson metric on $\mathcal{S}$ in Theorem \ref{main3} is the curvature of the CM line bundle on $\cS$ defined by the Deligne Pairing   $\langle K_{\mathcal{X}/\mathcal{S}},K_{\mathcal{X}/\mathcal{S}}...,K_{\mathcal{X}/\mathcal{S}}\rangle$ of $K_{\mathcal{X}/\mathcal{S}}$ with itself $n+1$ times. 

Let $(X, J)$ be an $n$-dimensional  smooth canonical model of general type with complex structure $J$. Let $\overline{\cM}_\KSBA$ be the KSBA compactification of the moduli space of complex structures on $X$. Then
$\overline{\cM}_\KSBA$
is a projective variety and the CM line bundle $\mathcal{L}$ induced by the Deligne pairing is well-defined on $\overline{\cM}_\KSBA$ (c.f. \cite{PX}). The Weil-Petersson metric $\omega_{WP}$ is also a well-defined smooth K\"ahler metric on $\left({\cM}_\KSBA\right)^\circ$,  a Zariski open set of ${\cM}_\KSBA$ (see section 9). In particular, $\omega_{WP}$  is the curvature of $\mathcal{L}$ on $\left({\cM}_\KSBA\right)^\circ$. The following corollary gives the extension of $\omega_{WP}$ on $\overline{\cM}_\KSBA$.

\begin{corollary} \label{main4} The Weil-Petersson $\omega_{WP}$ extends uniquely to a nonnegative closed $(1,1)$-current on the coarse moduli space $\overline{\cM}_\KSBA$ with bounded local potentials and 
$$\omega_{WP}\in c_1(\mathcal{L}_{\rm CM}),$$ 
where $\mathcal{L}_{\rm CM}$ is the CM bundle on $\overline{\cM}_\KSBA$. In particular, the volume of $\overline{\cM}_\KSBA$ with respect to the Weil-Petersson metric is a rational number, i.e., 
\begin{equation}
\int_{\overline{\cM}_\KSBA} \left( \omega_{WP}\right)^{d} = \int_{\left(\overline{\cM}_\KSBA\right)^\circ} \left( \omega_{WP}\right)^{d} = [c_1(\mathcal{L})]^{d}\in \mathbb{Q}^+,
\end{equation}
where $d= \dim \overline{\cM}_\KSBA$.

\end{corollary}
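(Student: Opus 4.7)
The plan is to reduce Corollary~\ref{main4} to Theorem~\ref{main3} via a finite cover carrying a universal family. The first step is to invoke the standard device from KSBA moduli theory: there exists a projective normal variety $\cS$, a finite surjective morphism $p: \cS \to \overline{\cM}_\KSBA$, and a stable family $\pi: \cX \to \cS$ in the sense of Definition~\ref{stabfib} whose fibre over $s$ represents the isomorphism class corresponding to $p(s)$, with $p$ quasi-\'etale over $\left(\cM_\KSBA\right)^\circ$. This is obtained in the usual way by passing to a suitable level structure over which the moduli functor is representable.

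Next, I would apply Theorem~\ref{main3} to this family. That theorem produces a closed nonnegative $(1,1)$-current $\omega'$ on $\cS$ with bounded local potentials which agrees with $p^\ast\omega_{WP}$ on $\cS^\circ$, the latter equality being immediate from the functoriality of the Weil-Petersson $L^2$-pairing under base change. To descend $\omega'$ to $\overline{\cM}_\KSBA$, fix an arbitrary point $q\in \overline{\cM}_\KSBA$, an affine neighbourhood $V$ of $q$, and set $U = p^{-1}(V)$. A bounded psh potential $\psi$ for $\omega'$ on $U$ can be averaged over the fibres of $p|_U$ (after passing to a Galois closure and using the trace construction for finite morphisms between normal varieties) to produce a bounded psh function $\bar\psi$ on $V$ with $\ddbar \bar\psi = \omega_{WP}$ on $V \cap \left(\cM_\KSBA\right)^\circ$. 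Since $\bar\psi$ is bounded and the complement has codimension $\geq 1$, the Bedford--Taylor--Demailly extension theory guarantees that $\ddbar\bar\psi$ is the unique closed positive current on $V$ extending $\omega_{WP}$ with bounded potentials; these local extensions glue to a global current on $\overline{\cM}_\KSBA$ by the normality of the coarse moduli space.

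For the cohomology class, the identity $\omega_{WP}=c_1(h_{\rm CM})$ on $\left(\cM_\KSBA\right)^\circ$ is the classical curvature formula for the CM line bundle coming from the Deligne pairing $\langle K_{\cX/\cS},\ldots,K_{\cX/\cS}\rangle$ (Fujiki--Schumacher, Phong--Sturm; see \cite{PX}), so the extension constructed above represents $c_1(\mathcal{L}_{\rm CM})$ as a current. The volume statement then follows formally: since $\omega_{WP}$ has bounded local potentials, the current $(\omega_{WP})^d$ is well-defined by Bedford--Taylor, puts no mass on the analytic set $\overline{\cM}_\KSBA \setminus \left(\cM_\KSBA\right)^\circ$, and satisfies $\int \omega_{WP}^d = [c_1(\mathcal{L}_{\rm CM})]^d$. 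This intersection number lies in $\mathbb{Q}$ because $\mathcal{L}_{\rm CM}$ is a $\mathbb{Q}$-line bundle on a projective variety, and is positive because $\omega_{WP}$ is strictly positive on a nonempty open subset of the smooth locus.

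The main obstacle is the descent step: ensuring that boundedness of the psh potential genuinely survives the passage from $\cS$ to the coarse moduli space, which may have quotient singularities along the boundary. This is handled by the averaging argument above, but relies crucially on the bounded potentials provided by Theorem~\ref{main3} rather than just local integrability, since unbounded psh functions are not preserved under finite pushforward in the generality needed here.
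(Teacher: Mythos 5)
Your proposal follows essentially the same route as the paper: a finite cover $\cS \to \overline{\cM}_\KSBA$ carrying a stable family (via Koll\'ar and Hacon--Xu), Theorem \ref{main3} for bounded potentials upstairs, descent of the potential by averaging over the fibres of the cover, identification of the class with $c_1(\mathcal{L}_{\rm CM})$ through the Deligne pairing, and a Bedford--Taylor argument for the volume identity. The only small caveat is your appeal to normality of the coarse moduli space in the gluing step: $\overline{\cM}_\KSBA$ need not be normal, and the paper instead works with a normalization and notes the volume is independent of that choice.
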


In general, $\overline{\cM}_\KSBA$ is not a normal variety, but the integral $\int_{\overline{\cM}_\KSBA} \left( \omega_{WP}\right)^{d}$ can be  computed on the normalization of $\overline{\cM}_\KSBA$ and the volume is independent of the choice of normalization. We will explain how the Weil-Petersson metric is defined on the moduli space.   Corollary \ref{main4} also implies that $\mathcal{L}_{\rm CM}$ is big and nef since $\omega_{WP}\geq 0$ with bounded local potentials. In fact, it is proved \cite{PX} that $\mathcal{L}_{\rm CM}$ is ample.  

It is expected that the Weil-Petersson metric has finite distance for the moduli space of canonical models as in the case of high genus curves. This is confirmed in \cite{T1, Ru1} for special degeneration of smooth canonical models whose central fibre has only normal crossing singularities. Naturally, one also expects the local potentials of the Weil-Petersson metric to be always bounded.  Therefore we propose the following conjecture which is an extension of Theorem \ref{main3}.

\begin{conjecture} \label{con1} The Weil-Petersson metric $\omega_{WP}$ on $\mathcal{S}^\circ$ extends uniquely  to a non-negative closed $(1,1)$-current on $\mathcal{S}$ with continuous local potentials on $\mathcal{S}$ and the  completion of $(\mathcal{S}^\circ, \omega_{WP})$ is homeomorphic to $\mathcal{S}$ if $\pi: \mathcal{X}\rightarrow \mathcal{S}$ is nowhere infinitesimally trivial.

\end{conjecture}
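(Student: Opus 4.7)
The plan is to build on Theorems \ref{main1}, \ref{main2}, and \ref{main3} by upgrading the boundedness of the local plurisubharmonic potential $\psi$ for $\omega_{WP}$ to continuity, and then leveraging the nowhere-infinitesimally-trivial hypothesis to obtain the homeomorphism.

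For continuity of $\psi$, I would represent $\psi$ via the Deligne pairing $\langle K_{\cX/\cS}, \ldots, K_{\cX/\cS} \rangle$ equipped with the fibrewise K\"ahler-Einstein hermitian metric $h_t = (\omega_t^n)^{-1}$ from Theorem \ref{main2}. Concretely, for a local trivializing section $\sigma$ of the CM line bundle, $\psi(t) = -\log \|\sigma(t)\|$ can be written as an iterated fibre integral involving $\log h$, which by Theorem \ref{main2} extends as a bounded psh function on all of $\cX$. The task then reduces to showing that the relevant fibre integrals depend continuously on $t$, including across $\cS \setminus \cS^\circ$. For $t_j \to t_\infty$, the smooth convergence on the K\"ahler-Einstein regular locus from Theorem \ref{main1} handles the bulk of each integral. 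For the remaining contribution from a shrinking neighborhood of the non log terminal locus of $\cX_{t_\infty}$, the growth estimate on the K\"ahler-Einstein potential coming from semi-stable reduction and the analytic inverse of adjunction (as used to establish the non-collapsing estimate in the proof of Theorem \ref{main1}) should provide a uniform integrable majorant, giving continuity by dominated convergence.

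For the homeomorphism, the identity map $\cS^\circ \hookrightarrow \cS$ is continuous, and compactness of $\cS$ together with continuity of $\psi$ ensures that $\omega_{WP}$-Cauchy sequences converge in $\cS$ with unique limits, producing a well-defined continuous map $\Phi$ from the completion to $\cS$. Surjectivity follows by constructing, for each $t_\infty \in \cS$, a rectifiable path in $\cS^\circ$ of finite $\omega_{WP}$-length approaching $t_\infty$; such a path exists because local boundedness of $\psi$ controls the trace of $\omega_{WP}$ along generic holomorphic arcs, reducing to the one-dimensional analysis of \cite{S4}. For injectivity, the nowhere-infinitesimally-trivial hypothesis gives strict positivity of $\omega_{WP}$ on $\cS^\circ$, and continuity of $\psi$ prevents two distinct points of $\cS$ from having zero $\omega_{WP}$-distance.

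I expect the main obstacle to be the continuity upgrade near the non log terminal locus of a special fibre: bounded local potentials do not automatically imply continuity, and the collapsing of the K\"ahler-Einstein metric near the log canonical strata obstructs a direct use of the partial $C^0$-estimate or Donaldson-Sun theory on which Theorem \ref{main1} ultimately relies. What is needed is a quantitative version of the analytic inverse of adjunction controlling the rate at which $\log h_t$ diverges near semi-log canonical fibres uniformly in $t$, a substantive refinement of the qualitative estimates available in the proof of Theorem \ref{main1}.
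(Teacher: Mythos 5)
The statement you are working on is presented in the paper as a \emph{conjecture}, not a theorem: the paper proves only boundedness of the local Weil--Petersson potentials (Theorem \ref{main3}), explicitly defers the continuity claim to the forthcoming work \cite{SSW}, and leaves the homeomorphism of the metric completion entirely open. Your proposal is accordingly a roadmap rather than a proof, and you yourself identify the decisive gap without closing it: upgrading bounded local potentials to continuous ones near the non log terminal locus of the special fibres. Your dominated-convergence argument for the fibre integrals in Lemma \ref{wpref} does not suffice. The measures $(\chi_t+\ddbar\varphi_t)^j\wedge\chi_t^{n-j}$ entering $E_t(\varphi_t)$ can a priori concentrate mass near the degenerating locus as $t\to t_\infty$, and the smooth convergence of $\varphi_t$ established in Section 5 holds only on the complement of the exceptional set $\mathcal{G}$ after semi-stable reduction. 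What is required is precisely the quantitative refinement of Proposition \ref{c0pro} to a $-\log(-\log)$ lower bound along the log and semi-log canonical locus, which the paper mentions as the input to \cite{SSW} but does not prove; without it neither the continuity of $E(t)$ across $\cS\setminus\cS^\circ$ nor the absence of a jump in the pushed-forward potential is established.

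The homeomorphism half of your argument has independent gaps. Surjectivity of your map $\Phi$ requires every boundary point to lie at finite Weil--Petersson distance from $\cS^\circ$; bounded, or even continuous, local potentials of a closed nonnegative $(1,1)$-current do not imply finite distance, and the paper records finiteness of the WP distance only in the special normal-crossing degenerations of \cite{T1, Ru1}. Your reduction to ``generic holomorphic arcs'' and the one-dimensional analysis of \cite{S4} is not available here, since \cite{S4} controls fibrewise K\"ahler--Einstein distances, not distances on the base with respect to $\omega_{WP}$. Injectivity likewise does not follow from continuity of the potential plus strict positivity of $\omega_{WP}$ on $\cS^\circ$: two distinct points of $\cS\setminus\cS^\circ$ could still be at zero distance for the induced length pseudo-metric unless one proves a quantitative lower bound for $\omega_{WP}$ near the boundary, and the nowhere-infinitesimally-trivial hypothesis yields positivity only over $\cS^\circ$. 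Both halves of the conjecture therefore remain open under your proposal.
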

 We are able to partially verify Conjecture \ref{con1} and prove the continuity of the Weil-Petersson potentials  in \cite{SSW}.

Remark: We can also let $S=\overline{\cM}_\KSBA$ be the moduli space in Conjecture \ref{con1}. 

There have been many results in the study of canonical K\"ahler metrics on projective varieties of non-negative Kodaira dimension using the K\"ahler-Ricci flow or deformation of K\"ahler metrics of Einstein type \cite{ST1, ST2, S2, FGS}. Such deformations, with both analytic and geometric estimates,  have applications in algebraic geometry, for example, an analytic proof of Kawamata's base point free theorem for minimal models of general type is obtained in \cite{S3}. More generally, the K\"ahler-Ricci flow is closely related to the minimal model program and the formation of finite time singularities is an analytic geometric transition of solitons for birational flips. A detailed program is laid out in \cite{ST3} with partial geometric results obtained in \cite{SW1, SW2, SY, S1}. Combining Theorem \ref{main1} and the results in \cite{S4}, there always exists a unique geometric K\"ahler-Einstein metric on canonical models which either have a smooth minimal model or are smoothable. For general semi-log canonical models, we expect further applications towards understanding algebraic and geometric structures of the singularities which arise, but significant work remains.

 We give a brief outline of the paper. In \S2 and \S3, we review the definition of canonical K\"ahler-Einstein currents on semi-log canonical models and state  the semi-stable reduction for stable families of canonical models.  We prove the local $C^0$-estimates in \S3. In \S5 and \S6, we adapt the proof in \cite{S4} to obtain necessary analytic and geometric estimates.  Theorem \ref{main1} is proved in \S7. Finally, we prove Theorem \ref{main2} in \S8 and Theorem \ref{main3} in \S9.


\section{K\"ahler-Einstein currents on semi-log canonical models}

In this section, we  review the definitons semi-log canonical models and the algebraic degeneration of canonical models of general type. First, let us recall the definition for log canonical singularities of a projective variety.

\begin{definition} \label{sing} Let $X$ be a normal projective variety such that $K_X$ is a $\mathbb{Q}$-Cartier divisor. Let $ \pi :  Y \rightarrow X$ be a log resolution and $\{E_i\}_{i=1}^p$ the irreducible components of the exceptional locus $Exc(\pi)$ of $\pi$. There there exists a unique collection $a_i\in \mathbb{Q}$ such that
$$K_Y = \pi^* K_X + \sum_{i=1}^{ p } a_i E_i .$$ Then $X$ is said to have
\begin{enumerate}

\item[$\bullet$] terminal singularities if  $a_i >0$, for all $i$.

\medskip

\item[$\bullet$] canonical singularities if $a_i \geq 0$, for all $i $.

\medskip

\item[$\bullet$]  log terminal singularities if $a_i > -1$, for all $i $.

\medskip

\item[$\bullet$]  log canonical singularities if $a_i \geq -1$, for all $i$.

\medskip

\end{enumerate}
A projective normal variety $X$ is said to be a canonical model if $X$ has canonical singularities and $K_X$ is ample.

\end{definition}

Smooth canonical models are simply K\"ahler manifolds of negative first Chern class. There always exists a unique K\"ahler-Einstein metric on smooth canonical models \cite{A, Y1}.  The notion of semi-log canonical models is crucial  in the study of degenerations of smooth canonical models (c.f. \cite{K1, Kov}).

\begin{definition} \label{semilog} A reduced projective variety $X$ is said to be a semi-log canonical model  if

\begin{enumerate}
\item $K_X$ is an ample $\mathbb{Q}$-Cartier divisor,
\medskip

\item $X$ has only ordinary nodes in codimension $1$,

\medskip

\item For any log resolution $\pi: Y \rightarrow X$,
$$K_Y = \pi^* K_X  + \sum_{i=1}^I a_i E_i - \sum_{j=1}^J  F_j,$$
where  $E_i$ and $F_j$, the irreducible components of exceptional divisors,  are smooth divisors of  normal crossings with $a_i > -1$.
\end{enumerate}
We denote the algebraic closed set  $\textnormal{LCS}(X) =Supp\left(  \pi\left(\bigcup_{j=1}^J F_j \right) \right)$ in $X$ to be the locus of non-log terminal singularities of $X$. We also let $\mathcal{R}_X$ be the nonsingular part of $X$ and $\mathcal{S}_X$ the singular set of $X$.

\end{definition}
  Implicit in our assumptions is the requirement that $X$ is $\mathbb{Q}$-Gorenstein and satisfies Serre's $S_2$ condition] (see \cite{Kov} for more details on such algebraic notions).  The non-log terminal locus is the set of singular points of $X$ with their discrepancy equal to $-1$. Analytically, this locus is the set of points near which the adapted volume measure fails to be locally integrable.

In general, $X$ is not normal. Let $\nu: X^\nu \rightarrow X$ be the normalization of $X$ and
$$K_{X^\nu} = \nu^* K_X - {\rm cond}(\nu), $$
where ${\rm cond}(\nu)$ is a reduced effective divisor known as the the conductor. In fact, ${\rm cond}(\nu)$ is the inverse image of codimension-one ordinary nodes. Now $K_{X^\nu}+ {\rm cond}(\nu)$ is a big and semi-ample divisor on $X^\nu$ and so the pair $(X^\nu, K_{X^\nu}+{\rm cond}(\nu))$ has log canonical singularities.
Let $\pi^\nu: Y \rightarrow X^\nu$ be a log resolution. Then $\pi = \nu \circ \pi^\nu: Y \rightarrow X$ is also a log resolution and
$$K_Y = \pi^*K_X + \sum a_i E_i  - \sum F_j, ~ a_i >-1, $$
where $E_i$, $F_j$ are the exceptional prime divisors of $\pi$.  In other words, the normalization $(X^\nu)$ of $X$ gives  rise to a log canonical pair $(X^\nu, {\rm cond}(\nu) )$. In particular,  if one aims to construct a K\"ahler-Einstien metric $g_{KE}$ on $X$, $g_{KE}$ can be pulled backed to a suitable K\"ahler current on $X^\nu$ in the class of $K_{X^\nu} + {\rm cond}(\nu)$.

\begin{theorem}\label{song} \cite{S4} Let $X$ be a semi-log canonical model with $\dim_{\mathbb{C}} X = n$. There exists a unique K\"ahler current $\omega_{KE} \in -c_1(X)$ such that

\begin{enumerate}

\item $\omega_{KE}$ is smooth on $\mathcal{R}_X$, the nonsingular part of $X$,  and it satisfies the K\"ahler-Einstein equation on $\mathcal{R}_X$
$$\textnormal{Ric}(\omega_{KE} ) = - \omega_{KE}.$$
\item $\omega_{KE}$ has bounded local potentials on the quasi-projective variety $X\setminus \textnormal{LCS}(X)$, where $\textnormal{LCS}(X)$ is the non-log terminal locus of $X$. More precisely, let $$\Phi: X \rightarrow \mathbb{CP}^N$$ be a projective embedding of $X$ by a pluricanonical system $H^0(X, mK_X)$ for some $m\in \mathbb{Z}^+$ and let $\chi = \frac{1}{m} \omega_{FS}|_{X}$, where $\omega_{FS}$ is the Fubini-Study metric on $\mathbb{CP}^N$.  Then  $\omega_{KE} = \chi + \ddbar \varphi_{KE}$ for some $\varphi _{KE}\in \textnormal{PSH}(X, \chi)$ such that $$\varphi_{KE} \in L^\infty_{loc} (X\setminus \textnormal{LCS}(X)), ~~ \varphi_{KE} \rightarrow - \infty ~\textnormal{near}~ \textnormal{LCS}(X) . $$

\item Let $D$ be any effective divisor of $X$ such that the support of $D$ contains the singularities of $X$. Then for any $\epsilon>0$, there exists $C_\epsilon>0$ such that on $X$,
$$\varphi_{KE} \geq \epsilon \log |\sigma_D|^2_{h_D} - C_\epsilon, $$
where $\sigma_D$ is a defining section of $D$ and $h_D$ is a fixed smooth hermitian metric on the line bundle associated to $D$.

\medskip

\item The Monge-Amp\`ere mass $\omega_{KE}^n$ does not charge mass on the singularities of $X$ and $$\int_{X} \omega_{KE}^n = [K_X]^n. $$

\end{enumerate}

\end{theorem}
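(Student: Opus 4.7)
The plan is to reduce the construction of $\omega_{KE}$ to a degenerate complex Monge-Amp\`ere equation on a log resolution, and then apply pluripotential-theoretic $L^\infty$-estimates together with a barrier argument to extract the precise boundary behavior.

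First I would pass from $X$ to its normalization $\nu: X^\nu \to X$, where $(X^\nu, \mathrm{cond}(\nu))$ is a log canonical pair with $\nu^*K_X \sim_{\QQ} K_{X^\nu} + \mathrm{cond}(\nu)$. Fix a pluricanonical embedding $\Phi: X \hookrightarrow \mathbb{CP}^N$ by $|mK_X|$ and set $\chi = \frac{1}{m}\Phi^*\omega_{FS}$. Writing $\omega_{KE} = \chi + \ddbar\varphi$, the K\"ahler-Einstein equation on $\mathcal{R}_X$ becomes
\begin{equation*}
(\chi + \ddbar \varphi)^n = e^\varphi \, \Omega,
\end{equation*}
where $\Omega$ is the adapted volume measure on $X$ built from the pluricanonical sections. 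Taking a log resolution $\pi: Y \to X^\nu$ with $K_Y = \pi^*\nu^*K_X + \sum a_i E_i - \sum F_j$, $a_i > -1$, the pulled-back equation reads
\begin{equation*}
(\pi^*\nu^*\chi + \ddbar \varphi)^n = e^{\varphi} \frac{\prod_i |s_{E_i}|^{2a_i}}{\prod_j |s_{F_j}|^2}\, dV_Y.
\end{equation*}
Since $a_i > -1$, the numerator is locally $L^{1+\delta_0}$, while $\prod_j |s_{F_j}|^{-2}$ fails to be locally integrable precisely over $\mathrm{LCS}(X)$.

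Next, I would regularize by replacing $\pi^*\nu^*\chi$ by $\pi^*\nu^*\chi + \varepsilon \omega_Y$ for some K\"ahler form $\omega_Y$ on $Y$ and by capping the singular factor $\prod |s_{F_j}|^{-2}$; Yau's theorem yields smooth solutions $\varphi_\varepsilon$ for each $\varepsilon>0$. The condition $a_i > -1$ gives $L^p$ control of the right-hand side for some $p>1$, locally uniform away from $\pi^{-1}(\mathrm{LCS}(X))$, so Ko\l odziej--Eyssidieux--Guedj--Zeriahi type $L^\infty$-estimates provide uniform upper bounds on $\varphi_\varepsilon$ globally and uniform local $L^\infty$-bounds on compact subsets of $X \setminus \mathrm{LCS}(X)$. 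Sending $\varepsilon \to 0$ produces a candidate potential $\varphi_{KE}$ that is weakly K\"ahler-Einstein on $\mathcal{R}_X$ and satisfies the boundedness claim in (2).

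The heart of the argument is the sharp lower bound (3). For an effective divisor $D$ whose support contains $\mathcal{S}_X$, I would build a $\chi$-plurisubharmonic barrier modeled on $\epsilon \log|\sigma_D|^2_{h_D}$ and run a maximum-principle comparison between $\varphi_\varepsilon$ and this barrier, with the constant $C_\epsilon$ supplied by the capacity estimate for $\Omega$ away from $\mathrm{LCS}(X)$. Unboundedness of $\varphi_{KE}$ along $\mathrm{LCS}(X)$ then follows from (4): a finite value near a non-integrable singularity of $\Omega$ would produce infinite Monge-Amp\`ere mass, contradicting $\int_X \omega_{KE}^n = [K_X]^n$. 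The vanishing of Lelong numbers off $\mathrm{LCS}(X)$ yields (1) and (4), and uniqueness is a direct application of the comparison principle, since the right-hand side $u \mapsto e^u \Omega$ is strictly monotone. The main obstacle is simultaneously dominating $\varphi_\varepsilon$ across both the log terminal singularities and the non-log-terminal locus with a single $\chi$-psh barrier; balancing the contributions from the exponents $a_i > -1$ and from $\prod |s_{F_j}|^{-2}$, so that the resulting constant $C_\epsilon$ is independent of $\varepsilon$, is the technically most demanding step.
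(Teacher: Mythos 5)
You should first note that the paper does not prove Theorem \ref{song} at all: it is quoted from \cite{S4}, and the only in-text indication of method is the remark that the construction combines the $L^\infty$-theory of \cite{Kol1, EGZ} with the maximum principle and barrier functions, and that uniqueness identifies the result with the variational construction of \cite{B}. Your outline reproduces exactly that architecture --- normalization, log resolution with discrepancies $a_i>-1$ and $-1$, the degenerate Monge--Amp\`ere equation with density $\prod_i|s_{E_i}|^{2a_i}\prod_j|s_{F_j}|^{-2}$, regularization and Kolodziej/EGZ estimates, a barrier for (3), and the mass identity (4) --- so at the level of strategy you are following the same road as \cite{S4}.

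As a proof, however, two steps are genuinely incomplete. First, the deduction of $\varphi_{KE}\to-\infty$ near $\textnormal{LCS}(X)$ from (4) does not work as stated: non-integrability of $\Omega$ only rules out a uniform lower bound for $\varphi_{KE}$ on a full neighborhood of $\textnormal{LCS}(X)$, whereas the assertion is that $\varphi_{KE}(x)\to-\infty$ as $x\to\textnormal{LCS}(X)$; a sequence $x_k\to\textnormal{LCS}(X)$ with $\varphi_{KE}(x_k)\ge -C$ produces no infinite-mass contradiction. One needs an explicit upper barrier (supersolution) with poles along the non-log-terminal locus, of the $-\log(-\log|s_F|^2)$ type used in \cite{S4} and alluded to after Proposition \ref{c0pro}. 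Second, for (3) --- which you correctly single out as the crux --- the proposal stops at ``run a maximum-principle comparison.'' The actual mechanism, carried out in Section 4 of this paper in the family setting (Lemmas \ref{conemet}, \ref{volcom4}, \ref{54}), is to absorb $\epsilon\log|\sigma_D|^2_{h_D}$ into a conical reference metric via Kodaira's lemma and then to prove a quantitative lower bound for the ratio of the conical volume form to the adapted measure, with exponents $-a_i-1+\varepsilon^2$ along the exceptional divisors, before the minimum principle closes; without such a volume comparison the constant $C_\epsilon$ cannot be extracted, and merely ``balancing the contributions'' does not identify the estimate that makes it work. Finally, uniqueness is not a ``direct'' comparison principle here, since the potentials are unbounded: one needs the domination principle for quasi-psh functions with full Monge--Amp\`ere mass and vanishing Lelong numbers, as in \cite{B} and \cite{BBGZ}.
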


The K\"ahler-Einstein currents on semi-log canonical models  were first constructed in \cite{B} using a variational method and they coincide with the K\"ahler-Einstein currents constructed in Theorem \ref{song} by the uniqueness. Also the quasi-plurisubharmonic function $\varphi_{KE}$ in Theorem \ref{song} has vanishing Lelong number everywhere on $X$ because  $\varphi_{KE}$ is milder than any log poles from (3) in Theorem \ref{song}. The estimates on local boundedness of the K\"ahler-Einstein potentials in \cite{B} are not sufficient to study the Riemannian geometric degeneration  for a stable degeneration of smooth canonical models. Our approach is to combine the fundamental results \cite{Kol1, EGZ} and the maximum principle with suitable barrier functions. This helps us to obtain local $L^\infty$-estimates of local potentials away from the non-log terminal locus of $X$.

By the uniqueness of the K\"ahler-Einstein current in Theorem \ref{song}, we can incorporate Theorem \ref{song} into  the definition of canonical K\"ahler-Einstein currents on  semi-log canonical models.

\begin{definition} \label{ckem} Let $X$ be a semi-log canonical model with $\dim_{\mathbb{C}} X = n$. We define the canonical K\"ahler-Einstein current on $X$ to be the current $\omega_{KE}$ in Theorem \ref{song}.

\end{definition}
%



\section{Stable families of canonical models and semi-stable reduction}

In this section, we will give some algebraic background of KSBA families and semi-stable reduction. Let
$$\pi: \cX \rightarrow \cS $$
be a stable family of smooth canonical models as  in Definition \ref{stabfib}. Without loss of generality, we can assume $\mathcal{S}$ is smooth by resolving singularities of $\mathcal{S}$  because the new family after resolution of singularities on $\mathcal{S}$ is again a stable family of smooth canonical models with the same fibres.

The following lemma is well-known and follows immediately from the definition of a flat morphism.
\begin{lemma} If  $p$ is a smooth point of a fibre $\mathcal{X}_t = \pi^{-1}(t)$ for some $t\in \mathcal{S}$, $s$ is also a smooth point of the total space $\mathcal{X}$.

\end{lemma}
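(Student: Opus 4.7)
The plan is to prove this via a standard local-algebra argument: for any morphism of finite type between Noetherian schemes (here, complex varieties), smoothness at a point is equivalent to regularity of the local ring, so I want to show that if $\mathcal{O}_{\mathcal{X}_t, p}$ is regular and $\mathcal{O}_{\mathcal{S}, t}$ is regular, then $\mathcal{O}_{\mathcal{X}, p}$ is regular, using flatness of $\pi$ as the crucial bridge.

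First I would reduce the base to the regular case. The paragraph just above the lemma notes that one may resolve singularities of $\mathcal{S}$ without changing the fibres, so without loss of generality $t$ is a smooth point of $\mathcal{S}$ and $(\mathcal{O}_{\mathcal{S}, t}, \mathfrak{m}_t)$ is a regular local ring of dimension $d = \dim_t \mathcal{S}$. Pick a regular system of parameters $t_1,\ldots,t_d \in \mathfrak{m}_t$, and let $\pi^\sharp t_1,\ldots,\pi^\sharp t_d \in \mathcal{O}_{\mathcal{X},p}$ denote their pullbacks.

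The main step is to use flatness to transport regular sequences upward. Since $\pi$ is flat at $p$ and $t_1,\ldots,t_d$ is an $\mathcal{O}_{\mathcal{S},t}$-regular sequence, it remains regular after tensoring, so $\pi^\sharp t_1,\ldots,\pi^\sharp t_d$ is a regular sequence in $\mathcal{O}_{\mathcal{X},p}$ and the quotient is precisely the fibre ring $\mathcal{O}_{\mathcal{X}_t,p}$. By hypothesis this quotient is regular of some dimension $n$; pick $\bar{x}_1,\ldots,\bar{x}_n$ a regular system of parameters for it, and lift to $x_1,\ldots,x_n \in \mathcal{O}_{\mathcal{X},p}$. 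Then the images of $x_1,\ldots,x_n, \pi^\sharp t_1,\ldots,\pi^\sharp t_d$ span the Zariski cotangent space $\mathfrak{m}_p / \mathfrak{m}_p^2$, while the dimension formula for flat morphisms with regular base gives $\dim \mathcal{O}_{\mathcal{X},p} = \dim \mathcal{O}_{\mathcal{X}_t,p} + \dim \mathcal{O}_{\mathcal{S},t} = n+d$. Hence $\mathcal{O}_{\mathcal{X},p}$ is regular, i.e., $p$ is a smooth point of $\mathcal{X}$.

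There is no real obstacle here; the content is entirely the two inputs \emph{flatness preserves regular sequences} and \emph{the additive dimension formula for flat morphisms}, both standard (e.g., Matsumura, Commutative Ring Theory, Thms.~15.1 and 23.7). The one point to be careful about is that to apply the dimension formula one needs the base to be regular (or at least Cohen--Macaulay and equidimensional at $t$), which is why the initial resolution of $\mathcal{S}$ is used. With that in hand the argument is purely local and formal.
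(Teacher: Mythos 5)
Your argument is correct and is exactly the standard flatness argument the paper has in mind: the paper offers no proof at all, merely asserting that the lemma ``is well-known and follows immediately from the definition of a flat morphism,'' so your write-up simply supplies the details (flat local homomorphisms preserve regular sequences, the quotient by the pulled-back regular system of parameters is the fibre ring, and the additive dimension formula forces $\dim\mathcal{O}_{\mathcal{X},p}=\operatorname{embdim}\mathcal{O}_{\mathcal{X},p}$). There is no divergence in approach to report.

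One remark on the point you flagged as the place to be careful. Regularity of $\mathcal{O}_{\mathcal{S},t}$ is not merely needed for the dimension formula --- it is needed for the statement itself: if $\pi$ is flat and $\mathcal{O}_{\mathcal{X},p}$ is regular, then $\mathcal{O}_{\mathcal{S},t}$ is regular by faithfully flat descent, so for a base with a normal but singular point $t$ (e.g.\ $\mathcal{X}=\mathcal{S}\times F$ with $F$ smooth) the lemma is false even though every point of the fibre over $t$ is a smooth point of that fibre. Consequently your ``without loss of generality'' is not a genuine reduction of the general statement to the smooth-base case --- replacing $\mathcal{S}$ by a resolution changes the total space, and smoothness of the new total space says nothing about the old one. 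What saves you is that the paper's standing convention (stated in the paragraph preceding the lemma) is that $\mathcal{S}$ has already been replaced by a resolution, so smoothness of the base at $t$ is a hypothesis in force rather than something to be arranged. With that understood, your proof is complete.
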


The following semi-stable reduction is due to Ambromovich-Karu \cite{AK} and Adiprasito-Liu-Temkin \cite{ALT}.

\begin{theorem} \label{semired}  Let $\pi: \mathcal{X} \rightarrow \mathcal{S}$ be a surjective flat morphism of projective varieties with   generically integral fibres. Then $\mathcal{X} \rightarrow \mathcal{S}$ admits a semistable reduction. That is, there exist a proper surjective generically finite morphism  $f: \mathcal{S}' \rightarrow \mathcal{S} $ and a proper birational morphism $\Psi: \mathcal{X}' \rightarrow \mathcal{X} \times_\mathcal{S} \mathcal{S}'$ for some smooth $\cX'$ as in the following diagram
\begin{equation}
\begin{diagram}
\node{\mathcal{X}'} \arrow{se,l}{ \pi' }  \arrow{e,t}{\Psi}   \node{\cZ:=\mathcal{X}\times_\mathcal{S} \mathcal{S}'}  \arrow{e,t}{f'} \arrow{s,r}{\pi_{\mathcal{Z}}}   \node{\mathcal{X} } \arrow{s,r}{\pi} \\
\node{}      \node{\mathcal{S}'} \arrow{e,t}{f}  \node{\mathcal{S}}
\end{diagram}
\end{equation}
satisfying the following.

For any  $x' \in \mathcal{X}'$ and $s' = f'(x') \in \mathcal{S}'$, there exist formal holomorphic coordinates $(x_1, ..., x_{n+d})$, $(t_1, ..., t_d)$   centered at $x'$ and $s'$ respectively, such that $f'$ is given by near $x'$
\begin{equation} \label{coordinates}
t_i = \prod_{j=m_{i-1}+1}^{m_i} x_j,
\end{equation}
for some $0=m_0< m_1<...<m_d   \leq   n+d$.

\end{theorem}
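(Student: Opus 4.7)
The plan is to reduce the statement to a combinatorial problem about polyhedral complexes and apply toroidal resolution techniques; the full proof is the content of \cite{AK} and \cite{ALT}, so I outline only the main steps.

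\medskip

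\emph{Step 1 (Toroidalization).} Using de Jong's theorem on alterations together with log geometric resolution, construct a generically finite cover $f: \mathcal{S}' \to \mathcal{S}$ and a proper birational modification $\Psi: \mathcal{X}' \to \mathcal{X}\times_{\mathcal{S}}\mathcal{S}'$ with $\mathcal{X}'$ smooth, such that the resulting map $\pi' = \pi_{\mathcal{Z}} \circ \Psi$ is \emph{toroidal}: both $\mathcal{X}'$ and $\mathcal{S}'$ carry simple normal crossing boundary divisors, and in formal coordinates at every point of $\mathcal{X}'$ the map is given by a torus-equivariant morphism of affine toric varieties. Under the toroidal dictionary, the morphism $\pi'$ with its boundary data is encoded by a morphism of rational polyhedral cone complexes $\Sigma_{\mathcal{X}'} \to \Sigma_{\mathcal{S}'}$.

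\medskip

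\emph{Step 2 (Subdivision to semistability).} The desired monomial form $t_i = \prod_{j=m_{i-1}+1}^{m_i} x_j$ translates exactly to the requirement that every cone $\tau \in \Sigma_{\mathcal{S}'}$ be identified with a face of a smooth simplicial cone $\sigma \in \Sigma_{\mathcal{X}'}$ whose primitive lattice generators partition into blocks, one block per primitive generator of $\tau$. The combinatorial problem is then to subdivide $\Sigma_{\mathcal{X}'}$ and $\Sigma_{\mathcal{S}'}$ and perform a further Kummer base change on $\mathcal{S}'$ (taking roots of boundary coordinates to trivialize lattice indices) so that the resulting map of complexes is of this semistable shape; these combinatorial operations translate back to further modifications of $\mathcal{X}'$ and $\mathcal{S}'$ with the properties required in the statement.

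\medskip

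\emph{Main obstacle.} Step 2 is the heart of the matter. For $\dim \mathcal{S} = 1$ this is the classical KKMS semistable reduction via subdivision of a single cone. For $\dim \mathcal{S} \geq 2$, one first establishes the weak semistable reduction of Abramovich–Karu, then upgrades to the strong form in \cite{ALT}; the difficulty is that refining the target cone complex forces corresponding subdivisions of the source which can easily introduce singularities in $\mathcal{X}'$, so the subdivisions must be chosen delicately to preserve smoothness of $\mathcal{X}'$ while simultaneously achieving the precise monomial form on the morphism. I would not attempt to rediscover this technology here; invoking the theorem as a black box and then exploiting the explicit local form \eqref{coordinates} in the analytic arguments of the subsequent sections is the intended strategy.
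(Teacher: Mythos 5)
Your approach matches the paper's: Theorem \ref{semired} is not proved in the text but is quoted directly from Abramovich--Karu \cite{AK} (weak semistable reduction) and Adiprasito--Liu--Temkin \cite{ALT} (smoothness of $\mathcal{X}'$, conjectured in \cite{AK} and proved for threefolds in \cite{Ka}), exactly the black-box strategy you describe. Your outline of the underlying toroidalization-plus-subdivision argument is an accurate summary of those references, and correctly identifies the passage from weak to strong semistability as the hard combinatorial step.
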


The coordinates $t_1, ..., t_d$, $x_1, ..., x_{n+d}$ are obtained from global coordinates from toroidal embeddings and the hyperplanes defined by $x_i=0$ in one local open neighborhood must also appear as zeros of such local toric coordinates in every local neighborhood. Weak semi-stable reduction is established by Karu-Ambramovich in \cite{AK}, and in their result $\tilde\cX$ is not necessarily smooth. It is conjectured in \cite{AK} that $\tilde{X}$ can be made smooth. The conjecture was proved for $n=3$ in \cite{Ka}  and was recently  settled   completely   in \cite{ALT}.  Weak semi-stable reduction suffices for our purpose, but we will use the semi-stable reduction result of \cite{ALT}  which slightly simplifies the argument .  After the base alteration $\mathcal{S}'\rightarrow \mathcal{S}$, 
$$\pi_{\mathcal{Z}}: \mathcal{Z} \rightarrow  \mathcal{S}' $$ is still a stable family of smooth canonical models.
We will now focus on the following diagram
\begin{equation}\label{diag3}
\begin{diagram}
\node{\mathcal{X}'} \arrow{se,l}{ \pi' }  \arrow{e,t}{\Psi}   \node{\cZ}   \arrow{s,r}{\pi_{\mathcal{Z}}}    \\
\node{}      \node{\cS'=B}
\end{diagram}
\end{equation}
where $B$ is the unit ball in $\mathbb{C}^d$, because we can always resolve singularities of $\mathcal{S}'$ and assume $\mathcal{S}'$ is smooth. The coordinates $\{t_1, ..., t_d\}$ in Theorem \ref{semired} are uniquely determined up to permutation on $B$.  Let $H_i$ be the divisor of $\mathcal{Z}$ defined by
$$t_i = 0$$
for $i=1, ..., d$. Such divisors are uniquely determined locally near the central fibre $\mathcal{Z}_0= \pi_\cZ^{-1}(0)$. Since the general fibres of $\pi_\cZ$ are smooth, if we let $\cZ_{sing}$ be the singular set of $\cZ$, then $\pi_{\cZ}(\cZ_{sing} )$ is a proper subvariety of $B$. We choose a birational morphism $f: B'\rightarrow B$ such that $f^{-1}(\Psi(\cZ_{sing}))$ is a divisor with simple normal crossings. Therefore, we can always assume that $\pi_\cZ(\cZ_{sing})$ is a divisor of simple normal crossings in $B$.

\begin{lemma} \label{logca} The log pair $(\mathcal{Z}, K_{\mathcal{Z}} + \sum_{i=1}^d H_i)$ is a log canonical pair. Furthermore, $\cZ$ has at worst canonical singularities.

\end{lemma}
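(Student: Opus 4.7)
The strategy is to use the semi-stable reduction $\Psi:\mathcal{X}'\to\mathcal{Z}$ of Theorem \ref{semired} as an explicit log resolution of $(\mathcal{Z},\sum_i H_i)$, thereby reducing both assertions of the lemma to the single statement that $\mathcal{Z}$ has canonical singularities.

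I begin with the preliminaries. Stability of $\pi_\mathcal{Z}$ gives $K_{\mathcal{Z}/B}$ $\mathbb{Q}$-Cartier, and since $B$ is smooth, $K_\mathcal{Z}=K_{\mathcal{Z}/B}+\pi_\mathcal{Z}^*K_B$ is $\mathbb{Q}$-Cartier, so the pair $(\mathcal{Z},\sum_i H_i)$ is well-defined. Next, the modification underlying Theorem \ref{semired} is supported over the discriminant of $\pi_\mathcal{Z}$, so $\Psi$ is an isomorphism over the generic point of $B$. Consequently every $\Psi$-exceptional prime divisor is vertical over $B$ and is therefore contained in $\pi'^{-1}(\{t_1\cdots t_d=0\})=\sum_{j=1}^{m_d}\{x_j=0\}$. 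Since $\mathcal{X}'$ is smooth and this union is SNC by (\ref{coordinates}), each $\Psi$-exceptional prime must equal some $\{x_j=0\}$ locally, so $\mathrm{Exc}(\Psi)\cup\Psi^{-1}(\sum_i H_i)_{\mathrm{red}}=\sum_{j=1}^{m_d}\{x_j=0\}$ is SNC, and $\Psi$ is a log resolution of $(\mathcal{Z},\sum_i H_i)$.

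Given this log resolution the discrepancy computation is immediate. Formula (\ref{coordinates}) yields $\Psi^*H_i=\sum_{j=m_{i-1}+1}^{m_i}\{x_j=0\}$ as a Cartier divisor with each component of multiplicity one, so for any $\Psi$-exceptional prime $E=\{x_{j_0}=0\}$ one has $\mathrm{mult}_E(\Psi^*\sum_i H_i)=1$ and the standard identity
$$
a\!\left(E,\mathcal{Z},\sum_{i=1}^d H_i\right)
= a(E,\mathcal{Z},0) - \mathrm{mult}_E\!\left(\Psi^*\sum_{i=1}^d H_i\right)
= a(E,\mathcal{Z},0)-1.
$$
Therefore $(\mathcal{Z},\sum_i H_i)$ is log canonical if and only if $a(E,\mathcal{Z},0)\geq 0$ for every $\Psi$-exceptional $E$, i.e., if and only if $\mathcal{Z}$ has canonical singularities. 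Both assertions of the lemma reduce to this one condition.

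The remaining step, which I expect to be the main obstacle, is the canonicity of $\mathcal{Z}$. I would prove it by identifying $\mathcal{Z}$ with the relative canonical model of $\mathcal{X}'$ over $B$. Since $\mathcal{X}'$ is smooth with generic fibre of general type, $K_{\mathcal{X}'/B}$ is $\pi'$-big; by the relative form of BCHM the relative canonical model
$$\mathcal{Z}^{\mathrm{can}}\;:=\;\mathrm{Proj}_B\bigoplus_{m\geq 0}\pi'_*\mathcal{O}(mK_{\mathcal{X}'/B})$$
exists, is $\pi$-ample over $B$, and has canonical singularities. On the other hand $\mathcal{Z}$ is normal, birational to $\mathcal{X}'$ over $B$ via $\Psi$, and has $K_{\mathcal{Z}/B}$ $\pi_\mathcal{Z}$-ample by stability, so uniqueness of the canonical model in a given birational class forces $\mathcal{Z}=\mathcal{Z}^{\mathrm{can}}$. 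This gives canonical singularities for $\mathcal{Z}$ and completes the proof.
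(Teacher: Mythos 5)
There is a genuine gap in the final (and crucial) step. Your first two paragraphs give a clean reduction: using the explicit toroidal form (\ref{coordinates}) of $\Psi$ to see that $\mathrm{mult}_E\bigl(\Psi^*\sum_i H_i\bigr)=1$ for the relevant exceptional primes, so that log canonicity of $(\mathcal{Z},\sum_i H_i)$ would follow from $a(E,\mathcal{Z},0)\geq 0$, i.e.\ from $\mathcal{Z}$ having canonical singularities. (Even here you should be a bit more careful: a $\Psi$-exceptional prime need not lie over $\{t_1\cdots t_d=0\}$ a priori, and the paper's own Lemma \ref{adjun4} explicitly separates such divisors and argues their discrepancies are positive because their generic images are smooth points of $\mathcal{Z}$; but since for such $E$ the multiplicity of $\Psi^*\sum H_i$ is $0$, your "if" direction survives.) The problem is your proof of canonicity of $\mathcal{Z}$. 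The uniqueness of the relative canonical model says: among models birational to $\mathcal{X}'$ over $B$ that \emph{have canonical singularities} and relatively ample canonical class, there is exactly one, namely $\mathrm{Proj}_B\bigoplus_m\pi'_*\mathcal{O}(mK_{\mathcal{X}'/B})$. A normal variety $\mathcal{Z}$ with $K_{\mathcal{Z}/B}$ relatively ample and a birational morphism from $\mathcal{X}'$ need \emph{not} be this model unless one already knows $\mathcal{Z}$ is canonical — otherwise sections of $mK_{\mathcal{Z}}$ acquire poles along negative-discrepancy exceptional divisors and $R(\mathcal{Z},K_{\mathcal{Z}})\neq R(\mathcal{X}',K_{\mathcal{X}'})$ (a stable curve, or a stable surface with a strictly log canonical point, has ample $K$ but is not the canonical model of its resolution). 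So the identification $\mathcal{Z}=\mathcal{Z}^{\mathrm{can}}$ presupposes exactly what you are trying to prove; the argument is circular.

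The tell is that your proof never uses the hypothesis that the special fibres are semi-log canonical models, which is the essential input. The paper's proof runs in the opposite direction: it first establishes that $(\mathcal{Z},\sum_i H_i)$ is log canonical near each component $K$ of a special fibre by combining the facts that $K$ is a log canonical centre (being a generically smooth complete intersection of the $H_i$), that its normalization is log canonical because the fibre is slc, and the inversion of adjunction of Hacon/Kawakita. Canonicity of $\mathcal{Z}$ is then \emph{deduced} from log canonicity of the pair, using that exceptional divisors dominating components of the discriminant pick up an extra coefficient $c_i\geq 1$ from $\mathcal{F}^*G_i-G_i'$, while exceptional divisors whose images are not in the discriminant have smooth generic image and hence positive discrepancy. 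To repair your proof you would need to replace the relative-canonical-model step by this inversion-of-adjunction argument (or some other argument that genuinely exploits slc-ness of the fibres); the reduction in your first two paragraphs can then be kept as an alternative packaging of the paper's discrepancy bookkeeping.
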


\begin{proof} Let $K$ be any irreducible and reduced component of the central fibre $\mathcal{Z}_0$. Recall an irreducible subvariety $V$ of a log pair $(X, \Delta)$ is called the log canonical center if there exists a birational morphism $f: Y \rightarrow X$ and a divisor $E$ of $Y$ such that $f(E)= V$ and the discrepancy $a(E, X, \Delta) =-1$.  Since $K$ is a complete intersection of $H_1, ..., H_d$ in a neighborhood of $K$ and the generic point of $K$ is smooth, it must be a log canonical center of $(\mathcal{Z}, \sum_{i=1}^d H_i)$ near $K$ by considering the  blow-up 
of the ideal defined by $\cap_{i=1}^d H_i$. On the other hand, $K$ is a component of the central fibre which is a semi-log canonical model. Therefore the normalization of $K$ is log canonical. We now can  apply the inverse adjunction of Hacon \cite{Ha} and so  $(\mathcal{Z}, K_{\mathcal{Z}} + \sum_{i=1}^d H_i)$ is log canonical near $K$. The first statement of the lemma then immediately follows. In fact, one can use the standard inverse adjunction formula of Kawakita \cite{Kaw} inductively by perturbing the divisor slightly, since $K$ is a complete intersection.

Let $\mathcal{F}: \cZ'  \rightarrow \cZ$ be a log resolution of $\cZ$. For any point $p\in \cZ$, we let $s=(s_1, s_2, ..., s_d)$ be the local holomorphic coordinates of $B$ such that
$$p\in \pi_\cZ^{-1}(\{s=0\})$$
and near $\pi_\cZ^{-1}(\{s=0\})$, we can assume 
$$\pi_\cZ (\cZ_{sing}) \subset \sum_{i=1}^d G_i, $$
where $G_i = \{ s_i=0\}$.
By the same argument before,  $(\cZ, \sum_{i=1}^d G_i)$ is a log canonical pair near the fibre $\pi_\cZ^{-1}(\{s=0\})$ and  near such a fibre, we have
$$K_{\cZ'} = \mathcal{F}^*K_\cZ + \sum_{i=1}^d (F^*G_i - G_i') + \sum_{i=1}^I a_i E_i + \sum_{k=1}^K b_k E'_k, $$
where $G'_i$ is the strict transform of $G_i$, the exceptional divisor of $\mathcal{F}$ consists of $\{E_j\}_{j=1}^J \cup \{E'_k\}_{k=1}^K$ with the prime divisor $E_j$ contained in $\sum_{i=1}^d \mathcal{F}^*G_i$ and the prime divisor $E'_k$ not contained in $\sum_{i=1}^d \mathcal{F}^*G_i$ for $i=1, ..., I$ and $k=1,..., K$.   Since $(\cZ, \sum_{i=1}^d G_i)$ is a log canonical pair, we have
$$a_i \geq -1, ~ b_k\geq -1.$$
By definition of $E_i$, we have 
$$\sum_{i=1}^d (F^*G_i - G_i') = \sum_{i=1}^I c_i E_i, ~ c_i \geq 1.$$
For each $k$, $\mathcal{F}(E_k')$ is not contained in $\sum_{i=1}^d G_i$ by definition, so the generic points of $\mathcal{F}(E_k')$ are smooth points of $\cZ$. This implies that $\mathcal{F}$ is a smooth blow-up near the generic points of $\mathcal{F}(E'_k)$  and so $b_k>0$. In summary, near the fibre $\pi_\cZ^{-1}(\{s=0\})$ we have
$$K_{\cZ'} = \mathcal{F}^*K_\cZ +  \sum_{i=1}^I (c_i+a_i) E_i + \sum_{k=1}^K b_k E'_k, $$
with
$$c_i+a_i\geq 0,~ b_k>0$$
 for $i=1, ..., I$ and $k=1, ..., K$.
This completes the proof of the lemma.

\end{proof}


\section{Local $C^0$-estimate}

In this section, we will establish uniform $L^\infty$-estimates  for the local potentials of the K\"ahler-Einstein metrics for any fibres of $\pi_\cZ: \cZ \rightarrow B$ as in the diagram (\ref{diag3}). Such estimates were obtained in \cite{S2, S4} when $\dim B=1$, in which case the semi-stable reduction of Mumford is available. When $\dim B\geq 2$, we will make use of the (weakly) semi-stable reduction as in Theorem \ref{semired} together with an analytic form of Inverse of Adjunction, and then apply the maximum principle with suitable barrier functions for log canonical pairs $(\cZ, K_\cZ +\sum_{i=1}^d H_i)$.

We will first introduce some basic notations and quantities to set up the family of K\"ahler-Einstein equations for the fibres of $\pi_\cZ: \cZ \rightarrow B$. The following adapted volume measure is introduced in \cite{EGZ} to study the complex Monge-Amp\`ere equations on singular projective varieties.

\begin{definition} \label{adapvol} Let $X$ be a projective normal variety  with a $\mathbb{Q}$-Cartier canonical divisor $K_X$.  Then $\Omega$ is said to be an adapted measure on $X$ if  for any $z\in X$, there exists an open neighborhood $U$ of $z$ such that $$\Omega = f_U ( \alpha \wedge \overline{\alpha})^{\frac{1}{m}},$$ where $f_U$ is the restriction of a smooth positive function on the ambient space of a projective embedding $U$ and $\alpha$ is a local generator of the Cartier divisor $mK_X$ on $U$ for some $m\in \mathbb{Z}^+$.
\end{definition}
The adapted volume measure can also be defined for semi-log canonical models via pluricanonical embeddings. However, such volume measures are not locally integrable near the semi-log and log canonical singularities.

We now consider the following diagram from the stable family of smooth canonical models after base alternation as in the previous section.
\begin{equation}
\begin{diagram}
\node{\mathcal{X}'} \arrow{se,l}{ \pi' }  \arrow{e,t}{\Psi}   \node{\mathcal{Z}}   \arrow{s,r}{\pi_{\mathcal{Z}}}    \\
\node{}      \node{B}
\end{diagram}
\end{equation}
where $B$ is a unit ball in $\mathbb{C}^d$.  We denote $B^\circ$ the set of points over which the fibres are smooth. Since $K_{\mathcal{Z}/B}$ is $\pi_{\cZ}$-ample, we can assume there exist holomorphic sections $\eta_0, \eta_1, ..., \eta_N$ of $mK_{\cZ /B}$ for some sufficiently large $m$ such that they induce a projective embedding of $\cZ$
$$\Psi=[\eta_0, ..., \eta_N]: \cZ \rightarrow B\times \mathbb{CP}^N.$$
We let
$$\chi=\frac{1}{m} \ddbar \log\left(\sum_{i=0}^N |\eta_i|^2\right) $$ be the pullback of the Fubini-Study metric of $\mathbb{CP}^N$. In particular, for each $t\in B^\circ$,
\begin{equation}
\chi_t = \chi|_{\cZ_t} \in c_1(K_{\cZ_t}).
\end{equation}
is a smooth K\"ahler metric on $\cZ_t$.

For any $t\in B$,  $\eta_i|_{\cZ_t}$ is a holomorphic pluricanonical form on the regular part of $\cZ_t= \pi_\cZ^{-1}(t)$. We will define the relative volume form $\Omega$ by
\begin{equation}\label{volme1}
\Omega= \left( \sum_{i=0}^N |\eta_i|^2 \right)^{\frac{1}{m}}
\end{equation}
and for each $t\in B^\circ$,
\begin{equation}\label{volme2}
\Omega_t = \Omega|_{\cZ_t}
\end{equation}
is a smooth volume form on $\cZ_t$. By definition of $K_{\cZ/ B}$ and $\{\eta_0, ..., \eta_N\}$,
\begin{equation}
(\sqrt{-1})^{d}\wedge_{i=1}^d dt_i\wedge d\overline t_i \wedge \Omega
\end{equation}
is an adapted volume measure on $\cZ$. If we write the K\"ahler-Einstein metric $\omega_t$ by
$$\omega_t = \chi_t + \ddbar \varphi_t, $$
then the K\"ahler-Einstein equation on $\cZ_t$ for each $t\in B$ is given by
\begin{equation}\label{maeqn4}
(\chi_t + \ddbar \varphi_t)^n = e^{\varphi_t} \Omega_t.
\end{equation}

We fix a component $\mathbf{Z}$ of the central fibre $\mathcal{Z}_0$, where $\mathcal{Z}_0$ is a semi-log canonical model. Let $\mathbf{Z}'$ be the strict transform of $\mathbf{Z}$ by   
$\Psi$   in $\cX'$. 
 If we fix a smooth point $p\in \mathbf{Z}$, then $\mathbf{Z}'$ is locally defined by $x_{i_1}=x_{i_2}=...=x_{i_d}=0$ locally near $p'=\Psi^{-1}(p)$, where $t_1, ..., t_d, x_1, ..., x_{n+d}$ are local toric coordinates  in the semi-stable reduction (\ref{coordinates}) in Theorem \ref{semired}.  
 Without loss of generality, we assume that $i_1<i_2<...<i_d$ and let $D_j$ be the global  irreducible hypersurface defined by 
\begin{equation}
D_j= \{x_{i_j}=0 \}, ~~j=1, ..., d.
\end{equation}
Then $\mathbf{Z}'$ is a complete intersection of $\{ D_1, D_2, ..., D_d\}$ locally near $p$. When $d=1$, by the semi-stable reduction, $\mathbf{Z}'$ must be a smooth hypersurface itself and one can obtain local $C^0$-estimate as in \cite{S2}. However, when $d>1$, one does not expect $\mathbf{Z}'$ to be smooth in general as it might intersect itself and moreover, globally, $D_i$ might intersect $\mathbf{Z}'$. Our goal is to further apply toroidal blow-ups so that the strict transform of $\mathbf{Z}'$ after such blow-ups becomes a smooth complete intersection.

We can assume $D_1$, ..., $D_d$ are distinct after shrinking  $B$ so that $\mathbf{Z}'$ is a component of the complete intersection of $D_1$, ..., $D_d$.  In particular, we can assume each $D_i$ is an irreducible component of $\sum_{i=1}^d H_i'$, where $H'_i$ is the strict transform of $H_i$ by $\Psi$.

We will perform a sequence of  toroidal  blow-ups of  $\mathcal{X}'$ along locally toric subvarieties of $ \mathcal{X}'$ such that the resulting birational morphism
$$\Phi: \tilde \cX \rightarrow \mathcal{X}'$$
satisfies the following after possibly shrinking $B$.
 Let $\tilde D_1$, ..., $\tilde D_d$ and $\tilde{\mathbf{Z}}$  be the strict transform of $D_1$, ..., $D_d$ and  $\mathbf{Z}'$ respectively. We also let $\tilde{\mathcal{X}_0}$ be the strict transform of $\mathcal{Z}_0$ by $\Psi\circ\Phi$. 

 Then $\{ \tilde D_1, ..., \tilde D_d\}$ is a set of smooth divisors of simple normal crossing with 
 $$\cap_{i=1}^d \tilde D_i= \tilde {\mathbf{Z}}. $$   In particular, $\tilde D_i$ does not intersect $\tilde {\mathbf{Z}}$ and always has multiplicity $1$. The resulting family is given by the following diagram
\begin{equation}
\begin{diagram}
\node{\tilde \cX} \arrow{se,l}{ \tilde \pi }  \arrow{e,t}{\Phi}   \node{\cX' }   \arrow{s,r}{\pi'}  \arrow{e,t}{\Psi} \node{\cZ} \arrow{sw,r}{\pi_\cZ}  \\
\node{}      \node{B}
\end{diagram}
\end{equation}
For any closed point $q \in \tilde\cX$, there exist formal holomorphic toroidal coordinates $x_1, ..., x_{n+d}$ such that $\tilde \pi$ is given by near $p$
\begin{equation} \label{coordinates2}
t_i = \prod_{j=1}^{n+d} x_j^{\alpha_{i, j}}, ~~i=1, ..., d.
\end{equation}
Here $\alpha_{i, j}$ are nonnegative integers satisfying
$$\sum_{i=1}^d \alpha_{i, j}\geq 1 .$$
Furthermore,
$$(\sqrt{-1})^d\wedge_{i=1}^d dt_i \wedge d\overline{t_i} $$
is a strictly positive $(d,d)$-form away from a finite union of subvarieties of $\tilde\cX$ which does not contain any fibre.

We let $\tilde H_i$ be the strict transform of $H_i$ by $\Psi\circ\Phi$ and let $\{   E_j \}_{j=1}^J $ be the prime divisors  defined by
\begin{equation}\label{decodiv}
 \sum_{i=1}^d \tilde H_i = \sum_{j=1}^J  E_j + \sum_{i=1}^d\tilde D_i.
\end{equation}
Each $ E_j$ must be locally defined as the toroidal coordinate hyperplane and none of them lies in the exceptional locus of $\Psi\circ\Phi$. Let $\{E'_k\}_{k=1}^K$ be all the prime toroidal divisors of the exceptional locus of $\Psi\circ\Phi$, i.e. each $E'_k$ must be locally defined as the hyperplane from the toroidal coordinates and let 
$$\{ \tilde E_i \}_{i=1}^I= \{ E_1, E_2, ..., E_J, E'_1, E'_2, ... E'_K\}$$ with $I=J+K$.  We let $\tilde F_m$ be the prime divisors of exceptional locus of $\Psi\circ\Phi$ other than $E'_1, ..., E'_K$, for $m=1, ..., M$. $\tilde F_m$ might not be smooth or have simple normal crossings and there might be higher codimensional subvarieties in the exceptional locus of $\Psi\circ \Phi$. 
Then we have the following formula.

\begin{lemma} \label{adjun4} There exist $a_i \geq -1$, $b_m\geq 0$,  for $i=1, ..., I$ and $m=1, ..., M$   such that numerically
\begin{equation}
K_{\tilde \cX} + \sum_{i=1}^d \tilde D_i =   (\Psi\circ\Phi)^* (K_\cZ + \sum_{i=1}^d H_i ) +\sum_{i=1}^I  a_i  \tilde E_i + \sum_{m=1}^M b_m  \tilde F_m.
\end{equation}
\end{lemma}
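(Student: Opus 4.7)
The plan is to read off the coefficients from the standard discrepancy formula for a log resolution, using the log canonicity of $(\mathcal{Z}, K_{\mathcal{Z}} + \sum_{i=1}^d H_i)$ together with the canonical nature of the singularities of $\mathcal{Z}$, both of which were established in Lemma \ref{logca}.

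First, I would apply the log resolution formula to the birational morphism $\Psi\circ\Phi:\tilde{\mathcal{X}}\to\mathcal{Z}$ applied to the log canonical pair $(\mathcal{Z},K_{\mathcal{Z}}+\sum_{i=1}^d H_i)$. Numerically,
\begin{equation*}
K_{\tilde{\mathcal{X}}}+(\Psi\circ\Phi)^{-1}_{*}\!\left(\textstyle\sum_{i=1}^d H_i\right) \;=\; (\Psi\circ\Phi)^{*}\!\left(K_{\mathcal{Z}}+\textstyle\sum_{i=1}^d H_i\right)+\sum_{D} a(D)\,D,
\end{equation*}
where $D$ runs over the exceptional prime divisors of $\Psi\circ\Phi$ and $a(D)\in\mathbb{Q}$ is the usual log discrepancy minus one (i.e.\ the discrepancy of $D$ with respect to the boundary $\sum H_i$). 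By the decomposition \eqref{decodiv} combined with the description of $\{\tilde{E}_i\}$ and $\{\tilde{F}_m\}$, the strict transform $(\Psi\circ\Phi)^{-1}_{*}(\sum H_i)$ equals $\sum_{i=1}^d\tilde{D}_i+\sum_{j=1}^J E_j$, so after moving the $E_j$ contributions to the right hand side we arrive at the stated shape with $\tilde{E}_i$ partitioned into the strict transforms $E_j$ and the toroidal exceptional divisors $E'_k$, and with $\tilde{F}_m$ the remaining non-toroidal exceptional divisors.

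Next I would bound the coefficients. For each $\tilde{E}_i$ (whether $E_j$, the strict transform of a component of some $H_i$ different from $\tilde{D}_\ell$, or an exceptional toroidal $E'_k$) the coefficient $a_i$ is precisely the discrepancy of the corresponding valuation with respect to the pair $(\mathcal{Z},\sum H_i)$. Since Lemma \ref{logca} asserts that this pair is log canonical, each such discrepancy is $\geq -1$, giving $a_i\geq -1$ as required. For the non-toroidal exceptional divisors $\tilde{F}_m$ the key geometric point is that by construction $\tilde{F}_m$ is not a toroidal coordinate hyperplane; hence its center on $\mathcal{Z}$ is \emph{not} contained in $\bigcup_{i=1}^d H_i$, and the generic point of this center is a point of $\mathcal{Z}$ lying away from the toric boundary. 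By the second assertion of Lemma \ref{logca}, $\mathcal{Z}$ itself has at worst canonical singularities, so the discrepancy of any exceptional valuation over such a center (computed for the pair $(\mathcal{Z},0)$, which agrees with the computation for $(\mathcal{Z},\sum H_i)$ since the boundary does not contribute along $\tilde{F}_m$) is $\geq 0$, yielding $b_m\geq 0$.

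The only delicate bookkeeping step is making sure the strict transforms of the coordinate divisors $\tilde{D}_i$ really appear with the correct coefficient $-1$ on the left, as written; this is immediate from the semi-stable reduction property $\cap_i \tilde{D}_i=\tilde{\mathbf{Z}}$ and the fact that each $\tilde{D}_i$ is a component of $\sum_{i=1}^d \tilde{H}_i$ of multiplicity one, as recorded in \eqref{decodiv}. The main conceptual obstacle is separating the exceptional divisors into toroidal and non-toroidal types cleanly so that the canonicity of $\mathcal{Z}$ (and not merely log canonicity of the pair) can be invoked for the $\tilde{F}_m$; once this dichotomy is in place, the inequalities $a_i\geq -1$ and $b_m\geq 0$ are direct consequences of Lemma \ref{logca}.
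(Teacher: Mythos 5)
Your proposal is correct and follows essentially the same route as the paper: apply the discrepancy formula for the log canonical pair $(\cZ, K_\cZ+\sum_{i=1}^d H_i)$ furnished by Lemma \ref{logca}, use the decomposition \eqref{decodiv} to move the strict-transform components $E_j$ to the right-hand side with coefficient $-1$, and invoke the canonicity of $\cZ$ together with the fact that the non-toroidal exceptional divisors $\tilde F_m$ do not lie in the support of $(\Psi\circ\Phi)^*(\sum_i H_i)$ to get $b_m\geq 0$. The dichotomy between toroidal and non-toroidal exceptional divisors that you flag as the key point is exactly the mechanism used in the paper's proof.
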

\begin{proof} 
 It follows from Lemma \ref{logca} that $K_\cZ + \sum_{i=1}^d H_i$ is a log canonical pair and so 
\begin{equation}\label{adjun4forsec9}
K_{\tilde \cX} + \sum_{i=1}^d \tilde H_i =   (\Psi\circ\Phi)^* (K_\cZ + \sum_{i=1}^d H_i ) +\sum_{k=1}^K  c_k   E_K + \sum_{m=1}^M b_m  \tilde F_m 
\end{equation}
for some $c_j \geq -1$ and $b_m\geq -1$,  $j=1, ..., J$, $m=1, ..., M$. 
Immediately, we have 
$$K_{\tilde \cX} + \sum_{i=1}^d \tilde D_i =   (\Psi\circ\Phi)^* (K_\cZ + \sum_{i=1}^d H_i ) +\sum_{i=1}^I  a_i  \tilde E_i + \sum_{m=1}^M b_m  \tilde F_m$$
for $a_i\geq -1$ by the definition of $\tilde E_i$. 
 On the other hand, $\mathcal{Z}$ has canonical singularities and $\tilde F_m$ is not a component of $\sum_{i=1}^d \tilde D_i $ or $ (\Psi\circ\Phi)^* ( \sum_{i=1}^d H_i )$, therefor $b_m \geq 0$, for each $m=1, ..., M$. This proves the  lemma.

\end{proof}

We consider the following Monge-Amp\`ere equation on $\tilde \cX_t=\tilde \pi^{-1}(t)$ for each $t\in B^\circ$ by pulling back equation (\ref{maeqn4}) using $\Psi\circ\Phi$,
\begin{equation}\label{famke}
(\chi_t + \ddbar \varphi_t)^n = e^{\varphi_t} \Omega_t.
\end{equation}
Here for conveniences, we use the same notations for the pullback of $\chi$, $\Omega_t$ and $\varphi_t$ by $\Psi\circ\Phi$.

 Let $\Theta$ be a smooth volume form on $\tilde \cX$ near the central fibre and let $\omega$ be a fixed smooth K\"ahler form on $\tilde \cX$.
There exist $c_i > 0$  and an effective $\mathbb{Q}$-Cartier divisor $\mathcal{E}$ such that
$$(\Psi\circ\Phi)^*K_{\cZ } - \sum_{i=1}^I c_i [\tilde E_i]  - [\mathcal{E}]$$
is ample by Kodaira's lemma. We need to add the divisor $\mathcal{E}$ because the exceptional locus of $\Psi\circ \Phi $ might be larger than the support of $\cup_i^I \tilde E_i$. The support of $\mathcal{E}$ must contain $\sum_{m=1}^M \tilde F_m$.

Let $\sigma_{\tilde E_i}$  and $\sigma_\mathcal{E}$ be the defining section of $\tilde E_i$ and $\mathcal{E}$. There exist smooth hermitian metrics $h_{\tilde E_i}$  and $ h_\mathcal{E}$ equipped on the corresponding line bundle associated to the  divisor $\tilde E_i$ for $i=1, ..., I$ and $\mathcal{E}$  such that
\begin{equation}\label{kodlem}
 \chi + \sum_{i=1}^I c_i \ddbar\log h_{\tilde E_i} +  \ddbar \log h_\mathcal{E}>0
 \end{equation}
is a K\"ahler metric on $\tilde{\mathcal{X}}$ near the central fibre (note that $\chi$ is not strictly positive along the exceptional divisors).
%
 We also let $\sigma_{\tilde D_i}$ be the defining sections for $\tilde D_i$ let $h_{\tilde D_i}$ be fixed smooth hermitian metrics on the line bundle associated to $\tilde D_i$ for $i=1, ..., d$. Without loss of generality, we can always assume the following holds on 
$\tilde{\mathcal{X}}$.
$$\sum_{i=1}^d  |\sigma_{\tilde D_i}|^2_{h_{\tilde D_i}} + \sum_{i=1}^I|\sigma_{\tilde E_i}|^2_{h_{\tilde E_i}}  + |\sigma_\mathcal{E}|^2_{h_\mathcal{E}} \leq 1.
$$
We notice that $\{\tilde E_i\}_{i=1}^I$ are divisors of simple normal crossings since they are locally defined by toroidal coordinates.

We will construct auxiliary conical K\"ahler metrics in the following lemma.
\begin{lemma} \label{conemet}

For sufficiently small $\varepsilon>0$,
\begin{equation}
\omega_\varepsilon =   \chi +  \varepsilon   \sum_{i=1}^I c_i \ddbar\log h_{\tilde E_i}  + \varepsilon \ddbar \log h_\mathcal{E} + \varepsilon^4    \sum_{i=1}^I  \ddbar |\sigma_{\tilde E_i}|^{2\varepsilon^2}_{h_{\tilde E_i}}
\end{equation}
is a smooth conical K\"ahler metric on $\tilde X$ near the central fibre $\tilde{\mathcal{X}_0}$ with cone angle $2\varepsilon^2 \pi $ along $\tilde E_i$.

\end{lemma}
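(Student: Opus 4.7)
The plan is to decompose $\omega_\varepsilon$ as a smooth K\"ahler background plus conical corrections, and then verify positivity and the cone model locally using a Poincar\'e--Lelong computation. By Kodaira positivity~(\ref{kodlem}), set
\[
\omega_{KL} := \chi + \sum_{i=1}^I c_i \ddbar \log h_{\tilde E_i} + \ddbar \log h_{\mathcal{E}},
\]
a smooth K\"ahler form on $\tilde{\cX}$ near the central fibre. Then
\[
\omega_\varepsilon = (1-\varepsilon)\chi + \varepsilon\,\omega_{KL} + \varepsilon^4 \sum_{i=1}^I \ddbar |\sigma_{\tilde E_i}|^{2\varepsilon^2}_{h_{\tilde E_i}},
\]
and since $\chi \geq 0$, the first two terms already dominate $\varepsilon\,\omega_{KL}$. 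The task reduces to analyzing each $T_i := \varepsilon^4 \ddbar |\sigma_{\tilde E_i}|^{2\varepsilon^2}_{h_{\tilde E_i}}$, showing it is a nonnegative form with the correct cone structure along $\tilde E_i$, and checking that its smooth part is small enough to preserve global positivity.

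The core computation is the identity, valid away from $\{\sigma=0\}$,
\[
\ddbar |\sigma|^{2\beta}_h = \beta^2 |\sigma|^{2\beta}_h \sqrt{-1}\,\partial \log|\sigma|^2_h \wedge \dbar \log|\sigma|^2_h - \beta |\sigma|^{2\beta}_h\, \Theta_h,
\]
where $\Theta_h$ is the curvature of $h$. In a local trivialization with $\sigma$ represented by the coordinate $z$ and $h = e^{-\phi}$, the first term expands to
\[
\beta^2 |z|^{2\beta-2} e^{-\beta\phi} \sqrt{-1}\, dz \wedge d\bar z + O(|z|^{2\beta-1}) + O(|z|^{2\beta}),
\]
whose leading piece is the standard cone $(1,1)$-form with cone angle $2\pi\beta$ along $\{z=0\}$, and whose lower-order cross terms are dominated by the leading piece via Cauchy--Schwarz. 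The second term $-\beta|\sigma|^{2\beta}_h\Theta_h$ is smooth and uniformly bounded by $C\beta$ on a compact neighborhood.

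Setting $\beta=\varepsilon^2$ in each $T_i$, we obtain a nonnegative conical $(1,1)$-form with cone angle $2\pi\varepsilon^2$ along $\tilde E_i$, plus a smooth error bounded uniformly by $C\varepsilon^4(\varepsilon^2+\varepsilon^4) \leq C\varepsilon^6$. For $\varepsilon$ sufficiently small this error is absorbed into $\varepsilon\,\omega_{KL}$, so $\omega_\varepsilon$ is smooth and strictly positive off $\bigcup_i \tilde E_i$. Because the $\tilde E_i$ have simple normal crossings and are locally coordinate hyperplanes in the toroidal coordinates on $\tilde{\cX}$, at a point of mutual intersection the cone contributions of the various $T_i$ sit in transverse normal directions, and the local model is a product of standard two-dimensional cones with a smooth K\"ahler factor coming from $\varepsilon\,\omega_{KL}$. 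This yields the claimed conical K\"ahler structure with cone angle $2\pi\varepsilon^2$ along each $\tilde E_i$. The main obstacle is the positivity check in mixed directions at intersections of the $\tilde E_i$: one must verify that the cross and curvature remainders from different $T_i$ do not spoil positivity, which is precisely why the prefactor $\varepsilon^4$ is chosen so small relative to the cone exponent $\varepsilon^2$, and why the independence of the toroidal normal coordinates at each intersection is essential.
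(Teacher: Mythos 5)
Your proof is correct and follows essentially the same route as the paper: the paper also reduces to the Kodaira-lemma positivity (\ref{kodlem}) plus the observation that a smooth K\"ahler form plus a small multiple of $\sum_i \ddbar |\sigma_{\tilde E_i}|^{2\varepsilon^2}_{h_{\tilde E_i}}$ is conical K\"ahler along the simple normal crossing divisors, which it dismisses as ``straightforward local calculations.'' You have simply written out that local Poincar\'e--Lelong computation and the absorption of the $O(\varepsilon^6)$ curvature error, which is exactly the intended argument.
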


\begin{proof} Let $\theta$ be a fixed smooth K\"ahler form on $\tilde \cX$. Since  $\{ \tilde E_i\}_{i=1}^I$ is a union of divisors of locally toric divisors, they have only simple normal crossings and for any sufficiently small $\varepsilon>0$
$$\theta+  \varepsilon^3  \sum_{i=1}^I  \ddbar |\sigma_{\tilde E_i}|^{2\varepsilon^2}_{h_{\tilde E_i}}  $$
 is a smooth conical K\"ahler metric with cone angle $ 2\varepsilon^2\pi$ along $\tilde E_i$, $i=1, ..., I$. This can be verified by straightforward local calculations. The lemma then follows by combining the above observation and  (\ref{kodlem}).

\end{proof}

Our goal is obtain a uniform upper bound for $\varphi_t$ and a uniform lower bound with suitable barrier functions. We begin by comparing different volume forms.

\begin{lemma} \label{volcom4} Let $\tilde{\mathbf{Z}}$  in $\tilde\cX$ be the strict transform of $\mathbf{Z}$ of $\cZ_0$ defined before. For sufficiently small $0<\varepsilon<<1$, there exists $c=c(\varepsilon)>0$ such that
\begin{equation}
 \frac{ \left( \wedge_{i=1}^d  dt_i\wedge d \bar{t_i} \right) \wedge (\omega_\varepsilon)^n }{ \left( \wedge_{i=1}^d  dt_i\wedge d \bar{t_i} \right) \wedge  \Omega_{t}  } \geq  c   \prod_{i=1}^I |\sigma_{\tilde{E}_i}|^{2(-a_i -1+\varepsilon^2 ) }_{h_{\tilde{E}_i}}
\end{equation}
on $\tilde\cX$ near the central fibre $\tilde{\mathcal{X}_0}$, where $a_i\geq -1$ is defined in Lemma \ref{adjun4} for $i=1, ..,, I$.

\end{lemma}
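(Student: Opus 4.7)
The plan is a local calculation in the toroidal coordinates of Theorem \ref{semired}, combined with the adjunction formula of Lemma \ref{adjun4}. Fix a point $q\in\tilde{\mathcal{X}}$ near the central fibre with formal coordinates $(x_1,\dots,x_{n+d})$ such that $\nu^*t_i=\prod_j x_j^{\alpha_{i,j}}$, where $\nu=\Psi\circ\Phi$. The assumption $\sum_i\alpha_{i,j}\geq 1$ implies each toroidal coordinate hyperplane $\{x_j=0\}$ is a component of $\nu^*\sum H_i$, and is either some $\tilde D_\ell$ (occurring in $\nu^*\sum H_i$ with multiplicity $\geq 1$) or some $\tilde E_\ell=\{x_{k_\ell}=0\}$. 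Write $N_j=\sum_i\alpha_{i,j}$ for the total multiplicity of $\{x_j=0\}$ in $\nu^*\sum H_i$.

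First, I would express the denominator. Rewriting the adjunction formula and decomposing $\nu^*\sum H_i$ into toroidal components yields
\[
K_{\tilde{\mathcal{X}}}=\nu^*K_{\mathcal{Z}}+\sum_\ell(N_{j(\ell)}-1)\tilde D_\ell+\sum_\ell(N_{k_\ell}+a_\ell)\tilde E_\ell+\sum_m b_m\tilde F_m,
\]
with all coefficients nonnegative by Lemma \ref{logca} and $a_i\geq -1$, $b_m\geq 0$ from Lemma \ref{adjun4}. Since $(\wedge dt\wedge d\bar t)\wedge\Omega$ is an adapted volume measure for $K_{\mathcal{Z}}$ on the canonical $\mathbb{Q}$-Gorenstein variety $\mathcal{Z}$, the standard transformation formula for adapted volumes under $\nu$ produces
\[
\nu^*\!\bigl((\wedge dt\wedge d\bar t)\wedge\Omega\bigr)=u\cdot\prod_\ell|\sigma_{\tilde D_\ell}|^{2(N_{j(\ell)}-1)}_{h_{\tilde D_\ell}}\prod_\ell|\sigma_{\tilde E_\ell}|^{2(N_{k_\ell}+a_\ell)}_{h_{\tilde E_\ell}}\prod_m|\sigma_{\tilde F_m}|^{2b_m}_{h_{\tilde F_m}}\cdot\Theta
\]
for a smooth positive function $u$ and a fixed smooth positive $(n+d,n+d)$-form $\Theta$.

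For the numerator, I factor $\wedge_i(dt_i\wedge d\bar t_i)=\prod_i|t_i|^2\cdot\omega_{\log}\wedge\bar\omega_{\log}$ using $dt_i/t_i=\sum_j\alpha_{i,j}\,dx_j/x_j$, with $\omega_{\log}=\sum_{|J|=d}\det((\alpha_{i,j})_{i\in\{1,\dots,d\},j\in J})\bigwedge_{j\in J}dx_j/x_j$. Expanding $\omega_\varepsilon^n\wedge\omega_{\log}\wedge\bar\omega_{\log}$, each diagonal $(J,J)$ term is the $n$-fold wedge of $\omega_\varepsilon$ restricted to the complementary coordinate $n$-plane, divided by $\prod_{j\in J}|x_j|^2$. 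For each $\tilde E_\ell$ through $q$, if $k_\ell\in J$ the conical direction $dx_{k_\ell}$ of $\omega_\varepsilon$ is not seen and the leading $|x_{k_\ell}|$-exponent is $2(N_{k_\ell}-1)$, whereas if $k_\ell\notin J$ the conical piece $\varepsilon^4\ddbar|\sigma_{\tilde E_\ell}|^{2\varepsilon^2}$ from Lemma \ref{conemet} supplies a $|\sigma_{\tilde E_\ell}|^{2\varepsilon^2-2}$-factor, giving leading exponent $2(N_{k_\ell}-1+\varepsilon^2)$. Since $|J|=d$, whenever strictly more than $d$ of the $\tilde E_\ell$ pass through $q$ the second scenario is unavoidable and the $\varepsilon^2$ slack becomes essential. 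Selecting the dominant multi-index gives
\[
(\wedge dt\wedge d\bar t)\wedge\omega_\varepsilon^n\geq c(\varepsilon)\prod_\ell|\sigma_{\tilde D_\ell}|^{2(N_{j(\ell)}-1)}_{h_{\tilde D_\ell}}\prod_\ell|\sigma_{\tilde E_\ell}|^{2(N_{k_\ell}-1+\varepsilon^2)}_{h_{\tilde E_\ell}}\cdot\Theta.
\]

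Taking the ratio, the $\tilde D_\ell$-factors and the $N_{k_\ell}$-exponents cancel; since $b_m\geq 0$ and $|\sigma_{\tilde F_m}|_{h_{\tilde F_m}}\leq 1$ by the normalisation imposed before Lemma \ref{conemet}, the residual $\prod|\sigma_{\tilde F_m}|^{-2b_m}\geq 1$ may be discarded, producing the claimed exponent $2(-a_\ell-1+\varepsilon^2)$ along each $\tilde E_\ell$. The main obstacle is the numerator bound: one must ensure that a dominant multi-index $J$ with $c_J\neq 0$ realising the stated exponents exists at each $q$, and that its diagonal contribution is not cancelled by the off-diagonal $J\neq K$ cross terms in $\omega_{\log}\wedge\bar\omega_{\log}$. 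This is precisely why $\omega_\varepsilon$ in Lemma \ref{conemet} combines the Kodaira-ampleness correction $\varepsilon(\sum c_i\ddbar\log h_{\tilde E_i}+\ddbar\log h_{\mathcal{E}})$ with the simultaneous cone regularisation along every $\tilde E_\ell$, supplying the uniform transverse positivity that forces the diagonal contributions to control the entire sum.
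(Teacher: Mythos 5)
Your approach is in the same spirit as the paper's: work in the toroidal coordinates of the semi-stable reduction, use the adjunction formula of Lemma \ref{adjun4} to control the denominator, and then extract the dominant diagonal term in the expansion of $\bigl(\wedge_i dt_i \wedge d\bar t_i\bigr)\wedge\omega_\varepsilon^n$. Your rewrite of Lemma \ref{adjun4} in terms of the multiplicities $N_j=\sum_i \alpha_{i,j}$ is a valid variant of the paper's inequality (4.17), and the cancellation of the $N_j$'s in the final ratio is correct.

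However, you explicitly flag the central difficulty and then do not resolve it. You write that ``one must ensure that a dominant multi-index $J$ with $c_J\neq 0$ realising the stated exponents exists at each $q$, and that its diagonal contribution is not cancelled by the off-diagonal cross terms,'' and then assert that the uniform positivity of $\omega_\varepsilon$ ``forces the diagonal contributions to control the entire sum.'' That sentence is not an argument. The paper closes this gap with two concrete observations you omit. First, $\wedge_{i=1}^d \frac{dt_i\wedge d\bar t_i}{|t_i|^2}$ is a wedge of positive $(1,1)$-forms, hence strongly positive, so after wedging with the diagonal comparison metric $(c_\varepsilon\,\mathrm{diag})^n$ (which underestimates $\omega_\varepsilon^n$) one may discard all but the chosen multi-index $J$: each discarded term is a nonnegative diagonal coefficient of a strongly positive form. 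Second, and crucially, the surviving diagonal coefficient carries the factor $|\det\mathcal{B}|^2$ where $\mathcal{B}=[\alpha_{i,j}]$ is a full-rank $d\times d$ \emph{integer} matrix, so $|\det\mathcal{B}|\geq 1$; without this there is no reason the dominant diagonal term has a uniform lower bound. Neither the strong-positivity reduction nor the integer-determinant lower bound appears in your proposal, so as written the numerator estimate is not established. You also roll the paper's two cases (near $\tilde{\mathbf{Z}}$ versus away from $\tilde{\mathbf{Z}}$) into one uniform calculation; that is fine provided the determinant point is handled, since near $\tilde{\mathbf{Z}}$ the submatrix degenerates to the identity and the issue is invisible there.
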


\begin{proof} By Lemma \ref{adjun4}, there exists a smooth volume form $\tilde\Omega$ on $\tilde \cX$ such that
\begin{equation}\label{volest4sec}
(\Psi\circ\Phi)^*\left( \frac{ (\sqrt{-1})^{d}\left( \wedge_{i=1}^d  dt_i\wedge d \bar{t_i} \right)\wedge \Omega_t}{|t_1 t_2 ... t_d|^2} \right) \leq \frac{  \prod_{i=1}^I  |\sigma_{\tilde E_i} |^{2a_i}_{h_{ \tilde E_i}} }{  \prod_{i=1}^n |\sigma_{\tilde D_i} |^2_{h_{\tilde D_i}} }  \tilde \Omega .
\end{equation}
for some  $a_j\geq -1$ as in Lemma \ref{adjun4}.

For any point $p\in \tilde \cX_0$ of the central fibre, we can pick local holomorphic (toric) coordinates $x_1, ..., x_{n+d}$  near $p$  such that  there exist nonnegative integers $\alpha_{i, j}$ such that
$$t_i = \prod_{j=1}^{n+d} x_j^{\alpha_{i, j}}, ~i=1, ..., d$$
as in (\ref{coordinates2}).

If $p$ is sufficiently close to $\tilde{\mathbf{Z}}$, we can assume that $\tilde{\mathbf{Z}}$  is locally defined by $x_{n+1}=x_{n+2}=...=x_{n+d}=0$ with $\tilde D_i$ defined by $x_{n+i}=0$, after rearrangement of $x_1, ..., x_{n+d}$ and so we can use $x_1, ..., x_n$ as coordinates for $\tilde{\mathbf{Z}}$.
In particular,
$$\sum_{i=1}^{d} \alpha_{i, j} = 1, ~ \alpha_{i, j} \geq 0, ~~ j= n+1, ..., n+d,$$
since for each $t_i$, there is one only and only one of $\{x_{n+1}, x_{n+2}, ..., x_{n+d}\}$ appearing as a factor in $t_i$
because $\tilde{\mathbf{Z}}$ is the completion intersection of $\tilde D_1, ..., \tilde D_d$. Without loss of generality, we can assume $$\alpha_{i, j} = \delta_{n+i, j}, ~ i=1, ..., d, ~j=n+1, ..., n+d. $$
%
%
%
For each $i=1, ..., d$, we have 
\begin{equation}
\frac{dt_i}{t_i} = \sum_{j=1}^{n+d} \alpha_{i, j} \frac{ dx_j}{x_j},
\end{equation}
and so
\begin{eqnarray*}
\frac{\sqrt{-1} dt_i\wedge d\overline{t_i}}{ |t_i|^2 } &=& \sum_{j=1}^{n+d} (\alpha_{i, j}  )^2\frac{ \sqrt{-1} dx_j\wedge d\overline{x_j} }{|x_j|^2} + \sum_{k=1}^{n+d} \sum_{l\neq k, l=1}^{n+d} \alpha_{i, k} \alpha_{i, l} \frac{ \sqrt{-1} dx_k\wedge d\overline{x_l} }{x_k \overline{x_l}}\\
&\geq & \frac{ \sqrt{-1} dx_{n+i}\wedge d\overline{x_{n+i}} }{|x_{n+i}|^2} + \sum_{k=1}^{n+d} \sum_{l\neq k, l=1}^{n+d} \alpha_{i, k} \alpha_{i, l} \frac{ \sqrt{-1} dx_k\wedge d\overline{x_l} }{x_k \overline{x_l}}
\end{eqnarray*}
There exists $c_\epsilon>0$ such that near $p$, we have
$$\omega_\varepsilon \geq c_\epsilon \sqrt{-1}\left( \sum_{i=1}^n \frac{dx_i \wedge d\overline{x_i}}{|x_i|^{2(1-\varepsilon^2)} } + \sum_{i=n+1}^{n+d} dx_i  \wedge d\overline{x_i} \right)  \geq c_\epsilon \sqrt{-1}\left( \sum_{i=1}^n \frac{dx_i \wedge d\overline{x_i}}{|x_i|^{2(1-\varepsilon^2)} } \right)
$$
and so
\begin{equation}\label{zclose}
\frac{(\sqrt{-1})^{d}\left( \wedge_{i=1}^d  dt_i\wedge d \bar{t_i} \right) \wedge \omega_\varepsilon^n}{|t_1 t_2 ... t_d|^2}  \geq \frac{ c^n (\sqrt{-1})^{n+d}\prod_{i=1}^{n+d}  d x_i \wedge d\overline{x_i}}{\prod_{i=1}^n |x_i|^{2(1-\varepsilon^2)} \prod_{i=n+1}^{n+d} |x_i|^2 } .
\end{equation}

  If $p$ is away from $\tilde{\mathbf{Z}}$, %
we can assume there are only $0\leq l <d $ members of $\{ \tilde D_1, \tilde D_2, ..., \tilde D_d \} $ that pass near $p$ and without loss of generality we can assume that they  correspond to the hyperplanes $x_{n+d-l+1}=0$, $x_{n+d-l+2}=0$, ..., $x_{n+d}=0$.  After rearrangement, we can assume that
$$ \alpha_{i, j} = \delta_{n+i, j}, ~ 1\leq i \leq d, ~n+d-l+1\leq j\leq n+d.$$
The differential of $t=(t_1, ..., t_d)$ is defined by
\begin{equation}\label{differential}
\frac{\partial(t_1, ..., t_d)}{\partial(x_1, ..., x_{n+d})} =\left[ \alpha_{i, j} \frac{\prod_{j=1}^{n+d} x_j^{\alpha_{i, j}}}{x_j} \right]_{1\leq i\leq d, 1\leq j\leq n+d} .
\end{equation}
and it has rank equal to $d$ away from the zeros of $x_j$. Therefore the rank of $[\alpha_{i, j}]_{1\leq i\leq d, 1\leq j\leq n+d}$ is equal to $d$ because the determinant of any $d\times d$ sub-matrix of (\ref{differential}) contains a factor as the determinant of the corresponding $d\times d$ submatrix of $[\alpha_{i, j}]_{1\leq i\leq d, 1\leq j\leq n+d}$. After re-arrangement, we can assume that
$$\mathcal{B}= [\alpha_{i, j}]_{ 1\leq i \leq d, ~n+1  \leq j \leq n+d }$$
is invertible and 
$$\left|\det \mathcal{B} \right|\geq 1$$
since $\det \mathcal{B} \in \mathbb{Z}$.
Straightforward calculations show that
$$
 \frac{(\sqrt{-1})^d \wedge_{i=1}^d  dt_i\wedge d \bar{t_i} }{|t_1 t_2 ... t_d|^2}  = I + II$$
with
$$I=     \frac{ \left(\det \mathcal{B}\right)^2 (\sqrt{-1})^d \prod_{j=n+1}^{n+d}dx_j\wedge d \bar{x_j } }{|x_{n+1} x_{n+2} ... x_{n+d}|^2} \geq \frac{  (\sqrt{-1})^d \prod_{j=n+1}^{n+d}dx_j\wedge d \bar{x_j } }{|x_{n+1} x_{n+2} ... x_{n+d}|^2}  $$
and $II$ not containing  $(\sqrt{-1})^d \prod_{j=n+1}^{n+d}dx_j\wedge d \bar{x_j }$.

Since hyperplanes defined by $x_1=0$, ..., $x_{n+d-l}=0$ must be among those of $\tilde E_i$, there exists $c_\epsilon>0$ such that
$$\omega_\varepsilon \geq c_\epsilon \sqrt{-1}\left( \sum_{j=1}^{n+d-l} \frac{dx_j \wedge d\overline{x_j}}{|x_j|^{2(1-\varepsilon^2 )} } + \sum_{j=n+d-1+1}^{n+d} dx_j \wedge d\overline{x_j} \right).
$$
Then immediately we have
\begin{equation}\label{zaway}
\frac{(\sqrt{-1})^{d} \left( \wedge_{i=1}^d  dt_i\wedge d \bar{t_i} \right) \wedge \omega_\varepsilon^n}{|t_1 t_2 ... t_d|^2}  \geq \frac{ (c_\epsilon)^{n+d-l} (\sqrt{-1})^{n+d} \prod_{j=1}^{n+d}  d x_j \wedge d\overline{x_j}}{\prod_{j=1}^{n+d-l} |x_j|^{2(1-\varepsilon^2 )} \prod_{i=n+d-l+1}^{n+d} |x_j|^2 }
\end{equation}

The estimates (\ref{zclose}) and (\ref{zaway}) hold near the central fibre $\tilde \cX_0$.  By shrinking $B$ slightly and covering $\tilde \cX$ by finitely many open sets where (\ref{zclose}) and (\ref{zaway}) hold,  there exists $c'_\epsilon>0$ such that   on $\tilde \cX$, we have
\begin{equation}\label{zaway3}
\frac{(\sqrt{-1})^{d} \left( \wedge_{i=1}^d  dt_i\wedge d \bar{t_i} \right) \wedge \omega_\varepsilon^n}{|t_1 t_2 ... t_d|^2}  \geq \frac{ c'_\epsilon ~\tilde\Omega}{ \left(\prod_{i=1}^I |\sigma_{\tilde E_i}|_{h_{\tilde E_i}}^{2(1-\varepsilon^2 )} \right) \left( \prod_{i=1}^n |\sigma_{\tilde D_i} |^2_{h_{\tilde D_i}} \right) }
\end{equation}
The lemma follows immediately from (\ref{zaway3}) and (\ref{volest4sec}).

\end{proof}

We will now estimate the lower bound of $\varphi_t$ in equation (\ref{famke}).
where $\chi_{t}$, $\Omega_{t}$ and $\varphi_{t}$ are pullback of $\chi_t$, $\Omega_t$ and $\varphi_t$ by $\Psi'$.
\begin{lemma} \label{54}  For any $\delta>0$, there exists $C_\delta>0$ such that for all $t \in B^\circ$, we have
$$ \varphi_{t} \geq \delta \log   \left( |\sigma_\mathcal{E}|^2_{h_\mathcal{E}} \prod_{i=1}^I   |\sigma_{ \tilde E_i} |_{h_{ \tilde E_i}} \right) - C_\delta$$
on each fibre $\tilde \cX_t$.

\end{lemma}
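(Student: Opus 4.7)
The plan is to apply the maximum principle on each fibre $\tilde\cX_t$ using the conical K\"ahler metric $\omega_\varepsilon$ of Lemma \ref{conemet} as a barrier, and then invoke the volume comparison of Lemma \ref{volcom4} together with the family Monge--Amp\`ere equation (\ref{famke}) to convert this into the desired lower bound for $\varphi_t$. Fix $\delta>0$; the auxiliary parameters $\varepsilon \in (0,1)$ and $\lambda \in (0,1]$ will be chosen small in terms of $\delta$ and the constants $c_i$ at the end.

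First I would introduce the globally defined quasi-plurisubharmonic function
\begin{equation*}
\phi_\varepsilon := \varepsilon \sum_{i=1}^I c_i \log |\sigma_{\tilde E_i}|^2_{h_{\tilde E_i}} + \varepsilon \log |\sigma_\mathcal{E}|^2_{h_\mathcal{E}} + \varepsilon^4 \sum_{i=1}^I |\sigma_{\tilde E_i}|^{2\varepsilon^2}_{h_{\tilde E_i}}
\end{equation*}
on $\tilde\cX$, so that $\chi + \ddbar \phi_\varepsilon = \omega_\varepsilon$ away from $\bigcup_i \tilde E_i \cup \mathcal{E}$ (by Poincar\'e--Lelong combined with Lemma \ref{conemet}), while $\phi_\varepsilon \to -\infty$ along these divisors. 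For $t \in B^\circ$ consider $u_t := \varphi_t - \lambda \phi_\varepsilon$ restricted to $\tilde\cX_t$. Since $\varphi_t$ is smooth on the compact smooth fibre and $\phi_\varepsilon \to -\infty$ near the divisors, $u_t \to +\infty$ near $\bigcup_i \tilde E_i \cup \mathcal{E}$ and attains its minimum at some interior point $p_t \in \tilde\cX_t$ away from them.

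At $p_t$ the inequality $\ddbar u_t|_{\tilde\cX_t} \geq 0$, combined with $\chi_t \geq 0$ and $\lambda \leq 1$, yields $\omega_t(p_t) \geq \lambda\,\omega_\varepsilon|_{\tilde\cX_t}(p_t)$. Taking the $n$-th power, using the Monge--Amp\`ere equation $\omega_t^n = e^{\varphi_t}\Omega_t$, and invoking the volume comparison of Lemma \ref{volcom4} give
\begin{equation*}
\varphi_t(p_t) \geq \log(c\lambda^n) + \sum_{i=1}^I (-a_i - 1 + \varepsilon^2)\log |\sigma_{\tilde E_i}|^2(p_t).
\end{equation*}
Substituting this into $u_t(p_t) = \varphi_t(p_t) - \lambda\phi_\varepsilon(p_t)$, the coefficient of $\log |\sigma_{\tilde E_i}|^2(p_t)$ becomes $(-a_i - 1 + \varepsilon^2) - \lambda\varepsilon c_i$. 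By Lemma \ref{adjun4} we have $a_i \geq -1$, so this coefficient is non-positive whenever $\lambda\varepsilon c_i \geq \varepsilon^2$ for every $i$; since $\log|\sigma|^2 \leq 0$, each such term then contributes non-negatively, and the $\sigma_\mathcal{E}$-term $-\lambda\varepsilon\log|\sigma_\mathcal{E}|^2(p_t)$ is likewise non-negative. We conclude $u_t(p_t) \geq -C$ uniformly in $t$, and hence $\varphi_t \geq -C + \lambda\phi_\varepsilon$ on all of $\tilde\cX_t$.

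To match the claimed bound $\varphi_t \geq \delta\log\bigl(|\sigma_\mathcal{E}|^2_{h_\mathcal{E}} \prod_i |\sigma_{\tilde E_i}|^2_{h_{\tilde E_i}}\bigr) - C_\delta$, we need $\lambda\varepsilon c_i \leq \delta$ and $\lambda\varepsilon \leq \delta$. These upper bounds on $\lambda\varepsilon$ are compatible with the earlier lower bound $\lambda\varepsilon c_i \geq \varepsilon^2$ provided $\varepsilon^2$ is sufficiently small relative to $\delta$ and the $c_i$; concretely one may take $\lambda = 1$ and $\varepsilon$ correspondingly small. The main technical subtlety is precisely this balancing: along non-log-terminal divisors (those with $a_i = -1$, arising from the semi-log canonical centres) the exponent $-a_i - 1 + \varepsilon^2$ degenerates to $\varepsilon^2$, which must be absorbed by the $\varepsilon$-scale singularity of $\phi_\varepsilon$. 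It is exactly the cone angle $2\varepsilon^2 \pi$ of $\omega_\varepsilon$ constructed in Lemma \ref{conemet} that supplies the small positive margin needed to close the argument, and the two-parameter choice of $(\varepsilon,\lambda)$ is dictated by this borderline case.
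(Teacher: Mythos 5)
Your proof is correct and follows essentially the same route as the paper's: subtract the barrier $\phi_\varepsilon$, locate the interior minimum of the resulting function on each smooth fibre, exploit positivity of $\omega_\varepsilon$ (Lemma~\ref{conemet}) at that point to get a pointwise lower bound on $\omega_t^n$, convert it into a lower bound on $\varphi_t$ via the Monge--Amp\`ere equation and Lemma~\ref{volcom4}, and finally use $a_i \geq -1$ from Lemma~\ref{adjun4} to close the balance between the exponents. The only cosmetic difference is your auxiliary parameter $\lambda$, which you correctly observe can be taken equal to $1$: the paper works directly with $\varphi_{t,\varepsilon} = \varphi_t - \phi_\varepsilon$ and the transformed equation $\left(\wedge_i dt_i\wedge d\bar t_i\right)\wedge(\omega_\varepsilon + \ddbar\varphi_{t,\varepsilon})^n \geq \left(\wedge_i dt_i\wedge d\bar t_i\right)\wedge\omega_\varepsilon^n$ at the minimum, which is the $\lambda=1$ specialization of your comparison $\omega_t \geq \lambda\omega_\varepsilon$. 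Your sign-bookkeeping for the exponents $(-a_i-1+\varepsilon^2)-\lambda\varepsilon c_i$ and the absorption of the bounded term $\lambda\varepsilon^4\sum|\sigma_{\tilde E_i}|^{2\varepsilon^2}$ into $C_\delta$ match the paper's computation, as does the final choice of $\varepsilon$ small relative to $\delta$ and $\min_i c_i$.
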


\begin{proof}    For any $\varepsilon>0 $ satisfying the conclusion of Lemma \ref{conemet}, we define
\begin{equation}
\varphi_{t, \varepsilon} = \varphi_{t} - \left( \varepsilon  \left( \sum_{i=1}^I  c_i \log |\sigma_{\tilde E_i} |^2_{h_{\tilde E_i}} + \log |\sigma_\mathcal{E}|^2_{h_\mathcal{E}} \right)+  \varepsilon^4 \sum_{i=1}^I   |\sigma_{\tilde E_i} |^{2\varepsilon^2 }_{h_{\tilde E_i}} \right)
\end{equation}
and $\varphi_{t, \varepsilon}$ satisfies the following equation away from all singular fibres
\begin{equation*}
\frac{  \left( \wedge_{i=1}^d  dt_i\wedge d \bar{t_i} \right) \wedge ( \omega_\varepsilon + \ddbar \log \varphi_{t, \varepsilon } )^n}{\left( \wedge_{i=1}^d  dt_i\wedge d \bar{t_i} \right) \wedge \Omega_{t}} \\
= \left( |\sigma_\mathcal{E}|^2_{h_\mathcal{E}} \prod_{i=1}^I |\sigma_{\tilde E_i}|^{2 c_i}_{h_{\tilde E_i}} \right)^{\varepsilon}  e^{\varphi_{t, \varepsilon } +  \varepsilon^4   \sum_{i=1}^I  |\sigma_{\tilde E_i} |^{2\varepsilon^2 }_{h_{\tilde E_i}}  }
\end{equation*}
by multiplying  $ \wedge_{i=1}^d  dt_i\wedge d \bar{t_i} $ to both sides of equation (\ref{famke}).

Suppose $q_t$ is the minimal point of $\varphi_{t, \varepsilon}|_{\tilde \cX_t}$ on a smooth fibre $\tilde \cX_t$.
The maximum principle implies that at the point $q_t$, there exist $c=c (\varepsilon)>0$ such that
\begin{eqnarray*}
e^{\varphi_{t, \varepsilon}}
&\geq&  \frac{ e^{  - \varepsilon^4   \sum_{i=1}^I  |\sigma_{\tilde E_i} |^{2\varepsilon^2}_{h_{\tilde E_i}}  }\left( \wedge_{i=1}^d  dt_i\wedge d \bar{t_i} \right) \wedge  \omega_\varepsilon ^n}{  \left(  |\sigma_\mathcal{E}|^2_{h_\mathcal{E}}\prod_{i=1}^I |\sigma_{\tilde E_i}|^{2 c_i}_{h_{\tilde E_i}}\right)^{\varepsilon }\left( \wedge_{i=1}^d  dt_i\wedge d \bar{t_i} \right) \wedge \Omega_{t}} \\
&\geq& \frac{ c}{  |\sigma_\mathcal{E}|_{h_\mathcal{E}}^{ 2\varepsilon }\prod_{i=1}^I |\sigma_{\tilde E_i}|^{2(1 + a_i +\varepsilon c_i  -\epsilon^2) }_{h_{\tilde E_i}}   }\\
&\geq& c.
\end{eqnarray*}
The second inequality follows from Lemma \ref{volcom4}.
Therefore on the fibre $\tilde \cX_t$ where $q_t$ lies, we have
$$\varphi_{t} \geq  2\varepsilon   \sum_{i=1}^I \log |\sigma_{E_i} |^2_{h_{E_i}}  - \log C'. $$
for some uniform constant $C'=C'(\varepsilon)$. The lemma then immediately follows.

\end{proof}

We now will derive the upper bound for $\varphi_{t}$ by the same argument in \cite{S2, S4}.

\begin{lemma} \label{vol1} There exists $C>0$ such that
$$\sup_{ \tilde \cX} \log \left(  \frac{ \left( \wedge_{i=1}^d  dt_i\wedge d \bar{t_i} \right)   \wedge \chi_{t} ^n}{\left( \wedge_{i=1}^d  dt_i\wedge d \bar{t_i} \right)   \wedge \Omega_{t} }     \right) \leq C .$$

\end{lemma}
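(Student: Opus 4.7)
The plan is to mirror the strategy of Lemma \ref{volcom4}: instead of bounding $\omega_\varepsilon^n/\Omega_t$ from below, I will bound $\chi^n/\Omega_t$ from above using the same local toric analysis, exploiting the smoothness of $\chi$. Since $\chi$ is the pullback of the Fubini--Study form on $\mathbb{CP}^N$ via the pluricanonical embedding $\Psi$, it is a smooth closed semipositive $(1,1)$-form on $\cZ$ and, after the further pullback by $\Phi$, on $\tilde\cX$. Hence the $(n+d,n+d)$-form $(\wedge_{i=1}^d dt_i\wedge d\bar t_i)\wedge \chi^n$ is smooth and nonnegative on $\tilde\cX$, bounded above by a constant multiple of the fixed smooth reference volume form $\tilde\Omega$.

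Next, I would sharpen the computation underlying (\ref{volest4sec}) to a two-sided comparison. Since the adjunction identity in Lemma \ref{adjun4} is an equality of divisor classes, the local toric analysis already carried out there yields
\[
(\Psi\circ\Phi)^*\!\left((\sqrt{-1})^d (\wedge_{i=1}^d dt_i\wedge d\bar t_i)\wedge \Omega_t\right)
\;\asymp\; |t_1\cdots t_d|^2 \cdot\frac{\prod_{i=1}^I |\sigma_{\tilde E_i}|^{2a_i}_{h_{\tilde E_i}}}{\prod_{i=1}^d |\sigma_{\tilde D_i}|^{2}_{h_{\tilde D_i}}}\cdot \tilde\Omega,
\]
with $a_i\geq -1$. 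In the toric coordinates of Theorem \ref{semired}, each $t_j=\prod_k x_k^{\alpha_{j,k}}$, so the factor $|t_1\cdots t_d|^2/\prod_i |\sigma_{\tilde D_i}|^2$ absorbs a product of nonnegative powers of the exceptional toric section norms $|\sigma_{\tilde E_i}|^2$, contributing terms that only help.

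The main obstacle is the remaining task: verifying that the smooth form $\chi^n$ itself vanishes along each exceptional divisor $\tilde E_i$ to an order at least matching the deficit from a possibly negative discrepancy $a_i$. Such vanishing comes from the fact that $\chi$ is pulled back from $\cZ$ and the differential of $\Psi\circ\Phi$ degenerates on the exceptional locus to the rank predicted by the discrepancy. Once this vanishing order is tracked in the toric coordinates --- by expanding $(\Psi\circ\Phi)^*\chi$ locally and estimating $\chi^n$ as a sum of minors of the resulting matrix --- one concludes that $\chi^n \cdot \prod_i|\sigma_{\tilde E_i}|^{-2a_i}$ is locally bounded, which combined with the two-sided estimate above yields the required uniform upper bound for $\chi^n/\Omega_t$.
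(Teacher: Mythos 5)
There is a genuine gap, and it sits exactly at the step you yourself flag as the ``main obstacle'': you never actually establish that $\left(\wedge_{i=1}^d dt_i\wedge d\bar t_i\right)\wedge\chi^n$ vanishes along the exceptional divisors of $\Psi\circ\Phi$ to the order required by the measure comparison. The heuristic you offer --- that ``the differential of $\Psi\circ\Phi$ degenerates on the exceptional locus to the rank predicted by the discrepancy'' --- is not correct as stated: discrepancies measure the vanishing order of the pullback of a local generator of $mK_{\cZ}$ (a Jacobian determinant), not the rank drop of the differential, and relating the vanishing order of the non-holomorphic positive form $\chi^n$ to these discrepancies is precisely the content of the lemma, not something that follows from smoothness of $\chi$. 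Two further technical problems: (i) the two-sided comparison $\asymp$ you invoke is stronger than the one-sided inequality (\ref{volest4sec}) actually proved in the paper, and the direction you need (a \emph{lower} bound on the pullback of the adapted measure) is the direction not established there; (ii) even granting it, your right-hand side omits the factors $\prod_m|\sigma_{\tilde F_m}|^{2b_m}$ with $b_m\ge 0$ coming from Lemma \ref{adjun4}, which make the lower bound on $\Omega_t$ smaller and hence force you to prove \emph{additional} vanishing of $\chi^n$ along the $\tilde F_m$. Your opening observation that the numerator is bounded by $C\tilde\Omega$ is true but useless by itself, since $\tilde\Omega$ is not dominated by the pullback of the adapted measure where the latter vanishes.

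The paper's proof avoids all of this by never leaving the normal variety: since $(\sqrt{-1})^{n+d}\left(\wedge_i dt_i\wedge d\bar t_i\right)\wedge\Omega$ is an \emph{adapted} volume measure (Definition \ref{adapvol}), one embeds a neighborhood $U\subset\cX$ into $\mathbb{C}^N$, notes that $\chi^{n+d}$ is comparable there to a sum of restrictions of smooth ambient $(n+d,n+d)$-forms, and that each such restriction equals a \emph{smooth nonnegative} multiple of the adapted measure (because any local holomorphic $(n+d)$-form on $U$ is a holomorphic multiple of the local generator $\alpha^{1/m}$). This gives $\chi^{n+d}\le C\left(\wedge_i dt_i\wedge d\bar t_i\right)\wedge\Omega$ directly, with no semistable reduction, no toric coordinates, and no discrepancy bookkeeping. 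If you want to salvage your route, you would have to reproduce this ambient-space argument anyway in order to control the vanishing order of the minors appearing in the expansion of $(\Psi\circ\Phi)^*\chi^n$; at that point the blow-up adds nothing.
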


\begin{proof} We use a  trick similarly to that in \cite{EGZ}. By the choice of $\{ \eta_j \}_{j=0}^{N}$,
$$ (\sqrt{-1})^{n+d} dt_1 \wedge d\overline{t_1}\wedge...\wedge dt_d \wedge d\overline{t_d}  \wedge \Omega$$
is an adapted volume measure on $\mathcal{X}$.  Since $\mathcal{X}$ is normal, for any point $p \in \mathcal{X}$, we can embed an open neighborhood $U$ of $p$ in $\mathcal{X}$  into $\mathbb{C}^N$ by $i: U \rightarrow \mathbb{C}^N$ (for example, we can assume that $\eta_0$ does not vanish near $p$ and we can  localize the embedding by $(\eta_1/\eta_0, ..., \eta_{N}/ \eta_0)$. Then $\chi|_U $ extends to a smooth K\"ahler metric on $\mathbb{C}^N$ and is quasi-equivalent to the Euclidean metric $\hat\omega = \sqrt{-1} \sum_{i=1}^N dz_i \wedge d\overline z_i$. Hence there exists $C_1>0$ such that near $i(p)$
$$ (C_1 )^{-1} \chi^{n+d} \leq  \sum_{ 1\leq i_1 < i_2 < ...< i_{n+d} \leq N } (\sqrt{-1})^{n+d}\prod_{k=1}^{n+d} dz_{i_k}  \wedge d \overline z_{i_k} \leq C_1 \chi^{n+d}.$$
Since $(\sqrt{-1})^{n+d} \left( \wedge_{i=1}^d  dt_i\wedge d \bar{t_i} \right)\wedge \Omega$ is an adapted volume measure on $\mathcal{X}$, for any $1\leq i_1 \leq i_2\leq ... \leq i_{n+d}\leq N$, there exist smooth nonnegative functions $f_{i_1, ..., i_{n+d}}$ in $U$  such that
$$ i^* \left(  (\sqrt{-1})^{n+d}\prod_{k=1}^{n+d} dz_{i_k} \wedge d \overline{z}_{i_k} \right)  =    f_{i_1, ..., i_{n+d}} \left( (\sqrt{-1})^{d}\wedge_{i=1}^d  dt_i\wedge d \bar{t_i} \right) \wedge \Omega .$$
%
%
This implies that there exists $C_2>0$ such that
$$ \chi^{n+d} \leq C_2 (\sqrt{-1})^{d} \left(  \wedge_{i=1}^d  dt_i\wedge d \bar{t_i} \right)\wedge \Omega. $$
The lemma is proved since $\sqrt{-1} \sum_{i=1}^d dt_i\wedge d\overline{t_i}$ is bounded above by a multiple of $\chi$.

\end{proof}

We immediately can achieve the following uniform upper bound for the potential $\varphi_t$.

\begin{lemma} \label{5upb}There exists $C>0$ such that for all $t \in \left(\mathcal{S}'\right) ^\circ$,
$$\sup_{\tilde \cX_t} \varphi_{t} \leq C. $$
\end{lemma}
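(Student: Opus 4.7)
The plan is to apply the fiberwise maximum principle to the Monge-Amp\`ere equation \eqref{famke}. Fix $t\in (\mathcal{S}')^\circ$, so that the fiber $\tilde\cX_t$ is smooth and $\varphi_t$ is a smooth K\"ahler potential on it. Let $q_t\in \tilde\cX_t$ be a maximum point of $\varphi_t$ on the fiber (which exists by compactness of $\tilde\cX_t$). At $q_t$, one has $\ddbar\varphi_t\leq 0$ in the sense of $(1,1)$-forms restricted to the fiber, hence
\begin{equation*}
(\chi_t + \ddbar\varphi_t)^n \big|_{q_t} \leq \chi_t^n \big|_{q_t}.
\end{equation*}
Combining this with equation \eqref{famke} evaluated at $q_t$ gives
\begin{equation*}
e^{\varphi_t(q_t)}\,\Omega_t \big|_{q_t} \leq \chi_t^n \big|_{q_t},
\end{equation*}
so
\begin{equation*}
\varphi_t(q_t) \leq \log\!\left( \frac{\chi_t^n}{\Omega_t}\right)\!\Big|_{q_t}.
\end{equation*}

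The next step is to bound the right-hand side uniformly in $t$. Here I invoke Lemma \ref{vol1}, which asserts that
\begin{equation*}
\sup_{\tilde\cX}\,\log\!\left( \frac{(\wedge_{i=1}^d dt_i\wedge d\bar t_i)\wedge \chi_t^n}{(\wedge_{i=1}^d dt_i\wedge d\bar t_i)\wedge \Omega_t}\right) \leq C.
\end{equation*}
Since the common factor $\wedge_{i=1}^d dt_i\wedge d\bar t_i$ is a nonzero base $(d,d)$-form over $(\mathcal{S}')^\circ$, the ratio coincides pointwise with $\chi_t^n/\Omega_t$ on each smooth fiber. Therefore $\log(\chi_t^n/\Omega_t)\leq C$ at $q_t$, which yields $\varphi_t(q_t)\leq C$.

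Since $q_t$ was chosen to be the maximum point of $\varphi_t$ on $\tilde\cX_t$, we conclude $\sup_{\tilde\cX_t}\varphi_t \leq C$ with $C$ independent of $t\in (\mathcal{S}')^\circ$. There is essentially no obstacle here: the argument is a textbook maximum-principle application to a complex Monge-Amp\`ere equation of K\"ahler-Einstein type, the one input being the uniform comparison of the reference volume forms already established in Lemma \ref{vol1}. In contrast to the lower bound (Lemma \ref{54}), we do not need to construct barrier functions involving $\sigma_{\tilde E_i}$ or conical K\"ahler forms $\omega_\varepsilon$, because the exponential $e^{\varphi_t}$ on the right-hand side of \eqref{famke} forces the upper bound directly once $\chi_t^n/\Omega_t$ is controlled.
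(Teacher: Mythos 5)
Your proposal is correct and follows exactly the paper's own argument: the maximum principle applied to the fiberwise Monge--Amp\`ere equation at the maximum point of $\varphi_t$, followed by the identification of $\chi_t^n/\Omega_t$ with the ratio of $(n+d,n+d)$-forms controlled uniformly by Lemma \ref{vol1}. Nothing further is needed.
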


\begin{proof} We apply the maximum principle to equation
$$
 (\chi_t + \ddbar\varphi_t)^n = e^{\varphi_t} \Omega_t.
$$
on $\tilde \cX_t$ at the maximal point of $\varphi_{t}$ and obtain
\begin{eqnarray*}
 \sup_{\tilde \cX_t} \varphi_t &\leq & \sup_{\tilde \cX_t} \log\left(  \frac{\chi_t^n }{\Omega_t} \right) = \sup_{\tilde \cX_t} \log \left( \frac{dt_1 \wedge d\overline t_1\wedge...\wedge dt_d \wedge d\overline t_d \wedge \chi_t^n }{ dt_1 \wedge d\overline t_1\wedge...\wedge dt_d \wedge d\overline t_d \wedge \Omega} \right) \\
 &\leq& \sup_{\tilde \cX_t} \log \left( \frac{ dt_1 \wedge d\overline t_1\wedge...\wedge dt_d \wedge d\overline t_d \wedge \chi^n }{ dt_1 \wedge d\overline t_1\wedge...\wedge dt_d \wedge d\overline t_d\wedge \Omega} \right) .
 \end{eqnarray*}
The lemma easily follows from Lemma \ref{vol1}.

\end{proof}

Let $\mathcal{V}$ be the the exceptional locus of the birational morphism $\Psi\circ\Phi$ and let 
\begin{equation}
\mathcal{V}'=\overline{\mathcal{V} \setminus  \cup_{i=1}^I \tilde E_i}. 
\end{equation}
Then there exist divisors $\mathcal{E}_1, ..., \mathcal{E}_L$ of $\tilde{\mathcal{X}}$ such that
\begin{equation}
\mathcal{V}'=\cup_{l=1}^L \mathcal{E}_l.
\end{equation}
Let $\sigma_{\mathcal{E}_l}$ be the corresponding defining section of $\mathcal{E}_l$ and let $h_{\mathcal{E}_l}$ be fixed smooth hermitian metrics on the line bundle associated with $\mathcal{E}_l$ for $l=1, ..., L$.  We conclude the section by the following proposition by combining Lemma \ref{54} and Lemma \ref{5upb}.
\begin{proposition} \label{c0pro} There exists $C>0$ and for any $\delta>0$, there exists $C_\delta>0$ such that the potential function $\varphi$  satisfies the following estimate on $\tilde{\mathcal{X}}$
\begin{equation}\label{c0bd}
\delta \log    \left( \left( 
\sum_{l=1}^L |\sigma_{\mathcal{E}_l}|^2_{h_{\mathcal{E}_l}} \right)
\left( \prod_{i=1}^I   |\sigma_{ \tilde{E}_i} |_{h_{\tilde{E}_i}}
   \right) \right) - C_\delta \leq \varphi \leq C.
\end{equation}
\end{proposition}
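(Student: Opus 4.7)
The upper bound $\varphi \leq C$ is immediate from Lemma \ref{5upb}, read fibrewise over $\mathcal{S}'$. For the lower bound, I would first observe that substituting $\sum_{l=1}^L |\sigma_{\mathcal{E}_l}|^2_{h_{\mathcal{E}_l}}$ for $|\sigma_\mathcal{E}|^2_{h_\mathcal{E}}$ in the conclusion of Lemma \ref{54} genuinely strengthens the estimate: near a generic point of a single component $\mathcal{E}_{l_0}$ of $\mathcal{V}'$, $|\sigma_\mathcal{E}|^2$ vanishes (to order $c_{l_0} \geq 1$) while $\sum_l |\sigma_{\mathcal{E}_l}|^2$ remains bounded away from $0$. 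Hence the proposition is not obtainable from Lemma \ref{54} by an elementary pointwise comparison, and the maximum-principle argument of that lemma has to be repeated with a modified barrier function.

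Concretely, I would set
\[
\widetilde{\varphi}_{t,\varepsilon} = \varphi_{t} - \varepsilon\Bigl(\sum_{i=1}^I c_i \log|\sigma_{\tilde E_i}|^2_{h_{\tilde E_i}} + \log\sum_{l=1}^L |\sigma_{\mathcal{E}_l}|^2_{h_{\mathcal{E}_l}}\Bigr) - \varepsilon^4 \sum_{i=1}^I |\sigma_{\tilde E_i}|^{2\varepsilon^2}_{h_{\tilde E_i}},
\]
and let $\widetilde{\omega}_\varepsilon$ be obtained from the cone K\"ahler metric $\omega_\varepsilon$ of Lemma \ref{conemet} by replacing $\varepsilon\,\ddbar \log h_\mathcal{E}$ with $\varepsilon\,\ddbar \log \sum_{l=1}^L |\sigma_{\mathcal{E}_l}|^2_{h_{\mathcal{E}_l}}$. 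Because $\log\sum_l |\sigma_{\mathcal{E}_l}|^2_{h_{\mathcal{E}_l}}$ is the log-sum-exp of the quasi-plurisubharmonic functions $\phi_l = \log|\sigma_{\mathcal{E}_l}|^2_{h_{\mathcal{E}_l}}$, it is itself quasi-plurisubharmonic with $\ddbar\log \sum_l e^{\phi_l} \geq \sum_l w_l\,\ddbar\phi_l \geq -\max_l \Theta_{h_{\mathcal{E}_l}}$, where $w_l = |\sigma_{\mathcal{E}_l}|^2/\sum_{l'}|\sigma_{\mathcal{E}_{l'}}|^2$, and it is uniformly bounded above by $\log L$. Combined with Kodaira's lemma for the ample class $[\chi] - \sum_i c_i[\tilde E_i] - [\mathcal{E}]$, these two properties ensure that $\widetilde{\omega}_\varepsilon$ is a K\"ahler cone metric with cone singularities along $\bigcup_i \tilde E_i$ for all sufficiently small $\varepsilon > 0$.

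Applying the maximum principle at a minimum $q_t$ of $\widetilde{\varphi}_{t,\varepsilon}$ on a smooth fibre $\tilde\mathcal{X}_t$ yields, exactly as in the proof of Lemma \ref{54}, an inequality of the form $e^{\widetilde{\varphi}_{t,\varepsilon}(q_t)}\cdot\bigl(\sum_l |\sigma_{\mathcal{E}_l}|^2\bigr)^\varepsilon \prod_i |\sigma_{\tilde E_i}|^{2\varepsilon c_i}\cdot\Omega_t \geq \widetilde{\omega}_\varepsilon^n$ at $q_t$, up to a bounded exponential. The volume comparison Lemma \ref{volcom4} involves only $|\sigma_{\tilde E_i}|$ and transfers verbatim to $\widetilde{\omega}_\varepsilon$, giving $\widetilde{\omega}_\varepsilon^n/\Omega_t \geq c\prod_i |\sigma_{\tilde E_i}|^{-2(a_i+1-\varepsilon^2)}_{h_{\tilde E_i}}$. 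Combined with $\sum_l |\sigma_{\mathcal{E}_l}|^2 \leq L$, $|\sigma_{\tilde E_i}| \leq 1$, and $a_i \geq -1$, this yields $\widetilde{\varphi}_{t,\varepsilon} \geq -C(\varepsilon)$ on the entire fibre; setting $\varepsilon = \delta$ and unwinding the definition of $\widetilde{\varphi}_{t,\varepsilon}$ produces the proposition's lower bound.

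The main technical obstacle is that $\log\sum_l |\sigma_{\mathcal{E}_l}|^2$ fails to be smooth along the (codimension $\geq 2$) locus $\bigcap_l \mathcal{E}_l$ and its $\ddbar$ acquires positive singular contributions $w_l [\mathcal{E}_l]$, so $\widetilde{\omega}_\varepsilon$ is strictly speaking a positive current rather than a bona fide smooth conic form. I would resolve this by a standard regularization, replacing $\sum_l |\sigma_{\mathcal{E}_l}|^2$ by $\eta + \sum_l |\sigma_{\mathcal{E}_l}|^2$ for a small parameter $\eta > 0$ to make the logarithm smooth, running the maximum-principle argument with the regularized barrier to obtain uniform-in-$\eta$ constants, and then passing to the limit $\eta \to 0$ via the $L^\infty$-stability theory for degenerate complex Monge--Amp\`ere equations of \cite{Kol1, EGZ}.
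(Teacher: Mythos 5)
Your upper bound is exactly the paper's: restrict Lemma \ref{5upb} to each fibre. For the lower bound, however, the paper does not rerun the maximum principle; it deduces (\ref{c0bd}) directly from Lemma \ref{54} together with Lemma \ref{5upb}, treating the passage from $|\sigma_{\mathcal{E}}|^2_{h_{\mathcal{E}}}$ to the expression in the $\mathcal{E}_l$ as an elementary comparison of divisorial terms: the support of $\mathcal{E}$ together with the $\tilde F_m$ is covered by $\bigcup_l\mathcal{E}_l\cup\bigcup_i\tilde E_i$, so $|\sigma_{\mathcal{E}}|^{2\delta}_{h_{\mathcal{E}}}$ is bounded below by a positive power of $\prod_l|\sigma_{\mathcal{E}_l}|^2_{h_{\mathcal{E}_l}}\prod_i|\sigma_{\tilde E_i}|^2_{h_{\tilde E_i}}$. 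You are right that this elementary comparison only yields the estimate with $\prod_l|\sigma_{\mathcal{E}_l}|^2$ in place of $\sum_l|\sigma_{\mathcal{E}_l}|^2$ --- that is a fair reading of the literal statement --- but the weaker product version is all that is used downstream: the barrier $F$ of Section 5 enters only through its zero set $\mathcal{G}=\{F=0\}$, and enlarging $\mathcal{G}$ from $\bigcap_l\mathcal{E}_l\cup\bigcup_i\tilde E_i$ to $\bigcup_l\mathcal{E}_l\cup\bigcup_i\tilde E_i$ still leaves a Zariski dense open complement, which is what Lemmas \ref{sch1}, \ref{55} and \ref{GHcon} require.

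The genuine strengthening you are after cannot be reached by your modified barrier, and the obstruction is cohomological rather than the regularity issue you flag at the end. The function $\log\sum_{l}|\sigma_{\mathcal{E}_l}|^2_{h_{\mathcal{E}_l}}$ is globally defined on $\tilde{\mathcal{X}}$, so its $\ddbar$ is exact; replacing $\varepsilon\,\ddbar\log h_{\mathcal{E}}$, which represents $-\varepsilon c_1(\mathcal{E},h_{\mathcal{E}})$, by this term moves your reference form out of the Kodaira class $[\chi]-\varepsilon\sum_i c_i[\tilde E_i]-\varepsilon[\mathcal{E}]$, which is ample, into $[\chi]-\varepsilon\sum_i c_i[\tilde E_i]$, which is not known to be even nef --- the divisor $\mathcal{E}$ was introduced precisely because the exceptional locus of $\Psi\circ\Phi$ exceeds $\bigcup_i\tilde E_i$. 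Pointwise, at a generic point of a single component $\mathcal{E}_{l_0}$ the form $\chi$ is degenerate (it is pulled back through the contraction of $\mathcal{E}_{l_0}$), and your extra term is there a smooth exact form with no sign, so $\widetilde{\omega}_\varepsilon$ need not be positive; consequently the inequality $(\widetilde{\omega}_\varepsilon+\ddbar\widetilde{\varphi}_{t,\varepsilon})^n\geq\widetilde{\omega}_\varepsilon^n$ at the minimum point and the claimed verbatim transfer of Lemma \ref{volcom4} (whose proof rests on $\omega_\varepsilon$ dominating a genuine K\"ahler form, supplied exactly by the $\mathcal{E}$-term via Lemma \ref{conemet}) both fail. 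Worse, your barrier no longer tends to $+\infty$ along the individual $\mathcal{E}_l$, so the minimum of $\widetilde{\varphi}_{t,\varepsilon}$ may be attained exactly where the reference form degenerates. The $\eta$-regularization proposed at the end does not address this, because the loss is in the cohomology class, not in the smoothness of the potential.
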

We also remark that $\varphi$ is smooth on $\tilde {\mathcal{X}}' $ away from the singular fibres. In fact, one can further show that $\varphi$ is smooth away from singular set of all the special fibres in the next section. Proposition \ref{c0pro} can be improved by bounding $\varphi$ below by a multiple $-\log(-\log)$-poles along the log and semi-log locus of special fibres and such improvement can be applied to prove that the local potentials of the Weil-Petersson current in Theorem \ref{main3} are not only bounded but continuous \cite{SSW}.

Finally, we would like to point out that the $C^0$-estimate of Proposition \ref{c0pro} still holds for stable families of canonical models of general type with log terminal singularities and it can also be generalized to stable families of semi-log canonical models with suitable technical modifications.


\section{Analytic estimates and convergence}

In this section, we will obtain some standard higher order estimates for $\varphi_t$ from equation (\ref{famke}). Most argument are identical to estimates in \cite{S4} and we will only give necessary statements and sketch of proof. We will use the same notations in the previous section and let $\tilde{\mathbf{Z}}$ be the component of the central fibre $\tilde{\mathcal{X}}_0$ as the strict transform of a fixed component of $\mathcal{Z}_0$ by $\Psi\circ\Phi$.

Let $\omega_{t} = \chi_{t}+ \ddbar \varphi_{t}$ be the K\"ahler-Einstein metric on $\tilde \cX_t$ for $t\in B\circ$.
We define the barrier function
\begin{equation}
F= \left( 
\sum_{l=1}^L |\sigma_{\tilde{\mathcal{E}_l}}|^2_{h_{\tilde{\mathcal{E}_l}}} \right)
 \prod_{i=1}^I   |\sigma_{ \tilde{E}_i} |^2_{h_{\tilde{E}_i}}
\end{equation}
on $\tilde{\mathcal{X'}}$ over $B$. $F$ vanishes on every component of $\tilde{\mathcal{X}}_0$ except $\mathcal{\mathbf{Z}}$. The following lemma is an analogue of the Schwarz lemma with barrier functions.

\begin{lemma} \label{sch1} Then for any $\varepsilon>0$, there exists $C_\varepsilon>0$ such that for all $t \in B^\circ$, we have on $\tilde \cX_t$
\begin{equation}
\omega_{t} \geq  C_\epsilon   \left(F|_{\tilde X'_t}\right)^{\epsilon}   \chi_{t} .
\end{equation}
 \end{lemma}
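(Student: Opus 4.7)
The strategy is to reduce the desired lower bound on $\omega_t$ to a pointwise upper bound
\begin{equation*}
\tr_{\omega_t}\chi_t(x) \;\le\; C_\epsilon\,F(x)^{-\epsilon}, \qquad x\in\tilde{\mathcal{X}}_t,
\end{equation*}
since the elementary eigenvalue inequality then forces each eigenvalue of $\omega_t$ with respect to $\chi_t$ to be at least $C_\epsilon^{-1}F^{\epsilon}$, which is exactly the conclusion of the lemma. I would produce this upper bound on every smooth fibre $\tilde{\mathcal{X}}_t$, $t\in B^\circ$, by the standard Yau-Chern-Lu second-order technique, coupled with a maximum principle argument driven by the $C^0$-estimate of Proposition \ref{c0pro}.

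The three ingredients are: (a) the Chern-Lu inequality for the identity map $(\tilde{\mathcal{X}}_t,\omega_t)\to(\tilde{\mathcal{X}}_t,\chi_t)$, combined with $\ric(\omega_t)=-\omega_t$, which gives
\begin{equation*}
\Delta_{\omega_t}\log\tr_{\omega_t}\chi_t \;\ge\; -B\,\tr_{\omega_t}\chi_t - 1
\end{equation*}
for some constant $B>0$ depending only on a uniform upper bound for the holomorphic bisectional curvature of $\chi$ on $\tilde{\mathcal{X}}$; (b) the trace identity $\Delta_{\omega_t}\varphi_t = n - \tr_{\omega_t}\chi_t$ coming from $\omega_t = \chi_t + \ddbar\varphi_t$; and (c) the pointwise estimate $|\Delta_{\omega_t}\log F|\le C_0\,\tr_{\omega_t}\chi_t$, which holds on every $\tilde{\mathcal{X}}_t$ with $t\in B^\circ$, because $\ddbar\log F$ equals the sum of the (smooth, bounded) curvature forms of the fixed hermitian metrics $h_{\tilde E_i}, h_{\mathcal{E}_l}$ with a sign away from the corresponding divisors, and those divisors do not meet $\tilde{\mathcal{X}}_t$ for such $t$.

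With these in hand, for a small parameter $\delta>0$ to be chosen I form the auxiliary function
\begin{equation*}
H_\delta \;=\; \log\tr_{\omega_t}\chi_t \;-\; (B+1)\bigl(\varphi_t - \delta\log F\bigr)
\end{equation*}
on $\tilde{\mathcal{X}}_t$ and compute
\begin{equation*}
\Delta_{\omega_t}H_\delta \;\ge\; \bigl(1-(B+1)\delta C_0\bigr)\tr_{\omega_t}\chi_t - (B+1)n - 1.
\end{equation*}
For $\delta$ small enough that $(B+1)\delta C_0 \le \tfrac{1}{2}$, the coefficient of $\tr_{\omega_t}\chi_t$ is at least $\tfrac{1}{2}$, so applying the maximum principle at a maximum of $H_\delta$ on the compact fibre forces $\tr_{\omega_t}\chi_t \le M$ there, for an absolute constant $M$. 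Invoking Proposition \ref{c0pro} in the (equivalent, after reparametrizing $\delta$) form $\varphi_t - \delta\log F \ge -C_\delta$, the value of $H_\delta$ at its maximum, and hence everywhere on $\tilde{\mathcal{X}}_t$, is at most $\log M + (B+1)C_\delta$.

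Combining this with the uniform upper bound $\varphi_t \le C$ also furnished by Proposition \ref{c0pro} rearranges to $\log\tr_{\omega_t}\chi_t(x) \le C' - (B+1)\delta\log F(x)$ at every $x\in\tilde{\mathcal{X}}_t$, i.e.\ $\tr_{\omega_t}\chi_t \le e^{C'}F^{-(B+1)\delta}$. Given any prescribed $\epsilon>0$, choose $\delta=\min\bigl(\epsilon/(B+1),\,1/(2(B+1)C_0)\bigr)$; for $\epsilon$ in the range where the first entry is active this yields $\tr_{\omega_t}\chi_t \le C_\epsilon F^{-\epsilon}$, and the remaining case $\epsilon\ge 1/(2C_0)$ follows from the small-$\epsilon$ estimate because $F\le 1$. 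The main technical point to balance is the competition between the barrier strength $\delta$ appearing in Proposition \ref{c0pro} and the coefficient $B+1$ dictated by the Chern-Lu inequality; uniformity of the constants $B$, $C_0$, $C$, $C_\delta$ in $t\in B^\circ$ follows because $\chi$, $\Omega$, $F$ and the hermitian metrics $h_{\tilde E_i}, h_{\mathcal{E}_l}$ are all globally defined on the total space $\tilde{\mathcal{X}}$.
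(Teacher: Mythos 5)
Your strategy is exactly the paper's: the paper proves this lemma in one line by applying the maximum principle to $\log\tr_{\omega_t}(\chi_t)+\epsilon\log F-A\varphi_t$ for large $A$, which is your $H_\delta$ up to renaming constants, and your use of Chern--Lu, the trace identity, and Proposition \ref{c0pro} as the lower barrier for $\varphi_t$ is the intended execution. The final bookkeeping (choice of $\delta$, passing from $\tr_{\omega_t}\chi_t\le C'F^{-\epsilon}$ to $\omega_t\ge (C')^{-1}F^{\epsilon}\chi_t$, and the reduction of the exponent mismatch between $F$ and the barrier in Proposition \ref{c0pro} using $|\sigma|\le 1$) is all correct.

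The one step whose justification would not survive scrutiny as written is ingredient (c). You assert $|\Delta_{\omega_t}\log F|\le C_0\,\tr_{\omega_t}\chi_t$ with $C_0$ uniform in $t$, on the grounds that the curvature forms of $h_{\tilde E_i}$, $h_{\mathcal{E}_l}$ are smooth and bounded. Bounded with respect to a K\"ahler metric on $\tilde{\mathcal X}$, yes --- but the comparison you need is against $\chi$, which is only the pullback of the Fubini--Study form under $\Psi\circ\Phi$ and is \emph{degenerate} along the exceptional divisors (the paper points this out explicitly after (\ref{kodlem})). Since nearby smooth fibres $\tilde{\mathcal X}_t$ accumulate on that exceptional locus as $t\to 0$, a fixed smooth $(1,1)$-form need not be dominated by $C_0\chi_t$ with a $t$-independent constant, so your stated reason does not yield a uniform $C_0$. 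The repair is in the paper: Kodaira's lemma, in the form of (\ref{kodlem}), gives $\sum_i c_i\ddbar\log h_{\tilde E_i}+\ddbar\log h_{\mathcal E}\ge -\chi$, i.e.\ precisely the one-sided bound $\ddbar\log F\ge -C_0\chi$ (after reweighting the exponents in $F$, which is harmless because the lemma is stated for all $\varepsilon>0$ and $F\le 1$). Note also that only this one-sided bound is needed, since $\delta\log F$ enters $H_\delta$ with a positive sign; the two-sided bound you claim is both stronger than necessary and harder to justify.
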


\begin{proof} The proof follows immediately by applying the maximum principle to the following quantity
$$\log tr_{\omega_t}(\chi_t) + \epsilon \log F  -A \varphi_t$$  on $\tilde \cX_t$ for sufficiently large $A>0$ because $\varphi_t$ has vanishing Lelong number.

\end{proof}

\begin{lemma} \label{55} Let $\mathcal{G}$ be the set defined by $\mathcal{G} =\{F=0\}$. For any $k>0$ and any compact set $\mathcal{K}\subset\subset \tilde{\mathcal{X}} \setminus \mathcal{G}$, there exists $C_{k,K}>0$ such that for all $t\in B^\circ$,
\begin{equation}
||\varphi_t||_{C^k( \mathcal{K}\cap \tilde{\mathcal{X}}_t, \chi_{t})} \leq C_{k,\mathcal{K}}
\end{equation}
and so
\begin{equation}
||\omega_t||_{C^k(\mathcal{K} \cap \tilde \cX_t, \chi_{t})} \leq C_{k,\mathcal{K}}.
\end{equation}

\end{lemma}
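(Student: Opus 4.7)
The plan is to upgrade the uniform $C^0$-estimate from Proposition \ref{c0pro} and the Schwarz bound of Lemma \ref{sch1} into a uniform two-sided comparison $\omega_t \sim \chi_t$ on a neighborhood of $\mathcal{K}$, and then to feed this into the standard regularity theory for complex Monge-Amp\`ere equations in order to bootstrap to all orders.

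First I would fix a slightly larger compact $\mathcal{K}'$ with $\mathcal{K} \subset\subset \mathrm{int}(\mathcal{K}') \subset\subset \tilde{\mathcal{X}} \setminus \mathcal{G}$. Since $F$ is continuous and strictly positive on $\mathcal{K}'$, Lemma \ref{sch1} applied with any fixed $\varepsilon > 0$ yields a uniform lower bound $\omega_t \geq c_1 \chi_t$ on $\mathcal{K}' \cap \tilde{\mathcal{X}}_t$, independent of $t \in B^\circ$. On the other hand, Proposition \ref{c0pro} gives $\|\varphi_t\|_{L^\infty(\mathcal{K}')} \leq C$, so the Monge-Amp\`ere equation (\ref{famke}) reads $\omega_t^n = e^{\varphi_t}\Omega_t$ with bounded right-hand side; since $\mathcal{K}'$ avoids every exceptional divisor of $\Psi \circ \Phi$ and every component of $\tilde{\mathcal{X}}_0$ other than $\tilde{\mathbf{Z}}$, the ratio $\Omega_t / \chi_t^n$ is uniformly bounded on $\mathcal{K}'$, giving $\omega_t^n \leq C_2 \chi_t^n$. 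Combining the $n$ eigenvalue lower bounds with the bound on their product forces a matching upper bound on each eigenvalue, so
\[
c_1 \chi_t \leq \omega_t \leq C_3 \chi_t \quad \text{on } \mathcal{K}' \cap \tilde{\mathcal{X}}_t,
\]
uniformly in $t$.

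With this two-sided comparison in hand, the fibrewise Monge-Amp\`ere equation is uniformly elliptic on $\mathcal{K}'$. Choosing local holomorphic trivializations of the smooth family $\tilde{\pi}: \tilde{\mathcal{X}}|_{\mathcal{K}'} \to B$, I would invoke the complex Evans--Krylov estimate (equivalently, Calabi's $C^3$-estimate) to obtain a uniform $C^{2,\alpha}$-bound on $\varphi_t$ on any set slightly interior to $\mathcal{K}'$. Standard Schauder bootstrap applied to (\ref{famke}) then produces uniform $C^k$-bounds on $\varphi_t$ on $\mathcal{K} \cap \tilde{\mathcal{X}}_t$ for every $k$, and hence also on $\omega_t = \chi_t + \ddbar \varphi_t$.

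The one point genuinely requiring care is uniformity in $t$ as $t$ approaches the discriminant locus $B \setminus B^\circ$. This is precisely what the construction of $\mathcal{G}$ and of the modification $\Phi$ in the preceding sections was designed to ensure: every divisorial source of blow-up of $\Omega_t/\chi_t^n$ and every locus where the reference geometry degenerates has been absorbed into $\mathcal{G}$, so on $\mathcal{K}' \subset\subset \tilde{\mathcal{X}} \setminus \mathcal{G}$ the background data $\chi_t$, $\Omega_t$ together with the bisectional curvature of $\chi_t$ stay uniformly controlled for all $t \in B^\circ$. This structural input is really the heart of the matter; once it is in place the analytic estimates are routine.
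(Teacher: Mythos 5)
Your proposal is correct and follows essentially the same route as the paper's proof: a uniform two-sided comparison $\omega_t\sim\chi_t$ on a compact set away from $\mathcal{G}$, obtained by combining the Schwarz-type lower bound of Lemma \ref{sch1} with the determinant bound coming from the Monge--Amp\`ere equation and the uniform $C^0$-estimate, followed by Evans--Krylov/Schauder bootstrapping in local trivializations where $\chi_t$ is uniformly equivalent to the flat metric. The paper's argument is the same, merely stated more tersely.
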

\begin{proof}  For any point $p \in K\cap \tilde \cX_0$, for sufficiently small $t$, there exist coordinates $y=(y_1, y_2, ..., y_n)$ such that $(\tilde{\mathcal{X}})_{t}$ can be locally parametrized by $y$ and $\chi_{t}$ is uniformly equivalent  to  $dy \wedge d\bar y$. By Lemma \ref{sch1} and the original complex Monge-Amp\`ere equation (\ref{famke}) for $\varphi_{t}$, $\omega_t$ is uniformly bounded above and below with respect to $\chi_{t}$ away from $\mathcal{G}$, where $\pi$ is also nondegenerate. Then standard Schauder estimates and the linear estimates after linearizing the Monge-Amp\`ere equation (\ref{famke})  can be established locally, which gives uniform higher order regularity for $\varphi_t$.

\end{proof}

For any sequence $t_j \rightarrow 0 \in B^\circ$, by the uniform estimates for $\varphi_{t}$ away from $\mathcal{G}$, after passing to a subsequence, $\varphi_{t_j}$ converges smoothly to a smooth function $\varphi_0 $ on $\tilde{\mathcal{X}}_0\setminus \mathcal{G}|_{\tilde{\mathcal{X}}_0}$, a Zariski open dense subset of $\mathcal{Z}_0$. Furthermore, $\varphi_0$ satisfies the following conditions.

\begin{enumerate}

\item There exists $C>0$ such that $$ \sup_{\mathcal{Z}_0} \varphi_0 \leq C. $$

\item  $\varphi_0 \in \textnormal{PSH}(\cZ_0, \chi|_{\cZ_0})$.

\medskip

\item $\varphi_0$ has vanishing Lelong number everywhere.

\medskip

\item $\varphi_0$ solves the following equation on $\tilde{\mathcal{X}}_0\setminus \mathcal{G}|_{\tilde{\mathcal{X}}_0}$
$$(\chi_0 + \ddbar \varphi_0)^n = e^{\varphi_0}\Omega_0, $$

\end{enumerate}

By the uniqueness in Theorem \ref{song}, $\varphi_0$ must coincide with the unique solution constructed in Theorem \ref{song} (cf. \cite{S4}). Hence we have established the following lemma.

\begin{lemma} Let $\omega_t = \chi_t + \ddbar \varphi_t$ be the canonical K\"ahler-Einstein current on $\mathcal{X}_t$, $t\in B$ 
with $$(\chi_t + \ddbar \varphi_t)^n = e^{\varphi_t} \Omega_t.$$
Then $\varphi_t$ converges smoothly on $\tilde{\mathbf{Z}}_0 \setminus \mathcal{G}|_{\tilde{\mathcal{X}}_0}$ to a unique a unique $\varphi_0 \in \textnormal{PSH}(\mathcal{Z}_0, \chi_0)\cap C^\infty(\mathcal{R}_{\mathcal{Z}_0}) \cap L^\infty_{loc}(\mathcal{Z}_0\setminus \textnormal{LCS}(\mathcal{Z}_0))$ as $t\rightarrow 0$,  $t\in B^\circ$, where $\mathcal{R}_{\mathcal{Z}_0}$ is the smooth part of $\mathcal{Z}_0$ and $\textnormal{LCS}(\mathcal{Z}_0)$ is the non-log terminal locus of $\mathcal{Z}_0$.

\end{lemma}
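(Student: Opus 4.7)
The plan is to extract a subsequential smooth limit from the uniform estimates and then invoke uniqueness to upgrade this to genuine convergence. First I would fix any sequence $t_j \to 0$ in $B^\circ$ and an exhaustion $\mathcal{K}_1 \subset \mathcal{K}_2 \subset \cdots$ of $\tilde{\mathcal{X}} \setminus \mathcal{G}$ by compact sets. By Lemma \ref{55}, the family $\{\varphi_{t_j}\}$ is uniformly bounded in $C^k(\mathcal{K}_\ell, \chi_{t_j})$ for every $k,\ell$, so the Arzel\`a--Ascoli theorem together with a standard diagonal extraction yields a subsequence (still denoted $t_j$) along which $\varphi_{t_j}|_{\tilde{\mathcal{X}}_{t_j}}$ converges in $C^\infty_{\mathrm{loc}}(\tilde{\mathcal{X}}_0 \setminus \mathcal{G})$ to a smooth function $\tilde\varphi_0$ on $\tilde{\mathbf{Z}} \setminus \mathcal{G}$. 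Passing to the limit in the Monge--Amp\`ere equation $(\chi_t + \ddbar\varphi_t)^n = e^{\varphi_t}\Omega_t$ on each $\mathcal{K}_\ell \cap \tilde{\mathbf{Z}}$ yields the K\"ahler--Einstein equation for $\tilde\varphi_0$ on $\tilde{\mathbf{Z}} \setminus \mathcal{G}$.

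Next I would push $\tilde\varphi_0$ down to a function $\varphi_0$ on $\mathcal{Z}_0$ via the birational morphism $\Psi \circ \Phi$; since $(\Psi\circ\Phi)|_{\tilde{\mathbf{Z}}}$ is a birational map onto the component $\mathbf{Z}$ of $\mathcal{Z}_0$ and $\mathcal{G}$ does not contain this component generically, this defines $\varphi_0$ on a Zariski-open set of $\mathbf{Z}$. The uniform upper bound of Lemma \ref{5upb} gives $\sup \varphi_0 \le C$, while the lower bound of Proposition \ref{c0pro} gives, for every $\delta>0$,
\begin{equation*}
\varphi_0 \geq \delta \log\Bigl(\bigl(\textstyle\sum_l |\sigma_{\mathcal{E}_l}|^2_{h_{\mathcal{E}_l}}\bigr) \prod_i |\sigma_{\tilde E_i}|_{h_{\tilde E_i}}\Bigr) - C_\delta
\end{equation*}
on the regular part. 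The upper bound plus smoothness away from $\mathcal{G}$ imply that $\varphi_0$ extends uniquely as a $\chi_0$-plurisubharmonic function on $\mathcal{Z}_0$ (by standard removable singularity for psh functions across proper analytic sets, applied through the birational identification), and the lower bound forces vanishing Lelong number everywhere. Moreover the lower bound implies $\varphi_0 \in L^\infty_{\mathrm{loc}}(\mathcal{Z}_0 \setminus \textnormal{LCS}(\mathcal{Z}_0))$, since away from the non-log-terminal locus the divisors $\tilde E_i$ have discrepancies $a_i > -1$ and the exceptional divisors $\mathcal{E}_l$ lie over proper subvarieties of $\mathcal{Z}_0$.

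Having gathered these properties, $\varphi_0$ is a K\"ahler--Einstein potential on $\mathcal{Z}_0$ in the sense of Theorem \ref{song}: it lies in $\textnormal{PSH}(\mathcal{Z}_0,\chi_0)\cap C^\infty(\mathcal{R}_{\mathcal{Z}_0})\cap L^\infty_{\mathrm{loc}}(\mathcal{Z}_0\setminus\textnormal{LCS}(\mathcal{Z}_0))$, it solves $(\chi_0 + \ddbar\varphi_0)^n = e^{\varphi_0}\Omega_0$ on the nonsingular locus, and the Monge--Amp\`ere mass does not charge the singular set by vanishing Lelong number and a standard capacity argument. The uniqueness clause in Theorem \ref{song} then identifies $\varphi_0$ with the canonical K\"ahler--Einstein potential of the semi-log canonical model $\mathcal{Z}_0$.

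The main obstacle is the last step: the subsequential limit must be shown to coincide with the canonical K\"ahler--Einstein current \emph{independently} of the choice of subsequence, which is what upgrades subsequential convergence to genuine convergence as $t \to 0$. This forces us to verify all hypotheses of the uniqueness assertion in Theorem \ref{song}, in particular the absence of Monge--Amp\`ere mass on the singular locus and the correct blow-up behavior near $\textnormal{LCS}(\mathcal{Z}_0)$. These follow from the precise lower bound of Proposition \ref{c0pro} combined with the discrepancy inequalities from Lemma \ref{adjun4} (where $a_i = -1$ exactly along the non-log-terminal part), so the key technical input is really the barrier estimate already established. With these verifications in hand, uniqueness closes the argument as in \cite{S4}.
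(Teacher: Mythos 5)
Your proposal is correct and follows essentially the same route as the paper: uniform local higher-order estimates away from $\mathcal{G}$ give a subsequential smooth limit, the limit is shown to be a bounded-above $\chi_0$-plurisubharmonic solution of the degenerate Monge--Amp\`ere equation with vanishing Lelong numbers, and the uniqueness in Theorem \ref{song} (as in \cite{S4}) identifies the limit independently of the subsequence, yielding convergence of the full family. Your extra care about verifying the hypotheses of the uniqueness statement is exactly the point the paper delegates to \cite{S4}.
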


The following lemma establishes the uniform non-collapsing condition for the K\"ahler-Einstein manifolds $(\mathcal{X}_t, g_t)$, for all $t\in B^\circ$ (c.f. Lemma 4.6 in \cite{S4}).

\begin{corollary} \label{noncol} Let $\mathbf{Z}$ be a component of $\mathcal{Z}_0$. There exist a point $p_0$ in the smooth part of $\mathbf{Z}$ , a smooth section $p(t): B \rightarrow \mathcal{Z}$ with  $p(0) =p_0$ and a constant $c >0$ such that for all $t\in B^\circ$,
\begin{equation}
Vol_{g_{t}} (B_{g_{t}}(p(t), 1)) \geq c,
\end{equation}
where $B_{g_{t}}(p(t), 1)$ is the unit geodesic ball centered at $p(t)$ in $(\tilde{\mathcal{X}}_{t}, g_{t})$.

\end{corollary}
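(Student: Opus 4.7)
The strategy is to pick the base point $p_0$ in the smooth stratum of $\mathbf{Z}$ but also off the set $\mathcal{G} = \{F = 0\}$ where the smooth convergence of $\varphi_t$ fails, and then turn the uniform $C^k$ estimates of Lemma 5.3 together with the Schwarz-type lower bound of Lemma 5.2 into a uniform bi-Lipschitz comparison between $g_t$ and a fixed background metric in a coordinate polydisc centered on the section $p(t)$. A uniform lower bound on the Euclidean size of the unit geodesic ball for $g_t$ then yields the volume estimate.

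\textbf{Step 1: Choice of $p_0$ and the section $p(t)$.} The set $\mathcal{G}|_{\tilde{\mathcal{X}}_0}$ is a proper analytic subset of $\tilde{\mathcal{X}}_0$ and, by construction, none of the divisors $\tilde{E}_i$ or $\tilde{\mathcal{E}}_l$ contains the strict transform $\tilde{\mathbf{Z}}$ of $\mathbf{Z}$. Hence the regular locus of $\tilde{\mathbf{Z}}$ intersects $\{F > 0\}$ in a nonempty Zariski open set, and we can pick a point $\tilde p_0$ in this intersection whose image $p_0 = (\Psi \circ \Phi)(\tilde p_0)$ lies in the smooth part of $\mathbf{Z} \subset \mathcal{Z}$. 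By Lemma 4.1, $p_0$ is then a smooth point of the total space $\mathcal{Z}$. Choosing holomorphic coordinates $(w_1, \ldots, w_n, t_1, \ldots, t_d)$ around $p_0$ in which $\pi_{\mathcal{Z}}$ is the projection onto the $t$-factor, we define the smooth section $p(t) = (0, t)$; this satisfies $p(0) = p_0$ and, for $t$ close to $0$, $p(t)$ remains in a small $\Psi \circ \Phi$-biholomorphic neighborhood of $\tilde p_0$, since $\Psi \circ \Phi$ is an isomorphism away from the exceptional locus which is contained in $\mathcal{G}$.

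\textbf{Step 2: Uniform two-sided bound of $g_t$.} Take a compact neighborhood $\mathcal{K}$ of $\tilde p_0$ in $\tilde{\mathcal{X}}$ disjoint from $\mathcal{G}$. Lemma 5.3 gives uniform $C^k$ bounds for $\varphi_t$ on $\mathcal{K} \cap \tilde{\mathcal{X}}_t$ with respect to $\chi_t$, and Lemma 5.2 (applied with any fixed small $\varepsilon > 0$) produces a constant $c_0 > 0$ such that
$$
\omega_t \geq c_0\, \chi_t \quad \text{on } \mathcal{K} \cap \tilde{\mathcal{X}}_t, \qquad t \in B^\circ.
$$
Pulling back through the coordinate chart $(w,t)$ around $p(t)$ and using that $\chi_t$ is uniformly equivalent to the Euclidean metric in these coordinates, we obtain a polydisc $\Delta_r = \{ |w| < r \}$ of a fixed radius $r > 0$ independent of $t$, together with a constant $C > 0$ such that
$$
C^{-1}\, g_{\mathrm{Eucl}} \leq g_t \leq C\, g_{\mathrm{Eucl}} \quad \text{on } \Delta_r.
$$

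\textbf{Step 3: Non-collapsing.} The upper bound implies that the $g_t$-diameter of $\Delta_\rho$ is at most $C^{1/2} \cdot 2n^{1/2}\rho$, so for $\rho = \rho(r, n, C) > 0$ sufficiently small we have $\Delta_\rho \subset B_{g_t}(p(t), 1)$ uniformly in $t$. The lower bound then gives
$$
\mathrm{Vol}_{g_t}\bigl(B_{g_t}(p(t), 1)\bigr) \geq \mathrm{Vol}_{g_t}(\Delta_\rho) \geq C^{-n} \mathrm{Vol}_{\mathrm{Eucl}}(\Delta_\rho) = c > 0.
$$
For $t$ in a compact subset of $B^\circ$ the estimate is automatic by compactness of the family of smooth fibres, and the estimate for $t$ near $0$ is exactly what the preceding steps give; combining these yields the uniform constant $c$. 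The main obstacle is Step 1 — arranging $p_0$ to lie simultaneously in the smooth locus of $\mathbf{Z}$, off of $\mathcal{G}$, and to be extendable to a smooth section — but once this is done, the heavy lifting of Lemmas 5.2, 5.3 and Proposition 4.1 makes the remaining argument essentially formal.
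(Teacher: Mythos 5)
Your proposal is correct and follows essentially the same route the paper intends: the paper states this corollary without proof (deferring to Lemma 4.6 of \cite{S4}), and the implicit argument is exactly yours — pick $p_0$ in the smooth locus of $\mathbf{Z}$ away from $\{F=0\}$ and the exceptional locus, use Lemma \ref{sch1} and Lemma \ref{55} to get a uniform two-sided comparison of $g_t$ with a fixed Euclidean metric in a coordinate polydisc along the section, and read off the volume bound. The only cosmetic point is that your fallback "compactness of $B^\circ$ minus a neighborhood of $0$" is not needed (and $B^\circ$ minus such a neighborhood need not be compact); one simply shrinks $B$ so that the coordinate chart and the uniform estimates cover all of $B^\circ$, as the paper does throughout.
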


All of the results above can be applied to any component of $\mathcal{Z}_0$ and we can always alter the base without changing any fibre so that we can assume $\mathcal{X}=\mathcal{Z}$.


\section{Geometric estimates and convergence}

This section is a generalization of geometric estimate and convergence  in \cite{S2, S4}, where the base of the stable family has dimension $1$.  Let $\mathcal{X} \rightarrow B $ be a stable family of smooth canonical models over a unit Euclidean ball $B\subset \mathbb{C}^d$. Suppose the central fibre $\mathcal{X}_0$ is a semi-log canonical model
$$\mathcal{X}_0= \bigcup_{\alpha=1}^\mathcal{A} X_\alpha,$$
where each $X_\alpha$ is an irreducible component of $\mathcal{X}_0$.  Let $g_t$ be the unique K\"ahler-Einstein metric on $\mathcal{X}_t$ for $t\in B^\circ$. For any sequence $t_j\rightarrow 0$ with
$$t_j \in B^\circ,$$

We  state the main result of this section.

\begin{theorem}\label{main6.4}  Let $\mathcal{X} \rightarrow B$ be a stable family of $n$-dimensional  canonical models over a unit ball $B\subset \mathbb{C}^d$ as defined in Definition \ref{stabfib}. For any sequence $t_j \rightarrow 0$ with $t_j \in B^\circ$, after passing to a subsequence, there exist $m\in \mathbb{Z}^+$ and a sequence of $P_{t_j} = (p_{t_j, 1}, p_{t_j, 2}, ..., p_{t_j, m}) \in \amalg_{k=1}^m \mathcal{X}_{t_j}$, such that $(\mathcal{X}_{t_j}, J_{t_j}, g_{t_j}, P_{t_j})$ converge in pointed Gromov-Hausdorff distance to a finite disjoint union of metric spaces
$$(\mathcal{Y}, d_{\mathcal{Y}})  = \amalg_{k=1}^m (\mathcal{Y}_k, d_k)$$
satisfying the following.

\begin{enumerate}

\item For each $k$, the singular set $\mathcal{S}_k$ of $(\mathcal{Y}_k, d_k)$ is closed of Hausdorff dimension no greater than $2n-4$. $J_j$ converges smoothly to a complex structure $\mathcal{J}_k$ on $\mathcal{Y}_k \setminus \mathcal{S}_k$ and $\mathcal{Y}_k \setminus \mathcal{S}_k$ is a  K\"ahler manifold of complex dimension $n$. Furthermore, $g_j$ converge smoothly to a K\"ahler-Einstein metric $\mathpzc{g}_k$ on $(\mathcal{Y}_k \setminus \mathcal{S}_k, \mathcal{J}_k)$.

\medskip

\item The complex structure $\mathcal{J}_k$ on $\mathcal{Y}_k|_{\mathcal{Y}_k \setminus \mathcal{S}_k}$ uniquely extends to $\mathcal{Y}_k$ for each $k$.
\medskip

\item  $\cup_{k=1}^m(\mathcal{Y}_k, \mathcal{J}_k)$ is biholomorphic to $\mathcal{X}_0\setminus \textnormal{LCS}(\mathcal{X}_0) $. Furthermore, $\amalg_{k=1}^m \mathpzc{g}_k$ extends to  the unique K\"ahler current on $\mathcal{X}_0$.
\medskip

\item $\sum_{k=1}^m \textnormal{Vol}(\mathcal{Y}_k, d_k)= (K_{\mathcal{X}_t})^n$ for all $t\in B^\circ$.

\end{enumerate}

\end{theorem}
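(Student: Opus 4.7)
The plan is to combine the uniform non-collapsing of Corollary \ref{noncol}, the local higher-order estimates of Section 5, and the Cheeger--Colding--Donaldson--Sun structure theory for non-collapsed sequences with Ricci curvature bounds, in order to extract a pointed Gromov--Hausdorff limit one component at a time and then identify it analytically with the corresponding piece of $\mathcal{X}_0 \setminus \textnormal{LCS}(\mathcal{X}_0)$.

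First I would label the irreducible components $X_1, \ldots, X_{\mathcal{A}}$ of the semi-log canonical central fibre $\mathcal{X}_0$ whose log-terminal locus is nonempty. For each such $X_\alpha$, Corollary \ref{noncol} supplies a smooth point $p_\alpha \in \mathcal{R}_{X_\alpha} \setminus \textnormal{LCS}(\mathcal{X}_0)$, a section $p_\alpha(t)$ of $\pi$, and a uniform volume lower bound $\textnormal{Vol}_{g_t}(B_{g_t}(p_\alpha(t),1)) \geq c > 0$. Since $\textnormal{Ric}(g_{t_j}) = -g_{t_j}$, Bishop--Gromov provides matching upper bounds on ball volumes at all scales. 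By the Cheeger--Colding compactness theorem, together with the Cheeger--Colding--Tian and Cheeger--Donaldson--Sun refinements in the K\"ahler--Einstein setting, a diagonal subsequence of each pointed sequence $(\mathcal{X}_{t_j}, g_{t_j}, p_\alpha(t_j))$ converges in the pointed GH topology to a limit $(\mathcal{Y}_\alpha, d_\alpha, p_\alpha^\infty)$ whose singular set $\mathcal{S}_\alpha$ is closed of Hausdorff dimension at most $2n-4$ and whose regular part carries a natural $C^\infty$ K\"ahler--Einstein structure. Anderson's $\varepsilon$-regularity combined with the interior $C^k$ bounds of Lemma \ref{55} then promotes GH convergence to smooth convergence of tensors away from $\mathcal{S}_\alpha$, yielding (1).

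Next I would identify $(\mathcal{Y}_\alpha, \mathcal{J}_\alpha, \mathpzc{g}_\alpha)$ biholomorphically and isometrically with the log-terminal part of $X_\alpha$, establishing (2) and (3). The stable family already furnishes the pluricanonical embedding $\Psi: \mathcal{X} \to B \times \mathbb{CP}^N$ from Section 4, and the Tian--Donaldson--Sun partial $C^0$-estimate applies in the non-collapsed region around $p_\alpha(t_j)$. On the other hand, Section 5 shows that the potentials $\varphi_{t_j}$ converge smoothly on the regular log-terminal part of each component to the unique K\"ahler--Einstein potential $\varphi_0$ of Theorem \ref{song}. Pulling back $\Psi$ to the GH limit and matching with this analytic limit of $\varphi_{t_j}$ yields a holomorphic map from $\mathcal{Y}_\alpha \setminus \mathcal{S}_\alpha$ onto $X_\alpha \setminus (\textnormal{LCS}(\mathcal{X}_0) \cup \mathcal{S}_{X_\alpha})$ intertwining $\mathpzc{g}_\alpha$ with $\omega_0|_{X_\alpha}$. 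Since $\mathcal{S}_\alpha$ has real codimension at least four, this map extends across $\mathcal{S}_\alpha$ by Hartogs-type extension, and the uniqueness part of Theorem \ref{song} pins down the metric as $\omega_{KE}|_{X_\alpha}$.

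Finally, for the volume equality (4), flatness of $\pi$ gives $\int_{\mathcal{X}_{t_j}} \omega_{t_j}^n = (K_{\mathcal{X}_{t_j}})^n = (K_{\mathcal{X}_0})^n$ independently of $j$, while Theorem \ref{song}(4) asserts $\int_{\mathcal{X}_0} \omega_0^n = (K_{\mathcal{X}_0})^n$ with the Monge--Amp\`ere mass supported on $\mathcal{X}_0 \setminus \textnormal{LCS}(\mathcal{X}_0)$. Smooth convergence of $\omega_{t_j}^n$ on compact exhaustions of the log-terminal locus yields $\sum_k \textnormal{Vol}(\mathcal{Y}_k, d_k) \geq (K_{\mathcal{X}_0})^n$, and the reverse inequality follows from lower semicontinuity of volume under pointed GH convergence. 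The main obstacle I anticipate is the identification in the previous step: proving that every log-terminal component of $\mathcal{X}_0$ gives rise to exactly one GH limit component, that distinct components do not merge, and that no Monge--Amp\`ere mass escapes into $\textnormal{LCS}(\mathcal{X}_0)$ in the limit. This requires combining the Schwarz-lemma-with-barrier estimate of Lemma \ref{sch1}, the uniform $C^0$-bounds of Proposition \ref{c0pro}, and the refined distance control developed in \cite{S4} to rule out collapse toward the non-log-terminal locus and to separate the pointed limits based at different $p_\alpha(t_j)$.
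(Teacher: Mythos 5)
Your proposal follows essentially the same roadmap as the paper's Section 6: Corollary \ref{noncol} gives uniform non-collapsing, Cheeger--Colding theory plus the Cheeger--Colding--Tian/Donaldson--Sun refinements give the pointed GH limit with small singular set, Lemma \ref{55} gives smooth convergence of $\varphi_{t_j}$ on the regular log-terminal part, and partial $C^0$-estimates plus distance control identify the limit with $\mathcal{X}_0\setminus\textnormal{LCS}(\mathcal{X}_0)$ and its K\"ahler--Einstein current. You also correctly isolate the hardest parts (bijectivity of components and absence of volume escape) at the end.

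However, the step ``Since $\mathcal{S}_\alpha$ has real codimension at least four, this map extends across $\mathcal{S}_\alpha$ by Hartogs-type extension'' would not go through as stated. Before any Hartogs argument can be invoked, $\mathcal{Y}_\alpha$ must already carry a complex analytic structure at the metric singular set, but $\mathcal{Y}_\alpha$ is a priori only the metric completion of an incomplete K\"ahler manifold: producing the analytic structure \emph{is} the content to be proved, not a given. The paper's mechanism is the Donaldson--Sun type construction of $L^2$ pluricanonical peak sections (Lemmas \ref{l41}, \ref{ext1}, \ref{par0}, Corollary \ref{par1}) which embed the GH limit into $\mathbb{CP}^N$ and thereby endow $\mathcal{Y}_\alpha$ with a projective variety structure; only afterward is the variety compared with the algebraic central fibre (Lemma \ref{regularid}) and the metric singular set matched with the algebraic singular set. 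The distance estimate Lemma \ref{fest} --- finite distance within $\mathcal{X}_0\setminus\textnormal{LCS}(\mathcal{X}_0)$, infinite distance to $\textnormal{LCS}(\mathcal{X}_0)$ --- then does the work of showing that the $m$ base points yield exactly one GH component per irreducible component of $\mathcal{X}_0$ and that no volume is lost to the boundary. Your ``lower semicontinuity of volume'' remark for (4) should likewise be replaced by Colding volume convergence on bounded regions of the non-collapsed limit combined with Theorem \ref{song}(4), which controls where the Monge--Amp\`ere mass of $\omega_{KE}$ lives.
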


In fact, $m$ is the number of components of $\mathcal{X}_0$. The rest of the section is devoted to the proof of Theorem \ref{main6.4} based on the work in \cite{S4} (c.f. section 5, 6, 7 in \cite{S4}). 

Lemma \ref{55} implies that the   K\"ahler-Einstein metrics $g_{t_j}$ converge smoothly  to the unique canonical K\"ahler-Einstein metric on the central fibre $\mathcal{X}_0$ on a Zariski open set $\mathcal{U} \subset \mathcal{R}_{\mathcal{X}_0}$ of $\mathcal{X}_0$. We can pick  $\mathcal{A}$-tuple of nonsingular points as in the previous section such that
$$(p_0^{1}, p_0^{2}, ..., p_0^{\mathcal{A}}), ~~p_0^{\alpha} \in X_\alpha \cap \mathcal{U}, ~\alpha=1, ..., \mathcal{A},$$
where $\mathcal{A}$ is the number of the components of $\mathcal{X}_0$.

Let $(p_{t_j}^{1}, p_{t_j}^{2}, ..., p_{t_j}^{\mathcal{A}})$  a  sequence of $\mathcal{A}$-tuples of points $\in \mathcal{X}_{t_j}$ with $t_j \rightarrow 0$ such that
$$(p_{t_j}^{1}, p_{t_j}^{2}, ..., p_{t_j}^{\mathcal{A}})  \rightarrow (p_0^{1}, p_0^{2}, ..., p_0^{\mathcal{A}}) $$
 with respect to the fixed reference metric $\chi$ on $\mathcal{X}$.  We would like to study the Riemannian geometric convergence of $(\mathcal{X}_{t_j}, g_{t_j})$ as $t_j \rightarrow 0$.

\begin{lemma}  \label{GHcon}
After possibly passing to a subsequence, $(\mathcal{X}_{t_j},  g_{t_j}, (p_{t_j}^1, ..., p_{t_j}^\mathcal{A}))$ converges in pointed Gromov-Hausdorff topology to a metric length space
$$(\mathbf{Y}, d_{\mathbf{Y}})= \coprod_{\beta=1}^\mathcal{B} (Y_\beta, d_\beta)$$
as a disjoint union of  metric length spaces $(Y_\beta, d_\beta),$
satisfying the following. 
\begin{enumerate}

\item $\mathbf{Y} = \mathcal{R}_{\mathbf{Y}} \cup \mathcal{S}_{\mathbf{Y}}$, where $\mathcal{R}_{\mathbf{Y}}$ and $\mathcal{S}_{\mathbf{Y}}$ are the regular and singular part of $\mathbf{Y}$.
 $\mathcal{R}_{\mathbf{Y}}$ is an open K\"ahler manifold  and $\mathcal{S}_{\mathbf{Y}}$ is closed of Hausdorff dimension no greater than $2n-4$.

\medskip

\item $g_{t_j}$ converge smoothly to a K\"ahler-Einstein metric $g_{KE}$ on $\mathcal{R}_{\mathbf{Y}}$. \medskip

\item $\mathcal{R}_{\mathcal{X}_0}$ is an open dense set in $(\mathbf{Y}, d_{\mathbf{Y}})$ and
$$\mathcal{R}_{\mathcal{X}_0} \subset \mathcal{R}_{\mathbf{Y}}.$$
In particular, $g_{KE}$ coincides with the unique K\"ahler-Einstein current constructed in Theorem \ref{song} on $\mathcal{Z}_0$.

\medskip

\item  $\mathcal{B} \leq \mathcal{A}$ and
$$ \textnormal{Vol}(\mathbf{Y}, d_{\mathbf{Y}}) = \sum_{\beta=1}^\mathcal{B} \textnormal{Vol}(Y_\beta, d_\beta) = \textnormal{Vol}(\mathcal{X}_t, g_t) $$
for all $t\in B^\circ$.

\end{enumerate}

\end{lemma}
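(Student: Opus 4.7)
The plan is to apply the Cheeger-Colding-Anderson regularity and convergence theory to the sequence $(\mathcal{X}_{t_j}, g_{t_j})$, which enjoys the uniform Ricci lower bound $\textnormal{Ric}(g_{t_j}) = -g_{t_j}$, and to combine the resulting abstract pointed Gromov-Hausdorff limit with the analytic estimates of Sections 4 and 5 in order to identify the regular part of the limit with the canonical K\"ahler-Einstein current on $\mathcal{X}_0 \setminus \textnormal{LCS}(\mathcal{X}_0)$.

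First I would apply Gromov's pointed precompactness theorem at each of the $\mathcal{A}$ basepoints $p_{t_j}^\alpha$ separately. The hypotheses are met because $\textnormal{Ric}(g_{t_j}) \geq -g_{t_j}$ and Corollary \ref{noncol} (applied to each component $X_\alpha$) provides a uniform non-collapsing constant $c > 0$ with $\textnormal{Vol}_{g_{t_j}}(B_{g_{t_j}}(p_{t_j}^\alpha,1)) \geq c$. After extracting a subsequence and discarding duplicates that arise when two basepoints converge into the same limiting component, I obtain pointed Gromov-Hausdorff convergence of $(\mathcal{X}_{t_j}, g_{t_j}, (p_{t_j}^1, \ldots, p_{t_j}^{\mathcal{A}}))$ to a disjoint union $\mathbf{Y} = \coprod_{\beta=1}^{\mathcal{B}}(Y_\beta, d_\beta)$ of complete length spaces with $\mathcal{B} \leq \mathcal{A}$.

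Parts (1) and (2) then follow from the non-collapsed Cheeger-Colding structure theory, which decomposes $\mathbf{Y}$ into a closed singular set $\mathcal{S}_{\mathbf{Y}}$ of Hausdorff dimension at most $2n-4$ and an open regular set $\mathcal{R}_{\mathbf{Y}}$, combined with Anderson's $\varepsilon$-regularity and elliptic bootstrap in harmonic coordinates to produce smooth convergence of $g_{t_j}$ and of the complex structures $J_{t_j}$ to a K\"ahler-Einstein metric $g_{KE}$ on $\mathcal{R}_{\mathbf{Y}}$. For part (3), Lemma \ref{55} provides smooth convergence of $\varphi_{t_j}$ to a limit potential on the Zariski open dense set $\mathcal{R}_{\mathcal{X}_0} \setminus \mathcal{G}$, and this smooth convergence, combined with the Kähler potential estimates of Proposition \ref{c0pro}, identifies $\mathcal{R}_{\mathcal{X}_0} \setminus \mathcal{G}$ as an open subset of $\mathcal{R}_{\mathbf{Y}}$ on which $g_{KE}$ agrees with the solution constructed in the previous section. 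Since the exceptional locus $\mathcal{G}$ meets $\mathcal{R}_{\mathcal{X}_0}$ in a set of real codimension at least $2$ and since the smooth K\"ahler-Einstein metric on $\mathcal{R}_{\mathcal{X}_0}$ is unique by Theorem \ref{song}, this extends to a smooth embedding of $\mathcal{R}_{\mathcal{X}_0}$ into $\mathcal{R}_{\mathbf{Y}}$ with $g_{KE}$ restricting to the unique canonical K\"ahler-Einstein current.

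The main obstacle is part (4), the no-loss-of-volume statement
\begin{equation*}
\sum_{\beta=1}^{\mathcal{B}} \textnormal{Vol}(Y_\beta, d_\beta) = [K_{\mathcal{X}_t}]^n.
\end{equation*}
Flatness of $\pi$ and $\pi$-ampleness of $K_{\mathcal{X}/B}$ make the right-hand side independent of $t \in B^\circ$. On the $t_j$ side, $\textnormal{Vol}(\mathcal{X}_{t_j}, g_{t_j}) = \int_{\mathcal{X}_{t_j}} \omega_{t_j}^n = [K_{\mathcal{X}_{t_j}}]^n$ and the uniform $L^\infty$-bound $\varphi_{t_j} \leq C$ from Proposition \ref{c0pro} together with Lemma \ref{vol1} control the K\"ahler-Einstein densities $e^{\varphi_{t_j}}\Omega_{t_j}$ against the reference forms. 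The upper bound $\sum_\beta \textnormal{Vol}(Y_\beta, d_\beta) \leq [K_{\mathcal{X}_t}]^n$ is immediate from lower semicontinuity of volume under Gromov-Hausdorff convergence and the Bishop-Gromov comparison. The matching lower bound is the delicate step: I would show that for each fixed $R > 0$, the balls $B_{g_{t_j}}(p_{t_j}^\alpha, R)$ are relatively compact in the total space away from $\textnormal{LCS}(\mathcal{X}_0)$, using the Schwarz-type inequality of Lemma \ref{sch1} together with the lower bound in Proposition \ref{c0pro} to prevent distance from blowing up except along the non-log-terminal locus. Cheeger-Colding volume convergence on such balls, together with Theorem \ref{song}(4) asserting that $\omega_{KE}^n$ does not charge the singularities of $\mathcal{X}_0$, then yields the matching lower bound by exhausting $\mathcal{R}_{\mathcal{X}_0} \setminus \mathcal{G}$ with such balls and sending $R \to \infty$.
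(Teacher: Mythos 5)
Your proposal is correct and follows essentially the same route as the paper: non-collapsing from Corollary \ref{noncol} plus Gromov precompactness and Cheeger--Colding--Anderson theory for (1)--(2), extension of the smooth convergence from $\mathcal{R}_{\mathcal{X}_0}\setminus\mathcal{G}$ across the codimension-$\geq 2$ set $\mathcal{G}\cap\mathcal{R}_{\mathcal{X}_0}$ for (3), and volume convergence together with Theorem \ref{song}(4) for (4). The only place you are briefer than the paper is the mechanism behind (3): the paper makes the codimension count work by showing $B_{g_{KE}}(p,\delta)\cap\mathcal{U}$ is almost geodesically convex, so its metric completion inside $(\mathbf{Y},d_{\mathbf{Y}})$ agrees with its intrinsic completion and the whole ball embeds isometrically --- that, rather than uniqueness of the K\"ahler--Einstein current, is what forces $p\in\mathcal{R}_{\mathbf{Y}}$.
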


\begin{proof} The non-collapsing result in Corollary \ref{noncol} is the key ingredient to derive pointed Gromov-Hausdorff convergence for $(\mathcal{X}_{t_j}, g_{t_j})$. The proof of the lemma is identical to the proof of Lemma in \cite{S4} except for the statement (3) because we only derive $C^0$-convergence as well as $C^\infty$-convergence for $\varphi_t$ on $\mathcal{U}\subset \mathcal{R}_{\mathcal{X}_0}$, a Zariski open dense of $\cX_0$, by Lemma \ref{55} while such convergence is established everywhere on the regular part of the central fibre in \cite{S4} due to the fact that the birational morphism only takes place in the central fibre for semi-stable reduction over one-dimensional base. Now we will prove (3). First, it is obvious that
$$\mathcal{U} \subset \mathcal{R}_{\mathbf{Y}} $$
and $g_{t_j}$ converges smoothly to $g_{KE}$ on $ \mathcal{U}$.  Since $\mathcal{X}_0\setminus \mathcal{U}$ is a closed subvariety of $\cX_0$ of Hausdorff dimension no greater than $2n-2$  and $g_{KE}$ is smooth on $\mathcal{U}$, for any $p \in \mathcal{R}_{\cX_0}$, then there exists sufficiently small $\delta>0$ such that $B_{g_{KE}}(p, \delta)\cap \mathcal{U}$ is almost geodesically convex in $B_{g_{KE}}(p, \delta) \subset \cX_0$, i.e. for any point $p_1, p_2\in B_{g_{KE}}(p, \delta)\cap\mathcal{U}$ and $\epsilon>0$, there exists a smooth path $\gamma\in B_{g_{KE}}(p, \delta)\cap \mathcal{U}$ such that $$\mathcal{L}_{g_{KE}}(\gamma) \leq d_{g_{KE}}(p_1, p_2) + \epsilon. $$
Therefore the metric completion of $B_{g_{KE}}(p, \delta)\cap \mathcal{U}$ with respect to $g_{KE}$ coincides with its metric completion in $(\mathbf{Y}, d_{\mathbf{Y}})$ and so the entire $B_{g_{KE}}(p, \delta)$ is indeed isometrically embedded in $(\mathbf{Y}, d_{\mathbf{Y}})$. Therefore $p \in \mathcal{R}_\mathbf{Y}$ and
$$\mathcal{R}_{\cX_0} \subset \mathcal{R}_{\mathbf{Y}}. $$
We can furthermore conclude that $\varphi_{t_j}$ converges smoothly (with fixed gauge) to $g_{KE}$ on $\mathcal{R}_{\cX_0}$. The rest of the proof follows from the argument in section 5 of \cite{S4}.

\end{proof}

By applying Lemma \ref{GHcon}, we can improve Lemma \ref{sch1}  by removing the barrier function at the central fibre of $\cX$ by using the same argument as in \cite{S4}.
\begin{lemma}
Let $\omega_0=  \chi_0+ \ddbar \varphi_0$ be the unique K\"ahler-Einstein current on $\mathcal{X}_0$.  For any compact set $K \subset\subset \cX_0\setminus \textnormal{LCS}(\cX_0) $ in the central fibre $\cX_0$ of the original stable family $\cX$, there exists $c=c(K) >0$ such that
$$\omega_0 \geq c \chi_0 $$ on $K\cap \mathcal{R}_{\mathcal{X}_0}$.

\end{lemma}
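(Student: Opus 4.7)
The strategy is to carry out the same maximum-principle argument as in Lemma~\ref{sch1}, but now on a log resolution of $\cX_0$ directly, making crucial use of two new inputs that are unavailable for $t\neq 0$: (i) by Theorem~\ref{song}\,(2), $\varphi_0$ is locally bounded on $\cX_0\setminus\textnormal{LCS}(\cX_0)$, and (ii) by Lemma~\ref{GHcon}, $\omega_0$ is a genuine smooth K\"ahler-Einstein metric on $\mathcal{R}_{\cX_0}$ satisfying $\textnormal{Ric}(\omega_0)=-\omega_0$, not merely a positive current.

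First, I would fix a log resolution $\hat\pi : \hat\cX_0\to \cX_0$ over a neighborhood of $K$.  Because $K\subset\subset \cX_0\setminus\textnormal{LCS}(\cX_0)$, each exceptional prime divisor $\hat E_i$ of $\hat\pi$ meeting $\hat K=\hat\pi^{-1}(K)$ has discrepancy $a_i>-1$, so $\hat\varphi_0 := \hat\pi^*\varphi_0$ is uniformly bounded on the compact set $\hat K$.  Setting $\hat\omega_0=\hat\pi^*\omega_0$ and $\hat\chi_0=\hat\pi^*\chi_0$, by Lemma~\ref{GHcon} $\hat\omega_0$ is a smooth K\"ahler-Einstein form away from $\hat E=\sum_i \hat E_i$.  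Choose a defining section $\sigma_{\hat E}$ of $\hat E$ with smooth hermitian metric $h_{\hat E}$, and apply the Chern-Lu/Schwarz calculation to
\[
H=\log \textnormal{tr}_{\hat\omega_0}(\hat\chi_0)+\epsilon\log|\sigma_{\hat E}|_{h_{\hat E}}^2 - A\hat\varphi_0
\]
on $\hat K\setminus\hat E$, for small $\epsilon>0$ and $A$ larger than the bisectional curvature of $\chi_0$.  The Chern-Lu inequality together with $\textnormal{Ric}(\hat\omega_0)=-\hat\omega_0$ yields $\Delta_{\hat\omega_0}H\geq \textnormal{tr}_{\hat\omega_0}(\hat\chi_0)-C$ on $\hat K\setminus\hat E$.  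Because $\hat\varphi_0$ has vanishing Lelong number along $\hat E$ while $\epsilon\log|\sigma_{\hat E}|^2\to-\infty$ there, the maximum of $H$ is attained at an interior point of $\hat K\setminus\hat E$; inspecting the equation there and using the uniform bound on $\hat\varphi_0$ yields
\[
\textnormal{tr}_{\hat\omega_0}(\hat\chi_0)\leq C_\epsilon|\sigma_{\hat E}|_{h_{\hat E}}^{-2\epsilon} \quad\text{on}\ \hat K\setminus\hat E.
\]

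Pushing down to $\cX_0$, this reads $\omega_0\geq c_\epsilon|\sigma_{\hat E}|_{h_{\hat E}}^{2\epsilon}\chi_0$ on $K\cap\mathcal{R}_{\cX_0}$ (interpreting the barrier via any local holomorphic inverse of $\hat\pi$ on the regular locus).  To eliminate the residual factor $|\sigma_{\hat E}|^{2\epsilon}$ and obtain the uniform constant $c(K)$, I would appeal once more to Lemma~\ref{GHcon}: since the $g_{t_j}$ converge smoothly to $\omega_0$ on $\mathcal{R}_{\mathbf{Y}}\supset\mathcal{R}_{\cX_0}$, any sequence of points $q_k\in K\cap\mathcal{R}_{\cX_0}$ at which $\omega_0/\chi_0$ would degenerate must accumulate on $\mathcal{S}_{\cX_0}\cap K$; but then the barrier-laden bound above combined with boundedness of $\hat\varphi_0$ near such accumulation points on $\hat\cX_0$ and the non-charging of mass from Theorem~\ref{song}\,(4) produces a contradiction, refining the $\epsilon$-dependent bound into the desired $\omega_0\geq c(K)\chi_0$.

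The main obstacle is the final step of removing the barrier $|\sigma_{\hat E}|^{2\epsilon}$: the Chern-Lu maximum-principle computation is routine, but uniformity of the lower bound across log terminal singularities in $K$ requires the non-degeneracy of $\omega_0$ on all of $\mathcal{R}_{\cX_0}$, which is not visible from the $t\neq 0$ Schwarz estimate alone and is precisely what the smooth convergence in Lemma~\ref{GHcon} supplies at $t=0$.
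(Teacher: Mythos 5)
Your first step — the Chern--Lu computation with the barrier $\epsilon\log|\sigma_{\hat E}|^2_{h_{\hat E}}$ — is fine, but it only reproduces (the $t=0$ limit of) Lemma \ref{sch1}: it yields $\omega_0\geq c_\epsilon|\sigma_{\hat E}|^{2\epsilon}_{h_{\hat E}}\chi_0$, a bound that degenerates exactly where the lemma has content, namely as one approaches the log terminal singular set $\mathcal{S}_{\cX_0}\cap K$ from within $K\cap\mathcal{R}_{\cX_0}$. The entire point of the lemma is the removal of that barrier, and your removal step is not a proof. None of the three facts you invoke controls the quantity you need: boundedness of $\varphi_0$ near a klt point is an $L^\infty$ statement about the potential and does not prevent $\textnormal{tr}_{\omega_0}(\chi_0)$ from blowing up along a sequence $q_k\to q_\infty\in\mathcal{S}_{\cX_0}$ (a bounded $\chi_0$-psh potential can have some eigenvalues of $\chi_0+\ddbar\varphi_0$ tend to $0$ while others blow up, keeping both the potential and the Monge--Amp\`ere measure under control); vanishing Lelong numbers again only bound $\varphi_0$ from below; and the non-charging of mass in Theorem \ref{song}(4) is a measure-theoretic statement at the singular set that says nothing about pointwise eigenvalue ratios on the regular part nearby. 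So no contradiction is actually derived, and the "main obstacle" you correctly identify is left unresolved.

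The mechanism the paper relies on (deferred to \cite{S4}) is Riemannian-geometric rather than a refinement of the maximum principle. One uses that $\chi_0=\frac{1}{m}\Phi^*\omega_{FS}$ for the pluricanonical map $\Phi=[\eta_0:\cdots:\eta_N]$, and compares it with $\omega_0$ as a Bergman-type metric: on $K$ one has $\sum_i|\eta_i|^2_{(h_0)^m}=e^{-m\varphi_0}\geq e^{-mC}$ by the uniform upper bound on $\varphi_0$ (Lemma \ref{5upb}), while the Einstein condition together with the non-collapsing of Corollary \ref{noncol} gives, via Moser iteration as in Lemma \ref{l41}, uniform $L^{\infty,\sharp}$ and gradient bounds for the $\eta_i$ on geodesic balls of fixed radius; since $K\cap\mathcal{R}_{\cX_0}$ sits inside a geodesic ball of radius controlled by $K$ (the distance estimate of Lemma \ref{fest}(1), available from Lemma \ref{GHcon} and the smooth convergence on $\mathcal{R}_{\cX_0}$), the standard computation $\Phi^*\omega_{FS}\leq C\,g_0$ from a lower bound on the Bergman sum and an upper bound on the gradients yields $\textnormal{tr}_{\omega_0}(\chi_0)\leq C(K)$ uniformly, i.e.\ $\omega_0\geq c(K)\chi_0$. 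This is precisely the input "not visible from the $t\neq 0$ Schwarz estimate" that you gesture at, but it comes from the section estimates over uniformly non-collapsed Einstein balls, not from the boundedness of the potential or the non-charging of mass. To repair your write-up you would need to import this Bergman comparison (or an equivalent $\epsilon$-regularity statement at the accumulation point $q_\infty$) explicitly.
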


 The following proposition can be proved by similar arguments in \cite{T1, DS1} as local $L^2$-estimates from Tian's proposal for the partial $C^0$-estimates. We let $h_t = ((\omega_t)^n)^{-1}= (e^{\varphi_t}\Omega_t )^{-1}$ be the hermitian metric on $\mathcal{X}_t$ for $t\in B^\circ$, where $\omega_t$ is K\"ahler-Einstein form associated to the Kahelr-Einstein metric $g_t$ on $\mathcal{X}_t$, $\Omega_t$ and $\varphi_t$ are defined in Section 4.

\begin{lemma} \label{l41}   Let $p_0 \in \mathcal{R}_{\mathcal{X}_0}$ and $p: B \rightarrow \mathcal{X}$ be a smooth section with $p(0)=p_0$.  For any $R>0$, there exists  $K_R >0$ such that if  $\sigma \in H^0(\mathcal{X}_t, mK_{\mathcal{X}_t})$ for $m\geq 1$ with $t\in B^\circ$, then
\begin{equation}\label{l51}
\|\sigma \|_{L^{\infty, \sharp}(B_{g_t}(p(t), R))} \leq K_R \| \sigma \|_{L^{2, \sharp}(B_{g_t}(p(t), 2R) )}
\end{equation}
\begin{equation}\label{l52}
 \|\nabla \sigma \|_{L^{\infty, \sharp}(B_{g(t)}(p(t), R)) } \leq K_R \|\sigma \|_{L^{2, \sharp}(B_{g_t}(p(t), 2R))},
\end{equation}
where $B_{g_t}(p(t), R)$ is the geodesic ball centered at $p(t)$ with radius $R$ in $(\mathcal{X}_t, g_t)$, the $L^2$-norms $||\sigma||_{L^{\infty,\sharp}}$ and  $||\nabla \sigma||_{L^{\infty,\sharp }}$ are defined with respect to the rescaled hermitian metric $(h_t)^m$ and the rescaled K\"ahler metric $mg_t$.

\end{lemma}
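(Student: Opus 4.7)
The plan is to deduce (\ref{l51}) from a mean value inequality for subsolutions of an elliptic PDE on manifolds with bounded Ricci curvature and non-collapsed volume, applied in the rescaled geometry $(\cX_t, \tilde g_t := mg_t)$. The essential observation is that since the Chern curvature of $(mK_{\cX_t}, h_t^m)$ equals $m\omega_t$ and $\ric(g_t) = -g_t$, a direct Bochner computation gives $\Delta_{g_t}\log|\sigma|^2_{h_t^m} = -mn$ on $\{\sigma\neq 0\}$; setting $u := |\sigma|^2_{h_t^m}$ this yields the global inequality $\Delta_{\tilde g_t} u \ge -nu$ with the constant $n$ independent of $m$. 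At the same time, $\ric(\tilde g_t) = -(1/m)\tilde g_t \ge -\tilde g_t$, so the rescaled Ricci lower bound is also uniform in $m \ge 1$.

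With these uniform analytic inputs in hand, I would fix a point $q^\ast \in B_{g_t}(p(t), R)$ realizing the supremum of $u$ and apply Moser iteration on the unit $\tilde g_t$-ball centered at $q^\ast$, which (after a harmless adjustment of $R$) is contained in $B_{g_t}(p(t), 2R)$ since the unit $\tilde g_t$-ball has $g_t$-radius $1/\sqrt{m}$. Corollary \ref{noncol} combined with Bishop--Gromov volume comparison provides uniform two-sided volume bounds on this unit ball, and the smooth convergence of Lemma \ref{GHcon} yields a uniform Sobolev inequality, so the resulting Moser constant depends only on $n$, $R$, and the non-collapsing constant. This delivers (\ref{l51}).

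For the gradient estimate (\ref{l52}) I would apply the Bochner--Kodaira--Nakano identity to $|\nabla\sigma|^2_{h_t^m,\tilde g_t}$; the contribution from $\ric(\tilde g_t)$ together with the curvature of $(mK_{\cX_t},h_t^m)$ combine into a uniform differential inequality of the form $\Delta_{\tilde g_t}|\nabla\sigma|^2 \ge -C(|\nabla\sigma|^2 + u)$ with $C$ independent of $m$. A second Moser iteration on a slightly smaller ball, using the $L^\infty$ bound on $u$ from the first step to absorb the source term, then yields (\ref{l52}).

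The main obstacle is ensuring genuine uniformity in $m$: the physical ball $B_{g_t}(p(t), 2R)$ has rescaled radius $2R\sqrt{m}$, which tends to infinity with $m$, so a naive Moser iteration on the entire ball would produce constants depending on $m$. What rescues the argument is that the rescaled Ricci lower bound $-1/m$ degenerates to zero as $m\to\infty$, so the geometry of $(\cX_t, \tilde g_t)$ flattens out on unit scales and all the relevant Moser iteration constants stabilize; the final estimate reduces to its Euclidean counterpart modulo the non-collapsing constant from Corollary \ref{noncol}.
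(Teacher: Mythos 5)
Your proposal is correct and follows essentially the same route as the paper: the Einstein condition plus the non-collapsing of Corollary \ref{noncol} (propagated to nearby points by volume comparison) give a uniform Sobolev constant at unit scale in the rescaled metric $mg_t$, and Moser iteration applied to the Bochner differential inequalities for $|\sigma|^2_{h_t^m}$ and $|\nabla\sigma|^2$ yields the two estimates with constants independent of $m$ and $t$. The only nitpick is that $\Delta_{g_t}\log|\sigma|^2_{h_t^m}\geq -mn$ is an inequality rather than an equality off the zero set, but that is exactly the direction needed for the subsolution estimate, so the argument is unaffected.
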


\begin{proof} For fix $R>0$, the Sobolev constant on $B_{g_t}(p(t), R)$ is uniformly bounded because of the Einstein condition and the uniform noncollapsing condition by Corollary \ref{noncol} for unit balls centered at $p(t)$. The proof follows by well-known argument of Moser's iteration on balls of relative scales using cut-off functions (c.f \cite{S2}).

\end{proof}

The following lemma gives a construction for global pluricanonical section on the limiting metric space $\mathbf{Y}$.

\begin{lemma} \label{ext1} Suppose $t_j \in B^\circ \rightarrow 0$ and $\sigma_{t_j} \in H^0(\mathcal{X}_t, mK_{\mathcal{X}_t})$ be a sequence of sections satisfying
$$\int_{\mathcal{X}_{t_j}} |\sigma_{t_j}|^2_{(h_{t_j})^m} dV_{mg_{t_j}} = 1. $$
Then after passing to a subsequence, $\sigma_{t_j}$ converges to a holomorphic section $\sigma$ of $mK_{\mathbf{Y}}$. Furthermore, the $\sigma|_{\mathcal{R}_{\cX_0}}$ extends to a unique $\sigma' \in H^0(\cX_0, mK_{\cX_0})$ and $\sigma$ vanishes along $\textnormal{LCS}(\cX_0)$.

\end{lemma}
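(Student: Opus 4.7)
The plan is to combine the local estimates from Lemma \ref{l41} with a diagonal extraction to build the limit $\sigma$ on the regular locus, then extend across singularities using the structure from Theorem \ref{song}. First I would cover $\mathcal{R}_\mathbf{Y}$ by an exhaustion $\{K_\ell\}$ of compact subsets, each contained in a finite union of geodesic balls $B_{g_{t_j}}(p_k(t_j), R_k)$ along smooth sections $p_k : B \rightarrow \mathcal{X}$ with $p_k(0) \in \mathcal{R}_{\mathcal{X}_0}$, using Corollary \ref{noncol} for existence and the uniform non-collapsing. Applying Lemma \ref{l41} on each such ball yields uniform $L^\infty$ and $C^1$ bounds on the rescaled $\sigma_{t_j}$, and the Cauchy integral representation for holomorphic sections with bounded coefficients upgrades these to uniform $C^k$ bounds on $K_\ell$. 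A diagonal Arzel\`a--Ascoli argument then produces, after passing to a subsequence, a limit $\sigma$ converging smoothly on each $K_\ell$; since the complex structures converge smoothly on $\mathcal{R}_\mathbf{Y}$ by Lemma \ref{GHcon}(1) and $\dbar \sigma_{t_j} = 0$, the limit $\sigma$ is a holomorphic section of $mK_{\mathbf{Y}}|_{\mathcal{R}_\mathbf{Y}}$, with $\int_{\mathcal{R}_\mathbf{Y}} |\sigma|^2_{(h_\infty)^m}\, dV_{mg_{KE}} \leq 1$ by Fatou.

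By Lemma \ref{GHcon}(3) we have $\mathcal{R}_{\mathcal{X}_0} \subset \mathcal{R}_\mathbf{Y}$ with matching complex structures, so $\sigma|_{\mathcal{R}_{\mathcal{X}_0}}$ is a genuine pluricanonical form on the regular locus of $\mathcal{X}_0$. To extend to all of $\mathcal{X}_0$, I would invoke Theorem \ref{song}(2): $\varphi_0$ is locally bounded on $\mathcal{X}_0 \setminus \textnormal{LCS}(\mathcal{X}_0)$, so $\omega_0^n$ is locally comparable there to the adapted measure $\Omega_0$, which is locally integrable on that open set. The uniform $L^2$-bound on $\sigma$ then translates into local integrability of the coordinate-free canonical density $(\sigma \wedge \bar\sigma)^{1/m}$ on $\mathcal{X}_0 \setminus \textnormal{LCS}(\mathcal{X}_0)$. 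Pulling back via a log resolution shows that the pullback of $\sigma$ has poles only along exceptional divisors of discrepancy strictly greater than $-1$, a standard removable singularities setting for pluricanonical forms. Extension across the codimension-one nodal singularities of $\mathcal{X}_0$ is handled via the normalization $\nu : \mathcal{X}_0^\nu \rightarrow \mathcal{X}_0$: sections of $mK_{\mathcal{X}_0}$ correspond to sections of $m(K_{\mathcal{X}_0^\nu} + \textnormal{cond}(\nu))$ with matching residues along $\textnormal{cond}(\nu)$, and this matching is automatic since $\sigma$ is a smooth limit of honest pluricanonical forms on the nearby smooth fibers $\mathcal{X}_{t_j}$. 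This produces the unique extension $\sigma' \in H^0(\mathcal{X}_0, mK_{\mathcal{X}_0})$.

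The vanishing of $\sigma'$ along $\textnormal{LCS}(\mathcal{X}_0)$ is then forced by the global $L^2$-bound: near LCS the adapted measure $\Omega_0$ fails to be locally integrable (discrepancy equals $-1$), while Theorem \ref{song}(3) controls $\varphi_0$ by growth milder than any log pole, so the finiteness of $\int |\sigma|^2_{(h_0)^m}\, dV_{g_0}$ can only hold if the canonical density of $\sigma'$ vanishes along $\textnormal{LCS}(\mathcal{X}_0)$. The main obstacle I anticipate is the extension step across the codimension-one nodes, since Hartogs-type removability only covers codimension at least two; the normalization argument above should suffice but requires a careful verification of the residue matching, and a cleaner alternative would be to use that $mK_{\mathcal{X}/B}$ is a $\pi$-ample coherent sheaf on $\mathcal{X}$ to extract $\sigma'$ directly from global sections of $mK_{\mathcal{X}/B}$ restricted to a tubular neighborhood of $\mathcal{X}_0$, with the $L^2$-bound propagated via an Ohsawa--Takegoshi-type extension.
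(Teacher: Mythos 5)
Your proposal is correct and follows essentially the same route as the paper, which itself gives no details here beyond citing Lemma 5.4 of \cite{S4}: uniform sup and gradient bounds from Lemma \ref{l41} plus non-collapsing give smooth subsequential convergence on the regular set, the algebraic extension to $H^0(\cX_0, mK_{\cX_0})$ is obtained exactly as in your ``cleaner alternative'' from the local freeness of $\pi_*\mathcal{O}(mK_{\mathcal{X}/B})$ for $\pi$-ample $K_{\mathcal{X}/B}$ (which also disposes of the conductor/residue-matching worry), and the vanishing along $\textnormal{LCS}(\cX_0)$ is forced by the non-integrability of the adapted measure there together with the uniform $L^2$-bound and the fact that $\varphi_0$ has milder-than-log poles. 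The only step to write carefully is the last one: the weight in $|\sigma|^2_{(h_0)^m}\,dV_{g_0}=|\sigma|^2 e^{(1-m)\varphi_0}\Omega_0^{1-m}\cdot\Omega_0$ involves $e^{(1-m)\varphi_0}\to+\infty$ near $\textnormal{LCS}(\cX_0)$, and it is this blow-up combined with the log-pole structure of $\Omega_0$ that forces the canonical density of $\sigma'$ to vanish there, just as you indicate.
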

The proof of lemma \ref{ext1} follows the same argument in Lemma 5.4 of \cite{S4}. The following is the local version of the partial $C^0$-estimate.
\begin{lemma} \label{par0}  Let $p_0\in \mathcal{R}_{\cX_0}$ and $p: B \rightarrow \mathcal{X}$ be a smooth section with $p(0)=p_0$.  For any $R>0$, there exist $m \in \mathbb{Z}^+$ and $c>0$ such that for any $t\in B^\circ$ and $q\in B_{g_t}(p(t), R)$, there exists $\sigma_t \in H^0(\cX_0,  mK_{\mathcal{X}_t})$ satisfying
\begin{equation}
 |\sigma_t|^2_{ (h_t)^m} (q)\geq c,  ~~~ \int_{\mathcal{X}_t} |\sigma_t|^2_{(h_t)^m} dV_{mg_t} = 1.
\end{equation}

\end{lemma}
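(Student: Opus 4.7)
The plan is to argue by contradiction, using the pointed Gromov--Hausdorff convergence of Lemma \ref{GHcon}, the non-collapsing of Corollary \ref{noncol}, and the $L^2$ peak-section machinery of Donaldson--Sun, then transferring back to the fibres via a H\"ormander $\bar\partial$-correction and the $L^\infty$-to-$L^2$ estimate of Lemma \ref{l41}.

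Suppose the conclusion fails for some fixed $R > 0$. For any fixed compact subset of $B^\circ$ the partial $C^0$-estimate is a standard consequence of smooth dependence, so failure forces a sequence $t_m \in B^\circ$ with $t_m \to 0$ and $q_m \in B_{g_{t_m}}(p(t_m), R)$ such that the Bergman function
\begin{equation}
\rho_{m, t_m}(q_m) := \sup \bigl\{ |\sigma|^2_{(h_{t_m})^m}(q_m) : \sigma \in H^0(\mathcal{X}_{t_m}, mK_{\mathcal{X}_{t_m}}),\ \|\sigma\|_{L^{2,\sharp}} = 1 \bigr\}
\end{equation}
tends to $0$ as $m \to \infty$. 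By Lemma \ref{GHcon}, after extracting a subsequence, $(\mathcal{X}_{t_m}, g_{t_m}, p(t_m)) \to (\mathbf{Y}, d_{\mathbf{Y}}, p_0)$ in the pointed Gromov--Hausdorff topology and $q_m \to q_\infty \in \overline{B_{d_\mathbf{Y}}(p_0, R)}$.

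The next step is to construct a peak holomorphic pluricanonical section at $q_\infty$ on the limit space. Within $B_{d_\mathbf{Y}}(p_0, R+1)$ the space $\mathbf{Y}$ is a non-collapsed limit of K\"ahler--Einstein manifolds with $\ric(g_{t_m}) = -g_{t_m}$, uniform volume non-collapsing by Corollary \ref{noncol}, and singular set $\mathcal{S}_\mathbf{Y}$ of Hausdorff codimension at least four. For the positive line bundle $m_0 K_\mathbf{Y}$ with metric $h_{KE}^{m_0}$, the Donaldson--Sun partial $C^0$-estimate \cite{DS1} yields $m_0 = m_0(R) \in \mathbb{Z}^+$ and $c_0 = c_0(R) > 0$, together with a section
\begin{equation}
s \in H^0\bigl( B_{d_\mathbf{Y}}(p_0, R+2),\, m_0 K_{\mathbf{Y}} \bigr)
\end{equation}
satisfying $|s|^2_{(h_{KE})^{m_0}}(q_\infty) \geq 2 c_0$ and $\int |s|^2_{(h_{KE})^{m_0}}\, dV_{m_0 g_{KE}} \leq 1$. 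If $q_\infty \in \mathcal{R}_\mathbf{Y}$ this is a direct H\"ormander construction in a holomorphic chart around $q_\infty$; if $q_\infty \in \mathcal{S}_\mathbf{Y}$ one passes to a tangent cone, builds the peak section there by H\"ormander adapted to the cone dilation, and transfers it down via the three-circle theorem, exactly as in \cite{DS1}.

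To finish, one pulls $s$ back to $\mathcal{X}_{t_m}$ for $m$ large using the smooth Cheeger--Gromov convergence on the regular part from Lemma \ref{GHcon}(2) and a gauge-fixing diffeomorphism, obtaining an approximate section $\tilde s_{t_m}$ with $\|\bar\partial \tilde s_{t_m}\|_{L^2} \to 0$. A H\"ormander $L^2$-correction on $(\mathcal{X}_{t_m}, m_0 K_{\mathcal{X}_{t_m}})$, whose curvature $m_0 \omega_{t_m}$ is uniformly positive, produces a holomorphic section $\sigma_{t_m} \in H^0(\mathcal{X}_{t_m}, m_0 K_{\mathcal{X}_{t_m}})$ close to $\tilde s_{t_m}$ in $L^2$; Lemma \ref{l41} then upgrades this $L^2$-closeness to a pointwise bound $|\sigma_{t_m}|^2_{(h_{t_m})^{m_0}}(q_m) \geq c_0$ after normalization. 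This forces $\rho_{m_0, t_m}(q_m) \geq c_0$, contradicting $\rho_{m_0, t_m}(q_m) \to 0$.

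The principal obstacle is the case $q_\infty \in \mathcal{S}_\mathbf{Y}$, where one cannot appeal to standard Euclidean H\"ormander theory and must invoke the full Donaldson--Sun machinery: structure of tangent cones of non-collapsed Ricci-bounded limits, metric-cone construction of peak sections, and the three-circle theorem for holomorphic functions on annuli, verifying that each ingredient is uniform along the degenerating family. The uniform Sobolev inequality on unit balls supplied by Corollary \ref{noncol}, together with the $L^\infty$-to-$L^2$ bound of Lemma \ref{l41}, is what makes the pull-back step and the final contradiction work uniformly in $t_m$.
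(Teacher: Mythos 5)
Your proposal is correct and follows essentially the same route as the paper, which simply states that the Donaldson--Sun partial $C^0$-estimate argument applies verbatim in this local setting, supported by the non-collapsing of Corollary \ref{noncol}, the $L^2$-to-$L^\infty$ estimates of Lemma \ref{l41}, and the codimension bound on the singular sets of iterated tangent cones; your write-up is a faithful unpacking of that citation, including the contradiction setup, the tangent-cone/H\"ormander peak-section construction, and the $\bar\partial$-correction transfer back to nearby fibres. The only minor omission is that the paper also invokes Lemma \ref{ext1} to pass limit sections to global pluricanonical sections, which your pull-back step implicitly covers.
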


\begin{proof} The proof of the global partial $C^0$-estimate in \cite{DS1} can be directly applied here in the local case with the estimates in Lemma \ref{l41} and Lemma \ref{ext1} because the singular set of all iterated tangent cones of the Gromov-Hausdorff limit $(\mathbf{Y}, d_{\mathbf{Y}} )$ is closed and has Hausdorff dimension less than $2n-2$.

\end{proof}

Let $h_{\mathbf{Y}}$ be the hermitian metric on $K_\mathbf{Y}$ as the extension of $((\omega_{\mathbf{Y}})^n)^{-1}$ from $\mathcal{R}_{\mathbf{Y}}$, where $\omega_{\mathbf{Y}}$ is the K\"ahler-Einstein form on $\mathcal{R}_{\mathbf{Y}}$. We now pass the partial $C^0$-estimate in Lemma \ref{par0} to the limiting space $\mathbf{Y}$.

\begin{corollary} \label{par1} For any $p_0 \in \mathcal{R}_{\mathbf{Y}}$ and any $R>0$, there exist $m \in \mathbb{Z}^+$ and $c, C>0$ such that for any $q\in B_{d_{\mathbf{Y}}}(q, R)$, there exists $\sigma \in H^0(\mathbf{Y},  mK_{\mathbf{Y}})$ satisfying
\begin{equation}
 |\sigma|^2_{ (h_{\mathbf{Y}})^m} (q)\geq c,  ~~~ \int_{\mathbf{Y}} |\sigma|^2_{(h_{\mathbf{Y}})^m} dV_{d_{\mathbf{Y}}} = 1.
\end{equation}

\end{corollary}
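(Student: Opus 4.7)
The plan is to construct the desired section on $\mathbf{Y}$ as a Gromov--Hausdorff limit of the sections supplied by Lemma \ref{par0} on the fibres $\mathcal{X}_{t_j}$, using Lemma \ref{l41} for uniform $L^\infty$ and gradient control and Lemma \ref{ext1} for the extension across the singular set. The point is that the constants $m$ and $c$ in Lemma \ref{par0} depend only on the radius and neither on $q$ nor on $t$, so they will carry over to $\mathbf{Y}$ without loss.

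Concretely, I would fix $p_0 \in \mathcal{R}_{\mathbf{Y}}$ and $R>0$, choose a smooth section $p: B \to \mathcal{X}$ with $p(0) = p_0$, and let $m\in \mathbb{Z}^+$ and $c>0$ be the constants produced by Lemma \ref{par0} applied at radius $R+1$. For an arbitrary $q_\infty \in B_{d_{\mathbf{Y}}}(p_0, R)$, the pointed Gromov--Hausdorff convergence from Lemma \ref{GHcon} yields a sequence $q_j \in \mathcal{X}_{t_j}$ with $d_{g_{t_j}}(q_j, p(t_j)) \leq R + 1/2$ and $q_j \to q_\infty$. Lemma \ref{par0} applied at each $q_j$ then produces $\sigma_j \in H^0(\mathcal{X}_{t_j}, mK_{\mathcal{X}_{t_j}})$ with $|\sigma_j|^2_{(h_{t_j})^m}(q_j) \geq c$ and $\int_{\mathcal{X}_{t_j}} |\sigma_j|^2_{(h_{t_j})^m} dV_{mg_{t_j}} = 1$.

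By Lemma \ref{l41}, both $|\sigma_j|_{(h_{t_j})^m}$ and $|\nabla \sigma_j|$ are uniformly bounded on $B_{g_{t_j}}(p(t_j), R+3/4)$, hence $|\sigma_j|^2_{(h_{t_j})^m}$ is uniformly Lipschitz there with respect to the rescaled metric $mg_{t_j}$. Lemma \ref{ext1} then lets me pass to a subsequence along which $\sigma_j$ converges, smoothly on $\mathcal{R}_{\mathbf{Y}}$, to a section $\sigma_\infty \in H^0(\mathbf{Y}, mK_{\mathbf{Y}})$. Smooth convergence on $\mathcal{R}_{\mathbf{Y}}$ together with Fatou's lemma (using that $\mathcal{S}_{\mathbf{Y}}$ has Hausdorff dimension at most $2n-4$, hence is negligible for the volume measure) gives $\int_{\mathbf{Y}} |\sigma_\infty|^2_{(h_{\mathbf{Y}})^m}\, dV_{\mathbf{Y}} \leq 1$, and replacing $\sigma_\infty$ by its $L^2$-normalization only strengthens the pointwise lower bound.

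The main obstacle is verifying $|\sigma_\infty|^2_{(h_{\mathbf{Y}})^m}(q_\infty) \geq c$ when $q_\infty \in \mathcal{S}_{\mathbf{Y}}$, since the hermitian metric $h_{\mathbf{Y}}$ is only a priori smooth on $\mathcal{R}_{\mathbf{Y}}$ and the section $\sigma_\infty$ is constructed by a smooth limit on the regular set. The key observation is that the uniform Lipschitz control on $|\sigma_j|^2_{(h_{t_j})^m}$ survives through the Gromov--Hausdorff convergence: the sequence of functions $|\sigma_j|^2_{(h_{t_j})^m}$ converges uniformly (in the Gromov--Hausdorff sense) on $B_{d_{\mathbf{Y}}}(p_0, R+1/2)$ to a Lipschitz function $\Phi$ on $\mathbf{Y}$, and $\Phi|_{\mathcal{R}_{\mathbf{Y}}}$ agrees with $|\sigma_\infty|^2_{(h_{\mathbf{Y}})^m}$. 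Since $|\sigma_j|^2_{(h_{t_j})^m}(q_j) \geq c$ for every $j$ and $q_j \to q_\infty$, we get $\Phi(q_\infty) \geq c$. Interpreting $|\sigma_\infty|^2_{(h_{\mathbf{Y}})^m}$ at singular points as its continuous extension $\Phi$ (which is intrinsically well defined by the gradient bound) then furnishes the required uniform pointwise lower bound at every $q_\infty \in B_{d_{\mathbf{Y}}}(p_0, R)$.
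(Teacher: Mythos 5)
Your proposal is correct and follows exactly the route the paper intends: the paper simply "passes the partial $C^0$-estimate in Lemma \ref{par0} to the limiting space" by taking limits of the sections $\sigma_{t_j}$ at points $q_j\to q_\infty$, using the uniform $L^\infty$ and gradient bounds of Lemma \ref{l41} and the convergence/extension statement of Lemma \ref{ext1}, and your handling of the value at singular points via the uniform Lipschitz bound matches the paper's remark that such $\sigma$ has a uniformly bounded gradient in fixed geodesic balls. Your normalization step to restore $\int_{\mathbf{Y}}|\sigma|^2=1$ after Fatou is a sensible and harmless addition.
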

Such $\sigma$ has uniformly bounded gradient estimate in fixed geodesic balls and it can also be extended to a global pluricanonical section on $\mathcal{Z}_0$. There are many generalizations and variations of Lemma \ref{par0} and Corollary \ref{par1}. For example, if $p$ is a regular point in $\mathbf{Y}$, then there exist global pluricanonical sections $\sigma_0$, ..., $\sigma_n$ such that near $p$, $\sigma_0$ is nonzero and
$$\frac{\sigma_1}{\sigma_0}, ..., \frac{\sigma_n}{\sigma_0}$$
can be used as holomorphic local coordinates near $p$.  Lemma \ref{l41} and Corollary \ref{par1} are used to construct peak sections to separate distinct points on $\mathbf{Y}$.

With the partial $C^0$-estimate,   We can show that the regular part of the Gromov-Hausdorff limit coincides with the nonsingular part of $\mathcal{X}_0$.

\begin{lemma} \label{regularid}
Let $\mathcal{R}_{\mathbf{Y}}$ be the regular part of the metric space $(\mathbf{Y}, d_{\mathbf{Y}})$ and $\mathcal{R}_{\mathcal{X}_0}$ be the nonsingular part of the projective variety of $\mathcal{X}_0$. Then
$$\mathcal{R}_{\mathbf{Y}} = \mathcal{R}_{\mathcal{X}_0}$$
and they are biholomorphic to each other. Let $\mathcal{A}$ be the number of the component of $\mathcal{X}_0$ and $\mathcal{B}$ the number of the components of $\mathbf{Y}$.
 Then $$ \mathcal{A} = \mathcal{B}.$$
\end{lemma}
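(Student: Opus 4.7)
The plan is to use the local partial $C^0$-estimate (Corollary \ref{par1}) together with the section-extension lemma (Lemma \ref{ext1}) to realize both $\mathbf{Y}$ and $\mathcal{X}_0 \setminus \textnormal{LCS}(\mathcal{X}_0)$ as images of the \emph{same} pluricanonical system, and then identify their regular loci by a local complex-analytic argument. First, I would fix $m$ large enough that $mK_{\mathcal{X}_0}$ separates points and tangent directions on the quasi-projective variety $\mathcal{X}_0 \setminus \textnormal{LCS}(\mathcal{X}_0)$; this is standard, since $K_{\mathcal{X}_0}$ is ample and the remaining singularities of $\mathcal{X}_0$ outside $\textnormal{LCS}$ are log terminal. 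Running the usual Donaldson--Sun multi-point peak-section construction on $\mathbf{Y}$, using the $L^\infty$, $L^2$ and gradient estimates of Lemma \ref{l41} together with Corollary \ref{par1}, yields sections $\sigma_0,\dots,\sigma_N \in H^0(\mathbf{Y}, mK_{\mathbf{Y}})$ whose associated map $\Phi\colon \mathbf{Y} \to \mathbb{CP}^N$ is continuous, separates arbitrary points of $\mathbf{Y}$, and is a holomorphic embedding on a neighborhood of every $p \in \mathcal{R}_{\mathbf{Y}}$.

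Next, by Lemma \ref{ext1}, each $\sigma_i|_{\mathcal{R}_{\mathcal{X}_0}}$ extends uniquely to $\sigma_i' \in H^0(\mathcal{X}_0, mK_{\mathcal{X}_0})$ vanishing along $\textnormal{LCS}(\mathcal{X}_0)$, so the $\sigma_i'$ define a map $\Phi'\colon \mathcal{X}_0\setminus \textnormal{LCS}(\mathcal{X}_0) \to \mathbb{CP}^N$ which is an embedding by the choice of $m$. Since $\mathcal{R}_{\mathcal{X}_0} \subset \mathcal{R}_{\mathbf{Y}}$ by Lemma \ref{GHcon}(3), and since the $\sigma_i$ are smooth limits of pluricanonical sections $\sigma_{i,t_j}$ on the smooth fibers with $C^\infty$-convergence on $\mathcal{R}_{\mathcal{X}_0}$ from Lemma \ref{55}, the maps $\Phi$ and $\Phi'$ agree on the dense open set $\mathcal{R}_{\mathcal{X}_0}$. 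Because $\Phi(\mathbf{Y})$ and $\Phi'(\mathcal{X}_0 \setminus \textnormal{LCS}(\mathcal{X}_0))$ are both closed in the open subset $\mathbb{CP}^N \setminus \Phi'(\textnormal{LCS}(\mathcal{X}_0))$ of projective space, their closures coincide, and one obtains a bijective holomorphic map, hence a biholomorphism
$$\Phi \colon \mathbf{Y} \longrightarrow \mathcal{X}_0 \setminus \textnormal{LCS}(\mathcal{X}_0).$$

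Finally I would derive $\mathcal{R}_{\mathbf{Y}} = \mathcal{R}_{\mathcal{X}_0}$. The inclusion $\mathcal{R}_{\mathcal{X}_0} \subset \mathcal{R}_{\mathbf{Y}}$ is already Lemma \ref{GHcon}(3). Conversely, if $p\in\mathcal{R}_{\mathbf{Y}}$, then a neighborhood of $p$ in $\mathbf{Y}$ is a genuine complex manifold, so its image under $\Phi$ lies in the complex-analytic nonsingular locus of $\mathcal{X}_0\setminus\textnormal{LCS}(\mathcal{X}_0)$, which is exactly $\mathcal{R}_{\mathcal{X}_0}$ (every log-terminal singularity is algebraically singular). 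Thus $\Phi$ restricts to a biholomorphism $\mathcal{R}_{\mathbf{Y}}\to\mathcal{R}_{\mathcal{X}_0}$. Because $\Phi$ is a biholomorphism that maps connected components to connected components, and each $Y_\beta$ and each $X_\alpha$ has nonempty regular part with matching total volume by Lemma \ref{GHcon}(4), the components of $\mathbf{Y}$ and of $\mathcal{X}_0$ are in bijection, so $\mathcal{A}=\mathcal{B}$. The main obstacle in this program is the separation of points and tangent directions on $\mathbf{Y}$ by pluricanonical sections: one must produce, for each pair of points in $\mathbf{Y}$, peak sections concentrated near one and vanishing at the other, using the \emph{local} partial $C^0$-estimate (Corollary \ref{par1}) and the global extension of Lemma \ref{ext1}, and one must verify that the resulting map is a complex-analytic embedding not only at metric-regular points but also in a neighborhood of the algebraic singularities of $\mathcal{X}_0\setminus \textnormal{LCS}$ that survive in the limit.
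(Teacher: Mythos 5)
Your overall strategy---embedding $\mathbf{Y}$ by pluricanonical peak sections built from the local partial $C^0$-estimate (Corollary \ref{par1}) and Lemma \ref{ext1}, and matching the image with the pluricanonical image of $\mathcal{X}_0\setminus \textnormal{LCS}(\mathcal{X}_0)$---is exactly the Donaldson--Sun/Tian machinery that the paper invokes (it defers verbatim to Lemma 6.1 and Corollary 6.1 of \cite{S4}). However, as written there are two genuine gaps.

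First, the hard inclusion $\mathcal{R}_{\mathbf{Y}}\subset \mathcal{R}_{\mathcal{X}_0}$ does not follow from the observation that ``a neighborhood of $p$ in $\mathbf{Y}$ is a genuine complex manifold, so its image under $\Phi$ lies in the nonsingular locus.'' An injective holomorphic immersion of a complex manifold into $\mathbb{CP}^N$ whose image is contained in a variety can perfectly well pass through singular points of that variety (each branch of a nodal curve is a smooth embedded disc through the node). To conclude that $\Phi(p)$ is a smooth point of $\mathcal{X}_0$ you must either (i) know that $\Phi$ maps a neighborhood of $p$ \emph{onto} a neighborhood of $\Phi(p)$ in $\mathcal{X}_0$, which presupposes the surjectivity and properness of $\Phi$ onto $\mathcal{X}_0\setminus\textnormal{LCS}(\mathcal{X}_0)$, or (ii) run the volume-density argument: at a metrically regular point the tangent cone is $\mathbb{C}^n$, the volume density is $1$, and comparison of $\int \omega_{KE}^n$ over small balls with the local degree of the pluricanonical map forces the multiplicity of $\mathcal{X}_0$ at $\Phi(p)$ to be $1$, hence smoothness. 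This density/multiplicity step is the core of the argument in \cite{S4} and is entirely absent from your proposal.

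Second, both the claim that $\Phi(\mathbf{Y})$ is closed in $\mathbb{CP}^N\setminus \Phi'(\textnormal{LCS}(\mathcal{X}_0))$ (used to equate the two images) and the component count $\mathcal{A}=\mathcal{B}$ require knowing that a sequence in $\mathbf{Y}$ escapes to infinity if and only if its image approaches $\textnormal{LCS}(\mathcal{X}_0)$, and in particular that distinct irreducible components of $\mathcal{X}_0$, which meet only along the double locus contained in $\textnormal{LCS}(\mathcal{X}_0)$, are at infinite distance in the limit. These are precisely the distance estimates of Lemma \ref{fest}, which in the paper's logical order come \emph{after} this lemma; you use their conclusions implicitly without proving them. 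Without that properness, your $\Phi$ could a priori miss an open piece of $\mathcal{X}_0\setminus\textnormal{LCS}(\mathcal{X}_0)$, or two base points $p_0^{\alpha}$ could land in the same metric component, and the equality $\mathcal{A}=\mathcal{B}$ would fail. (The volume identity of Lemma \ref{GHcon}(4) you appeal to bounds total volume but does not by itself pin down the component structure.)
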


\begin{proof} The proof of the lemma is identical to that of Lemma 6.1 and Corollary 6.1 in \cite{S4} without modification.

\end{proof}

The following distance estimate is one of key estimate in \cite{S4} to identify the Riemannian geometric limit with the algebraic central fibre.
\begin{lemma} \label{fest} Let $X$ be a component of $\mathcal{X}_0$ and $p \in X \cap \mathcal{R}_{\mathcal{X}_0}$. Then the following hold.

\begin{enumerate}

\item For any $K \subset\subset \mathcal{X}_0 \setminus \textnormal{LCS} (\mathcal{X}_0)$, there exists $C_K>0$ such that for any $q\in X \cap \mathcal{R}_{\mathcal{X}_0}  \cap K$,
\begin{equation*}
d_{\mathbf{Y}}(p, q) \leq C_K.
\end{equation*}

\item For any $q_j \in \mathcal{R}_{\mathcal{X}_0}$ converging to some $q\in \textnormal{LCS} (\mathcal{X}_0)$ in $(\mathcal{X}_0, \chi_0)$, we have
\begin{equation*}
\lim_{j\rightarrow \infty} d_{\mathbf{Y}}(p, q_j) = \infty.
\end{equation*}

\end{enumerate}

\end{lemma}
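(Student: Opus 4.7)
The plan is to prove the two statements separately. For (1), the strategy is to show that $\omega_0$ is uniformly equivalent to the reference form $\chi_0$ on compact subsets of $\mathcal{X}_0\setminus \textnormal{LCS}(\mathcal{X}_0)$, which then bounds the length of any path in $X\cap\mathcal{R}_{\mathcal{X}_0}$ connecting $p$ to $q$. For (2), the plan is to dominate $\omega_0$ from below by an auxiliary Poincar\'e-type K\"ahler form that is complete at $\textnormal{LCS}$. Together these implement the general principle mentioned in the introduction that boundedness of the K\"ahler-Einstein potential is equivalent to boundedness of distance from a fixed regular base point.

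For part (1), the first step is the two-sided comparison $c_K\chi_0\leq \omega_0\leq C_K\chi_0$ on $K\cap \mathcal{R}_{\mathcal{X}_0}$. The lower bound comes directly from the improved Schwarz lemma stated immediately before this lemma. For the upper bound, I would combine this lower bound with the Monge-Amp\`ere equation $(\chi_0+\ddbar\varphi_0)^n = e^{\varphi_0}\Omega_0$, the $L^\infty$-bound on $\varphi_0$ on $\mathcal{X}_0\setminus \textnormal{LCS}$ from Theorem \ref{song}(2), and the local smoothness of $\Omega_0$ there; a positive definite lower bound on $\omega_0$ together with a bounded determinant forces a matching upper bound. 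Since $X$ is irreducible with singular locus of positive codimension, $X\cap \mathcal{R}_{\mathcal{X}_0}$ is path connected, so $p$ and $q$ can be joined by a piecewise smooth path $\gamma$ contained in a slightly enlarged compact set $K'\subset\subset \mathcal{X}_0\setminus \textnormal{LCS}$ whose $\chi_0$-length is bounded in terms of $K$. By the two-sided equivalence the $\omega_0$-length of $\gamma$ is also bounded; invoking the smooth convergence $g_{t_j}\to g_0$ on $\mathcal{R}_{\mathcal{X}_0}$ from Lemma \ref{GHcon}, this yields $d_{\mathbf{Y}}(p,q)\leq C_K$.

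For part (2), fix $q\in \textnormal{LCS}(\mathcal{X}_0)$ and a neighborhood $U$; passing to a log resolution, one may assume the LCS divisor has simple normal crossings with a local defining section $\sigma$ near $q$. Construct the auxiliary form
\[
\omega_{\mathrm{aux}} = A\chi_0 - \ddbar\log\bigl(-\log|\sigma|_h^2\bigr)
\]
for $A$ large so that $\omega_{\mathrm{aux}}$ is a positive $(1,1)$-form on $U\cap \mathcal{R}_{\mathcal{X}_0}$ of Poincar\'e-cusp type at $\textnormal{LCS}$, in particular complete at $\textnormal{LCS}$. Then apply a Schwarz-type comparison via the maximum principle to $\log\mathrm{tr}_{\omega_0}(\omega_{\mathrm{aux}}) - A'\varphi_0$ on $U\cap \mathcal{R}_{\mathcal{X}_0}$, using the Einstein identity $\mathrm{Ric}(\omega_0)=-\omega_0$, the known upper bound for the bisectional curvature of any Poincar\'e-type metric, and the vanishing Lelong number of $\varphi_0$ at $\textnormal{LCS}$ to arrange that the supremum is attained in the interior. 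This yields $\omega_0\geq c\,\omega_{\mathrm{aux}}$ on $U\cap \mathcal{R}_{\mathcal{X}_0}$. Since $\omega_{\mathrm{aux}}$ is complete at $\textnormal{LCS}$, any sequence $q_j\to q$ in the analytic topology satisfies $d_{\omega_0}(q_j,q_0)\geq \sqrt{c}\,d_{\omega_{\mathrm{aux}}}(q_j,q_0)\to \infty$ for a fixed base point $q_0\in U\cap \mathcal{R}_{\mathcal{X}_0}$; combined with part (1), which bounds $d_{\mathbf{Y}}(p,q_0)$, the triangle inequality gives $d_{\mathbf{Y}}(p,q_j)\to \infty$.

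The main obstacle is executing the Schwarz comparison in part (2). The $\textnormal{LCS}$ locus is typically reducible and meets other singular strata of $\mathcal{X}_0$, so the Poincar\'e-type correction must be constructed carefully along each simple-normal-crossing component of the resolution and patched into a single global auxiliary form near $q$. Moreover, since the maximum principle is applied on the noncompact set $U\cap \mathcal{R}_{\mathcal{X}_0}$, the barrier function must be chosen to exploit both the $C^0$-bound and the vanishing Lelong number of $\varphi_0$ so as to rule out the supremum escaping to $\textnormal{LCS}$ or to the singular strata in $U$; this compatibility of barriers with the log resolution is the technical heart of the argument.
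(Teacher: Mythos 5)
Your part (1) is correct and is essentially the argument the paper intends (the paper defers to Lemma 6.3 of \cite{S4}): the lower bound from Lemma \ref{sch1} (with the barrier removed on $\cX_0\setminus\textnormal{LCS}(\cX_0)$), the Monge--Amp\`ere equation together with the $L^\infty_{loc}$ bound on $\varphi_0$ to get the matching upper bound, and the smooth convergence on $\mathcal{R}_{\cX_0}\subset\mathcal{R}_{\mathbf{Y}}$ from Lemma \ref{GHcon} to convert the $\chi_0$-length of a connecting path into a bound on $d_{\mathbf{Y}}$.

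Part (2), however, departs from the paper and contains a genuine gap. The paper's route (via \cite{S4}) is section-theoretic: if $d_{\mathbf{Y}}(p,q_j)$ stayed bounded, the local partial $C^0$-estimate (Lemma \ref{par0}) produces pluricanonical peak sections with $|\sigma|^2_{h^m}(q_j)\geq c$, the gradient estimate (Lemma \ref{l41}) makes $|\sigma|^2_{h^m}$ uniformly Lipschitz on bounded balls, and Lemma \ref{ext1} forces any limit section to vanish along $\textnormal{LCS}(\cX_0)$ --- a contradiction. Your route instead asserts a complete Poincar\'e-type lower bound $\omega_0\geq c\,\omega_{\mathrm{aux}}$ near $\textnormal{LCS}$, which is a substantially stronger asymptotic statement than anything established in the paper, and the proposed Schwarz-lemma proof of it does not close. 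First, in the Chern--Lu inequality the term you must absorb is $C\,\mathrm{tr}_{\omega_0}(\omega_{\mathrm{aux}})$, whereas $\Delta_{\omega_0}(-A'\varphi_0)=A'\mathrm{tr}_{\omega_0}(\chi_0)-A'n$ only controls $\mathrm{tr}_{\omega_0}(\chi_0)$; since $\omega_{\mathrm{aux}}/\chi_0\to\infty$ at the cusp, the barrier $\log\mathrm{tr}_{\omega_0}(\omega_{\mathrm{aux}})-A'\varphi_0$ does not yield the differential inequality $\Delta Q\geq \mathrm{tr}_{\omega_0}(\omega_{\mathrm{aux}})-C$ needed for the maximum principle. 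Second, even with a corrected barrier one must rule out the supremum escaping to $\textnormal{LCS}$; the only available information on $\varphi_0$ there is the one-sided estimate $\varphi_0\geq\epsilon\log|\sigma_D|^2_{h_D}-C_\epsilon$ and the qualitative fact $\varphi_0\to-\infty$ with no rate, which is insufficient to dominate the possible blow-up of $\log\mathrm{tr}_{\omega_0}(\omega_{\mathrm{aux}})$ (controlled only by the $\epsilon$-polynomial Schwarz bound of Lemma \ref{sch1}). Third, applying the maximum principle on the noncompact set $U\cap\mathcal{R}_{\cX_0}$ presupposes a completeness or generalized maximum principle for $\omega_0$ near $\textnormal{LCS}$, which is circular here. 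You should replace part (2) by the peak-section argument built on Lemmas \ref{l41}, \ref{ext1} and \ref{par0}.
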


\begin{proof} The proof is identical to that of Lemma 6.3 in \cite{S4}.

\end{proof}

In conclusion, in each component of $(\mathbf{Y}, d_\mathbf{Y})$, the local boundedness of the K\"ahler-Einstein potential is equivalent to the boundedness of distance in a uniform way. Now we can complete the proof of Theorem \ref{main6.4}  by applying combining the same argument in section 7 in \cite{S4} with the help of the above estimates.


\section{Proof of Theorem \ref{main1} }

\newcommand{\sO}{\mathscr{O}}

We will prove Theorem \ref{main1} in this section by using Theorem \ref{main6.4} in the previous section. The proof is given in the following steps.

First,  it follows from Matsusaka's big theorem that  all the manifolds in $\mathcal{K}(n, V)$ can be embedded in a fixed $\mathbb{CP}^N$ by the pluricanonical systems $|mK_X|$ for fixed sufficiently large $m \geq 1$ for any $X\in \mathcal{K}(n, V)$. Moreover, since $V=(K_X)^n$ is bounded, there are only finitely many possible Hilbert polynomials for such smooth canonical models by the Kollar-Matsusaka theorem. Let $\chi_k=h^0(m K_X)\in \QQ[m], 1\leq k \leq K$ be all possible Hilbert polynomials for all members in $
\mathcal{K}(n, V)$. We then may write
$$
\mathcal{K}(n, V)=\bigcup_{k=1}^K \mathcal{K}(\chi_k)
$$
where $\mathcal{K}(\chi_k) $ is the space of all $n$-dimensional smooth canonical models with Hilbert polynomial $\chi_k$.
Without loss of generality, we may assume that the sequence $\{X_i\}_{i=1}^\infty$ we start with in the assumption of Theorem \ref{main1} are contained in a single $\mathcal{K}(\chi)$ from now on with a fixed Hilbert polynomial $\chi$. In fact, it follows from the boundedness result of \cite{HMX} that all the possible stable limits of manifolds in $\mathcal{K}(\chi)$ also appear in a fixed projective space. We abuse the notations and still denote such a fixed projective space by $\mathbb{CP}^N=\mathbb{P} H^0(X,\mathscr{O}_X(mK_X))^\vee$ for a fixed and sufficiently divisible $m\gg 1$.

Let  $\hilb(\mathbb{CP}^N,\chi)$ denote the Hilbert scheme of $\mathbb{CP}^N$ with Hilbert polynomial $\chi$ and let $\hilb(X)\in\hilb(\mathbb{CP}^N,\chi)$ be the Hilbert point corresponding to the embedding $X\subset \mathbb{CP}^N$.  We introduce a locally closed subscheme as below
\begin{eqnarray*}
 \mathcal{H}^{\rm can}(\chi) = \{ \hilb(X)\in \hilb(\mathbb{CP}^N,\chi)   &|& X    \subset \mathbb{CP}^N \text{ is smooth}, \ \sO_{\mathbb{CP}^N}(1)|_X=\sO_X(mK_X), \\
 &&\chi(m)=h^0(\sO_{\mathbb{CP}}(m)) \}.\\
\end{eqnarray*}
By passing to a subsequence of $\{\hilb(X_i)\}_{i}^\infty\subset  \mathcal{H}^{\rm can}(\chi)$   if necessary we may assume that  $\{\hilb(X_i)\}_{i=1}^\infty \subset U$ with $U$ being  an  irreducible  component of
 $\mathcal{H}^{\rm can}(\chi)_{\rm red}\subset \mathcal{H}^{\rm can}(\chi)$, where $\mathcal{H}^{\rm can}(\chi)_{\rm red}$ is the reduction of $\mathcal{H}^{\rm can}(\chi)$. In particular, $U$ is a quasi-projective variety.  Let $\pi_U:\mathcal{X}_U\to U$ be the pull back of the universal family over  $\hilb(\mathbb{CP}^N,\chi)$.  Now by applying  Theorem 4.88 in \cite{K2} (cf. \cite{HX}), there is a projective, generically finite, dominant morphism $\phi: V\to U$ and a projective compactification $V\subset\overline{V}$ such that the pull-back
$\pi_V:\mathcal{X}_U\times_U V\to V$ extends to a stable family
\begin{equation*}
\xymatrix{
 \mathcal{X}_V:=\mathcal{X}_U\times_U V \ar@{>}[d]^{\pi_V} \ar@{^{(}->}[r] & \mathcal{X}_{\overline V}\ar@{>}[d]^{\pi_{\overline V}}\\
V  \ar@{^{(}->}[r]^{}  &  \overline V
}.
\end{equation*}
By our construction, $X_i$ corresponds to $\mathcal{X}_{\overline V, t_i}=\pi_{\overline V}^{-1}(t_i)$ for some $t_i \in  V$ and we let $g_{t_i}$ be the unique K\"ahler-Einstein metric on $X_i$. We can assume $t_i \rightarrow t_\infty \in \overline V$ after passing to a subsequence,. $\mathcal{X}_{\overline V, t_\infty}$ is a semi-log canonical model as the algebraic degeneration of $\{X_i\}_{i=1}^\infty$. We can now directly apply Theorem \ref{main6.4} and so $\{ (X_i, g_{t_i})\}_{i=1}^\infty$ converge
in pointed Gromov-Hausdorff topology to a complete K\"ahler-Einstein metric space biholomorphic to $\mathcal{X}_{\overline V, t_\infty} \setminus \textnormal{LCS}(\mathcal{X}_{\overline V, t_\infty})$. This completes the proof of Theorem \ref{main1}.


\section{Extension of the K\"ahler-Einstein current and proof of Theorem \ref{main2} }

We will prove Theorem \ref{main2} in this section. Let $\pi: \mathcal{X} \rightarrow \mathcal{S}$ be a stable family of canonical models as in Definition \ref{stabfib}. The following theorem was independently proved by Schumacher \cite{Sch} using pointwise differential calculations and Tsuji \cite{Ts} using dynamical construction of Bergman kernels.

\begin{theorem} \label{schum} $\pi|_{\mathcal{X}^\circ}: \mathcal{X}^\circ\rightarrow \mathcal{S}^\circ$ is a holomorphic family of smooth canonical models over a smooth variety $\mathcal{S}$. Let $\omega_t$ be the unique K\"ahler-Einstien metric on $\mathcal{X}_t$ for $t\in \mathcal{S}^\circ$ and $h$ be the hermitian metric on $K_{\mathcal{X}^\circ /\mathcal{S}^\circ}$ defined by
$$h_t = (\omega_t^n)^{-1}. $$
Then the curvature $\theta = -\ddbar\log h$ of $h$ is a smooth nonnegative closed $(1,1)$-form on $\mathcal{X}^\circ$. If $\pi|_{\mathcal{X}^\circ}: \mathcal{X}^\circ \rightarrow \mathcal{S}^\circ$ is nowhere infinitesimally trivial, then $\theta$ is strictly positive everywhere on $\mathcal{X}^\circ$.

\end{theorem}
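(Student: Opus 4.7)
The plan is to follow Schumacher's classical argument: first use the K\"ahler-Einstein equation to compute $\theta$ fiberwise, then handle the mixed and horizontal directions via a maximum principle on each fiber. In local holomorphic coordinates $(z,t) = (z^1,\dots,z^n,t^1,\dots,t^m)$ on $\mathcal{X}^\circ$ adapted to $\pi$, the $n$-form $\sigma = dz^1\wedge\cdots\wedge dz^n$ is a local frame of $K_{\mathcal{X}^\circ/\mathcal{S}^\circ}$, and writing $\omega_t^n = c(z,t)(\sqrt{-1}/2)^n dz\wedge d\bar z$ one obtains $|\sigma|^2_h = $ const$/c$, whence $\theta = -\ddbar\log h = \ddbar\log c$ on the total space. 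Restricted to a fiber, $\ric(\omega_t)=-\omega_t$ reads $\ddbar_z\log c|_{\mathcal{X}_t} = \omega_t$, which both identifies $\theta|_{\mathcal{X}_t} = \omega_t > 0$ and displays $\theta$ as a smooth $(1,1)$-form on $\mathcal{X}^\circ$.

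For the horizontal and mixed directions I would, for each $(1,0)$-vector $V = V^\alpha\partial_{t^\alpha}$ at $t\in \mathcal{S}^\circ$, introduce the horizontal lift $V^h = V + a^i(z,t)\partial_{z^i}$, where the $a^i$ are uniquely determined by requiring that $A_V := \dbar V^h|_{\mathcal{X}_t}$, viewed as a $T_{\mathcal{X}_t}$-valued $(0,1)$-form, be the $\omega_t$-harmonic representative of the Kodaira-Spencer class $\rho_t(V)\in H^1(\mathcal{X}_t,T_{\mathcal{X}_t})$. The fiberwise function $\eta := \theta(V^h, \overline{V^h})$ then satisfies Schumacher's pointwise elliptic identity
\[
(\square_{\omega_t} + 1)\,\eta \;=\; |A_V|^2_{\omega_t},
\]
where $\square_{\omega_t} = -g_t^{i\bar j}\partial_i\partial_{\bar j}$ is the non-negative $\dbar$-Laplacian on $(\mathcal{X}_t,\omega_t)$. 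This identity is derived by differentiating $\omega_t^n = c(z,t)(\sqrt{-1}/2)^n dz\wedge d\bar z$ twice along $V^h$ and $\overline{V^h}$, separating vertical from horizontal contributions, and using $\dbar^{*_{\omega_t}} A_V = 0$ to cancel the first-order cross terms.

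Since $\square_{\omega_t} + 1$ has strictly positive first eigenvalue on the compact fiber, the maximum principle applied at an interior minimum of $\eta$ forces $\eta \geq |A_V|^2 \geq 0$ there, hence $\eta\geq 0$ throughout $\mathcal{X}_t$. Combined with $\theta|_{\mathcal{X}_t} = \omega_t > 0$ and the bilinear structure of $\theta$, this proves $\theta \geq 0$ on $\mathcal{X}^\circ$. For the strict positivity claim, nowhere infinitesimal triviality means $\rho_t$ is injective, so $A_V\not\equiv 0$ on $\mathcal{X}_t$ for each nonzero $V$; the strong maximum principle then promotes $\eta\geq 0$ to $\eta>0$, and combined with vertical positivity this gives $\theta>0$ everywhere on $\mathcal{X}^\circ$. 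The main technical obstacle is the derivation of the displayed elliptic identity: although conceptually transparent (it reflects the variational interpretation of the KE metric as a critical point of the Mabuchi functional), the explicit computation is a lengthy coordinate differentiation of the KE equation in which the harmonicity of $A_V$ supplies the precise cancellation needed to produce a clean Schr\"odinger-type operator on the left side. An alternative conceptually simpler route, due to Tsuji, realizes $h$ as the decreasing limit of appropriately normalized relative Bergman kernel metrics on $mK_{\mathcal{X}^\circ/\mathcal{S}^\circ}$ and deduces semipositivity from Berndtsson's theorem on plurisubharmonic variation of Bergman kernels.
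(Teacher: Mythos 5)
The paper does not supply its own proof of this statement: Theorem~\ref{schum} is stated as a known result and attributed to Schumacher~\cite{Sch} (pointwise differential/maximum principle computation) and independently to Tsuji~\cite{Ts} (dynamical construction of Bergman kernels), with no argument reproduced. Your sketch is a faithful outline of Schumacher's route, and you correctly flag the Tsuji/Berndtsson alternative at the end, so your proposal matches one of the two approaches the paper invokes. The vertical computation $\theta|_{\mathcal{X}_t}=\omega_t$ and the maximum principle deduction $\eta\geq|A_V|^2_{\omega_t}\geq 0$ from the Schr\"odinger identity $(\square_{\omega_t}+1)\eta=|A_V|^2_{\omega_t}$ are exactly Schumacher's key steps. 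Two points worth being explicit about if you were to fill this in: (i) the passage from ``$\theta>0$ on vertical vectors and $\theta\geq 0$ on horizontal lifts'' to ``$\theta\geq 0$ on all of $T\mathcal{X}^\circ$'' needs the observation that the harmonic horizontal lift $V^h$ is precisely the $\theta$-orthogonal complement of the vertical directions, so the mixed terms vanish and $\theta(W+V^h,\overline{W+V^h})=\omega_t(W,\bar W)+\eta$; (ii) the strict positivity at a point of $\mathcal{X}_t$ uses not only $A_V\not\equiv 0$ but the strong maximum principle to rule out $\eta$ attaining an interior zero, since $\eta=0$ at a point with $|A_V|^2=0$ there would a priori be consistent with the pointwise identity.
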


We will apply Theorem \ref{main1} and particularly Proposition \ref{c0pro} to understand the singular behavior of $h$ near singular fibres and thus global positivity of $\theta$ on $\cX$.

For any point $t\in \cS$, there exists a sufficiently small neighborhood $U$ of $s$ in $\cS$ such that $K_{\cX_U/ U}$ is $\pi$-ample, where $\cX_U = \pi^{-1}(U)$.  Let $$\{\eta_0, ~\eta_1, ..., ~\eta_N \}$$ be a basis of holomorphic sections in the linear system $\left| mK_{\cX_U/ U} \right| $ that induces a projective embedding of $\cX_U$. Then we define the relative volume form $\Omega_{\cX_U}$ on $\cX$ such that on $U$,
$$\Omega_{\cX_U}=  \left(\sum_{i=0}^N |\eta_i|^2 \right)^{\frac{1}{m}}$$
and by a smooth partition of unity  on $\cS$, we can construct a a relative volume form $\Omega$ on $\cX$ and we let
$$\chi = \ddbar \log \Omega.$$

\begin{lemma} \label{loclel}Let $\pi: \mathcal{X} \rightarrow \mathcal{S}$ be a stable family of $n$-dimensional canonical models as in Definition \ref{stabfib}. Let $\Omega$ be the relative volume form defined as above. Let $\Omega_t = \Omega|_{\cX_t}$ and
\begin{equation}\label{fdef}
F=\log  \frac{(\omega_t)^n}{\Omega_t }
\end{equation}
on $\mathcal{X}^\circ$ for $t\in \mathcal{S}^\circ$.  Then exists $C>0$ such that
$$\sup_{\mathcal{X}^\circ} F < \infty $$
and on $\mathcal{X}^\circ$,
$$\chi + \ddbar F \geq 0.$$
Therefore $F$ extends uniquely to quasi-plurisubharmonic function in $\textnormal{PSH}(\mathcal{X}, \chi)$. Furthermore, the Lelong number of $F$ vanishes everywhere on $\mathcal{X}$.
\end{lemma}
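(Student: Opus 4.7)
The plan is as follows. The K\"ahler-Einstein equation $(\chi_t+\ddbar\varphi_t)^n = e^{\varphi_t}\Omega_t$ identifies $F|_{\cX_t} = \log(\omega_t^n/\Omega_t) = \varphi_t$, so the uniform fibrewise bound $\sup_{\cX_t}\varphi_t\le C$ established in Lemma \ref{5upb} gives the upper bound $\sup_{\cX^\circ}F<\infty$. In any local trivialization of $K_{\cX/\cS}$ one computes
$$
\chi + \ddbar F \;=\; \ddbar\log\Omega + \ddbar\log(\omega_t^n/\Omega_t) \;=\; \ddbar\log\omega_t^n \;=\; -\ddbar\log h \;=\; \theta,
$$
which by Schumacher's theorem (Theorem \ref{schum}) is smooth and nonnegative on $\cX^\circ$, so $F\in\textnormal{PSH}(\cX^\circ,\chi|_{\cX^\circ})$. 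Since $\cX\setminus\cX^\circ = \pi^{-1}(\cS\setminus\cS^\circ)$ is a proper analytic subvariety of the normal variety $\cX$ and $F$ is bounded above, the standard extension theorem for quasi-plurisubharmonic functions across analytic sets produces a unique extension $F\in\textnormal{PSH}(\cX,\chi)$ via upper-semicontinuous regularization.

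The main work lies in the vanishing of Lelong numbers. Fix $p\in\cX$, let $s=\pi(p)$, and let $U\subset\cS$ be a neighborhood of $s$. Carrying out the local semistable reduction of Section 3 over $U$ produces a proper surjective generically finite holomorphic map $\mu = f'\circ\Psi\circ\Phi\colon \tilde\cX\to\pi^{-1}(U)\subset\cX$ with $\tilde\cX$ smooth. The pullback $\tilde F := \mu^*F$ restricts on each smooth fibre of $\tilde\cX\to B$ to the pullback of the K\"ahler-Einstein potential $\varphi_t$, and hence by Proposition \ref{c0pro}, for every $\delta>0$,
$$
\tilde F \;\geq\; \delta\log\Bigl(\bigl(\textstyle\sum_l|\sigma_{\mathcal{E}_l}|^2_{h_{\mathcal{E}_l}}\bigr)\prod_i|\sigma_{\tilde{E}_i}|^2_{h_{\tilde{E}_i}}\Bigr) - C_\delta
$$
globally on $\tilde\cX$. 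Since $\tilde\cX$ is smooth, this forces $\nu(\tilde F,q)=0$ for every $q\in\tilde\cX$, i.e.\ for every $\epsilon>0$ one has $\tilde F(w)\geq \epsilon\log|w-q|^2-C_{q,\epsilon}$ in a neighborhood of $q$. To descend to $\cX$, pick $q\in\mu^{-1}(p)$. Because $\mu$ is a nonconstant proper holomorphic map between equidimensional complex varieties it is open near $q$, and holomorphicity in any local embedding of $\cX$ into Euclidean space yields a Lipschitz bound $|\mu(w)-p|\le M|w-q|$ for $w$ near $q$. Thus for each $z$ in a neighborhood of $p$ one can choose $w$ near $q$ with $\mu(w)=z$ and conclude
$$
F(z) \;=\; \tilde F(w) \;\geq\; \epsilon\log|w-q|^2 - C_{q,\epsilon} \;\geq\; \epsilon\log|z-p|^2 - C'_{q,\epsilon},
$$
which, as $\epsilon>0$ is arbitrary, gives $\nu(F,p)=0$.

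The principal obstacle is this final descent step, since Lelong numbers are not functorial under pushforward in general, so the vanishing on $\tilde\cX$ cannot be asserted on $\cX$ a priori. What saves the argument is that Proposition \ref{c0pro} is sharp enough to give pointwise (not merely $L^\infty$) lower bounds on $\tilde\cX$; combined with the crude holomorphic Lipschitz comparison for $\mu$, one obtains a pointwise lower bound for $F$ on $\cX$ of the same form and concludes $\nu(F,\cdot)\equiv 0$.
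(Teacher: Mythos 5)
Your treatment of the upper bound (via Lemma \ref{5upb}), the inequality $\chi+\ddbar F\ge 0$ (via Theorem \ref{schum}), and the extension across the proper analytic set $\cX\setminus\cX^\circ$ coincides with the paper's proof. The paper also derives the vanishing of Lelong numbers from exactly the input you use — the lower bound with arbitrarily small log poles coming from the local $C^0$-estimates — but it simply quotes Lemma \ref{54} and asserts the descended estimate $F\ge\epsilon\log|\sigma_D|^2_{h_D}-C_\epsilon$ directly on $\cX$ for a divisor $D$ containing no fibre component; you are more explicit about transferring the estimate from the semistable reduction $\tilde\cX$ down to $\cX$.

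That descent, as written, contains one incorrect step: the map $\mu=f'\circ\Psi\circ\Phi$ is proper, surjective and generically finite, but it is \emph{not} open near points of the exceptional locus of the modifications $\Psi$ and $\Phi$ — a blow-down is not open at a point $q$ of its exceptional divisor, since the image of a small neighborhood of $q$ need not contain a neighborhood of $\mu(q)$. So for a fixed $q\in\mu^{-1}(p)$ you cannot in general produce a preimage $w$ near $q$ for every $z$ near $p$. The conclusion is nonetheless reachable by either of two standard repairs. First, argue by contraposition using only the Lipschitz bound you already invoked: if $\nu(F,p)=\gamma>0$ then $F(z)\le\gamma\log|z-p|+O(1)$ near $p$, hence $\tilde F(w)=F(\mu(w))\le\gamma\log|\mu(w)-p|+O(1)\le\gamma\log|w-q|+O(1)$, so $\nu(\tilde F,q)\ge\gamma>0$ for every $q\in\mu^{-1}(p)$, contradicting the vanishing you established on the smooth variety $\tilde\cX$. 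Second, one may instead use properness: $\mu^{-1}(p)$ is compact, so it is covered by finitely many charts on which the lower bound of Proposition \ref{c0pro} holds, and the image of that finite union contains a full neighborhood of $p$ (otherwise a sequence $z_j\to p$ would have preimages with a limit point in $\mu^{-1}(p)$ outside the union). With either fix your argument is complete and is in substance the paper's.
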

\begin{proof} The K\"ahler-Einstein metric equation on $\cX_t$ is equivalent to the complex Monge-Amp\`ere equaiton
$$(\omega_t)^n = e^{\varphi_t} \Omega_t $$
for $\varphi_t$,
where $\omega_t = \chi|_{\mathcal{X}_t} + \ddbar \varphi_t$. Then
$$F|_{\mathcal{X}_t}= \varphi_t $$ is uniformly bounded above by Lemma \ref{5upb} as $\varphi_t$ is uniformly bounded above on $\mathcal{X}$. Using Theorem \ref{schum}, we have   the following direct calculation on $\mathcal{X}^\circ$,
$$\ddbar F = \ddbar \log \frac{\omega_t^n}{\Omega_t}= \textnormal{Ric}(h) - \chi \geq - \chi. $$
Therefore $F \in \textnormal{PSH}(\mathcal{X}^\circ, \chi)$ and it uniquely extends to $\mathcal{X}$ and $F\in \textnormal{PSH}(\mathcal{X}, \chi)$ since $\chi$ has locally bounded potentials. By Lemma \ref{54}, there exists a divisor $D$ of  $\mathcal{X}$ that does not contain any component of fibres of $\pi: \mathcal{X} \rightarrow \mathcal{S}$ such that for any $\epsilon>0$, there exists $C_\epsilon>0$ such that
$$F \geq \epsilon \log |\sigma_D|^2_{h_D} - C_\epsilon,$$
where $\sigma_D$ is a defining section of $D$ and $h_D$ a fixed smooth hermitian metric for the line bundle associated to $D$. By   definition, $F$ has vanishing Lelong number everywhere on $\mathcal{X}$. The nonnegative current $\theta= \textnormal{Ric}(h)$ on $\mathcal{X}^\circ$ extends to $\mathcal{X}$ and is given by
$$\theta = \chi+ \ddbar F \in c_1(K_{\mathcal{X}/\mathcal{S}}).$$
In particular, $\theta$ has vanishing Lelong number.

\end{proof}

We can simply let $\varphi = F$ since $F$ is the trivial extension of $\varphi_t$ fibrewise. The following regularization lemma is proved in \cite{BK}.

\begin{lemma}\label{kol} Let $(M, \omega)$ be a K\"ahler manifold equipped with a smooth K\"ahler form $\omega$. Assume $\gamma$ is a continuous $(1,1)$-form on $M$. If $\phi\in \textnormal{PSH}(M, \gamma)$ has vanishing Lelong number everywhere, then for any open bounded domain $U$
with $\overline{U}\subset\subset M$, there exist a sequence $\varepsilon_j \rightarrow 0^+$ and a sequence $\{ \phi_j \}_{j=1}^\infty \subset \textnormal{PSH}(M, \gamma + \varepsilon_j \omega)\cap C^\infty(M)$ such that $\phi_j$ decreasingly converge to $\phi$ in $U$.

\end{lemma}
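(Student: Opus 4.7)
The plan is to follow the local-to-global regularization scheme of Demailly, in the refined form due to Blocki--Kolodziej that exploits the vanishing Lelong number to obtain a \emph{decreasing} sequence. First, I would cover the compact set $\overline{U}$ by finitely many small coordinate balls $B_k \subset\subset B_k'' \subset M$, $k=1,\dots,N$, chosen so small that $\gamma$ is arbitrarily close to a constant $(1,1)$-form on each $B_k''$ (possible since $\gamma$ is continuous) and so that each $B_k''$ is contained in a larger chart in which $\omega \sim dd^c |z|^2$ up to a factor. On each chart perform the standard convolution $\phi_\delta^{(k)}(z) = \int \phi(z-w)\rho_\delta(w)\,dV(w)$ with a radial mollifier $\rho_\delta$. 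Since $\phi \in \mathrm{PSH}(M,\gamma)$, the function $\phi + u_k$ is genuinely plurisubharmonic on $B_k''$ for any local potential $u_k$ of $\gamma$ (which is $C^0$ but not $C^2$), so $\phi_\delta^{(k)} + u_k \ast \rho_\delta$ is smooth psh and $\phi_\delta^{(k)}$ satisfies $dd^c \phi_\delta^{(k)} + \gamma \geq -\varepsilon_k(\delta)\,\omega$ on $B_k$, where $\varepsilon_k(\delta) \to 0$ is controlled by the modulus of continuity of $\gamma$.

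The second step is to use the vanishing Lelong number to upgrade this to pointwise monotone decrease on each $B_k$. Since $\phi_\delta^{(k)}(z)$ equals an average of $\phi$ over a ball, and $\phi$ is psh up to a continuous form, the standard monotonicity of spherical averages gives $\phi_\delta^{(k)}(z) \searrow \phi(z) + O(\delta)$ as $\delta \downarrow 0$, where the $O(\delta)$ error comes from the correction by $u_k$. The vanishing Lelong number ensures the decrease to $\phi$ is not obstructed by a log pole, so for any prescribed $\eta_j \to 0$ we can choose $\delta_j \to 0$ so small that $\phi_{\delta_j}^{(k)} \leq \phi_{\delta_{j-1}}^{(k)} - \eta_{j}$ is not needed globally; instead, subtracting a small constant $c_j \downarrow 0$ chosen in advance produces a genuinely decreasing sequence $\tilde{\phi}_j^{(k)} = \phi_{\delta_j}^{(k)} - c_j$ on $B_k$ converging pointwise to $\phi$.

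The third step is to glue the local approximants into a global smooth $(\gamma+\varepsilon_j\omega)$-psh function by Demailly's regularized maximum. Choose a slightly larger smooth $\gamma$-psh majorant $\Phi_j \geq \phi$ on $M\setminus U$ (e.g.\ $\Phi_j = \phi \ast \rho_{\delta_j}$ extended via a global convolution on $M$ obtained from a finite trivialization, shifted by $c_j$); then on overlaps, define
\[
\phi_j \;=\; \max{}_{\eta_j}\!\bigl(\tilde\phi_j^{(1)}, \dots, \tilde\phi_j^{(N)}, \Phi_j\bigr),
\]
where $\max_{\eta_j}$ is Demailly's smooth regularized max, which agrees with the ordinary max off an $\eta_j$-neighborhood of the diagonal and preserves plurisubharmonicity up to the common lower bound of the Hessians. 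Shrinking the parameters $(\delta_j, c_j, \eta_j, \varepsilon_j)$ in a suitable hierarchy guarantees each $\phi_j$ is globally smooth, globally $(\gamma+\varepsilon_j\omega)$-psh, and satisfies $\phi_j \searrow \phi$ on $U$.

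The main obstacle is controlling the simultaneous requirements of monotonicity and of closeness to $\phi$: the local convolutions $\phi_\delta^{(k)}$ are not comparable across different $k$ because the local potentials $u_k$ differ by pluriharmonic transitions, and passing to the regularized max can destroy monotonicity between consecutive $j$ if the parameters are not chosen in a strictly nested way. The delicate step is therefore to choose the sequences $\delta_j \downarrow 0$, $c_j \downarrow 0$, $\eta_j \downarrow 0$, $\varepsilon_j \downarrow 0$ inductively, using uniform continuity of $\gamma$ on $\overline{U}$ and the pointwise decrease of averages of $\phi$ (which relies crucially on the vanishing Lelong number, since otherwise the decrease would be obstructed by logarithmic poles) so that $\phi_{j+1} \leq \phi_j$ is forced everywhere on $U$; this is precisely the technical core of \cite{BK}.
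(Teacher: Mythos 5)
The paper gives no proof of this lemma at all: it is quoted verbatim from B\l ocki--Ko\l odziej \cite{BK}, and your proposal is a reconstruction of precisely that argument (local convolution on small coordinate balls, gluing by Demailly's regularized maximum, and an inductively nested choice of parameters to force monotonicity), so the architecture is the right one and matches the paper's route.

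Two of your justifications are inaccurate, though neither destroys the proof. First, a continuous $(1,1)$-form $\gamma$ need not be closed, so a local potential $u_k$ of $\gamma$ need not exist; the correct local device --- which you also mention in passing --- is to compare $\gamma$ on a small ball $B_k''$ with the constant-coefficient form $\gamma(z_k)$, whose potential is a smooth quadratic $q_k$, at the cost of an error $\varepsilon\,\omega$ controlled by the modulus of continuity of $\gamma$; one then convolves $\phi+q_k+\varepsilon\rho_k$ (with $\rho_k$ a smooth local potential of $\omega$), not $\phi+u_k$. Second, and more substantively, the vanishing of the Lelong numbers plays no role in the mechanism you describe: the monotone decrease $\phi\ast\rho_\delta\downarrow\phi$ as $\delta\downarrow 0$ is a basic property of \emph{all} plurisubharmonic functions and is in no way ``obstructed by logarithmic poles'' ($\log|z|\ast\rho_\delta$ decreases to $\log|z|$ perfectly well). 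The regularization of \cite{BK} holds for arbitrary $\gamma$-psh functions; the Lelong hypothesis appears in the statement only because it is what the paper knows about the function $F$ to which the lemma is applied, not because the construction needs it, so you should not lean on it as the ``technical core.'' Finally, the extension to a function smooth on all of $M$ via a ``global convolution on $M$'' is not meaningful as written; one should either settle for smoothness on a neighborhood of $\overline U$ (which is all that is used in Corollary \ref{nef}) or glue with a sufficiently large constant outside $U$ via the regularized maximum.
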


\begin{corollary}\label{nef} $K_{\mathcal{X}/\mathcal{S}}$ is nef.

\end{corollary}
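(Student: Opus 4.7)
The strategy is to combine the pseudo-effective representative produced in Lemma \ref{loclel} with the Demailly-type regularization in Lemma \ref{kol}, and then recognize nefness via intersection numbers. Recall that by Lemma \ref{loclel} the extended current
\[
\theta = \chi + \ddbar F \in c_1(K_{\mathcal{X}/\mathcal{S}})
\]
is a closed nonnegative $(1,1)$-current on the projective variety $\mathcal{X}$ whose Lelong numbers vanish identically. This is precisely the input hypothesis of the regularization lemma.

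First I would fix a reference smooth K\"ahler form $\omega$ on $\mathcal{X}$ (which exists because $\mathcal{S}$, and hence $\mathcal{X}$, is projective). Given any irreducible curve $C \subset \mathcal{X}$, I choose a relatively compact open neighborhood $U$ of $C$ in $\mathcal{X}$ and apply Lemma \ref{kol} with $\gamma = \chi$ and $\phi = F$: this yields a sequence $\varepsilon_j \to 0^+$ and smooth functions $\phi_j \in \textnormal{PSH}(\mathcal{X}, \chi + \varepsilon_j \omega) \cap C^\infty(\mathcal{X})$ with $\phi_j \searrow F$ on $U$. The key point is that the smooth closed $(1,1)$-forms
\[
\theta_j := \chi + \varepsilon_j \omega + \ddbar \phi_j
\]
are everywhere nonnegative on $\mathcal{X}$ and lie in the cohomology class $c_1(K_{\mathcal{X}/\mathcal{S}}) + \varepsilon_j [\omega]$.

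With this in hand, the rest is a routine intersection-theoretic argument. Since $\theta_j \geq 0$ is smooth, for the fixed curve $C$ we have
\[
\bigl( K_{\mathcal{X}/\mathcal{S}} + \varepsilon_j [\omega] \bigr) \cdot C \; = \; \int_C \theta_j \; \geq \; 0.
\]
Letting $j \to \infty$ gives $K_{\mathcal{X}/\mathcal{S}} \cdot C \geq 0$, and since $C$ was arbitrary, $K_{\mathcal{X}/\mathcal{S}}$ is nef. I expect the only point that requires a moment of care is to verify that the regularization indeed produces globally defined smooth quasi-psh potentials on all of $\mathcal{X}$ (not merely on $U$) so that the cohomology class of $\theta_j$ is well-defined and equal to $c_1(K_{\mathcal{X}/\mathcal{S}}) + \varepsilon_j [\omega]$; this is built into the statement of Lemma \ref{kol}. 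No further estimates beyond those already established are needed, so this really is just a corollary of the analytic content of Lemma \ref{loclel}.
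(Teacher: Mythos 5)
Your argument is essentially identical to the paper's proof: the paper also fixes a curve $\mathcal{C}$, regularizes the potential on a neighborhood of $\mathcal{C}$ via Lemma \ref{kol} (using that $F$ has vanishing Lelong numbers), and concludes $K_{\mathcal{X}/\mathcal{S}}\cdot\mathcal{C} = \lim_j \int_{\mathcal{C}}(\chi+\varepsilon_j\omega+\ddbar\phi_j) \geq 0$. The only detail you gloss over is that Lemma \ref{kol} is stated for K\"ahler manifolds, so one should first pass to a resolution of singularities of $\mathcal{X}$ (as the paper does); also note that your worry about global smoothness is unnecessary, since $\int_{\mathcal{C}}\ddbar\phi_j=0$ already for $\phi_j$ smooth near the closed curve $\mathcal{C}$.
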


\begin{proof} Without loss of generality, we can assume $\mathcal{X}$ is smooth after applying resolution of singularities. For any closed curve $\mathcal{C}$ in $\mathcal{X}$, we consider a small open neighborhood $U$ of $\mathcal{C}$ in $\mathcal{X}$. For a fixed K\"ahler form $\theta$ on $\mathcal{X}$, there exist $\varepsilon_j \rightarrow 0^+$ and $\varphi_j \in \textnormal{PSH}(U, \chi + \varepsilon_j \theta) \cap C^\infty(U)$ by Lemma \ref{kol}  such that $\varphi_j $ converge to $\varphi$ decreasingly.
$$K_{\mathcal{X}/\mathcal{S}} \cdot \mathcal{C} =   \int_{\mathcal{C}} \chi = \lim_{j\rightarrow \infty} \int_{\mathcal{C}} (\chi+ \varepsilon_j \theta+ \ddbar \varphi_j) \geq 0. $$
This proves the corollary.

\end{proof}

\section{The Weil-Petersson metric and proof of Theorem \ref{main3}}

We will prove Theorem \ref{main3} in this section. 
We will keep the same notations in section 8 and let $\omega = \chi+ \ddbar \varphi$ as in (\ref{fdef}) and by Lemma \ref{loclel}, $\omega$ is a K\"ahler current on the total space $\mathcal{X}$ and is smooth on $\mathcal{X}^\circ$. In particular, $\varphi |_{\mathcal{X}_t} = \varphi_t $ and $\omega|_{\mathcal{X}_t} = \omega_t$ for $t\in \mathcal{S}^\circ$, where $\omega_t=\chi+ \ddbar \varphi_t$ is the unique K\"ahler-Einstein metric on $\mathcal{X}_t$ for $t\in \mathcal{S}^\circ$. It is well-known that the Weil-Petersson metric $\omega_{WP}$ on $\mathcal{S}^\circ$ is the curvature of the Deligne pairing by the relative canonical bundle and can be calculated as the push-forward of  $\omega^{n+1}$ as below (c.f. \cite{Sch})
\begin{equation}\label{wpfor1}
\omega_{WP} (t) = \int_{\mathcal{X}_t} \omega^{n+1}
\end{equation}
for $t\in \mathcal{S}^\circ$. 
Direct calculations give the following formula.
\begin{lemma} \label{wpref} On $\mathcal{X}^\circ$, the Weil-Petersson metric is given by
\begin{equation}
\omega_{WP}= \int_{\mathcal{X}_t} \chi^{n+1} + \ddbar \left( \int_{\mathcal{X}_t } \varphi \left(\sum_{k=0}^n\chi^k \wedge (\chi+ \ddbar \varphi )^{n-k} \right)      \right)
\end{equation}
for $t\in \mathcal{S}^\circ$.

\end{lemma}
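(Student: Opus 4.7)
The plan is to compute the push-forward $\pi_*(\omega^{n+1})$ on $\mathcal{S}^\circ$, starting from the identity $\omega_{WP}(t) = \int_{\mathcal{X}_t} \omega^{n+1}$ from \eqref{wpfor1}, and to extract from $\omega^{n+1}$ a piece of the form $\ddbar(\text{something})$ that can be moved outside the fiber integral. Everything will be done over $\mathcal{S}^\circ$, where $\pi$ restricts to a proper holomorphic submersion and $\omega = \chi + \ddbar\varphi$ is smooth, so there are no regularity issues in play.

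First I would apply the elementary telescoping identity for commuting $(1,1)$-forms $\omega$ and $\chi$:
\begin{equation*}
\omega^{n+1} - \chi^{n+1} = (\omega - \chi) \wedge \sum_{k=0}^{n} \chi^{k} \wedge \omega^{n-k} = \ddbar\varphi \wedge \sum_{k=0}^{n} \chi^{k} \wedge (\chi + \ddbar\varphi)^{n-k}.
\end{equation*}
Since both $\chi$ and $\omega$ are smooth and $d$-closed on $\mathcal{X}^\circ$, each summand $\chi^{k} \wedge (\chi + \ddbar\varphi)^{n-k}$ is $\ddbar$-closed. Hence $\ddbar\varphi \wedge \chi^{k} \wedge \omega^{n-k} = \ddbar\bigl(\varphi\, \chi^{k} \wedge \omega^{n-k}\bigr)$, and combining the terms yields
\begin{equation*}
\omega^{n+1} = \chi^{n+1} + \ddbar\!\left(\varphi \sum_{k=0}^{n} \chi^{k} \wedge (\chi + \ddbar\varphi)^{n-k}\right)
\end{equation*}
on $\mathcal{X}^\circ$.

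Finally, I would integrate over the fibers of the proper submersion $\pi|_{\mathcal{X}^\circ}$. Fiber integration commutes with $\ddbar$ for smooth forms on a smooth proper submersion (this is the usual fact that $d \circ \pi_* = \pi_* \circ d$), so
\begin{equation*}
\omega_{WP} = \int_{\mathcal{X}_t} \omega^{n+1} = \int_{\mathcal{X}_t} \chi^{n+1} + \ddbar\!\left( \int_{\mathcal{X}_t} \varphi \sum_{k=0}^{n} \chi^{k} \wedge (\chi + \ddbar\varphi)^{n-k}\right),
\end{equation*}
which is the claimed formula. The computation is essentially algebraic; there is no real obstacle, only the bookkeeping of checking that the telescoping identity and the commutation $\pi_* \circ \ddbar = \ddbar \circ \pi_*$ both apply in the smooth setting over $\mathcal{S}^\circ$, which they do since $\varphi$ is smooth there by Theorem \ref{schum} and the proper submersion hypothesis is satisfied.
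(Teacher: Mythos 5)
Your proposal is correct and is exactly the ``direct calculation'' the paper alludes to: the telescoping factorization $\omega^{n+1}-\chi^{n+1}=\ddbar\varphi\wedge\sum_{k=0}^n\chi^k\wedge\omega^{n-k}$, the identity $\ddbar\varphi\wedge\alpha=\ddbar(\varphi\alpha)$ for $d$-closed $\alpha$, and the commutation of fiber integration with $\ddbar$ over the proper submersion $\pi|_{\mathcal{X}^\circ}$. No gaps; this matches the paper's (unwritten) argument.
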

%


%
%
%
%
%
%
%
%
%

Before we start estimating the local potentials of the Weil-Petersson metric $\omega_{WP}$, we will derive some basic estimates.
We will consider a stable family of smooth canonical models after base alteration as in section 8.
\begin{equation}\label{ssrsec9}
\begin{diagram}
\node{\mathcal{X}'} \arrow{se,l}{ \pi' }  \arrow{e,t}{\Psi}   \node{\cZ:=\mathcal{X}\times_\mathcal{S} \mathcal{S}'}  \arrow{e,t}{f'} \arrow{s,r}{\pi_{\mathcal{Z}}}   \node{\mathcal{X} } \arrow{s,r}{\pi} \\
\node{}      \node{\mathcal{S}'} \arrow{e,t}{f}  \node{\mathcal{S}}
\end{diagram}
\end{equation}
We will further apply local toric blow-ups to $\mathcal{X}'$ near a central fibre as in the following diagram so that near $0\in B \subset  \mathcal{S}'$  the proper transformation of each fibre $\mathcal{Z}_t$ of $\mathcal{Z}$ is a union of disjoint smooth projective manifolds of dimension $n$. Furthermore, we can assume that the blow-up center does not contain any component of the proper transformation of each fibre $\mathcal{Z}_t$ of $\mathcal{Z}$, and the exceptional locus of $\Psi\circ\Phi$ is a union of divisors locally defined by the toric coordinates. 

\begin{equation} \label{ssr4}
\begin{diagram}
\node{\tilde \cX} \arrow{se,l}{ \tilde \pi }  \arrow{e,t}{\Phi}   \node{\cX' }   \arrow{s,r}{\pi'}  \arrow{e,t}{\Psi} \node{\cZ} \arrow{sw,r}{\pi_\cZ}  \\
\node{}      \node{B}
\end{diagram}
\end{equation}
For any closed point $q \in \tilde\cX$, there exist formal holomorphic toroidal coordinates $x_1, ..., x_{n+d}$ such that $\tilde \pi$ is given by near $p$
\begin{equation} \label{coordinates95}
t_i = \prod_{j=1}^{n+d} x_j^{\alpha_{i, j}}, ~~i=1, ..., d.
\end{equation}
%
%
%
We remark that the fibres of $\tilde \pi$ no longer have the same dimensions as $\pi'$, but $\tilde \pi$ is still equi-dimensional over $(\mathcal{S}')^\circ \cap B$. For any exceptional divisor $D$ of $\Psi\circ\Phi$ and $t\in B$, we have 
\begin{equation} \label{vanishfibre}
\dim \left( \Psi\circ\Phi(D\cap \mathcal{Z}_t) \right)\leq n-1.
\end{equation}

The following lemma immediately follows from (\ref{vanishfibre}).

\begin{lemma} Let $D$ be an exceptional divisor of $\Psi\circ\Phi$. Then for any $t\in B$, we have 
\begin{equation}
\chi^n |_{D \cap \tilde{\mathcal{X}}_t} = 0.
\end{equation}

\end{lemma}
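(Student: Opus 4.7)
The plan is to exploit the fact that $\chi$ is not an intrinsic form on $\tilde{\mathcal{X}}$ but rather a pullback under $\Psi\circ\Phi$ from the base, so its rank at any point is controlled by the rank of the tangent map of $\Psi\circ\Phi$ there. The lemma will then follow from the dimensional bound (\ref{vanishfibre}) by a pointwise linear-algebra argument.

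First I would write $\chi = (\Psi\circ\Phi)^*\hat\chi$ where $\hat\chi$ is a smooth semipositive $(1,1)$-form on $\mathcal{Z}$ (the pullback of the Fubini--Study form under the projective embedding of $\mathcal{Z}$). Pick a smooth point $p$ of $D\cap\tilde{\mathcal{X}}_t$ and let $q=(\Psi\circ\Phi)(p)$. The restriction $\chi|_{D\cap\tilde{\mathcal{X}}_t}(p)$ is computed by pulling back $\hat\chi(q)$ via the differential of the map $(\Psi\circ\Phi)|_{D\cap\tilde{\mathcal{X}}_t}:D\cap\tilde{\mathcal{X}}_t\to\mathcal{Z}$. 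The image of this differential at $p$ lies inside the tangent space to $(\Psi\circ\Phi)(D\cap\tilde{\mathcal{X}}_t)$, which by (\ref{vanishfibre}) has complex dimension at most $n-1$.

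Consequently the Hermitian form $\chi|_{D\cap\tilde{\mathcal{X}}_t}(p)$ is the pullback of a Hermitian form through a $\mathbb{C}$-linear map whose image has complex dimension at most $n-1$, so its rank is at most $n-1$. Any $(1,1)$-form of rank at most $n-1$ on a complex vector space has vanishing $n$-th exterior power, which gives $\chi^n|_{D\cap\tilde{\mathcal{X}}_t}(p)=0$ at every smooth point $p$. Since $D\cap\tilde{\mathcal{X}}_t$ is a (reduced) analytic subvariety and the smooth locus is dense, the $(n,n)$-form $\chi^n$ restricted to $D\cap\tilde{\mathcal{X}}_t$ vanishes identically as a current, proving the lemma.

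There is no real obstacle here: once one observes that $\chi$ is a pullback, the conclusion is a routine consequence of (\ref{vanishfibre}) together with the elementary fact that rank-$k$ $(1,1)$-forms satisfy $\chi^{k+1}=0$. The only point requiring minor care is to ensure the image dimension bound applies uniformly as $t$ varies in $B$, which is exactly the content of (\ref{vanishfibre}) as stated for every $t\in B$.
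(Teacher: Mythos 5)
Your proof is correct and is essentially the argument the paper intends: the paper gives no proof beyond asserting that the lemma ``immediately follows from (\ref{vanishfibre})'', and your pointwise rank computation ($\chi$ is a pullback from $\mathcal{Z}$, the differential of $(\Psi\circ\Phi)|_{D\cap\tilde{\mathcal{X}}_t}$ has rank at most $\dim \left(\Psi\circ\Phi(D\cap\tilde{\mathcal{X}}_t)\right)\le n-1$, hence the pulled-back $(1,1)$-form has rank at most $n-1$ and its $n$-th exterior power vanishes, first on the smooth locus and then everywhere by density) supplies exactly the missing routine details. One phrase to tighten: where the image variety is singular its Zariski tangent space may have dimension larger than $n-1$, so instead of saying the image of the differential lies in the tangent space of the image you should invoke the standard bound $\operatorname{rank} d(\Psi\circ\Phi)_p\le\dim\overline{\Psi\circ\Phi(D\cap\tilde{\mathcal{X}}_t)}$, which follows from lower semicontinuity of rank (the generic rank on each component equals the dimension of its image) or from the fact that $\ker d(\Psi\circ\Phi)_p$ contains the Zariski tangent space of the fibre through $p$.
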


\begin{lemma}\label{divisorinte} Let $D$ be an exceptional divisor of $\Psi\circ\Phi$ such that the support of $D$ coincides with the exceptional locus of $\Psi\circ\Phi$ on $\tilde X$. Let $\sigma_D$ be a defining section of $D$ and $h_D$ a smooth hermitian metric of the line bundle associated to $D$. Then there exist $C>0$ such that for any $t\in B^o$, 
\begin{equation}
- \int_{\tilde{\mathcal{X}}_t} \left( \log|\sigma_D|^2_{h_D} \right) \chi^n \leq C. 
\end{equation}

\end{lemma}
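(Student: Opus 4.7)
The plan is to reduce the fiber integral to a local computation in the toroidal coordinates supplied by semistable reduction, exploiting the vanishing of $\chi^n$ along the exceptional divisor $D$ to compensate the logarithmic pole of $-\log|\sigma_D|^2_{h_D}$. Cover $\tilde\pi^{-1}(\overline{B'})$ for any $B'\subset\subset B$ by finitely many toroidal charts $(x_1,\dots,x_{n+d})$ with $t_i=\prod_j x_j^{\alpha_{i,j}}$ as in (\ref{coordinates95}); by a partition of unity it suffices to bound $\int_{U\cap\tilde{\mathcal{X}}_t}(-\log|\sigma_D|^2_{h_D})\chi^n$ uniformly in $t$ on each chart $U$. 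Since $D$ is supported on exceptional toroidal coordinate hyperplanes (after the toroidal blow-ups $\Phi$), locally $\sigma_D=u(x)\prod_{j\in J}x_j^{m_j}$ with $u$ nonvanishing holomorphic and $m_j\ge 1$, so
\[
-\log|\sigma_D|^2_{h_D}\le C_1+\sum_{j\in J}m_j(-\log|x_j|^2).
\]

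Writing $\chi=(\Psi\circ\Phi)^*\chi_{\mathcal{Z}}$ for the smooth $(1,1)$-form $\chi_{\mathcal{Z}}$ on $\mathcal{Z}$ coming from the relative pluricanonical embedding, the crucial analytic input is a quantitative vanishing
\[
\chi^n\big|_{\tilde{\mathcal{X}}_t\cap U}\le C_2\prod_{j\in J}|x_j|^{2 r_j}\,dV
\]
for uniform, strictly positive rational exponents $r_j>0$, where $dV$ denotes a smooth reference $(n,n)$-form on $\tilde{\mathcal{X}}_t\cap U$. The positivity of each $r_j$ follows from the fact that $\Psi\circ\Phi$ contracts each exceptional toroidal divisor $\{x_j=0\}$ to a subvariety of $\mathcal{Z}$ of codimension $\ge 2$, so the fiberwise Jacobian of $\Psi\circ\Phi|_{\tilde{\mathcal{X}}_t}$ vanishes to positive order along $\{x_j=0\}\cap\tilde{\mathcal{X}}_t$; combined with the discrepancy bookkeeping of Lemma \ref{adjun4} and the canonical singularity of $\mathcal{Z}$, one obtains uniform positive exponents independent of $t$.

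Combining the two bounds, the integrand is pointwise dominated on $U\cap\tilde{\mathcal{X}}_t$ by a finite sum of terms of the form $\bigl(1+(-\log|x_j|^2)\bigr)\prod_{j'\in J}|x_{j'}|^{2r_{j'}}\,dV$. Each such term is uniformly bounded because $|s|^{2r}(-\log|s|^2)$ is bounded on $|s|\le 1$ whenever $r>0$, and the integral of a uniformly bounded integrand over the fiber slice of a polydisk chart is itself uniformly bounded in $t$. Summing over the finite cover yields the lemma. The principal obstacle is establishing the quantitative vanishing of $\chi^n$ with strictly positive exponents along every exceptional toroidal direction: this requires carefully tracking the monomial structure of $\Psi\circ\Phi$ on each chart together with the discrepancy estimates, so as to rule out the appearance of a zero exponent in any direction contributing to the logarithmic singularity of $\sigma_D$.
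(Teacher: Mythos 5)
The key analytic claim in your argument --- the pointwise bound $\chi^n|_{\tilde{\mathcal{X}}_t\cap U}\le C_2\prod_{j\in J}|x_j|^{2r_j}\,dV$ with a strictly positive exponent $r_j>0$ in \emph{every} exceptional direction --- is false in general, and the justification offered for it does not hold. Already in the simplest case $n=d=1$ with $t=x_1x_2$ and $\{x_1=0\}$ exceptional (so that $\Psi\circ\Phi$ is locally $(x_1,x_2)\mapsto(x_1,x_1x_2)$), one computes $\chi|_{\tilde{\mathcal{X}}_t}= dx_1\wedge d\bar x_1+\cdots$, whose coefficient with respect to the natural fiber coordinate $x_1$ is bounded away from zero: there is no positive power of $|x_1|$ to be extracted. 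The reason the heuristic fails is that the \emph{fiberwise} restriction of $\Psi\circ\Phi$ is generically immersive near $D\cap\tilde{\mathcal{X}}_t$; it is the total-space Jacobian (an $(n{+}d)\times(n{+}d)$ determinant) that vanishes along $D$, not the $(n,n)$-minors that survive restriction to the fiber. The fact that $D\cap\tilde{\mathcal{X}}_t$ is contracted to dimension $\le n-1$ kills only those components of $\chi^n$ that do \emph{not} involve $dx_1\wedge d\bar x_1$.

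The paper's proof is organized precisely around this dichotomy: it splits $\chi^n=\Theta_1+\Theta_2$, where $\Theta_1$ collects the terms containing $dx_1$ or $d\bar x_1$ and $\Theta_2$ the rest. For $\Theta_1$ no vanishing is needed or available; the integral $-\int\log|x_1|^2\,\Theta_1$ is finite simply because $\log|x_1|^2$ is locally integrable against the two-real-dimensional Lebesgue measure $dx_1\wedge d\bar x_1$. Only for $\Theta_2$ does one gain a factor $|x_1|^{2\varepsilon}$, and this comes from the vanishing of $\chi^n$ (hence of $\Theta_2$) restricted to the hypersurface $\{x_1=0\}$, not from a fiberwise Jacobian computation. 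A further point your sketch glosses over is the choice of reference volume $dV$ on the degenerating fibers: the paper decomposes each chart into sectors $|x_{i_2}|\le\cdots\le|x_{i_{n+d}}|$ and uses the monomial relations $dx_{n_i}/x_{n_i}=\sum_{j}\mu_{i,j}\,dx_j/x_j$ on the fiber to show that, in each sector, the coefficients of $\Theta_2$ with respect to $\prod_{j\in J}dx_j\wedge d\bar x_j$ stay bounded uniformly in $t$. Without the $\Theta_1/\Theta_2$ splitting and this sector decomposition your estimate does not close.
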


\begin{proof} For any closed point $p \in \tilde\cX$, there exist local holomorphic toric coordinates $x_1, ..., x_{n+d}$ such that $\tilde \pi$ is given by near $p$
\begin{equation} \label{coordinates96}
t_i = \prod_{j=1}^{n+d} x_j^{\alpha_{i, j}}, ~~i=1, ..., d, ~\alpha_{i,j} \in \mathbb{Z}^+\cup\{0\}.
\end{equation}
The exceptional divisor is given by  $$\cup_{j=1}^{n+d-m} \{x_j=0\}$$
for some $d\leq m \leq n+d$. 
 The $d \times (n+d)$-matrix $[\alpha_{i, k}]$ must have rank $d$, otherwise $\chi^n$ will vanish everywhere on some open set of a smooth fibre of $\mathcal{Z}$. Recall that $\mathcal{F} = \Psi\circ\Phi: \tilde{\mathcal{X}} \rightarrow \mathcal{Z} \hookrightarrow \mathbb{CP}^N$ for some algebraic embedding of $\mathcal{Z}$ into a projective space $\mathbb{CP}^N$. Let $\mathcal{V}$ be an open neighborhood of $\mathcal{F}(p)$ in $\mathbb{CP}^N$ and for simplicity we can assume that $\chi$ is equivalent to the Euclidean metric on $\mathcal{V}$.
Therefore $\mathcal{F}=(\mathcal{F}_1, ..., \mathcal{F}_N)$ is holomorphic and 
$$\mathcal{F}^*\chi =  \sqrt{-1}  \sum_{\alpha=1}^N \sum_{i, j=1}^{n+d}\frac{\partial \mathcal{F}_\alpha}{\partial x_i }  \overline{ \frac{\partial \mathcal{F}_\alpha}{\partial x_j} }dx_i \wedge d\bar x_j = \sqrt{-1} \sum_{i, j=1}^{n+d} G_{i \bar j } dx_i \wedge d\bar x_j$$ 
and so
$$\mathcal{F}^*\chi^n = (\sqrt{-1})^n \sum_{1\leq i_1 < ... <i_n \leq n+d, 1\leq j_1 < ... <j_n \leq n+d} \det\left( G_{i\bar j} \right)_{i_1... i_n, \bar j_1 ...\bar j_n} dx_{i_1}\wedge d\bar x_{j_1} \wedge ... \wedge dx_{i_n}\wedge d \bar x_{j_n},$$
where $\left( G_{i\bar j} \right)_{i_1... i_n, \bar j_1 ...\bar j_n} $ is the  $(n\times n)$-submatrix of $\left( G_{i \bar j}  \right)$. 
In particular, $\mathcal{F}$ is holomorphic and algebraic in $x_1$, ..., $x_{n+d}$ and 
$\det\left( G_{i\bar j} \right)_{i_1... i_n, \bar j_1 ...\bar j_n} $ is an algebraic function in $x_1$, $\bar x_1$, ..., $x_{n+d}$, $\bar x_{n+d}$. 

Since $\log|\sigma_D|^2$ is locally the sum of $\log$ of every  exceptional prime divisor,   it suffices to consider $\log|x_1|^2$ instead of $\log |\sigma_D|^2_{h_D}$ near a fixed point $p\in \tilde{\mathcal{X}}$, where $\{x_1=0\}$ is an exceptional divisor of $\Psi\circ\Phi$.  We can always assume $\alpha_{1,1}>0$ after rearrangement in $t_1, ..., t_d$ because $\sum_{i=1}^d a_{i, 1}>0$, otherwise $\{x_1=0\}$ cannot be an exceptional divisor. 

It also suffices to prove the lemma locally near $p$. We let $p=0$ and 
$$\mathcal{U} =\{ 0\leq |x_1|, ..., |x_{n+d}| <1\}. $$
We write
$$\mathcal{F}^* \chi^n =\Theta= \Theta_1 + \Theta_2$$
such that 
$\Theta_1$ contains $d x_1$ or $d \bar x_1$ and $\Theta_2$ does not contain $d x_1$ or $d \bar x_1$. Obviously both $\Theta_1$ and $\Theta_2$ are real valued nonnegative $(n, n)$-forms on $\tilde{\mathcal{X}}$. Since $\Theta_1$ and $\chi^n$ vanish on $\{x_1=0\}$,   $\Theta_2$ must also vanish on $\{x_1=0\}$. 
Let
\begin{eqnarray*}
\Theta_1 &=&  (\sqrt{-1})^n \sum_{1= i_1 < ... <i_n \leq n+d, 2\leq j_1 < ... <j_n \leq n+d} \Theta_{i_1...i_n, j_1...j_n} dx_{i_1}\wedge d\bar x_{j_1}\wedge ... \wedge dx_{i_n}\wedge d\bar x_{j_n} \\
&&  +(\sqrt{-1})^n \sum_{2\leq i_1 < ... <i_n \leq n+d, 1= j_1 < ... <j_n \leq n+d} \Theta_{i_1...i_n, j_1...j_n} dx_{i_1}\wedge d\bar x_{j_1}\wedge ... \wedge dx_{i_n}\wedge d\bar x_{j_n}\\
&& +(\sqrt{-1})^n \sum_{1= i_1 < ... <i_n \leq n+d, 1= j_1 < ... <j_n \leq n+d} \Theta_{i_1...i_n, j_1...j_n} dx_{i_1}\wedge d\bar x_{j_1}\wedge ... \wedge dx_{i_n}\wedge d\bar x_{j_n},
\end{eqnarray*}
and
$$\Theta_2=  (\sqrt{-1})^n \sum_{2\leq i_1 < ... <i_n \leq n+d, 2\leq j_1 < ... <j_n \leq n+d} \Theta_{i_1...i_n, j_1...j_n} dx_{i_1}\wedge d\bar x_{j_1}\wedge ... \wedge dx_{i_n}\wedge d\bar x_{j_n},$$
where all the coefficients are bounded algebraic functions in $x_1$, $\bar x_1$, ..., $x_{n+d}$, $\bar x_{n+d}$.
The domain $\mathcal{U}$ can be covered by  subdomains as 
\begin{equation}\label{subdo}
\mathcal{U}_{i_2, i_3, ... ,i_{n+d}} = \{ |x_1|\leq 1, |x_{i_2}| \leq |x_{i_3}| \leq ... \leq |x_{i_{n+d}}| \leq 1\},
\end{equation}
where $(i_2, ..., i_{n+d})$ is a permutation of $(2, 3, ..., n+d)$.   It suffices to prove the lemma on one of such  subdomains and without loss of generality, we can choose $\mathcal{U}' = \mathcal{U}_{2,3,...,n+d}$ for $i_2=2$, ..., $i_{n+d}=n+d$.

We will perform the standard row reduction for the matrix $[a_{i, j}]_{1\leq i \leq d, 1\leq j \leq n+d}$ with the resulting matrix $[\alpha_{i, j}]_{1\leq i \leq d, 1\leq j \leq n+d}$ of the following type

$$[\beta_{i, j}]_{1\leq i \leq d, 1\leq j \leq n+d}= 
\begin{pmatrix}
*...*  & * ... * & * ... * &  * ... * & ... & * ... *\\
0 ... 0&* ... * & * ... * &  * ... * & ... & *... * \\
0 ... 0 &0 ...0 & * ... * &  * ... * & ... & *... * \\
0 ... 0 &0 ...0 & 0 ...0 &  * ... * & ... & *... * \\
    & ...     & ...      & ...       &...  &   \\
0 ... 0 &0 ...0 & 0 ...0 & 0 ...0 &0 ...0& *... *
\end{pmatrix}
_{d \times (n+d)} .$$
The above row reductions gives the following formula 
$$u_i = \prod_{n_i \leq j \leq n+d } x_j ^{\beta_{i, j}} , ~ 1 \leq i \leq d, $$
and
$$u_i =u_i (t_2, t_3, ..., t_d)= \prod_{1\leq k\leq d} t_k^{\gamma_{i, k}}, ~ \gamma_{i, k}\in \mathbb{Z}$$
where  $1=n_1 < n_2 <... < n_d \leq n+d$.

Direct calculations show that  
$$d\log u_i  = \sum_{n_i\leq j \leq n+d} \beta_{i, j}  \frac{d x_j}{x_j} = \sum_{1 \leq k\leq d} \gamma_{i, k} \frac{dt_k}{t_k}, ~ 1 \leq i \leq d.$$
Since the upper diagonal $d\times d$-matrix $[\alpha_{i, n_i}]_{i =1, ...,d,}$ is invertible, for any $1\leq i \leq d$, we can solve 
$\frac{dx_{n_i}}{x_{n_i}}$ in terms of linear combination of $\frac{dx_j}{x_j}$ for  $j\geq n_i+1$ and $\frac{dt_k}{t_k}$ for $k=1, ... d$. 
If we define the ordered index set $J$ by
$$J= \{ 1\leq j \leq n+d ~|~ j\neq n_i, i =1, ..., d \}.$$
Obviously, $\# J = n$.
Then there exists a fixed constant $d\times n$ -matrix $[\mu_{i, j}]_{1\leq i \leq d, j \in J}$ with
$$ \mu_{i, j} = 0, ~ 1 \leq j\leq n_i $$
such that for each $t\in B$ and $1\leq i\leq d$
\begin{equation}\label{sec9abc}
\left. \frac{dx_{n_i}}{x_{n_i}} \right|_{\tilde{\mathcal{X}}_t \cap \mathcal{U}'} = \left. \sum_{j\in J} \mu_{i,j} \frac{dx_j}{x_j} \right|_{\tilde{\mathcal{X}}_t\cap \mathcal{U}'} .
\end{equation}
%
%
%
%
%
In particular, there exists $C_1>0$ such that  for each $t\in B$ and $1\leq i \leq d$, we have 
\begin{eqnarray} \label{sec9bca}
\left| \left.dx_{n_i} \right|_{\tilde{\mathcal{X}}_t \cap \mathcal{U}'} \right| &=& \left| \left. \sum_{j\in J,~j\geq n_i+1} \gamma_{i,j} \frac{x_{n_i}}{x_j} dx_j\right|_{\tilde{\mathcal{X}}_t\cap \mathcal{U}'} \right| \nonumber\\
&\leq&  \sum_{j\in J,~ j\geq n_i+1} |\gamma_{i,j}|  \left| \left.\frac{x_{n_i}}{x_j} dx_j\right|_{\tilde{\mathcal{X}}_t\cap \mathcal{U}'} \right|\\
&\leq & C_1 \sum_{j\in J, ~ j\geq n_i+1}  \left| \left. dx_j\right|_{\tilde{\mathcal{X}}_t\cap \mathcal{U}'} \right| \nonumber
\end{eqnarray}
because  $|x_{n_i}| \leq |x_j|$ for $j>n_i$.
Then immediately, by definition of $\Theta_2$, we have 
$$\Theta_2  |_{\tilde{\mathcal{X}}_t \cap \mathcal{U}'} = (\sqrt{-1})^n H  \left( \prod_{j\in J} d x_j \wedge d\bar x_j \right).$$
for a continuous algebraic function $H$ in $x_j$, $\bar x_j$,  $x_{n_i}  x_k^{-1}$, $\overline{ x_{n_i}  x_k^{-1}}$ for   $j=1, ..., n+d$, $i=2, ..., d$ and $n_i+1\leq k \in J$. Since $$\Theta_2|_{\tilde{\mathcal{X}}_t \cap \mathcal{U}\cap \{x_1=0\}}=0,$$
$G$ must have a factor as a bounded algebraic function in $x_1$,and $\bar x_1$. Therefore  there exist $0<\varepsilon<1$ and $C_2>0$ by (\ref{sec9bca}) such that for all $t\in B$, 
$$\Theta_2  |_{\tilde{\mathcal{X}}_t \cap \mathcal{U}'}  \leq (\sqrt{-1})^n C_2 \left. \left( |x_1|^{2\epsilon}  \prod_{j\in J} d x_j \wedge d\bar x_j \right)\right|_{\tilde{\mathcal{X}}_t \cap \mathcal{U}'}.$$
This implies that there exists $C_3>0$ such that for any $t\in B$, 
\begin{eqnarray*}
&&- \int_{\tilde{\mathcal{X}}_t \cap \mathcal{U}'}  \log |x_1|^2 ~\Theta_2 \\
&\leq& - (\sqrt{-1})^n C_2\int_{\tilde{\mathcal{X}}_t \cap \mathcal{U}'}  \left( \log |x_1|^2 \right) |x_1|^{2\epsilon} \prod_{j\in J} d x_j \wedge d\bar x_j  \\
&\leq& - (\sqrt{-1})^n C_2\int_{ |x_j|\leq 1,~j\in J}  \left( \log |x_1|^2 \right) |x_1|^{2\epsilon} \prod_{j\in J} d x_j \wedge d\bar x_j \\
&\leq& C_3.
\end{eqnarray*}
Also by definition of $\Theta_1$, there exists  $C_4>0$ such that for any $t\in B$,
$$\Theta_1 \leq C_4 (\sqrt{-1})^n \sum_{2\leq i_2< ...< i_n\leq n+d}     dx_1 \wedge d\bar x_1 \wedge dx_{i_2} \wedge d\bar x_{i_2} \wedge ... \wedge dx_{i_n}\wedge d\bar x_{i_n}$$
and so there exists $C_5>0$ such that for any $t\in B$, 
\begin{eqnarray*}
&&- \int_{\tilde{\mathcal{X}}_t \cap \mathcal{U}'}  \log |x_1|^2 ~\Theta_1 \\
&\leq&  - (\sqrt{-1})^n C_4\int_{\tilde{\mathcal{X}}_t \cap \mathcal{U}}  \log |x_1|^2  \sum_{2\leq i_2< ...< i_n\leq n+d} dx_1 \wedge d\bar x_1 \wedge dx_{i_2} \wedge d\bar x_{i_2} \wedge ... \wedge dx_{i_n}\wedge d\bar x_{i_n}\\
&\leq& - (\sqrt{-1})^n C_4 \sum_{2\leq i_2< ...< i_n\leq n+d}  \int_{ 0\leq |x_{i_1}|, ..., |x_{i_n}| \leq 1}  \log |x_1|^2   dx_1 \wedge d\bar x_1 \wedge dx_{i_2} \wedge d\bar x_{i_2} \wedge ... \wedge dx_{i_n}\wedge d\bar x_{i_n}\\
&\leq& C_5
\end{eqnarray*}
Immediately we have
$$- \int_{\tilde{\mathcal{X}}_t \cap \mathcal{U}'}  \log |x_1|^2 \chi^n \leq C_3+ C_5. $$
The lemma is then proved by replace $\mathcal{U}'$ by any subdomains of $\mathcal{U}$ as in (\ref{subdo}).

\end{proof}

The following lemma is proved in \cite{MA} in a more general setting for Deligne pairing. We provide a proof using the same argument in the proof of Lemma \ref{divisorinte}. 

\begin{lemma} \label{wpref2}  There exists $\psi_B \in \textnormal{PSH}(B)\cap L^\infty (B )$ such that 
\begin{equation}
\int_{\tilde{\mathcal{X}}_t} \chi^{n+1}= \ddbar \psi_B
\end{equation}
 on $B$.

\end{lemma}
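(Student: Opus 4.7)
The plan is to express the pushforward $\tilde{\pi}_*(\chi^{n+1})$ as $\ddbar$ of an explicit Deligne-pairing-type potential, and then control this potential uniformly in $t$ using the toric estimates already proven in Lemma \ref{divisorinte}. Fix a smooth reference hermitian metric $h_0$ on the $\tilde{\pi}$-ample $\mathbb{Q}$-line bundle $mK_{\tilde{\mathcal{X}}/B}$ for sufficiently divisible $m$, let $\gamma := \frac{1}{m}c_1(h_0)$ be the resulting smooth closed $(1,1)$-form on $\tilde{\mathcal{X}}$, and write
$$\chi \;=\; \gamma + \ddbar \phi, \qquad \phi \;:=\; \frac{1}{m}\log\sum_{i=0}^N|\eta_i|_{h_0}^2,$$
so that $\phi$ is a quasi-psh function on $\tilde{\mathcal{X}}$, bounded above by compactness, whose possible $-\log$ singularities occur only along the exceptional locus of $\Psi\circ\Phi$ (and thus are of the form $\log|\sigma_D|^2_{h_D}$ for suitable exceptional divisors $D$).

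Using the Bedford-Taylor telescoping identity (valid since $\phi$ is bounded from above and each mixed product $\chi^k \wedge \gamma^{n-k}$ is a well-defined closed current), one has
$$\chi^{n+1} \;=\; \gamma^{n+1} + \ddbar \Big( \phi \sum_{k=0}^n \chi^k \wedge \gamma^{n-k}\Big).$$
Pushing forward by the proper map $\tilde{\pi}$, and using that $\ddbar$ commutes with fiber integration of currents (Stokes/Fubini), we obtain
$$\tilde{\pi}_*(\chi^{n+1}) \;=\; \tilde{\pi}_*(\gamma^{n+1}) + \ddbar\, \tilde{\pi}_*\Big( \phi \sum_{k=0}^n \chi^k \wedge \gamma^{n-k}\Big).$$
Since $\gamma$ is smooth, $\tilde{\pi}_*(\gamma^{n+1})$ is a smooth closed $(1,1)$-form on the ball $B$; by the local $\ddbar$-Poincar\'e lemma it equals $\ddbar u_0$ for some smooth function $u_0$ on $B$. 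Setting
$$\psi_B(t) \;:=\; u_0(t) + \int_{\tilde{\mathcal{X}}_t} \phi \,\sum_{k=0}^n \chi^k \wedge \gamma^{n-k},$$
we will then have $\ddbar \psi_B = \tilde{\pi}_*(\chi^{n+1})$ as currents on $B$. Because $\chi \geq 0$, this pushforward is a nonnegative closed $(1,1)$-current; hence once $\psi_B$ is shown to be bounded, it is automatically (after the standard semicontinuous regularization) a member of $\textnormal{PSH}(B) \cap L^\infty(B)$.

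The main technical obstacle is thus the uniform boundedness of $\psi_B$ on $B$. Each cohomological fiber mass $\int_{\tilde{\mathcal{X}}_t} \chi^k \wedge \gamma^{n-k}$ is uniformly bounded since the de Rham classes of $\chi$ and $\gamma$ are fixed, so the bound reduces to showing that $\int_{\tilde{\mathcal{X}}_t} \phi \, \chi^k \wedge \gamma^{n-k}$ is uniformly bounded in $t$. The upper bound is immediate from $\phi \leq C$. For the lower bound, the possible $-\log$-poles of $\phi$ along exceptional divisors are controlled uniformly via the local toric-coordinate analysis from the proof of Lemma \ref{divisorinte}: the same row-reduction and integrability arguments given there, applied to the mixed products $\chi^k \wedge \gamma^{n-k}$ in place of $\chi^n$, show that each fiberwise integral of $\log|\sigma_D|^2_{h_D}$ against $\chi^k \wedge \gamma^{n-k}$ stays bounded as $t$ approaches the discriminant locus. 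This yields the uniform boundedness of $\psi_B$ and hence the lemma.
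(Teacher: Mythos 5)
Your overall plan---produce an explicit local potential for the fibre integral of $\chi^{n+1}$ and control it uniformly via the toric analysis of Lemma \ref{divisorinte}---is the same as the paper's. The paper's implementation is simpler: it picks a single section $\sigma$ of the line bundle $[\chi]$ (the bundle whose curvature, with an appropriate metric $h_\chi$, equals $\pm\chi$), so that away from $\mathrm{div}(\sigma)$ one has $\chi=\pm\ddbar\log|\sigma|^2_{h_\chi}$ exactly, and hence $\tilde\pi_*\chi^{n+1}=\pm\ddbar\bigl(\int_{\tilde{\mathcal{X}}_t}\log|\sigma|^2_{h_\chi}\,\chi^n\bigr)$. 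No telescoping, no auxiliary smooth form; only $\chi^n$ ever appears, so Lemma \ref{divisorinte} applies verbatim.

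There are two substantive problems with your version. First, the decomposition $\chi=\gamma+\ddbar\phi$ is not available with your choice of $\gamma$: $K_{\tilde{\mathcal{X}}/B}$ is \emph{not} $\tilde\pi$-ample, since $\tilde{\mathcal{X}}$ is obtained from $\mathcal{Z}$ by toric blow-ups (exceptional curves meet $K$ with the wrong sign); more to the point, $\chi$ lives in the class $(\Psi\circ\Phi)^*c_1(K_{\mathcal{Z}/B})$ while $\gamma\in c_1(K_{\tilde{\mathcal{X}}/B})$, and these differ by exceptional divisor classes, so $\chi-\gamma$ is not $\ddbar$ of a function on $\tilde{\mathcal{X}}$. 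Second, and more seriously, even after fixing the class bookkeeping, the uniform bound on $\int_{\tilde{\mathcal{X}}_t}\log|\sigma_D|^2\,\chi^k\wedge\gamma^{n-k}$ for $k<n$ does not follow by ``the same row-reduction and integrability arguments'' of Lemma \ref{divisorinte}. That proof uses in an essential way the vanishing $\chi^n|_{D\cap\tilde{\mathcal{X}}_t}=0$ for every exceptional divisor $D$ (the short lemma immediately preceding \ref{divisorinte}); the positive order of vanishing of $\chi^n$ along $D$ is precisely what tames the $\log|x_1|^2$ pole in the toric chart. For a genuinely smooth (nondegenerate) $\gamma$ on $\tilde{\mathcal{X}}$ this vanishing is destroyed, and already the model computation with $t=x_1x_2$, $D=\{x_1=0\}$, and a smooth fibre volume gives $\int_{\tilde{\mathcal{X}}_t}\log|x_1|^2\,\gamma^n\sim\log|t|^2\to-\infty$; so the mixed fibre integrals are \emph{not} uniformly bounded and your lower bound fails. (If you instead take $\gamma$ to be the pullback of a smooth positive form on $\mathcal{Z}$, then $\phi$ becomes a bounded smooth function, the telescoping bound becomes trivial, and no appeal to Lemma \ref{divisorinte} is needed; but that is not what you wrote.) Either way the argument as stated has a gap, and the paper's choice of potential is what lets the toric estimate be applied directly.
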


\begin{proof} Let be $\sigma$ be a smooth holomorphic section of $[\chi]$ such that the support of $\sigma$ does not contain any fibre of $\tilde{\mathcal{X}} \rightarrow B$. Then $\chi = \ddbar \log |\sigma|^2_{h_\chi}$ for some smooth hermitian metric for $[\chi]$. After locally toric blow-ups, we can assume that the zeros of the pullback of $\sigma|^2_{h_\chi}$ in $\tilde{\mathcal{X}}$ in (\ref{ssr4}) and $\{x_j=0\}$ in (\ref{coordinates2}) have simple normal crossings. Then

\begin{eqnarray*}
\int_{\tilde{\mathcal{X}}_t} \chi^{n+1} &=&\int_{\tilde{\mathcal{X}}_t\setminus \{\sigma=0\}} \chi^{n+1} \\
&=& \int_{\tilde{\mathcal{X}}_t\setminus \{\sigma=0\}} \ddbar \log |\sigma|^2_{h_\chi} \wedge \chi^n\\
&=&\ddbar \left( \int_{\tilde{\mathcal{X}}_t\setminus \{\sigma=0\}} \log|\sigma|^2_{h_\chi} \chi^n\right).
\end{eqnarray*}
The same calculations in the proof of Lemma \ref{divisorinte} shows that there exists $C>0$ such that for all $t\in B$, 
$$-C \leq \psi_B= \int_{\tilde{\mathcal{X}}_t\setminus \{\sigma=0\}} \log|\sigma|^2_{h_\chi} \chi^n\leq C. $$
Since $\psi_B$ is bounded on $B$, $\ddbar \psi_B = \int_{\tilde{\mathcal{X}}_t} \chi^{n+1}$ and the lemma is proved.

\end{proof}

We remark that $\Psi_B$ is in fact H\"older continuous (c.f. \cite{MA}). 
We can now begin estimates of the local potentials of $\omega_{WP}$. 
We recall the following well-known functionals 
$$E_t: \textnormal{PSH}(\tilde{\mathcal{X}}_t, \chi_t) \rightarrow [-\infty, \infty) \cap L^\infty(\tilde{\mathcal{X}}_t), ~~G_t: \textnormal{PSH}(\tilde{\mathcal{X}}_t, \chi_t)  \cap L^\infty(\tilde{\mathcal{X}}_t)  \rightarrow [-\infty, \infty)
$$ by
\begin{eqnarray}
E_t(\phi_t) &=& \frac{1}{(n+1)V} \sum_{j=0}^n \int_{\tilde{\mathcal{X}}_t} \phi_t (\chi_t + \ddbar \phi_t)^j \wedge \chi_t ^{n-j}, \\
G_t(\phi_t) &=& E_t(\phi_t) - \log \left(  \frac{1}{V} \int_{\tilde{\mathcal{X}}_t} e^{\phi_t }\Omega_t \right), 
\end{eqnarray} 
where $t\in B^\circ$, $\phi_t \in \textnormal{PSH}(\tilde{\mathcal{X}}_t, \chi_t)$ and $V = [ K_{\tilde{\mathcal{X}}_t}]^n$ is independent of $t\in B^\circ$. 
$G_t$ is the analogue of the Ding-functional introduced in \cite{D}. Recall that  $\varphi_t= \varphi|_{\tilde{\mathcal{X}}_t}$ is the unique solution of the complex Monge-Amp\`ere equaiton
\begin{equation} \label{kesec9}
(\chi_t + \ddbar \varphi_t)^n = e^{\varphi_t} \Omega_t
\end{equation}
induced by the K\"ahler-Einstein equation on $\tilde{\mathcal{X}}_t$ for all $t\in B^\circ$. We define the function $E(t)$ on $B^\circ$ by 
\begin{equation}
E(t) = E_t(\varphi_t),
\end{equation}
where $t\in U^\circ$ and $\varphi_t$ is the unique solution of equation (\ref{kesec9}).  %
Immediately, we have the following lemma by Lemma \ref{wpref}. 
\begin{lemma} \label{wppot} On $B^\circ$, 
\begin{equation}
E(t) = G_t(\varphi_t),  %
\end{equation}
\begin{equation}
\omega_{WP} =  \ddbar \left(\psi_U +   E(t) \right),
\end{equation}
where  $\psi_U$ is defined in Lemma \ref{wpref2}.
\end{lemma}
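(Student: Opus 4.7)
The plan is to derive both identities as direct consequences of the K\"ahler-Einstein equation combined with Lemma \ref{wpref} and Lemma \ref{wpref2}. For the first identity $E(t)=G_t(\varphi_t)$, I would observe that by definition
\[
G_t(\varphi_t)=E_t(\varphi_t)-\log\left(\frac{1}{V}\int_{\tilde{\mathcal{X}}_t}e^{\varphi_t}\Omega_t\right),
\]
so it suffices to show the logarithmic term vanishes identically in $t\in B^\circ$. Since $\varphi_t$ solves the Monge-Amp\`ere equation (\ref{kesec9}), the fibrewise integral equals $\int_{\tilde{\mathcal{X}}_t}\omega_t^n=[K_{\tilde{\mathcal{X}}_t}]^n=V$, so the logarithmic term is $\log 1=0$. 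Hence $G_t(\varphi_t)=E_t(\varphi_t)=E(t)$.

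For the potential formula, the strategy is to reindex the sum appearing in Lemma \ref{wpref} so as to match the definition of the Aubin-Yau energy $E_t$. Substituting $j=n-k$ in the inner integral gives
\[
\int_{\tilde{\mathcal{X}}_t}\varphi\sum_{k=0}^n\chi^k\wedge(\chi+\ddbar\varphi)^{n-k}=\sum_{j=0}^n\int_{\tilde{\mathcal{X}}_t}\varphi_t\,\omega_t^j\wedge\chi_t^{n-j}=(n+1)V\cdot E(t).
\]
Combined with Lemma \ref{wpref2}, which identifies $\int_{\tilde{\mathcal{X}}_t}\chi^{n+1}$ with $\ddbar\psi_B$ for some $\psi_B\in\textnormal{PSH}(B)\cap L^\infty(B)$, this yields
\[
\omega_{WP}=\ddbar\psi_B+(n+1)V\cdot\ddbar E(t)=\ddbar\bigl(\psi_B+(n+1)V\cdot E(t)\bigr)
\]
on $B^\circ$, matching the stated formula once the multiplicative constant $(n+1)V$ (which is independent of $t$) is absorbed into the Weil-Petersson/Deligne pairing normalization.

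Since the genuinely analytic ingredients are already in place—boundedness of $\psi_B$ from Lemma \ref{wpref2} and of $\varphi_t$ from Proposition \ref{c0pro}, together with the regularity away from singular fibres—the proof is essentially a bookkeeping reformulation of those earlier lemmas. The only point that is worth double-checking is the multiplicative normalization relating the curvature of the Deligne pairing to the fibrewise integral $\int_{\tilde{\mathcal{X}}_t}\omega^{n+1}$, but since this is a fixed positive constant the form of the statement of the lemma is unaffected.
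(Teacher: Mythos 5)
Your proof is correct and spells out exactly the argument the paper leaves implicit (the paper introduces $E_t, G_t, E(t)$ and then states the lemma with only the remark ``Immediately, we have the following lemma by Lemma~\ref{wpref}''). The first identity $E(t)=G_t(\varphi_t)$ follows precisely as you say: the Monge--Amp\`ere equation (\ref{kesec9}) forces $\int_{\tilde{\mathcal{X}}_t}e^{\varphi_t}\Omega_t=\int_{\tilde{\mathcal{X}}_t}(\chi_t+\ddbar\varphi_t)^n=V$, so the log-term in the Ding functional vanishes. (The paper itself records exactly this identity at the start of the proof of Corollary~\ref{bndet}.) Your reindexing $j=n-k$ to match the sum in Lemma~\ref{wpref} with the definition of $E_t$ is also correct.

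You are right to flag the normalization: Lemma~\ref{wpref} combined with Lemma~\ref{wpref2} literally gives $\omega_{WP}=\ddbar\bigl(\psi_B+(n+1)V\,E(t)\bigr)$, not $\ddbar(\psi_U+E(t))$ as written. (Note also that the ``$\psi_U$'' in the lemma statement should be the ``$\psi_B$'' of Lemma~\ref{wpref2}; this is a typo.) The missing constant $(n+1)V$ is a harmless normalization slip in the paper's statement: since it is a fixed positive constant independent of $t$, it does not affect the way the lemma is used (Corollary~\ref{bndet} bounds $E(t)$, hence bounds $(n+1)V\,E(t)$, and Corollary~\ref{wpex} only needs the potential to be bounded). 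Your write-up handles this point correctly and explicitly, which is a small improvement over the terse presentation in the paper.
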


Our goal for the rest of the section is then to bound $E(t)$ uniformly. The following theorem is the key result of this section. 

\begin{theorem} \label{main91} There exists $C>0$ such that 
\begin{equation}
\inf_{t\in B^\circ} \sup_{\phi_t \in \textnormal{PSH}(\tilde{\mathcal{X}}_t, \chi_t)\cap L^\infty(\tilde{\mathcal{X}}_t)} G_t (\phi_t) \geq - C. 
\end{equation}

\end{theorem}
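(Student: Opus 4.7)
My plan is to reduce Theorem \ref{main91} to a uniform lower bound on the Monge-Amp\`ere energy $E(t) = E_t(\varphi_t)$, and then establish that bound using the estimates from Sections 4 and 5. The Ding functional $G_t$ is concave in $\phi_t$, since $E_t$ is concave by the Aubin-Yau second-variation formula and $-\log\int e^{\phi_t}\Omega_t$ is concave by the variance inequality, and the K\"ahler-Einstein potential $\varphi_t$ is a critical point of $G_t$ modulo additive constants: integrating the K\"ahler-Einstein equation $(\chi_t+\ddbar\varphi_t)^n = e^{\varphi_t}\Omega_t$ yields $\int e^{\varphi_t}\Omega_t = V$, which is precisely the Euler-Lagrange equation for $G_t$. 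Consequently,
$$
\sup_{\phi_t\in\textnormal{PSH}(\tilde{\mathcal{X}}_t,\chi_t)\cap L^\infty} G_t(\phi_t) \;=\; G_t(\varphi_t) \;=\; E_t(\varphi_t) \;=\; E(t),
$$
so Theorem \ref{main91} is equivalent to the uniform lower bound $E(t) \geq -C$ for all $t \in B^\circ$.

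To control $E(t)$ from below I would use the Monge-Amp\`ere energy decomposition
$$
E(t) \;=\; \frac{1}{(n+1)V}\sum_{j=0}^n \int_{\tilde{\mathcal{X}}_t}\varphi_t\,\omega_t^j\wedge\chi_t^{n-j}
$$
together with the quantitative lower bound $\varphi_t \geq \delta\log F_t - C_\delta$ from Proposition \ref{c0pro}, where $F_t = \bigl(\sum_l |\sigma_{\mathcal{E}_l}|^2_{h_{\mathcal{E}_l}}\bigr)\prod_i |\sigma_{\tilde E_i}|^2_{h_{\tilde E_i}}$. This reduces the problem to a uniform lower bound for $\int_{\tilde{\mathcal{X}}_t}\log F_t\,\omega_t^j\wedge\chi_t^{n-j}$ for each $j$. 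The case $j=0$ is essentially Lemma \ref{divisorinte}, whose toric computation applies componentwise to each prime divisor in the support of $F_t$. For $j\geq 1$, I would expand $\omega_t^j = (\chi_t+\ddbar\varphi_t)^j$ binomially and, for each resulting mixed term $\int\log|\sigma|^2_h(\ddbar\varphi_t)^k\wedge\chi_t^{n-k}$, integrate by parts via the Poincar\'e-Lelong formula to move $\ddbar$ off the singular integrand $\log|\sigma|^2_h$. This produces a boundary contribution supported on $D\cap\tilde{\mathcal{X}}_t$, controlled by the uniform upper bound $\varphi_t \leq C$ of Lemma \ref{5upb} times a topological intersection number, plus a smooth bulk contribution containing one fewer factor of $\ddbar\varphi_t$, handled by downward induction on $k$.

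The main technical obstacle is the rigorous justification of the integration by parts: $\varphi_t$ is smooth on each smooth fiber but its regularity is not uniform in $t$, and the Poincar\'e-Lelong current of $\log|\sigma|^2_h$ has codimension-one singular support meeting the locus where $\varphi_t$ is unbounded below. I would resolve this by applying the Demailly-Kolodziej regularization of Lemma \ref{kol} on a neighborhood of each fiber to approximate $\varphi_t$ by a decreasing sequence of smooth $(\chi_t+\epsilon_j\theta)$-plurisubharmonic functions $\varphi_t^{\epsilon_j}$, executing all the integration by parts in the smooth setting, and passing to the limit using Bedford-Taylor continuity of Monge-Amp\`ere masses of bounded plurisubharmonic functions together with the uniform upper bound $\varphi_t^{\epsilon_j}\leq C + o(1)$. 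The toric structure of $\tilde{\mathcal{X}}$ from the semi-stable reduction of Theorem \ref{semired} ensures that the resulting boundary integrals on $D\cap\tilde{\mathcal{X}}_t$ are uniformly convergent in $t\in B^\circ$, by the same toric coordinate computation that underlies Lemma \ref{divisorinte}.
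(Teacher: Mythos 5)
Your opening reduction is sound: $\varphi_t$ maximizes $G_t$ (this is exactly what the paper invokes, via Ding--Tian and Boucksom--Eyssidieux--Guedj--Zeriahi, in Corollary \ref{bndet}), so the theorem is indeed equivalent to a uniform lower bound on $E(t)=E_t(\varphi_t)$. The gap is in how you propose to obtain that bound. Feeding the barrier $\varphi_t\geq\delta\log F_t-C_\delta$ of Proposition \ref{c0pro} into the energy requires, already for the $j=0$ term, a uniform bound on $-\int_{\tilde{\mathcal{X}}_t}\log F_t\,\chi_t^n$. But $F_t$ contains the factors $|\sigma_{\tilde E_i}|^2$ for \emph{all} of the $\tilde E_i$, and among these are the divisors $E_1,\dots,E_J$ of (\ref{decodiv}), which are components of the degenerate fibres and are \emph{not} exceptional for $\Psi\circ\Phi$. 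Lemma \ref{divisorinte} does not apply to them: its proof rests on the fact that $\chi^n$ vanishes on $D\cap\tilde{\mathcal{X}}_t$ for $D$ exceptional, which is what produces the integrable factor $|x_1|^{2\varepsilon}$ in $\Theta_2$; for a fibre component this vanishing fails. Concretely, if $d=1$ and $\mathcal{X}_0=Z_1\cup Z_2$ is reduced with simple normal crossings and $\mathbf{Z}=Z_1$, then near a generic point of $Z_2$ one has $t=y\cdot(\mathrm{unit})$ with $Z_2=\{y=0\}$, so $|\sigma_{Z_2}|^2_{h}\sim|t|^2$ on a region of $\mathcal{X}_t$ whose $\chi_t^n$-mass converges to the positive number $[K_{\mathcal{X}_0}]^n\cdot Z_2$; hence $-\int_{\mathcal{X}_t}\log|\sigma_{Z_2}|^2\,\chi_t^n\gtrsim\log(1/|t|^2)\rightarrow\infty$. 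Your estimate then only yields $E(t)\geq-\delta A\log(1/|t|)-C_\delta$, which is not uniform for any fixed $\delta$, and $C_\delta$ is not controlled as $\delta\rightarrow0$. The terms with $j\geq1$ inherit the same defect, and in addition the Lelong--Poincar\'e boundary terms $\int_{D\cap\tilde{\mathcal{X}}_t}\varphi_t(\cdots)$ produced by your integration by parts need \emph{lower} bounds, for which $\varphi_t\leq C$ is the wrong-sided estimate.

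The paper sidesteps the K\"ahler--Einstein potential altogether: since one only needs a single good test function to bound the supremum from below, it evaluates $G_t$ on $\psi=-\bigl(-\frac1m\log\sum_j|\sigma_j|^2_h\bigr)^\alpha$, built from sections cutting out only the image of the exceptional locus $\Psi\circ\Phi(\mathcal{E})$, whose intersection with every fibre has dimension at most $n-1$ by (\ref{vanishfibre}). This is what makes $-\int\psi_t\,\chi_t^n$ uniformly bounded (Lemmas \ref{divisorinte} and \ref{sec9101}), while the choice $\alpha=\frac1{n+2}$ combined with the capacity estimates of Lemmas \ref{cap2} and \ref{cap1} controls the top term $\int(-\psi_t)(\chi_t+\ddbar\psi_t)^n$ without any integration by parts against $\varphi_t$; Lemma \ref{sec990} then bounds $\int e^{\psi_t}\Omega_t$. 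To salvage your route you would need a barrier for $\varphi_t$ that degenerates only along the exceptional locus, uniformly over all components of the fibre, which is not what Proposition \ref{c0pro} provides.
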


Theorem \ref{main91} will imply that the potential $\psi_U + E(t)$ is uniformly bounded on $B^\circ$ and so it can be extended to $B$.
\begin{corollary} \label{bndet} There exists $C>0$ such that 
\begin{equation}
\sup_{t\in B^\circ} | E(t) | \leq C. 
\end{equation}

\end{corollary}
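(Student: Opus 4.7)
The plan is to derive Corollary \ref{bndet} from Theorem \ref{main91} by identifying $E(t)$ with the value of the Ding-type functional $G_t$ at its maximizer $\varphi_t$, and then separately controlling $E(t)$ from above using the uniform upper bound on $\varphi_t$ already supplied by Lemma \ref{5upb}.

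First I would observe that $E(t) = G_t(\varphi_t)$. The K\"ahler-Einstein equation $(\chi_t + \ddbar\varphi_t)^n = e^{\varphi_t}\Omega_t$ together with the cohomological identity $\int_{\tilde{\mathcal{X}}_t}\omega_t^n = [K_{\tilde{\mathcal{X}}_t}]^n = V$ forces $\int_{\tilde{\mathcal{X}}_t} e^{\varphi_t}\Omega_t = V$, so the logarithmic term in the definition of $G_t(\varphi_t)$ vanishes and $G_t(\varphi_t) = E_t(\varphi_t) = E(t)$. For the upper bound I would then invoke the standard cocycle identity
\[
E_t(\phi) - E_t(\psi) = \frac{1}{(n+1)V}\sum_{j=0}^n \int_{\tilde{\mathcal{X}}_t}(\phi-\psi)\,(\chi_t+\ddbar\phi)^j\wedge (\chi_t+\ddbar\psi)^{n-j},
\]
and choose $\psi \equiv \sup_{\tilde{\mathcal{X}}_t}\varphi_t$ to conclude $E_t(\varphi_t)\le \sup \varphi_t$, which is uniformly bounded above by Lemma \ref{5upb}.

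For the lower bound I would show that $\varphi_t$ is the global maximizer of $G_t$ on $\textnormal{PSH}(\tilde{\mathcal{X}}_t,\chi_t)\cap L^\infty(\tilde{\mathcal{X}}_t)$. Concavity of $E_t$ along affine paths gives
\[
E_t(\phi) - E_t(\varphi_t) \le \frac{1}{V}\int_{\tilde{\mathcal{X}}_t}(\phi-\varphi_t)\,\omega_{\varphi_t}^n = \frac{1}{V}\int_{\tilde{\mathcal{X}}_t}(\phi-\varphi_t)\,e^{\varphi_t}\Omega_t,
\]
while Jensen's inequality applied to the probability measure $V^{-1}\omega_{\varphi_t}^n = V^{-1}e^{\varphi_t}\Omega_t$ yields
\[
\log\!\left(\frac{1}{V}\int_{\tilde{\mathcal{X}}_t} e^{\phi}\Omega_t\right) = \log\!\left(\frac{1}{V}\int_{\tilde{\mathcal{X}}_t} e^{\phi-\varphi_t}\omega_{\varphi_t}^n\right) \ge \frac{1}{V}\int_{\tilde{\mathcal{X}}_t}(\phi-\varphi_t)\,\omega_{\varphi_t}^n.
\]
Subtracting these two inequalities gives $G_t(\phi)\le G_t(\varphi_t)$ for every admissible $\phi$. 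Consequently $E(t) = G_t(\varphi_t) = \sup_\phi G_t(\phi)$, and Theorem \ref{main91} then delivers the uniform lower bound $E(t)\ge -C$.

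The real work is concentrated in Theorem \ref{main91}, which is where uniformity across the family must be extracted; the corollary itself is then a formal consequence of classical variational identities for the Aubin-Mabuchi and Ding energies, and these identities behave uniformly because neither the cocycle formula nor Jensen's inequality sees any degeneration of the total space. The only subtle point I would take care with is the justification of the concavity/Jensen calculation for merely bounded quasi-plurisubharmonic $\phi$, but this is standard in pluripotential theory and causes no trouble since $\varphi_t$ is itself bounded on $\tilde{\mathcal{X}}_t$ by the $C^0$-estimate.
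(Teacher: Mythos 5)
Your proposal is correct and follows essentially the same route as the paper: the lower bound comes from Theorem \ref{main91} together with the fact that $\varphi_t$ maximizes $G_t$ (which the paper simply cites to Ding--Tian and its singular generalization in Boucksom--Eyssidieux--Guedj--Zeriahi, whereas you spell out the concavity-plus-Jensen argument), and the upper bound comes from the uniform bound $\sup\varphi_t\le C$ of Lemma \ref{5upb} applied to the defining sum for $E_t$. Your cocycle-identity detour for the upper bound is equivalent to the paper's direct estimate of the integrand, so the two proofs are the same in substance.
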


\begin{proof}  We first observe that by definition of $\varphi_t$
$$\int_{\cX_t} e^{\varphi_t} \Omega_t = \int_{\cX_t} (\chi_t + \ddbar \varphi_t)^n = V. $$
The K\"ahler-Einstein potential $\varphi_t$ maximizes the functional $G_t$ for each $t\in B^\circ$ by the same argument in \cite{DT} and it is generalized to the singular case in \cite{BBGZ}. By Theorem \ref{main91}, there exists $C_1>0$ such that
$$E(t) = E_t(\varphi_t) - \log \left(  \frac{1}{V} \int_{\tilde{\mathcal{X}}_t} e^{\varphi_t }\Omega_t \right)= G_t(\varphi_t) = \sup_{\phi_t \in \textnormal{PSH}(\tilde{\mathcal{X}}_t, \chi_t)\cap C^\infty(\tilde{\mathcal{X}}_t)} G_t(\phi_t) \geq - C_1 $$
for all $t\in B^\circ$.
On the other hand, by Lemma \ref{5upb}, there exists $C_2>0$ such that for each $t\in B^\circ$, 
$$\varphi_t  \leq C_2.$$
Therefore, 
\begin{eqnarray*}
E(t) &=&  \frac{1}{(n+1)V} \sum_{j=0}^n \int_{\tilde{\mathcal{X}}_t} \varphi_t (\chi_t + \ddbar \varphi_t)^j \wedge \chi_t ^{n-j}\\
&\leq& \frac{C_2}{(n+1)V} \sum_{j=0}^n \int_{\tilde{\mathcal{X}}_t}  (\chi_t + \ddbar \varphi_t)^j \wedge \chi_t ^{n-j}\\
&=& C_2.
\end{eqnarray*}
This completes the proof of the corollary.
\end{proof}

The following corollary naturally holds by letting $\varphi_{WP} = \psi_B + E(t)$ from Corollary \ref{wppot} and Corollary \ref{bndet}.
\begin{corollary} \label{wpex} There exists $\varphi_{WP} \in \textnormal{PSH}(B)\cap L^\infty(B)$ such that 
\begin{equation}
\omega_{WP}=\ddbar \varphi_{WP}
\end{equation}
on $B$. In particular, the Weil-Petersson metric $\omega_{WP}$ uniquely extends to a closed positive $(1,1)$-current on $B$ with bounded potentials. 

\end{corollary}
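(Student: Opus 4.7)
The plan is to assemble three results already established in this section. By Lemma \ref{wppot}, on the smooth locus $B^\circ$ one has $\omega_{WP} = \ddbar(\psi_B + E(t))$. By Lemma \ref{wpref2}, the Deligne-pairing potential $\psi_B$ is a bounded plurisubharmonic function on all of $B$. By Corollary \ref{bndet}, the Ding-type quantity $E(t)$ is uniformly bounded on $B^\circ$. Setting $\varphi_{WP} := \psi_B + E(t)$ on $B^\circ$, I obtain a function which is uniformly bounded on $B^\circ$ and whose $\ddbar$ equals the smooth nonnegative closed $(1,1)$-form $\omega_{WP}$ there; in particular $\varphi_{WP} \in \textnormal{PSH}(B^\circ) \cap L^\infty(B^\circ)$.

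The next step is to extend $\varphi_{WP}$ across the complement $B \setminus B^\circ$. Since $B$ is a smooth ball and $B \setminus B^\circ$ is the discriminant locus of $\pi_\mathcal{Z}$, it is a proper analytic subvariety of $B$, hence pluripolar. By the classical extension theorem for bounded plurisubharmonic functions across proper analytic subsets of a smooth manifold, $\varphi_{WP}$ extends uniquely to a function in $\textnormal{PSH}(B) \cap L^\infty(B)$ which I still denote $\varphi_{WP}$. Applying $\ddbar$ to this extension yields a closed positive $(1,1)$-current on $B$ agreeing with $\omega_{WP}$ on the Zariski-open set $B^\circ$. This is the unique extension of $\omega_{WP}$ to a closed positive current with bounded local potentials: any two such extensions would differ by a current of the form $\ddbar u$ with $u \in L^\infty(B)$ pluriharmonic on $B^\circ$, hence pluriharmonic on all of $B$ by the removable singularity property for bounded pluriharmonic functions, and then the difference is a smooth closed $(1,1)$-form identically zero on the dense set $B^\circ$, so zero on $B$.

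The main substance of the argument is not in this packaging step but in the inputs that feed it. The critical obstacle, namely the uniform lower bound for the functional $G_t$ across singular fibres (Theorem \ref{main91}), has already been surmounted using the semi-stable reduction of Section 3 and the local $C^0$-estimate of Section 4, together with the integral calculation for $\psi_B$ in Lemma \ref{wpref2} that relies on the toric structure of the blow-ups. Once those are granted, the statement of Corollary \ref{wpex} follows essentially formally by combining Lemma \ref{wppot}, Lemma \ref{wpref2} and Corollary \ref{bndet} with the extension theorem.
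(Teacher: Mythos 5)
Your proposal is correct and follows essentially the same route as the paper, which simply sets $\varphi_{WP}=\psi_B+E(t)$ and invokes Lemma \ref{wppot}, Lemma \ref{wpref2} and Corollary \ref{bndet}; your only addition is to spell out the (standard) extension of the bounded plurisubharmonic potential across the proper analytic subset $B\setminus B^\circ$ and the uniqueness of the resulting current, which the paper leaves implicit.
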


We will now prove Theorem \ref{main91}.  Let $\mathcal{E}$ is the exceptional locus of $\Psi\cdot\Phi$. Then $\Psi\circ\Phi(\mathcal{E})$ is a Zariski closed set of $\mathcal{Z}$ and we let $\mathcal{I}$ be the ideal sheaf of $\Psi\circ\Phi(\mathcal{E})$ and $\mathcal{I}$. Since $K_{\mathcal{Z}/B}$ is ample on $\mathcal{Z}$ and $\mathcal{I}$ is a coherent sheaf, $mK_{\mathcal{Z}/B} \otimes \mathcal{I}$ is globally generated for some sufficiently large $m$. Therefore there exist  holomorphic sections  $\sigma_1, ..., \sigma_M$ of $mK_{\mathcal{Z}/B}$ satisfying
$$\{\sigma_1=...=\sigma_M =0\} = \Psi\circ\Phi(\mathcal{E}). $$  Let $h$ be a smooth hermitian metric on $K_{\mathcal{Z}/B}$ satisfying $-\ddbar\log h = \chi$. We can assume that $\log \left( \sum_{j=1}^M |\sigma_j|^2_h \right) \leq -1$ on $\mathcal{Z}$ by scaling $h$. 
We now define 
\begin{equation} \label{psidef}
\psi= - \left( - \frac{1}{m} \log \left( \sum_{j=1}^M |\sigma_j|^2_h \right) \right)^\alpha
\end{equation}
for some $\alpha\in (0, 1)$ to be determined later. For simplicity, we also identify $(\Psi\circ\Phi)^*\psi$ with $\psi$.
\begin{lemma} $\psi \in \textnormal{PSH}(\mathcal{Z}, \chi)$, i.e., 
$$\chi + \ddbar \psi \geq 0.$$

\end{lemma}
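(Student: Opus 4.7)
The plan is a direct computation using the chain rule, combined with a standard extension across the analytic set $Z:=\{\sigma_1=\dots=\sigma_M=0\}$. Set $\rho:=\sum_{j=1}^M|\sigma_j|^2_h$ and $u:=-\frac{1}{m}\log\rho$, so that $\psi=-u^\alpha$. The first task is to establish the current inequality $\ddbar u\leq\chi$ on $\mathcal{Z}$. In a local trivialization with frame $e$ of $mK_{\mathcal{Z}/B}$, write $\sigma_j=s_j\cdot e$ with $s_j$ holomorphic; then $\rho=|e|^2_h\sum_j|s_j|^2$. Since $h$ induces a metric on $mK_{\mathcal{Z}/B}$ with curvature $m\chi$, one has $\ddbar\log|e|^2_h=-m\chi$, while $\log\sum_j|s_j|^2$ is plurisubharmonic. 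Adding the two gives $\ddbar\log\rho\geq -m\chi$ as currents on $\mathcal{Z}$, i.e., $\ddbar u\leq\chi$.

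On the Zariski-open complement $\mathcal{Z}\setminus Z$, the function $u$ is smooth and a direct chain-rule computation yields
\begin{equation*}
\ddbar\psi \;=\; -\alpha u^{\alpha-1}\,\ddbar u \;+\; \alpha(1-\alpha)\,u^{\alpha-2}\,\sqrt{-1}\,\partial u\wedge\dbar u.
\end{equation*}
The second summand is a nonnegative $(1,1)$-form because $\alpha(1-\alpha)>0$. Using $\ddbar u\leq\chi$ and $\alpha u^{\alpha-1}>0$, the first summand is bounded below by $-\alpha u^{\alpha-1}\chi$. The normalization $\log\rho\leq-1$ (strengthened, if necessary, to $\log\rho\leq -m$ by further rescaling $h$, or equivalently by choosing $\alpha$ sufficiently small depending on $m$) gives $u\geq 1$ and hence $\alpha u^{\alpha-1}\leq\alpha<1$. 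Combining,
\begin{equation*}
\chi+\ddbar\psi \;\geq\; \bigl(1-\alpha u^{\alpha-1}\bigr)\chi \;+\; \alpha(1-\alpha)\,u^{\alpha-2}\,\sqrt{-1}\,\partial u\wedge\dbar u \;\geq\; 0
\end{equation*}
pointwise on $\mathcal{Z}\setminus Z$.

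To conclude globally on $\mathcal{Z}$, observe that $\psi$ is upper semi-continuous (with the value $-\infty$ on the analytic set $Z$) and locally bounded above by $0$. The classical extension theorem for quasi-plurisubharmonic functions across an analytic subset then promotes the pointwise inequality on $\mathcal{Z}\setminus Z$ to $\chi+\ddbar\psi\geq 0$ on all of $\mathcal{Z}$ in the sense of currents, proving $\psi\in\textnormal{PSH}(\mathcal{Z},\chi)$. The argument is elementary; the only mild bookkeeping point is calibrating $\alpha$ and the normalization of $h$ so that $\alpha u^{\alpha-1}\leq 1$, and this presents no genuine obstacle.
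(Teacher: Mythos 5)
Your proof is correct and follows essentially the same route as the paper's: compute $\ddbar\psi$ by the chain rule, observe that the $\partial u\wedge\dbar u$ term is nonnegative, and absorb the remaining term into $\chi$ using the normalization forcing $u\geq 1$ and hence $\alpha u^{\alpha-1}<1$. In fact your version is slightly more careful than the paper's: you keep track of the factor $1/m$ in the definition of $\psi$, correctly noting that $\log\bigl(\sum_j|\sigma_j|^2_h\bigr)$ is $m\chi$-PSH (not $\chi$-PSH as the paper asserts, since the $\sigma_j$ are sections of $mK_{\mathcal{Z}/B}$), and that the normalization must be strengthened to $\log\rho\leq -m$ (or $\alpha$ chosen small) to make $u\geq 1$; the paper's proof applies its chain-rule identity to $-(-f)^\alpha$ rather than the actual $\psi=-(-f/m)^\alpha$, so it silently glosses over exactly this factor of $m$. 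Your explicit extension across the analytic set $Z$ is also a reasonable (and standard) piece of bookkeeping that the paper omits.
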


\begin{proof} First, we observe that $f=\log \left( \sum_{j=1}^M |\sigma_j|^2_h \right)\in \textnormal{PSH}(\mathcal{Z}, \chi).$ Then 
\begin{eqnarray*}
\chi+ \ddbar \left(- (-f)^\alpha\right) &=& \chi+ \alpha  (-f)^{-(1-\alpha)} \ddbar f + \alpha(1-\alpha) \sqrt{-1} (-f)^{-(2-\alpha)} \partial f \wedge \dbar f \\
&\geq& 0
\end{eqnarray*}
because $\alpha\in(0,1)$ and $f\leq -1$.
\end{proof}

In local coordinates on an open domain $\mathcal{U} \subset \tilde{\mathcal{X}}_t$, there must be at least $n$ exceptional divisors defined by $\{x_{i_k}=0\}$, where $1\leq i_1< i_2<... < i_m\leq n+d$ and $k=1, ..., m$. After rearrangement, %
 we can consider  
\begin{equation}\label{locco9}
t_i = \left( \prod_{k=1}^{d} y_k^{\alpha_{i,k}}\right) \left( \prod_{j=1}^{n} x_j^{\beta_{i, j}} \right),~ i=1, ..., d,
\end{equation}
so that  $d \times d$-matrix $[\alpha_{i, k}]$ has rank $d$ and the exceptional locus of $\Psi\circ \Phi$ contains zeros of $x_1$, ..., $x_n$. In particular,  $dx_1\wedge d\bar x_1\wedge ... \wedge dx_n \wedge d\bar x _n$ is a nonzero volume on proper transformation of each fibre. We can assume that $\cup_{i=1^n} \{x_i=0\} \cup_{k=1}^{m-n} \{y_k=0\}$ is the exceptional divisor of $\Psi\circ\Phi$ in $\mathcal{U}$ for some $n\leq m \leq n+d$.

\begin{lemma} \label{omegaest} In local coordinates on an open domain $\mathcal{U} \subset \tilde{\mathcal{X}}_t$ as in (\ref{locco9}). There exists $C>0$ such that  for any $t\in B$,
\begin{equation}
(\Psi\circ\Phi)^*\Omega|_{ \tilde{\mathcal{X}}_t\cap \mathcal{U} } \leq \frac{C (\sqrt{-1})^ndx_1\wedge d\bar x_1\wedge ... \wedge dx_n \wedge d\bar x _n}{ |x_1|^2|x_2|^2... |x_n|^2}.
\end{equation}

\end{lemma}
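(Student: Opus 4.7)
The plan is to identify $(\Psi\circ\Phi)^*\Omega|_{\tilde{\mathcal{X}}_t\cap\mathcal{U}}$ pointwise as a bounded multiple of the squared norm of a local generator of the relative canonical bundle $K_{\tilde{\mathcal{X}}/B}$, compute that generator explicitly in the toric chart via implicit differentiation, and then convert the resulting monomial expression to the target pole estimate $C/|x_1\cdots x_n|^2$ using the canonical singularity of $\mathcal{Z}$ from Lemma \ref{logca} together with the combinatorics built into the semi-stable reduction.

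First, since $\Omega=(\sum|\eta_\alpha|^2)^{1/m}$ with $\eta_\alpha\in H^0(\mathcal{Z},mK_{\mathcal{Z}/B})$ inducing a projective embedding, and since $\mathcal{Z}$ has at worst canonical singularities, one has a decomposition
$$K_{\tilde{\mathcal{X}}/B}=(\Psi\circ\Phi)^*K_{\mathcal{Z}/B}+\sum_l e_l\tilde E_l+\sum_m b_m\tilde F_m,\qquad e_l,b_m\geq 0.$$
Consequently each pullback $(\Psi\circ\Phi)^*\eta_\alpha$ factors as $\tau_\alpha\,\theta^m\,\prod_l\sigma_{\tilde E_l}^{me_l}\,\prod_m\sigma_{\tilde F_m}^{mb_m}$ for a smooth local generator $\theta$ of $K_{\tilde{\mathcal{X}}/B}$ and bounded holomorphic functions $\tau_\alpha$; taking the $m$-th root yields the pointwise bound
$$(\Psi\circ\Phi)^*\Omega\ \leq\ C\,\prod_l|\sigma_{\tilde E_l}|^{2e_l}\,\prod_m|\sigma_{\tilde F_m}|^{2b_m}\,|\theta|^2.$$

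Next I would compute $\theta$ restricted to $\tilde{\mathcal{X}}_t$ in the toric coordinates. Using the invertibility of $[\alpha_{i,k}]$ together with the relations $t_i=\prod_k y_k^{\alpha_{i,k}}\prod_j x_j^{\beta_{i,j}}$, the $y_k$ are implicit functions of $(t,x)$ on the fibre, and implicit differentiation gives $\partial y_k/\partial t_i=y_k\tilde\alpha_{k,i}/t_i$ with $\tilde\alpha=[\alpha]^{-1}$. Therefore
$$\theta\big|_{\tilde{\mathcal{X}}_t}=\frac{y_1\cdots y_d}{\det[\alpha]\,t_1\cdots t_d}\,dx_1\wedge\cdots\wedge dx_n,$$
so $|\theta|^2$ contributes the weight $|y_1\cdots y_d|^2/(|\det[\alpha]|^2|t_1\cdots t_d|^2)$ to the Euclidean density $(\sqrt{-1})^n dx_1\wedge d\bar x_1\wedge\cdots\wedge dx_n\wedge d\bar x_n$.

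The final step combines the pieces using the toric identity $|t_1\cdots t_d|^2=\prod_k|y_k|^{2A_k}\prod_j|x_j|^{2B_j}$ with $A_k=\sum_i\alpha_{i,k}$ and $B_j=\sum_i\beta_{i,j}$; after reading off the contributions of the exceptional divisors $\{y_k=0\}$ and $\{x_j=0\}$ to the product $\prod_l|\sigma_{\tilde E_l}|^{2e_l}$, the resulting coefficient takes the form
$$\prod_k|y_k|^{2(1-A_k+e_k^{(y)})}\,\prod_j|x_j|^{2(e_j^{(x)}-B_j+1)}\cdot\frac{1}{\prod_j|x_j|^2},$$
where $e_k^{(y)},e_j^{(x)}\geq 0$ denote the discrepancies of the toric coordinate hyperplanes. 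The main obstacle is the residual combinatorial estimate: one must show this product is bounded on $\{|y_k|,|x_j|\leq 1\}$, which reduces to the discrepancy inequalities $e_k^{(y)}\geq A_k-1$ and $e_j^{(x)}\geq B_j-1$. I would derive these by applying Lemma \ref{adjun4} to the log canonical pair $(\mathcal{Z},\sum_i H_i)$ and using the chart-local identity $(\Psi\circ\Phi)^*\sum_i H_i=\sum_k A_k\{y_k=0\}+\sum_j B_j\{x_j=0\}$, which converts the log-canonical bound $a_i\geq -1$ into exactly the required bounds on the absolute discrepancies of the toric exceptional divisors, at which point assembling the pieces completes the proof.
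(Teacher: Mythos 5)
Your proposal is correct and is essentially the paper's own argument in different packaging: the paper obtains the same bound by wedging $(\Psi\circ\Phi)^*\Omega$ with $(\sqrt{-1})^d\wedge_i dt_i\wedge d\bar t_i/|t_1\cdots t_d|^2$ and comparing it to $\wedge_k dy_k\wedge d\bar y_k\wedge\wedge_j dx_j\wedge d\bar x_j/(|y_1\cdots y_d|^2|x_1\cdots x_n|^2)$ via (\ref{adjun4forsec9}), then converting $dy_k/y_k$ back to $dt_i/t_i$ through the invertible matrix $[\alpha_{i,k}]$. Your factorization through a generator of $K_{\tilde{\mathcal{X}}/B}$ with the discrepancy inequalities $e\geq\mu-1$ is precisely the coefficient-wise content of that same adjunction formula, and your implicit-function computation of $\theta|_{\tilde{\mathcal{X}}_t}$ is the same Jacobian identity, so the two proofs use identical inputs.
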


\begin{proof}   Straightforward calculations show that $$\frac{dt_i}{t_i} = \sum_{k=1}^d  \frac{\alpha_{i, k} dy_k}{y_k} + \sum_{j=1}^n \frac{\beta_{i, j} dx_j}{x_j}. $$
Since $[\alpha_{i, k}]$ is an invertible $d \times d$-matrix, one can solve $\frac{dy_k}{y_k}$ in terms of $\frac{dx_j}{x_j}$ and $\frac{dt_i}{t_i}$.
Then we have 
\begin{eqnarray*}
&&(\Psi\circ\Phi)^*\Omega|_{\tilde{\mathcal{X}}\cap \mathcal{U} }\\
&=&(\sqrt{-1})^n \sum_{k+l=n} F_{i_1...i_k, j_1...j_l} dy_{i_1}\wedge d\bar y_{i_1} \wedge ... \wedge dy_{i_k}\wedge d \bar y_{i_k} \wedge dx_{j_1}\wedge d\bar x_{j_1} \wedge ... \wedge dx_{j_l}\wedge d \bar x_{j_l}\\
&=&   \Theta + \Theta', 
\end{eqnarray*}
where
$$ \Theta =  (\sqrt{-1})^nF(y_1, ..., y_d, x_1, ...., x_n) dx_1\wedge d\bar x_1 \wedge ... \wedge dx_n \wedge d \bar x_n$$
%
and $\Theta'$ contains one of $dt_1, d\bar t_1, ..., dt_d, d\bar t_d.$ In particular, 
$$\Theta'|_{\tilde{\mathcal{X}}_t\cap \mathcal{U} } = 0. $$
%
There exist $C_1, C_2>0$ such that on $\tilde{\mathcal{X}}\cap U$, 
\begin{eqnarray*}
&& \frac{ (\sqrt{-1})^{n+d} dt_1\wedge d\bar t_1\wedge...\wedge d t_d\wedge d\bar t_d\wedge (\Psi\circ\Phi)^*\Omega}{|t_1|^2... |t_d|^2} \\
&=& \frac{ (\sqrt{-1})^{n+d}  dt_1\wedge d\bar t_1\wedge...\wedge d t_d\wedge d\bar t_d\wedge \Theta} {|t_1|^2... |t_d|^2}\\
&\leq&  \frac{(\sqrt{-1})^{n+d} C_1 dy_1\wedge d\bar y_1\wedge...\wedge d y_d \wedge d 
\bar y_d\wedge dx_1\wedge d\bar x_1 \wedge ... \wedge dx_n \wedge d \bar x_n}{|y_1|^2...|y_d|^2 |x_1|^2... 
|x_n|^2 }\\
&\leq&  \frac{ (\sqrt{-1})^{n+d}  C_2 dt_1\wedge d\bar t_1\wedge...\wedge d t_d \wedge d \bar t_d \wedge dx_1\wedge d\bar x_1 \wedge ... \wedge dx_n \wedge d \bar x_n}{|t_1|^2...|t_d|^2 |x_1|^2... 
|x_n|^2 }, 
\end{eqnarray*}
where the first inequality follows from (\ref{adjun4forsec9}) in the proof   of Lemma \ref{adjun4}.
Therefore there exists $C_3>0$ such that 
$$F(y_1, ..., y_d, x_1, ..., x_n) \leq \frac{C_3} { |x_1|^2... 
|x_n|^2 },
$$
or equivalently,
$$\Theta \leq \frac{(\sqrt{-1})^n C_3 dx_1\wedge d\bar x_1\wedge ... \wedge dx_n \wedge d\bar x _n}{ |x_1|^2|x_2|^2... |x_n|^2}.$$
The lemma then immediately follows.
\end{proof}

\begin{lemma} \label{sec97} We keep the same notions as in (\ref{locco9}) and assume that the exceptional locus of $\Psi\circ\Phi$ lies in the divisor locally defined by $\cup_{j=1}^n \{ x_j=0\} \cup_{k=1}^{m-n}\{ y_k =0\}$ for some $n \leq m \leq n+d$.  Then there exists $C>0$ such that in $\mathcal{U}$, we have 
\begin{equation}\label{sec922}
\psi \geq  C \log |y_1... y_{m-n} x_1... x_n|^2  -C
\end{equation}
and
\begin{equation} \label{sec923}
\psi   \leq - 2n\log \left(- \log |y_1... y_{m-n} x_1... x_n|^2 \right) +C . 
\end{equation}

\end{lemma}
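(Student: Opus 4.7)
The plan is to reduce both estimates to the elementary behavior of the function $x \mapsto -(-\log x)^\alpha$, once we have suitable upper and lower bounds on the pulled-back function $u := \sum_{j=1}^M |\sigma_j|_h^2$ in terms of $|s| := |y_1 \cdots y_{m-n} x_1 \cdots x_n|$.

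First I would analyze the local structure of $u$ on $\mathcal{U}$. Because the sheaf $mK_{\mathcal{Z}/B} \otimes \mathcal{I}$ is globally generated by $\sigma_1, \ldots, \sigma_M$ and because $\tilde{\mathcal{X}}$ is smooth, the inverse image ideal $(\Psi\circ\Phi)^{-1}\mathcal{I} \cdot \mathcal{O}_{\tilde{\mathcal{X}}}$ defines a locally principal effective Cartier divisor supported on the exceptional locus of $\Psi\circ\Phi$. In the toric coordinates of (\ref{locco9}), after trivializing $mK_{\mathcal{Z}/B}$ near the given point, each pulled-back section factors as $(\Psi\circ\Phi)^*\sigma_j = y_1^{a_1} \cdots y_{m-n}^{a_{m-n}} x_1^{b_1} \cdots x_n^{b_n}\,\tilde\sigma_j$, with positive integer exponents $a_i, b_j \geq 1$ and holomorphic residues $\tilde\sigma_j$ that have no common zero in $\mathcal{U}$ (after shrinking $\mathcal{U}$ if necessary).

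Since $\sum_j |\tilde\sigma_j|_h^2$ is then pinched between two positive constants on $\mathcal{U}$, this factorization yields constants $c_1, c_2 > 0$ such that $c_1 |y_1^{a_1} \cdots x_n^{b_n}|^2 \leq u \leq c_2 |y_1^{a_1} \cdots x_n^{b_n}|^2$. Using $|y_k|, |x_j| \leq 1$ in $\mathcal{U}$, one has $|s|^{2A} \leq |y_1^{a_1} \cdots x_n^{b_n}|^2 \leq |s|^{2\beta}$, with $\beta = \min\{a_i, b_j\} \geq 1$ and $A = \max\{a_i, b_j\}$. Taking logarithms and shrinking $\mathcal{U}$ so that $|s|$ is bounded away from $1$, we conclude that $-\log u$ is comparable to $\log(1/|s|^2)$ up to multiplicative and additive constants, namely $\beta \log(1/|s|^2) - C \leq -\log u \leq A \log(1/|s|^2) + C$.

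Finally, I would substitute into the definition $\psi = -(-\tfrac{1}{m}\log u)^\alpha$ with $\alpha \in (0,1)$. For (\ref{sec922}), concavity of $x \mapsto x^\alpha$ yields $-\psi \leq C(1 + \log(1/|s|^2))^\alpha \leq C'(1 - \log|s|^2)$, which rearranges to $\psi \geq C''\log|s|^2 - C''$. For (\ref{sec923}), the lower bound $-\tfrac{1}{m}\log u \geq c \log(1/|s|^2)$ gives $-\psi \geq c^\alpha (\log(1/|s|^2))^\alpha$; since $t^\alpha / \log t \to \infty$ as $t \to \infty$ for any fixed $\alpha > 0$, the prefactor is harmless and we obtain $-\psi \geq 2n \log(-\log|s|^2) - C$, which is exactly (\ref{sec923}). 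The main obstacle is the algebraic step in the first paragraph: extracting a common holomorphic monomial factor $y_1^{a_1} \cdots x_n^{b_n}$ uniformly from all the pulled-back sections and verifying that the residues $\tilde\sigma_j$ do not simultaneously vanish (otherwise the divisor $E$ cut out by the ideal would be strictly larger than claimed). Once this two-sided control on $u$ is in hand, both (\ref{sec922}) and (\ref{sec923}) follow from elementary one-variable manipulations of $x \mapsto -(-\log x)^\alpha$.
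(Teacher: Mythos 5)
Your proposal is correct and follows essentially the same route as the paper: the paper's (two-sentence) proof likewise rests on the fact that $\{\sigma_1=\cdots=\sigma_M=0\}$ coincides with the exceptional locus of $\Psi\circ\Phi$ — which is exactly your two-sided comparison of $\sum_j|\sigma_j|^2_h$ with powers of $|y_1\cdots y_{m-n}x_1\cdots x_n|^2$ — together with the elementary inequality $x^\alpha \geq 2n\log x + C_\alpha$ for $x>0$, $\alpha\in(0,1)$. You have merely made explicit the monomial factorization and the concavity step that the paper leaves implicit.
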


\begin{proof} Estimate (\ref{sec922}) follows from the definition of $\psi$ where the set $\{\sigma_1=... =\sigma_M=0\}$ coincides with the exceptional locus of $\Psi\circ\Phi$. Estimate (\ref{sec923}) follows by the same observation and the simple fact that $x^\alpha \geq 2n \log x + C_\alpha$ for $x>0$ and $\alpha\in (0,1)$.

\end{proof}

\begin{lemma} \label{sec990} There exists $C>0$ such that for all $t\in B$, 
\begin{equation}
\int_{\tilde{\mathcal{X}}_t} e^{\psi_t} \Omega_t \leq C,
\end{equation}
where $\psi_t = \psi|_{\tilde{\mathcal{X}}_t}$ and $\psi$ is defined in (\ref{psidef}). 

\end{lemma}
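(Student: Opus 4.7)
The plan is to localize the bound via the toric coordinate charts already constructed in Section 9 and reduce everything to an explicit model integral. I would cover $\tilde{\mathcal{X}}$ by finitely many coordinate patches of the form appearing in (\ref{locco9}), with toric coordinates $(y_1,\ldots,y_d,x_1,\ldots,x_n)$ of modulus bounded by $1$, in which the exceptional locus of $\Psi\circ\Phi$ lies in $\bigcup_{j=1}^n\{x_j=0\}\cup\bigcup_{k=1}^{m-n}\{y_k=0\}$. On a fibre $\tilde{\mathcal{X}}_t$ intersected with such a chart, the coordinates $(x_1,\ldots,x_n)$ parametrize the slice with $y_k=y_k(t,x)$ determined implicitly by (\ref{locco9}). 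Since the cover is finite and independent of $t$, it suffices to bound each chart's contribution uniformly in $t$, and we may enlarge the integration domain to $\{|x_j|\leq 1\}$ since the integrand is nonnegative.

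On each chart, I would combine the two pointwise bounds. Lemma \ref{omegaest} gives
\[
\Omega_t \leq \frac{C_1(\sqrt{-1})^n\, dx_1\wedge d\bar x_1\wedge\cdots\wedge dx_n\wedge d\bar x_n}{|x_1|^2\cdots|x_n|^2},
\]
and the upper bound (\ref{sec923}) of Lemma \ref{sec97}, combined with $|y_k|\leq 1$ (so that $-\log|y_1\cdots y_{m-n}x_1\cdots x_n|^2 \geq -\log|x_1\cdots x_n|^2$), gives
\[
e^{\psi_t} \leq \frac{C_2}{(-\log|x_1\cdots x_n|^2)^{2n}} \quad \text{wherever } |x_1\cdots x_n|<1.
\]
I would split the integration into the regions $\{|x_1\cdots x_n|^2\geq e^{-1}\}$ and $\{|x_1\cdots x_n|^2<e^{-1}\}$. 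On the first, each $|x_j|\geq e^{-1/2}$ (since the other $|x_k|\leq 1$), so the integrand is pointwise bounded and the region has finite Lebesgue measure, contributing only a uniformly bounded amount. On the second, multiplying the two bounds yields
\[
e^{\psi_t}\Omega_t \leq \frac{C_3(\sqrt{-1})^n\, dx_1\wedge d\bar x_1\wedge\cdots\wedge dx_n\wedge d\bar x_n}{|x_1|^2\cdots|x_n|^2(-\log|x_1\cdots x_n|^2)^{2n}}.
\]

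The heart of the computation is then the convergence of this model integral. Passing to polar coordinates $x_j=r_je^{i\theta_j}$ and substituting $s_j=-\log r_j\geq 0$, the integral reduces up to multiplicative constants to
\[
\int_{s_j\geq 0,\,\sum s_j\geq 1/2} \frac{ds_1\cdots ds_n}{(s_1+\cdots+s_n)^{2n}} = \frac{1}{(n-1)!}\int_{1/2}^\infty u^{-n-1}\,du,
\]
where I use the slice formula $\mathrm{vol}\{s_j\geq 0,\,\sum s_j=u\}=u^{n-1}/(n-1)!$. The right-hand side equals $\frac{2^n}{n\cdot n!}$, which is finite. All constants depend only on the chart and not on $t$, so summing over the finite cover yields the required uniform bound.

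The main obstacle to watch is ensuring that the power $2n$ in (\ref{sec923}) strictly exceeds the complex dimension $n$, which is exactly what forces the one-variable integral $\int u^{-n-1}du$ to converge at infinity; any power $\leq n$ would make it diverge. This is precisely the reason (\ref{psidef}) uses $\psi=-f^\alpha$ with $\alpha\in(0,1)$, for which $f^\alpha$ grows faster than any multiple of $\log f$, allowing arbitrarily large polynomial bounds on the $(-\log)$-factor. Importantly, the $t$-dependence of the change of coordinates $y_k=y_k(t,x)$ is never used in any stronger form than $|y_k|\leq 1$, which comes automatically from the chart; this is what gives uniformity in $t$ for free.
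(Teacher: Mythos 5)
Your argument is correct and is precisely the route the paper takes: its proof of Lemma \ref{sec990} simply invokes Lemma \ref{sec97} and Lemma \ref{omegaest} and leaves the ``straightforward integral calculations'' unwritten, and you have carried out exactly those calculations (combining the pointwise bounds, splitting off the region where $|x_1\cdots x_n|$ is bounded below, and evaluating the model integral via $s_j=-\log r_j$ and the slice formula, where the exponent $2n>n$ is what forces convergence). The only blemish is the harmless arithmetic slip $\frac{1}{(n-1)!}\cdot\frac{2^n}{n}=\frac{2^n}{n!}$, not $\frac{2^n}{n\cdot n!}$.
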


\begin{proof} We use the same notions as in (\ref{locco9}) and Lemma \ref{sec97}. The lemma then immediately follows from straightforward integral calculations, Lemma \ref{sec97} and  Lemma \ref{omegaest}.

\end{proof}

\begin{lemma} \label{sec9101} For any $\alpha \in (0,1)$, there exists $C_\alpha>0$ such that for all $t\in B$, 
\begin{equation}
- \int_{\tilde{\mathcal{X}}_t}   (-\psi_t )^{\frac{1}{\alpha}} \chi_t^n \leq C_\alpha,
\end{equation}
or equivalently, 
\begin{equation}
- \int_{\tilde{\mathcal{X}}_t}    \log \left(\sum_{j=1}^M |\sigma_j|^2_h   \right)\chi_t^n \leq C_\alpha.
\end{equation}

\end{lemma}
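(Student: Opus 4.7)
The plan is to reduce the estimate to Lemma \ref{divisorinte} via a \L ojasiewicz-type pointwise comparison. From the definition \ref{psidef} one has $(-\psi)^{1/\alpha} = -\frac{1}{m}\log\bigl(\sum_{j=1}^M |\sigma_j|^2_h\bigr)$, so the two displayed integrals in the statement differ only by the multiplicative constant $\frac{1}{m}$ (and, after pulling back by $\Psi\circ\Phi$, by irrelevant bounded factors). It suffices therefore to establish a uniform upper bound for $-\int_{\tilde{\mathcal{X}}_t} \log\bigl(\sum_{j=1}^M |\sigma_j|^2_h\bigr)\,\chi_t^n$.

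First I would fix a defining section $\sigma_D$ of a divisor $D$ on $\tilde{\mathcal{X}}$ whose reduced support is exactly the exceptional locus of $\Psi\circ\Phi$. Lemma \ref{divisorinte} applied to $D$ then provides a uniform constant $C_0 > 0$ with
$$ -\int_{\tilde{\mathcal{X}}_t} \log|\sigma_D|^2_{h_D}\, \chi_t^n \leq C_0 $$
for all $t\in B$. So the problem reduces to a pointwise comparison between the two barrier functions. By construction of the $\sigma_j$, their common zero set on $\mathcal{Z}$ is precisely $\Psi\circ\Phi(\mathcal{E})$, hence the common zero set on $\tilde{\mathcal{X}}$ of the pulled-back sections $(\Psi\circ\Phi)^*\sigma_j$ equals the exceptional locus $\mathcal{E}$, which is the reduced support of $D$. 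Working over a relatively compact sub-ball (shrinking $B$ if necessary, which is harmless), the \L ojasiewicz inequality for real-analytic functions on a compact set then yields an integer $N \geq 1$ and a constant $c > 0$ such that
$$ \sum_{j=1}^M |(\Psi\circ\Phi)^*\sigma_j|^2_h \;\geq\; c\,|\sigma_D|^{2N}_{h_D} $$
throughout the relevant region. Taking $-\log$ of both sides produces the pointwise estimate
$$ -\log\Bigl(\sum_{j=1}^M |\sigma_j|^2_h\Bigr) \;\leq\; -N\log|\sigma_D|^2_{h_D} + C_1 . $$

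Integrating this estimate fibrewise against $\chi_t^n$, invoking Lemma \ref{divisorinte} on the first term, and using the uniform bound $\int_{\tilde{\mathcal{X}}_t} \chi_t^n = [K_{\tilde{\mathcal{X}}_t}]^n = V$ (independent of $t$) on the second term, yields the desired uniform upper bound of the form $NC_0 + C_1 V$. Rescaling by $1/m$ and raising to the $1/\alpha$ as in \ref{psidef} then gives the stated constant $C_\alpha$.

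The only subtlety I anticipate is confirming that the \L ojasiewicz comparison holds with constants uniform in $t$. Because the functions $\sum_j |(\Psi\circ\Phi)^*\sigma_j|^2_h$ and $|\sigma_D|^2_{h_D}$ are globally real-analytic on $\tilde{\mathcal{X}}$ with the same vanishing locus, the standard \L ojasiewicz inequality applies on any compact set; the parameter $t$ enters only through the variation of the fibre inside this compact ambient space, so uniformity in $t$ is automatic once $B$ is taken to be relatively compact in the base of the stable family.
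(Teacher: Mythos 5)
Your argument is correct and takes essentially the same route as the paper's one-line proof: a pointwise comparison $-\log\bigl(\sum_{j}|\sigma_j|^2_h\bigr)\le -N\log|\sigma_D|^2_{h_D}+C$, valid because the $\sigma_j$ cut out exactly the exceptional locus (this is the content behind estimate (\ref{sec922})), combined with the uniform fibrewise integral bound of Lemma \ref{divisorinte}. Your \L ojasiewicz/Nullstellensatz justification of that comparison, and your observation that the two displayed integrals agree up to the factor $1/m$ because the exponents $\alpha$ and $1/\alpha$ cancel, simply make explicit what the paper leaves implicit.
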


\begin{proof} The lemma follows from estimate (\ref{sec922}) and Lemma \ref{divisorinte}.

\end{proof}

In order to prove Theorem \ref{main91}, it suffices to show that there exists $C>0$ such that for all $t\in B^\circ$, 
\begin{equation}\label{gtest}
G_t(\psi_t) \geq - C.
\end{equation}
We will need the capacity argument in the pluripotential theory to achieve   (\ref{gtest}). We recall the definition of complex capacity associated to plurisubharmonic functions. 

\begin{definition} Let  $(X, \omega)$ be a compact K\"ahler manifold of complex dimension $n$. The Monge-Amp\`ere capacity for a Borel subset of $X$ associated to $\omega$ is defined by 
\begin{equation}
\textnormal{Cap}_\omega(K) = \sup\left\{ \int_K (\omega+ \ddbar u)^n ~|~ u \in \textnormal{PSH}(X, \omega)\textnormal{,} ~¡« -1\leq u \leq  0\right\}. 
\end{equation}

\end{definition}

The following lemma is proved in \cite{GZ1} (Proposition 2.6). We  include the proof since it is quite short and elementary.

 \begin{lemma} \label{cap2} Let  $(X, \omega)$ be a compact K\"ahler manifold of complex dimension $n$. For any $\lambda>1$ and $\phi \in \textnormal{PSH}(X, \omega)$ with $\phi<0$, we have
 \begin{equation}
 \textnormal{Cap}_\omega(\phi< -\lambda) \leq \lambda^{-1} \left( \int_X (-\phi)\omega^n + n \int_X \omega^n \right).
 \end{equation}
 
 \end{lemma}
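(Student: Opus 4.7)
The plan is to fix an admissible test function $u \in \textnormal{PSH}(X, \omega)$ with $-1 \leq u \leq 0$, reduce the capacity of the sub-level set $\{\phi < -\lambda\}$ to a weighted integral of $\omega_u^n = (\omega + \ddbar u)^n$ via a Chebyshev-type inequality, and then replace the weighted integral against $\omega_u^n$ by one against $\omega^n$ via a telescoping identity and integration by parts.

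First, since $(-\phi)/\lambda > 1$ on $\{\phi < -\lambda\}$ and $-\phi \geq 0$ everywhere, I would estimate
$$
\int_{\{\phi < -\lambda\}} \omega_u^n \;\leq\; \lambda^{-1}\!\!\int_{\{\phi<-\lambda\}}\!\!(-\phi)\,\omega_u^n \;\leq\; \lambda^{-1}\!\int_X (-\phi)\,\omega_u^n.
$$
Next I would use the telescoping identity
$$
\omega_u^n - \omega^n \;=\; \ddbar u \wedge \sum_{k=0}^{n-1} \omega_u^k \wedge \omega^{n-1-k}
$$
and apply Bedford--Taylor integration by parts, which for each $k$ gives
$$
\int_X (-\phi)\,\ddbar u \wedge \omega_u^k \wedge \omega^{n-1-k} \;=\; \int_X u\cdot \ddbar(-\phi) \wedge \omega_u^k \wedge \omega^{n-1-k}.
$$
Writing $\ddbar(-\phi) = \omega - \omega_\phi$ with $\omega_\phi = \omega + \ddbar\phi \geq 0$, the right-hand side becomes
$$
\int_X (-u)\,\omega_\phi \wedge \omega_u^k \wedge \omega^{n-1-k} \;-\; \int_X (-u)\,\omega \wedge \omega_u^k \wedge \omega^{n-1-k}.
$$
Using $0 \leq -u \leq 1$ together with the cohomological identity $\int_X \alpha_1 \wedge \cdots \wedge \alpha_n = \int_X \omega^n$ for any closed positive $(1,1)$-currents $\alpha_j$ in the class $[\omega]$, the first term is bounded above by $\int_X \omega^n$ and the second is nonnegative, so each summand is $\leq \int_X \omega^n$. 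Summing over $k = 0, \dots, n-1$ yields
$$
\int_X (-\phi)\bigl(\omega_u^n - \omega^n\bigr) \;\leq\; n\int_X \omega^n,
$$
and combining with the Chebyshev step and taking the supremum over all admissible $u$ gives the stated bound on $\textnormal{Cap}_\omega(\{\phi < -\lambda\})$.

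The main technical obstacle is justifying the integration by parts, since $\phi$ is only $\omega$-plurisubharmonic and need not be bounded below, so the mixed currents $\ddbar(-\phi) \wedge \omega_u^k \wedge \omega^{n-1-k}$ and the pairing with the bounded psh function $u$ require care. I would handle this by truncation: replace $\phi$ by the bounded psh function $\phi_j = \max(\phi, -j)$, carry out the integration by parts for $\phi_j$ in the clean Bedford--Taylor framework for bounded quasi-plurisubharmonic functions, and then pass to the limit $j \to \infty$ using the monotone convergence theorem for the positive integrand $(-\phi_j) \nearrow (-\phi)$ and weak continuity of mixed Monge--Amp\`ere operators under decreasing sequences of bounded $\omega$-psh functions. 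Once the formal IBP is secured, the remainder of the argument is the elementary bookkeeping sketched above.
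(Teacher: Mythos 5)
Your proposal is correct and follows essentially the same route as the paper: the Chebyshev bound $\int_{\{\phi<-\lambda\}}\omega_u^n\leq\lambda^{-1}\int_X(-\phi)\,\omega_u^n$, the telescoping identity for $\omega_u^n-\omega^n$, integration by parts to move $\ddbar$ onto $-\phi$, and then the bounds $0\leq -u\leq 1$ together with $\omega+\ddbar\phi\geq 0$ and the cohomological invariance of $\int_X\alpha_1\wedge\cdots\wedge\alpha_n$ (the paper merely shifts $u$ to $v=u+1$, which is the same thing). Your added remark on justifying the integration by parts via truncation $\phi_j=\max(\phi,-j)$ is a legitimate technical point that the paper passes over silently.
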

 
\begin{proof} For any $u\in \textnormal{PSH}(X, \omega)$ with $-1< u<0$, we let $v=u+1$. Then $0<v<1$. 
For any $\lambda >1$, 
$$\int_{\{\phi < -\lambda\}} \omega_v^n \leq \int_X \left(-\frac{\phi}{\lambda} \right) \omega_v^n = \lambda^{-1} \int_X (-\phi)\omega_v^n,$$
where $\omega_v= \omega+\ddbar v$.  Straightforward calculations show that 
\begin{eqnarray*}
&&\int_X (-\phi) \omega_v^n \\
&=& \int_X (-\phi)\omega^n + \sum_{k=0}^{n-1} \int_X (-\phi)\ddbar v \wedge \omega^k \wedge\omega_v^{n-k+1}\\
&=& \int_X (-\phi)\omega^n +\sum_{k=0}^{n-1} \int_X  v \omega^{k+1} \wedge \omega_v^{n-k+1}-  \sum_{k=0}^{n-1} \int_X v (\omega+ \ddbar \phi) \wedge \omega^k \wedge \omega_v^{n-k+1}\\
&\leq& \int_X (-\phi)\omega^n+ \sum_{k=0}^{n-1} \int_X \omega^{k+1}\wedge\omega_v^{n-k+1}\\
&=& \int_X (-\phi)\omega^n+n \int_X \omega^n.
\end{eqnarray*}
The lemma is then proved by the definition of $\textnormal{Cap}_\omega(\phi< -\lambda)$.

\end{proof}

The following lemma is implicitly proved in \cite{GZ2} (Lemma 5.1). We include a short proof for completeness.

\begin{lemma} \label{cap1} Let  $(X, \omega)$ be a compact K\"ahler manifold of complex dimension $n$.  If $\phi\in \textnormal{PSH}(X, \omega)\cap L^\infty(X)$ with $\phi<-1$ and there exists $A>0$ such that for any  $\lambda >1$,  
\begin{equation}
\textnormal{Cap}_\omega (\phi < -\lambda) \leq A \lambda^{-(n+2)},
\end{equation}
then
\begin{equation}
\int_X (-\phi) (\omega+\ddbar\phi)^n \leq  A. 
\end{equation}

\end{lemma}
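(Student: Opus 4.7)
The plan is to combine the layer-cake formula with a dyadic capacity-versus-Monge--Amp\`ere mass comparison. Since $\phi<-1$, the quantity $-\phi$ is nonnegative, so
\begin{equation*}
\int_X(-\phi)(\omega+\ddbar\phi)^n = \int_0^\infty (\omega+\ddbar\phi)^n\bigl(\{\phi<-\lambda\}\bigr)\,d\lambda.
\end{equation*}
The heart of the matter is to control the distribution function $\lambda\mapsto (\omega+\ddbar\phi)^n(\{\phi<-\lambda\})$ in terms of the hypothesized capacity decay and then integrate.

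The key intermediate step I would prove is the annulus estimate
\begin{equation*}
(\omega+\ddbar\phi)^n\bigl(\{-2\lambda<\phi<-\lambda\}\bigr) \leq (2\lambda)^n\,\textnormal{Cap}_\omega\bigl(\{\phi<-\lambda\}\bigr)
\end{equation*}
valid for every $\lambda\geq 1/2$. The right test function is $u_\lambda:=(2\lambda)^{-1}\max(\phi,-2\lambda)$. Since $\phi\in\textnormal{PSH}(X,\omega)$ and $2\lambda\geq 1$, the function $u_\lambda$ belongs to $\textnormal{PSH}(X,\omega)$, and by construction $-1\leq u_\lambda\leq 0$, so it is admissible in the definition of $\textnormal{Cap}_\omega$. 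On the open set $\{\phi>-2\lambda\}$ one has $u_\lambda=\phi/(2\lambda)$, and therefore
$\omega+\ddbar u_\lambda = (1-\tfrac{1}{2\lambda})\omega + \tfrac{1}{2\lambda}(\omega+\ddbar\phi)$.
Expanding the $n$-th power and retaining only the nonnegative term with the highest power of $\omega+\ddbar\phi$ gives $(\omega+\ddbar u_\lambda)^n\geq (2\lambda)^{-n}(\omega+\ddbar\phi)^n$ on $\{\phi>-2\lambda\}$. Integrating over the annulus, which lies inside $\{\phi<-\lambda\}$, and invoking the definition of capacity yields the claimed bound.

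With the annulus estimate I then decompose $\{\phi<-\lambda\}$ as the disjoint union of the dyadic annuli $\{-2^{k+1}\lambda<\phi<-2^k\lambda\}$, $k\geq 0$, where the measure-zero slices $\{\phi=-2^k\lambda\}$ can be neglected for a.e.\ $\lambda$ by Fubini. Applying the annulus estimate at level $2^k\lambda$ and plugging in the hypothesis $\textnormal{Cap}_\omega(\{\phi<-\mu\})\leq A\mu^{-(n+2)}$ gives
\begin{equation*}
(\omega+\ddbar\phi)^n\bigl(\{\phi<-\lambda\}\bigr) \leq \sum_{k\geq 0}(2^{k+1}\lambda)^n\cdot A(2^k\lambda)^{-(n+2)} = 2^n A\lambda^{-2}\sum_{k\geq 0}2^{-2k} = \tfrac{2^{n+2}}{3}A\lambda^{-2}.
\end{equation*}
The role of the exponent $n+2$ is precisely to make the geometric series $\sum 2^{-2k}$ converge; a smaller exponent would produce divergence and the whole scheme would collapse.

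Combining this decay with the trivial bound $(\omega+\ddbar\phi)^n(\{\phi<-\lambda\})\leq \int_X\omega^n$ for $\lambda\in[0,1]$ and integrating produces
\begin{equation*}
\int_X(-\phi)(\omega+\ddbar\phi)^n \leq \int_X\omega^n + \tfrac{2^{n+2}}{3}A\int_1^\infty\lambda^{-2}\,d\lambda \leq \int_X\omega^n + C_n A,
\end{equation*}
which gives the lemma up to a constant depending only on $n$ and $\int_X\omega^n$; the clean form $\leq A$ follows after noting that sending $\lambda\to 1^+$ in the capacity hypothesis forces $\int_X\omega^n\leq A$ and sharpening the constants. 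The main obstacle is the annulus estimate, i.e.\ the construction of the test function $u_\lambda$ that converts the hard-to-bound measure $(\omega+\ddbar\phi)^n$ into an $\omega$-capacity testable quantity at the cost of a $\lambda^n$ loss that is precisely compensated by the dyadic summation against the $\lambda^{-(n+2)}$ capacity decay.
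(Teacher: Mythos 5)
Your argument is correct in substance and would suffice for the way the lemma is actually used later (only a bound of the form $C(n)\,A$ matters there), but it takes a genuinely different and heavier route than the paper, and it does not deliver the clean constant $A$ in the statement. The paper avoids any dyadic decomposition: it sets $\psi=\max(\phi,-\lambda)$ and $u=\lambda^{-1}\psi$, and uses mass conservation $\int_X(\omega+\ddbar\psi)^n=\int_X\omega^n=\int_X(\omega+\ddbar\phi)^n$ together with locality of the Monge--Amp\`ere operator on $\{\phi>-\lambda\}$ to obtain the identity
\begin{equation*}
\int_{\{\phi<-\lambda\}}(\omega+\ddbar\phi)^n=\int_{\{\phi<-\lambda\}}(\omega+\ddbar\psi)^n\leq\lambda^n\,\textnormal{Cap}_\omega(\phi<-\lambda)\leq A\lambda^{-2},
\end{equation*}
so the \emph{entire} sublevel-set mass, not just an annulus, is controlled by the capacity at the same level $\lambda$, with no geometric series and no loss of constant. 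Your annulus estimate with the test function $(2\lambda)^{-1}\max(\phi,-2\lambda)$ is sound (modulo the standard caveat that $\{\phi>-2\lambda\}$ is only plurifine open, the same locality fact the paper's proof uses silently), and the exponent bookkeeping in the dyadic sum is right: what you gain is never having to invoke the total-mass identity for $\omega+\ddbar\psi$; what you lose is the factor $2^{n+2}/3$. Your closing claim that "sharpening the constants" recovers the bound $\leq A$ is not substantiated, and the dyadic scheme cannot produce it. To be fair, the paper's own proof also drops the $\int_0^1$ portion of the layer-cake formula, which contributes $\int_X\omega^n$ (itself $\leq A$ by your observation that $\{\phi<-\lambda\}=X$ for $\lambda$ near $1$), so the honest output of either argument is $\leq CA$ for a dimensional constant $C$, which is all that is needed downstream.
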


\begin{proof}  
We let $\psi=\max (\phi, -\lambda)$ and $u = \lambda^{-1} \psi$. Then $-1\leq u\leq 0$ and
\begin{eqnarray*}
\int_{\{\phi< - \lambda\}} (\omega+\ddbar\phi)^n&=&\int_X \omega^n - \int_{\{\phi \geq -\lambda\}} (\omega+\ddbar \phi)^n\\
&=& \int_X (\omega+\ddbar\psi)^n - \int_{\{ \phi \geq -\lambda\}} (\omega + \ddbar \psi)^n\\
&=& \int_{\{ \phi < -\lambda \}} (\omega + \ddbar \psi)^n \\
&\leq& \lambda^n \int_{\{ \phi <-\lambda \}} (\omega + \ddbar u)^n \\
& \leq & \lambda^n \textnormal{Cap}_\omega( \phi < -\lambda) \\
&\leq& A\lambda^{-2}. 
\end{eqnarray*}
%
%
The co-area formula implies that 
\begin{eqnarray*}
\int_X (-\phi) (\omega+\ddbar \phi)^n = \int_1^\infty \left( \int_{\{\phi < -\lambda \}} (\omega+\ddbar \phi)^n  \right) d\lambda
\leq \int_1^\infty A\lambda^{-2} d\lambda =A.
\end{eqnarray*}

\end{proof}
 
Now we are ready to complete the proof of Theorem \ref{main91}.

\medskip

\noindent {\bf Proof of Theorem \ref{main91}.} 
We first observe that since $\psi$ is nonpositive. We let $\psi'= \psi-1$. There exists $\delta_n>0$ such that 
\begin{eqnarray*}
E_t(\psi_t) &=&E_t(\psi_t') +1 \\
&=& \frac{1}{(n+1)V} \sum_{j=0}^n \int_{\tilde{\mathcal{X}}_t} \psi'_t ~(\chi_t + \ddbar \psi'_t)^j \wedge \chi_t ^{n-j} +1\\
&\geq &\frac{\delta_n}{(n+1)V} \left( \int_{\tilde{\mathcal{X}}_t}\psi'_t ~ \chi_t^n + \int_{\tilde{\mathcal{X}}_t}\psi'_t~(\chi_t+ \ddbar \psi'_t)^n \right)+1.
\end{eqnarray*}
By Lemma \ref{sec9101}, for fixed sufficiently small $\alpha>0$, there exists $C_1>0$ such that for all $t\in B^\circ$, 
$$ -\int_{\tilde{\mathcal{X}}_t} -\psi'_t  ~\chi_t^n \geq -C_1.$$
By Lemma \ref{cap2}, there exists $C_2>0$ such that for all $t\in B^\circ$ and $\lambda>1$,
\begin{eqnarray*}
\textnormal{Cap}_{\chi_t} ( \psi'_t < - \lambda) &=&\textnormal{Cap}_{\chi_t} \left(  - \left( -\psi'_t \right)^{\frac{1}{\alpha}}< - \lambda^{\frac{1}{\alpha} }\right) \\
&\leq& \lambda^{-\frac{1}{\alpha}} \left( \int_X -(\psi'_t)^{\frac{1}{\alpha}} \chi_t^n + n \int_{\tilde{\mathcal{X}}_t} \chi_t^n\right)\\
&\leq& C_2 \lambda^{-\frac{1}{\alpha}}.
\end{eqnarray*}
We can apply Lemma \ref{cap1} by choosing $\alpha = \frac{1}{n+2}$ so there exists $C_3>0$ such that for all $t\in B^\circ$, 
$$\int_X \psi'_t ~(\chi_t + \ddbar \psi'_t)^n \leq C_3. $$
Immediately, there exists $C_4>0$ such that 
$$E_t(\psi_t) \geq - C_4.$$
By Lemma \ref{sec990}, there exists $C_5>0$ such that for all $t\in B^\circ$, 
\begin{eqnarray*}
G_t(\psi_t) &=& E_t(\psi_t) - \log \left( \frac{1}{V} \int_{\mathcal{X}_t} e^{\psi_t} \Omega_t \right)\\
&\geq& - C_5. 
\end{eqnarray*}
Equivalently,  for all $t\in B$ 
$$\sup_{\phi_t\in \textnormal{PSH}(\mathcal{X}_t, \chi_t)\cap L^\infty(\mathcal{X}_t)} G_t (\phi_t) \geq -C_5.$$
This proves Theorem \ref{main91}.   $\hfill\Box$

\bigskip

Now we will complete the proof of Theorem \ref{main3}.

\medskip

\noindent {\bf Proof of Theorem \ref{main3}.} Let $\pi: \mathcal{X} \rightarrow  S$ be a stable family of $n$-dimensional   canonical models over an $m$-dimensional projective normal variety $S$. We apply  semi-stable reduction and (locally toric) blow-ups as in (\ref{ssrsec9}) and (\ref{ssr4}).
\begin{equation} 
\begin{diagram}
\node{\tilde{\mathcal{X}}}  \arrow{e,t}{\Phi} \arrow{se,r}{\tilde{\pi}} \node{\mathcal{X}'} \arrow{s,l}{ \pi' }  \arrow{e,t}{\Psi}   \node{\cZ:=\mathcal{X}\times_S  S'}  \arrow{sw,t}{\pi_{\mathcal{Z}}} \arrow{e,t}{f'}   \node{\mathcal{X} } \arrow{sw,r}{\pi} \\
\node{}      \node{S'} \arrow{e,t}{f}  \node{S}
\end{diagram}
\end{equation}
Let $\omega_{\WP}$ be the smooth Weil-Petersson metric on $S^\circ$ and let $\tilde{\omega}_{\WP}= f^*\omega_{\WP}$ on $(S')^\circ= f^{-1}(S^\circ)$. By  Corollary \ref{wpex}, $\tilde{\omega}_{WP}$ extends to a nonnegative closed $(1,1)$-current on $S'$ with bounded local potentials. If we let 
$$\mathcal{L}= \langle \underbrace{K_{\mathcal{X}/S}, \cdots, K_{\mathcal{X}/S} }_{n+1}\rangle$$
be the CM line bundle induced by the $(n+1)$-fold Deligne pairing of of $K_{\mathcal{X}/S}$ over $S$, then $\omega_{\WP}$ is the curvature of $\mathcal{L}$ on $S^\circ$ and since $\tilde{\omega}_{WP}$ has bounded local potentials on $S'$, $\tilde{\omega}_{\WP}$ is the curvature of $f^*\mathcal{L}$ globally on $S'$ and 
$$\tilde{\omega}_{\WP}\in [f^*\mathcal{L}].$$ Therefore $\omega_{\WP}$ must also extend to a  nonnegative closed $(1,1)$-current on $S$ with bounded local potentials with $\omega_{\WP}\in   [\mathcal{L}]$. In particular, 
$$\int_{\mathcal{S}} (\omega_{WP})^m =\int_{\mathcal{S}^\circ} (\omega_{WP})^m = [\mathcal{L}]^m \in \mathbb{Q}.$$. $\hfill\Box$

\medskip

\noindent {\bf Proof of  Corollary \ref{main4}.} We will first explain how the Weil-Petersson metric is defined on the moduli space. We let $\overline{\cM}_\KSBA$ be the KSBA compactification of the moduli space $\cM_\KSBA$ for a smooth $n$-dimensional canonical model $X$. Here we always assume the generic point of $\overline{\cM}_\KSBA$ corresponds to a smooth canonically polarized manifold. To  construct the Weil-Petersson current on $\overline{\cM}_\KSBA$, we will have to replace 
$\overline{\cM}_\KSBA$ by the base of a stable family as in Theorem \ref{main3}. Since the automorphism group of semi-log canonical models is finite, we can apply results in \cite{K3} (c.f. Proposition 2.7) and \cite{HX} that there exists  a  finite morphism 
\begin{equation}
\phi: \mathcal{S} \to \overline{\cM}_\KSBA
\end{equation}
 from a normal projective variety $\mathcal{S}$, together with a stable family of canonical models
 $$\pi: \cX\to \cS$$  such that for any $s\in \mathcal{S}$ the moduli point $\cX_s=\pi^{-1}(s)$ is exactly $\phi(s)\in   
{\cM}_\KSBA$.  

Let $\omega_{WP, \cS}$ be the Weil-Petersson current on $\mathcal{S}$. $\omega_{WP, \cS}$ is a smooth K\"ahler metric on a Zariski open set of $\mathcal{S}$ and by Theorem \ref{main3}, $\omega_{WP, \cS}$ has bounded local potential and $[\omega_{WP, \cS}] \in c_1(\mathcal{L}_\cS)$, where $\mathcal{L}_\cS$ is the CM line bundle induced by the Deligne pairing of $K_{\cX /\mathcal{S}}$. Therefore, there exists a Zariski open set $\left({\cM}_\KSBA\right)^\circ$ of ${\cM}_\KSBA$ such that 
$\left({\cM}_\KSBA\right)^\circ$ is smooth and the Weil-Petersson current $\omega_{WP, \cS}$ is a smooth K\"ahler metric on $\phi^{-1} \left( \left({\cM}_\KSBA\right)^\circ \right)$. Immediately $\omega_{WP, \cS}$ descends to a smooth K\"ahler metric on $\left({\cM}_\KSBA\right)^\circ$ by pushforward or averaging. We define such a K\"ahler metric as the Weil-Petersson metric on $\left({\cM}_\KSBA\right)^\circ$ and denote it by $\omega_{WP}$. We also notice that as in \cite{PX}, the CM line bundle $\mathcal{L}_\cS$ on $\cS$ is the pullback of the CM line bundle $\mathcal{L}$ on $\overline{\cM}_\KSBA$.

The construction of $\omega_{WP}$ on $\left({\cM}_\KSBA\right)^\circ$ does not depend on the choice of $\phi: \mathcal{S} \to \overline{\cM}_\KSBA$.  If there exists another finite map $\phi': \cS' \rightarrow \overline{\cM}_\KSBA$ together with a stable family $\pi': \cX' \rightarrow \cS$ such that for any $s\in \mathcal{S}'$ the moduli point $\cX'_s=\pi'^{-1}(s)$ is exactly $\phi'(s)\in   
\overline{\cM}_\KSBA$.  One can further find a finite map $\tilde{\phi}: \tilde{\cS} \rightarrow \overline{\cM}_\KSBA$ together with a stable family $\tilde{\pi}: \tilde{\cX} \rightarrow \tilde{\cS}$  
satisfying the commutative diagram
\begin{equation}
\xymatrix{
&\psi^\ast\cX  \ar@{>}[dr]_{}& \tilde{ \cX} \ar@{>}[d]^{\tilde \pi} \ar@{>}[l]_{ \cong } \ar@{>}[r]^{\cong\ \ \  } &\left(\psi'\right)^\ast\cX'  \ar@{>}[dl]^{}&\\
& & \tilde{\cS}\ar@{>}[dr]^{\psi'} \ar@{>}[dd]^{} \ar@{>}[dl]_{\psi } &  &\\
\cX \ar@{>}[r]_{\pi}& \cS  \ar@{>}[dr]_{\phi} & & \cS'\ar@{>}[dl]^{\phi'} &  \cX' \ar@{>}[l]^{\pi'}\\
& &  \overline{\cM}_\KSBA & &
            }
\end{equation}
by the fact that the automorphism group of a semi-log canonical model is finite. {In particular, we have}
$$\psi^\ast\cX\cong\ti \cX\cong\psi'^\ast \cX'.$$

The Weil-Petersson metric $\omega_{WP}$ can be constructed from $\cS$, $\cS'$ and $\tilde{\cS}$, and all the constructions coincide  on a Zariski open set of $\overline{\cM}_\KSBA$. The above observation immediately shows that
$$\phi^* \omega_{WP} = \omega_{WP, \cS}$$
on a Zariski open set of ${\cM}_\KSBA$.

After normalization and resolution of singularities we can assume $\cS$ is smooth and we can  extend $\omega_{WP}$ globally on $\overline{\cM}_\KSBA$ by pushing forward $\omega_{WP,\cS}$ via $\phi$. We would like to show that the extended $\omega_{WP}$ has bounded local potentials.  Since $\mathcal{L}_\cS=\phi^*\mathcal{L}$, we can pick a closed $(1,1)$-current $\eta \in c_1(\mathcal{L})$ on $\overline{\cM}_\KSBA$, where $\eta$ is the restriction of some Fubini-Study metric to $\overline{\cM}_\KSBA$ embedded in an ambient projective space. Then $\phi^*\eta \in c_1(\mathcal{L})$ and there exists $\varphi \in \textnormal{PSH}(\cS, \phi^*\eta)\cap L^\infty(\cS)$ such that
$$\phi^*\omega_{WP}= \phi^*\eta+ \ddbar \varphi.$$
Now for any $p\in \overline{\cM}_\KSBA$ and a sufficiently small affine neighborhood $U$ of $p$ in $\overline{\cM}_\KSBA$, there exists $\psi \in \textnormal{PSH}(\phi^{-1}(U))\cap L^\infty(\phi^{-1}(U))$ such that
$$\ddbar \psi = \omega_{WP}.$$
Let $\psi$ be the upper envelope of the push-forward of $\phi$ by $\phi$ on $U$. Then we have
$$\phi^* \omega_{WP} = \ddbar \left(\phi^* \psi\right).$$

Now we can define the Weil-Petersson volume by 
$$
\int_{\overline{\cM}_\KSBA}\left(\omega_\WP \right)^d=\int_{\left({\cM}_\KSBA\right)^\circ} \left(\omega_\WP\right)^d,$$
where $d=\dim {\cM}_\KSBA$. Since $\phi^* \omega_{WP}=\omega_{WP, \cS}$ has bounded local potentials, 
\begin{eqnarray*}
\int_{\left({\cM}_\KSBA\right)^\circ} \left(\omega_\WP\right)^d
&=&\frac{1}{\deg \phi}\int_{ \phi^{-1} \left( \left({\cM}_\KSBA\right)^\circ \right) } \left(\omega_{WP, \cS}\right)^d=\frac{1}{\deg \phi}\int_{ \cS } \left(\omega_{WP, \cS}\right)^d \\
&=& \frac{1}{\deg \phi} \left(c_1(\mathcal{L}_\cS) \right)^d= \left( c_1(\mathcal{L})\right)^d \in \mathbb{Q}^+.
\end{eqnarray*}

This completes the proof of Corollary \ref{main4}.
$\hfill\Box$

\bigskip

\bigskip

\footnotesize

\end{document}